  \let\leq=\leqslant
  \let\geq=\geqslant
\DeclareMathOperator{\gr}{gr}
\DeclareMathOperator{\Aut}{Aut}
\DeclareMathOperator{\Ann}{Ann}
\DeclareMathOperator{\End}{End}
\DeclareMathOperator{\Hom}{Hom}
\DeclareMathOperator{\GL}{GL}
\DeclareMathOperator{\SL}{SL}
\DeclareMathOperator{\Sat}{Sat}
\DeclareMathOperator{\Sym}{Sym}
\DeclareMathOperator{\pd}{pd}
\DeclareMathOperator{\ass}{ass}
\DeclareMathOperator{\udim}{udim}
\DeclareMathOperator{\Spec}{Spec}
\newcommand{\be}{\begin{enumerate}[{(}a{)}]}
\newcommand{\ee}{\end{enumerate}}
\newcommand{\mb}[1]{\quad\mbox{#1}\quad}
\begin{document}
\theoremstyle{plain}

\newtheorem{MainThm}{Theorem}
\renewcommand{\theMainThm}{\Alph{MainThm}}

\newtheorem*{trm}{Theorem}
\newtheorem*{lem}{Lemma}
\newtheorem*{prop}{Proposition}
\newtheorem*{defn}{Definition}
\newtheorem*{thm}{Theorem}
\newtheorem*{example}{Example}
\newtheorem*{cor}{Corollary}
\newtheorem*{conj}{Conjecture}
\newtheorem*{hyp}{Hypothesis}
\newtheorem*{thrm}{Theorem}
\newtheorem*{quest}{Question}
\newtheorem*{rems}{Remarks}

\newcommand{\Fp}{\mathbb{F}_p}
\newcommand{\Zp}{\mathbb{Z}_p}
\newcommand{\Qp}{\mathbb{Q}_p}
\newcommand{\Kr}{\mathcal{K}}
\newcommand{\invlim}{\lim\limits_{\longleftarrow}}
\newcommand{\wt}[1]{\langle {#1}, \omega(\mathbf{g}) \rangle}
\newcommand{\wth}[1]{w(\mathbf{c}^{#1})}
\newcommand{\Uhat}[1]{\widehat{U_\mathfrak{#1}}}
\newcommand{\qdel}[1]{\partial_{\mathbf{g}}^{(#1)}}
\newcommand{\C}[1]{\mathbf{c}^{#1}}
\title{Prime ideals in nilpotent Iwasawa algebras}
\author{Konstantin Ardakov}
\address{\newline University of Nottingham, \newline School of Mathematical Sciences, \newline University Park, \newline Nottingham NG7 2RD \newline }
\email{\newline konstantin.ardakov@gmail.com}
\begin{abstract} Let $G$ be a nilpotent complete $p$-valued group of finite rank and let $k$ be a field of characteristic $p$. We prove that every faithful prime ideal of the Iwasawa algebra $kG$ is controlled by the centre of $G$, and use this to show that the prime spectrum of $kG$ is a disjoint union of commutative strata. We also show that every prime ideal of $kG$ is completely prime. The key ingredient in the proof is the construction of a non-commutative valuation on certain filtered simple Artinian rings.
\end{abstract}
\subjclass[2010]{16S34, 16D25, 16W60}
\setcounter{tocdepth}{1}
\date{\today}
\thanks{This research was supported by an Early Career Fellowship from the Leverhulme Trust.}
\maketitle
\tableofcontents

\section{Introduction}

\subsection{Prime ideals and Iwasawa algebras} Let $G$ be a compact $p$-adic analytic group and let $k$ be a field of characteristic $p$. The completed group algebra $kG$ of $G$ with coefficients in $k$, also known as an Iwasawa algebra, is an interesting example of a non-commutative Noetherian complete semilocal ring with good homological properties --- see the survey article \cite{ArdBro2006} for an introduction to this area. A long-running project aims to understand the prime spectrum $\Spec(kG)$ of $kG$, guided in part by the list of open questions in $\S 6$ of this survey paper. Progress so far has been rather limited: the strongest known result to date, \cite[Theorem 4.8]{ArdWad2009} asserts that (under mild restrictions on the prime $p$) when the Lie algebra $\mathfrak{g}$ of $G$ is split semisimple, the homological height of a non-zero prime ideal in $kG$ is bounded below by an integer $u$ depending only $\mathfrak{g}$; for example if $\mathfrak{g} = \mathfrak{sl}_n(\Qp)$ then $u = 2n-2$. 

\subsection{Complete $p$-valued groups}Lazard proved in 1965 that it is always possible to find a closed normal subgroup $N$ of finite index in $G$ with particularly nice properties. For any such subgroup there is a crossed product decomposition $kG = kN \ast (G/N)$, and the going-up and going-down theorems \cite{Pass} for such crossed products give a strong connection between $\Spec(kG)$ and $\Spec(kN)$. Because of this it is important to first better understand $\Spec(kN)$. Typically one can choose $N$ to be a uniform pro-$p$ group, but it will be more convenient for us to work with a slightly larger class of groups --- Lazard's \emph{complete $p$-valued groups of finite rank}. See $\S\ref{pVal}$ for the precise definitions. 

\subsection{Construction of the `obvious' prime ideals}  Let, then, $G$ be a complete $p$-valued group of finite rank. Currently, the only known way to obtain two-sided ideals in $kG$ is to either take a centrally generated ideal, to induce up from a closed normal subgroup or to take an inverse image ideal. Let us make this more precise. 

We say that a prime ideal $P$ is \emph{faithful} if $G$ embeds faithfully into the group of units of $kG/P$, or equivalently, if $P^\dag := (1 + P)\cap G$ is the trivial group. Let
\[ \Spec^f(kG)\]
denote the set of all faithful prime ideals of $kG$. If $N$ is a closed normal subgroup of $G$, we say that $N$ is \emph{isolated} if $G/N$ is a torsion-free group, and we will write $N\triangleleft^i_c G$ to denote this. We show in Lemma \ref{IsoSub} that $P^\dag \triangleleft^i_c G$ for any $P \in \Spec(kG)$.

Let $N \triangleleft^i_c G$ and let $Z_N = \widetilde{N} / N$ denote the centre of $G/N$; this is a free abelian pro-$p$ group of finite rank $d_N\geq 0$ say. Then the algebra $kZ_N$ is just a commutative formal power series ring in $d_N$ variables over $k$. Now if $Q$ is a faithful prime ideal of $kZ_N$, let $\widetilde{Q}$ be its preimage in $k\widetilde{N}$ and let $\widetilde{Q}kG$ be its extension to $kG$. It follows from Theorem \ref{CompPrime} below that $\widetilde{Q}kG$ is always a prime ideal in $kG$, and in this way we obtain a map
\[ \begin{array}{ccccc}\Theta &:& \coprod\limits_{N \triangleleft_c^i G} \Spec^f(kZ_N) &\to& \Spec(kG) \\
&&Q &\mapsto & \widetilde{Q}kG .\end{array}
\]
There is a natural bijection between the set of closed isolated normal subgroups of $G$ and the set of ideals of the Lie algebra $\mathfrak{g}$ of $G$.
\subsection{A partial inverse map to $\Theta$}
Let $P$ be a prime ideal in $kG$. Then $P \cap k\widetilde{P^\dag}$ is a prime ideal in $k\widetilde{P^\dag}$ containing $P^\dag - 1$ because $\widetilde{P^\dag}$ is central modulo $P^\dag$, so we obtain a prime ideal
\[ \Psi(P) := \frac{ P \cap k\widetilde{P^\dag} }{(P^\dag - 1)k\widetilde{P^\dag}}\]
of $kZ_{P^\dag}$. It is easy to see that $\Psi(P)$ is faithful, and in this way we obtain a map
\[ \Psi : \Spec(kG) \to \coprod\limits_{N \triangleleft_c^i G} \Spec^f(kZ_N).\]
Here is our main result:
\begin{MainThm}\label{MainA}Let $G$ be a complete $p$-valued group of finite rank. Then
\be
\item $\Psi(\Theta(Q)) = Q$ for any $N \triangleleft_c^i G$ and $Q \in \Spec^f(kZ_N)$, and
\item $\Theta(\Psi(P)) = P$ for any $P \in \Spec(kG)$, whenever $G$ is nilpotent.
\ee 
Every ideal of the form $\Theta(Q)$ is completely prime.
\end{MainThm}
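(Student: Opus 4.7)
The plan is to treat (a) together with the complete primeness assertion first, both by direct structural arguments, then to tackle (b), where the essential content of the theorem lies.

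\textbf{Part (a) and complete primeness.} Fix $N \triangleleft_c^i G$, write $Z = Z_N$, and let $\widetilde{Q} \subseteq k\widetilde{N}$ denote the preimage of $Q \in \Spec^f(kZ)$. Since $\widetilde{N}$ is normal in $G$, the ring $kG$ is a crossed product of $k\widetilde{N}$ by $G/\widetilde{N}$, and in particular faithfully flat over $k\widetilde{N}$, so $\widetilde{Q}kG \cap k\widetilde{N} = \widetilde{Q}$, whose image in $kZ$ is $Q$. Because $N - 1 \subseteq \widetilde{Q}$ we have $\Theta(Q)^\dag \supseteq N$; the reverse inclusion is forced by the faithfulness of $Q$ on $Z$. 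Together these give $\Psi(\Theta(Q)) = Q$. For complete primeness, I would identify $kG/\Theta(Q)$ with the completed crossed product $(kZ/Q) \ast (G/\widetilde{N})$: because $Z$ is central in $G/N$, the group $G/\widetilde{N}$ acts trivially on $kZ/Q$, and this is a twisted power-series ring over the commutative domain $kZ/Q$ whose associated graded with respect to the $p$-valuation filtration is a polynomial ring over that domain, hence a domain. This is essentially the content of Theorem \ref{CompPrime} and delivers both the primeness and the complete primeness of $\Theta(Q)$.

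\textbf{Part (b).} The serious direction is $\Theta(\Psi(P)) = P$. First reduce to the faithful case: since $P \supseteq (P^\dag - 1)kG$ and $P^\dag$ is isolated by Lemma \ref{IsoSub}, we may pass to $G/P^\dag$ and assume $P^\dag = 1$, so that $\widetilde{P^\dag}$ becomes the centre $Z(G)$ and the statement reduces to $P = (P \cap kZ(G))kG$: the \emph{control-by-centre} assertion advertised in the abstract. I would prove this by induction on the nilpotency class $c$ of $G$. The base case $c = 1$ is vacuous because then $Z(G) = G$. For the inductive step, form the classical (Goldie) ring of fractions $A$ of the prime Noetherian ring $kG/P$: $A$ is simple Artinian, and the $p$-valuation filtration on $kG$ descends to a separated filtration on $kG/P$. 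The key ingredient, announced in the abstract, is the construction of a non-commutative valuation on the filtered simple Artinian ring $A$ extending this filtration. With the valuation in hand, the nilpotency of $G$ is used to show that commutators between lifts of arbitrary elements of $G$ and lifts of elements of the upper central series decrease the valuation additively; this forces the valuation to be controlled by the image of $kZ(G)$ in $A$, and hence any element of $P$ already lies in $(P \cap kZ(G))kG$.

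\textbf{Main obstacle.} The principal difficulty is the construction and exploitation of the non-commutative valuation on $A$. One must verify that the filtration on $kG/P$ coming from the $p$-valuation extends multiplicatively to the entire simple Artinian ring of fractions, and then identify its value group and residue division ring precisely enough to match those arising from the central subring $kZ(G)/(P \cap kZ(G))$. A secondary subtlety is the interaction between the induction on nilpotency class and the reduction step, since going-up/going-down for the crossed product of $kH$ by $G/H$ (where $H$ is a step in the upper central series of $G$) must be shown to transport control from $\Spec(kH)$ back to $\Spec(kG)$ without disturbing the centre. Once control-by-centre is secured, the equality $\Theta(\Psi(P)) = P$ is a matter of unwinding definitions, and the theorem follows.
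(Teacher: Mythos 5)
Your part (a) is on the right track and largely matches the paper's argument, but two steps are treated as obvious when they are not. First, the reverse inclusion $\Theta(Q)^\dag \subseteq N$ is not an immediate consequence of $Q$ being faithful on $Z_N$: what it really requires is Theorem~\ref{CompPrime}(b), namely that a faithful prime of $kZ_N$ extends to a faithful prime of $k[[G/N]]$. That transfer of faithfulness is proved with the power-series normal form of Lemma~\ref{CompCross} together with Mahler's theorem, and it is the reason the paper proves Theorem~\ref{CompPrime} \emph{before} Theorem~\ref{MainA}. Second, for complete primeness you propose to use "the $p$-valuation filtration" on the commutative factor $kZ/Q$ directly. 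But the quotient filtration inherited from $w$ on $kZ/Q$ is not a valuation in general --- its associated graded ring can have nilpotents (exactly the phenomenon Theorem~\ref{MainC} is designed to fix). The paper first applies Theorem~\ref{RescVal} (built on Theorem~\ref{MainC}) to replace it by a genuine valuation $v\tau$ on $kZ$, and only then forms the filtration $f$ on $kG = kZ[[c_1,\dots,c_e]]$ and shows $\gr^f kG$ is a polynomial ring over the domain $\gr^{v\tau} kZ$. Without that step the argument does not go through.

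For part (b) you correctly identify the reduction to the faithful case, the target statement (control by the centre, i.e.\ Theorem~\ref{Zal}), and the key abstract ingredient (the non-commutative valuation on the Goldie quotient). However, the route you propose --- induction on nilpotency class together with a claim that "commutators between lifts decrease the valuation additively" and that this "forces the valuation to be controlled by $kZ(G)$" --- is not what the paper does and, as stated, is not a proof sketch: there is no precise sense in which a valuation is "controlled by" a subring, and no indication of how such a statement would yield the ideal-theoretic control $P=(P\cap kZ)kG$. The paper's actual mechanism is quite different: the valuation makes $\tau : kG \to Q$ a bounded map, so that the Mahler expansions $\tau\varphi^{p^r} = \sum_\alpha \tau\langle \varphi^{p^r}, \qdel{\alpha}\rangle\,\tau\qdel{\alpha}$ live in the complete space $\mathcal{B}(kG,Q)$; the operators $\zeta^{(i)}_r$ are built from the inverse of the Smith matrix $M_r$ and are shown in Theorem~\ref{Zeta} to converge to the quantized derivations $\tau\partial_i$, which (by Lemma~\ref{Idemps}) gives control of $P$ by a proper subgroup $H$. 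That is Theorem~\ref{MainB}, not an induction on nilpotency class. Furthermore, the passage from Theorem~\ref{MainB} to Theorem~\ref{Zal} is not a routine going-up/going-down argument: it needs the $\Gamma$-prime, non-splitting, and orbitally-sound machinery of $\S\ref{Gamma}$ precisely because $P\cap kP^\chi$ is generally only $G$-prime, not prime. Your "secondary subtlety" hints at this, but the resolution is not a crossed-product going-down theorem --- it is Theorem~\ref{GammaOrb} applied via Theorem~\ref{VirtNonSplit}.
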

Thus $\Spec(kG)$ always contains the disjoint union of the ``commutative strata" $\Theta\left(\Spec^f(kZ_N)\right)$ and is actually equal to this union when $G$ is nilpotent. In fact, the evidence we have so far leads us to suspect that this assumption on $G$ is not necessary. The proof is given in $\S \ref{PfMainA}$.
\subsection{Zalesskii's Theorem}\label{ContSubIntro} Let $I$ be a right ideal in $kG$. We say that a closed subgroup $U$ of $G$ \emph{controls} $I$ if and only if $I = (I \cap kU)kG$. In $\S 2.7$ of the companion paper \cite{Ard2011}, we defined the \emph{controller subgroup} of $I$ to be the intersection $I^\chi$ of all \emph{open} subgroups of $G$ that control $I$:
\[I^\chi = \bigcap \{U \leq_o G : I = (I \cap kU)kG \}.\]
It follows from \cite[Theorem A]{Ard2011} that a closed subgroup $H$ of $G$ controls $I$ if and only if $H \supseteq I^\chi$. In particular, $I^\chi$ itself controls $I$, and $(I \cap kI^\chi)^\chi = I^\chi$. 

Let $Z$ be the centre of $G$. The real content of Theorem \ref{MainA}, namely part (b), quickly follows from Theorem \ref{Zal} which asserts that 
\[\mbox{\emph{if $G$ is nilpotent, then every faithful prime ideal $P$ of $kG$ is controlled by $Z$,}}\]
or equivalently, that $G$ must act trivially on $P^\chi$ by conjugation. Of course this is a direct analogue of Zalesskii's Theorem on prime ideals in group algebras of nilpotent groups --- see \cite{Zal}. Theorem \ref{Zal} in turn follows from our main technical result, namely

\begin{MainThm}\label{MainB} Let $G$ be a complete $p$-valued group of finite rank and let $P$ be a faithful prime ideal of $kG$. Let $\varphi$ be a non-trivial automorphism in $\Aut^\omega_Z(G)$ such that $\varphi(P) = P$. Then $P$ is controlled by some proper closed subgroup $H$ of $G$.
\end{MainThm}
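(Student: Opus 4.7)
The plan is to pass from $kG$ to $B := kG/P$, which is a prime Noetherian ring, and then to its simple Artinian Goldie quotient $Q := Q(B)$. The $p$-valuation $\omega$ induces a complete filtration on $kG$ which descends to $B$; the new tool announced in the abstract is to extend this to a non-commutative valuation $v$ on $Q$, whose associated graded ring $\gr_v Q$ has a well-understood structure. Because $\varphi$ preserves $\omega$ and fixes $P$, it induces a $v$-preserving ring automorphism of $Q$, which I shall still denote $\varphi$.

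The operator $\delta := \varphi - \mathrm{id}$ is then a $\varphi$-skew derivation on $Q$. The hypothesis that $\varphi$ is trivial on the centre $Z$ of $G$, combined with the structure of $\omega$, should force $\delta$ to strictly raise $v$, so that its leading term is a nonzero graded derivation $\overline{\delta}$ of strictly positive degree on $\gr_v Q$. The heart of the proof, and the step I expect to be the main obstacle, is to show that $\overline{\delta}$ is an inner derivation of $\gr_v Q$: this is where the non-commutative valuation theory on filtered simple Artinian rings does its real work. Once $\gr_v Q$ is known to be of a sufficiently manageable form --- roughly, a graded division ring or a crossed product of one with a torsion-free abelian group --- any graded derivation of strictly positive degree on it should be forced to be inner.

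Given innerness at the graded level, a standard successive-approximation argument using $\omega$-completeness of $kG$ lifts the homogeneous implementer of $\overline{\delta}$ to an actual element $u \in Q^\times$ satisfying $\varphi(b) = u b u^{-1}$ for every $b \in B$. Equivalently, $u g = \varphi(g) u$ in $Q$ for all $g \in G$. Combined with \cite[Theorem A]{Ard2011} characterising controller subgroups, this relation should then force $P$ to be controlled by some proper closed subgroup $H$ of $G$ --- the natural candidate being (the closure of) the fixed subgroup of $\varphi$, which is proper because $\varphi \neq 1$. The valuation construction and the innerness step are the novel ingredients; the remainder is a standard skew-derivation / lift-to-inner deduction.
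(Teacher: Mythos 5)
Your proposal takes a genuinely different route from the paper, but it has a gap at its central step that I don't see how to close, and which I believe is actually false.

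The proposal hinges on showing that the leading graded term $\overline{\delta}$ of the skew derivation $\delta = \varphi - \mathrm{id}$ is an \emph{inner} derivation of $\gr_v Q$. But Theorem~C gives $\gr_v Q \cong C[X,X^{-1}]$ with $C$ a central simple algebra and $X$ central; in such a ring a graded derivation of strictly positive degree is typically \emph{not} inner. Concretely, take the $C$-linear derivation $D$ with $D(C)=0$ and $D(X)=X^2$: this is a nonzero graded derivation of degree $\deg X$, but any inner derivation $[a,-]$ kills $X$ since $X$ is central, so $D$ cannot be inner. The heuristic ``graded derivations of positive degree on a graded division ring or crossed product must be inner'' is therefore wrong, and nothing in the valuation theory of $\S\ref{ConstNCVal}$ supplies the missing constraint. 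Your proposal would need a substantially stronger structural input here, and the paper contains no such lemma. Relatedly, even if you could implement $\varphi$ by conjugation by a unit $u \in Q^\times$, the passage from ``$\varphi$ is inner on $Q$'' to ``$P$ is controlled by the fixed subgroup of $\varphi$'' is unjustified: inner automorphisms of $Q$ say nothing directly about control in the sense of $I = (I\cap kH)kG$, and the paper's controller machinery (\cite[Theorem A]{Ard2011}) requires you to exhibit that $P$ is stable under the divided-power operators $\partial_i$ attached to some concrete $H$, not merely that $\varphi$ is inner.

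For contrast, the paper's proof does not use $\delta = \varphi - 1$ as a single skew derivation at all. Instead it works with the full sequence $\varphi^{p^r}$, extracts their Mahler coefficients $\langle \varphi^{p^r}, \qdel{\alpha}\rangle$ via Lemma~\ref{CalcMahler}, estimates them using the logarithm $z(\tilde{\varphi})$ and the fact that $v$ restricted to the centre $F$ of $Q$ is a genuine valuation (Theorem~\ref{RescVal}(b)), inverts the Smith matrix $M_r$, and shows in Theorem~\ref{Zeta} that suitable $F$-linear combinations $\zeta_r^{(i)}$ of the operators $\tau\varphi^{p^{r+j-1}} - \tau$ converge to $\tau\partial_i$ for $i = 1,\dots,m$. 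Since $\varphi$ stabilises $P$, each $\zeta_r^{(i)}$ kills $P$, hence so does $\tau\partial_i$ in the limit, i.e.\ $\partial_i(P) \subseteq P$. The proper subgroup $H$ is constructed as the kernel of an explicit additive character $\sigma: G \to Q_\lambda/Q_{\lambda^+}$ built from $z(\tilde\varphi)$ — not as the fixed subgroup of $\varphi$ — and faithfulness of $P$ is used precisely to show $\sigma$ is nontrivial, so $H < G$. Finally, Lemma~\ref{Idemps} expresses the characteristic functions of cosets of $H$ as polynomials in $\partial_1,\dots,\partial_m$, and \cite[Theorem A]{Ard2011} closes the argument. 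Your proposal captures the idea of exploiting the non-commutative valuation $v$ and the centrality of $\tau(kZ)$, but replaces the whole Mahler/Smith-matrix machinery with an innerness claim that does not hold.
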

Here $\Aut^\omega_Z(G)$ is a certain ``small" group of automorphisms of $G$ that act trivially modulo $Z$ --- see $\S \ref{AutModZ}$ for the precise definition. The deduction of Theorem \ref{Zal} from Theorem \ref{MainB} is performed in $\S \ref{Gamma}$; this is not entirely trivial because $P \cap kP^\chi$ need not in general be a prime ideal of $kP^\chi$. Theorem \ref{MainB} can also be viewed as a non-commutative analogue of Roseblade's \cite[Theorem D]{Roseblade}.

\subsection{The strategy of the proof} To prove Theorem \ref{MainB}, we let $\tau : kG \to Q$ be the natural map from $kG$ to the classical ring of quotients $Q$ of the prime Noetherian ring $kG/P$, and consider certain Mahler expansions
\[\tau \varphi^{p^r} = \sum_{\alpha \in \mathbb{N}^d} \tau \left(\langle \varphi^{p^r}, \qdel{\alpha} \rangle\right) \cdot \tau \qdel{\alpha} \quad\mbox{for all}\quad r \geq 0\]
inside the vector space of all $k$-linear maps from $kG$ to $Q$ --- see Corollary \ref{Extend} and $\S \ref{ModPExp}$. We study the growth rates of the Mahler coefficients $\tau \left(\langle \varphi^{p^r}, \qdel{\alpha} \rangle\right)$ as $r \to \infty$ and define an appropriate $Q$-linear combination
\[\zeta^{(i)}_r := \sum_{j=1}^m (M_r^{-1})_{ij} (\tau\varphi^{p^{r + j-1}} - \tau)\]
of these $\tau \varphi^{p^r}$. On the one hand, each of these operators sends $P$ to zero since $\varphi$ preserves $P$. On the other hand, we show in Theorem \ref{Zeta} that the limit of $\zeta^{(i)}_r$ as $r \to \infty$ equals one of the ``quantized derivations" 
\[\tau \partial_i : kG \to Q.\]
This is enough to deduce Theorem \ref{MainB} --- see $\S \ref{PfCntrlThm}$ below.

\subsection{A key ingredient}
In order to make sense of $\lim_{r \to \infty} \zeta^{(i)}_r$ and to construct the ``correct" $\zeta^{(i)}_r$, we need to equip the simple Artinian ring $Q$ with a well-behaved filtration. This is obtained from
\begin{MainThm}\label{MainC} Let $R$ be a prime ring and let $w : R \to \mathbb{Z} \cup \{\infty\}$ be a Zariskian filtration. Suppose that $F := R_0/R_1$ is a field and that $\gr R$ is a commutative infinite-dimensional $F$-algebra. Then there exists a filtration $v : Q \to \mathbb{Z}\cup \{\infty\}$ on the classical ring of quotients $Q$ of $R$ and a central simple algebra $C$, such that
\begin{enumerate}[{(}a{)}] 
\item the natural inclusion $(R, w) \to (Q,v)$ is continuous,
\item if $w(x) \geq 0$ then $v(x) \geq 0$, and
\item $\gr Q \cong C[X,X^{-1}]$.
\end{enumerate}
Moreover, restriction of $v$ to the centre of $Q$ is a valuation.
\end{MainThm}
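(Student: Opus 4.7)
My strategy is to construct $v$ as a $\pi$-adic filtration on $Q$, where $\pi$ is a lift of a uniformizer from a discrete valuation ring quotient of $A := \gr R$, obtained via a suitable Ore localization of $R$. First I would use the hypotheses on $A$ to produce, after a preliminary reduction (e.g., passing from $A$ to $A/\mathfrak{q}_0$ for a minimal prime $\mathfrak{q}_0$, lifted to an Ore-localization of $R$), a height-one homogeneous prime $\mathfrak{p} \subseteq A$ with $A_\mathfrak{p}$ a DVR: infinite-dimensionality forces $A$ to have positive Krull dimension, and Noetherianness plus normality then supplies such a $\mathfrak{p}$. Set $S := \{r \in R : \sigma(r) \notin \mathfrak{p}\}$. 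By commutativity of $A$ and the Zariskian hypothesis on $R$, $S$ is an Ore set of regular elements, and the Ore-localization $R_S := R[S^{-1}]$ inherits a Zariskian filtration $w'$ with $\gr_{w'} R_S \cong A_\mathfrak{p}$.

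Next I would pick $\pi \in R_S$ lifting a uniformizer of $A_\mathfrak{p}$. Commutativity of $\gr R_S$ implies that $\pi r - r\pi$ has strictly higher $w'$-order than $\pi r$ for every $r \in R_S$, so $\pi$ is normal in the filtered sense and $\pi R_S = R_S \pi$. Define $v$ on $Q = Q(R_S) = Q(R)$ by $v(q) := \sup\{n \in \mathbb{Z} : q \in \pi^n R_S\}$. Since the $\pi$-adic filtration on $R_S$ dominates $w'$ up to a scaling factor of $\deg_A(\pi)$, continuity (a) and non-negativity preservation (b) follow routinely. With $X := \sigma_v(\pi)$ a homogeneous normal unit of degree one in $\gr Q$, we obtain $\gr Q \cong (Q_0/Q_1)[X, X^{-1}]$, giving (c) with $C := Q_0/Q_1$.

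Finally I would identify $C \cong R_S / \pi R_S$ as a central simple algebra. Since $Q$ is simple Artinian (being the classical quotient of a prime Noetherian ring) and $\pi$ is central modulo higher order, primeness of $R$ should force $C$ to be simple with finite-dimensional centre; the valuation property on $Z(Q)$ then follows because $Z(Q) \hookrightarrow Z(\gr Q) = Z(C)[X, X^{-1}]$, an integral domain, so $v|_{Z(Q)}$ is multiplicative. I expect the two most delicate steps to be: (i) justifying the reduction under which some height-one prime $\mathfrak{p}$ of $A$ yields an honest DVR $A_\mathfrak{p}$, which is not automatic since $A$ need not be a domain and the reduction must interact gracefully with the filtration; and (ii) proving that the residue ring $C$ is genuinely central simple, which will require combining the prime hypothesis on $R$, Noetherianness of $A$, and the specific structure of the Ore localization in a subtle way.
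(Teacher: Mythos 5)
Your outline has the right general flavour (homogeneous localisation, then an Ore localisation of $R$, then a normal-element-adic filtration), but it contains two genuine gaps that correspond precisely to the hardest steps in the paper's argument.

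First, your step ``Noetherianness plus normality then supplies such a $\mathfrak{p}$'' assumes a normality hypothesis on $\gr R$ that is simply not available --- the paper explicitly warns that $\gr R$ can have nilpotents, e.g.\ for $R = k[[x,y]]/\langle x^2-y^3\rangle$, and even when $\gr R$ is a domain it need not be normal (think $k[x^2,x^3]$). Your proposed remedy of ``passing from $A$ to $A/\mathfrak{q}_0$ \ldots lifted to an Ore-localization of $R$'' does not make sense: a quotient of $\gr R$ does not lift to any localisation of $R$. What the paper does instead is \emph{localise} $\gr R$ at the homogeneous complement $T$ of a minimal prime, so the minimal prime becomes a nilpotent ideal $\mathfrak{p}$ of $(\gr R)_T$ rather than disappearing; the ``DVR-like'' behaviour comes from the homogeneous unit $Y$ of positive degree in the graded local ring, not from localising at a height-one prime.

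Second, and more seriously, your construction stops at the Ore localisation $R_S$ and tries to read off the answer there. The paper's remarks in \S\ref{ExistsMO} point out that this must fail: Akizuki's example of a one-dimensional local Noetherian domain whose normalisation is not finitely generated shows that without \emph{completing} $R_S$ (the microlocalisation $\widehat{Q}$), you cannot obtain the required maximal order. The paper then proves existence of maximal orders only after passing to the complete local ring $A$, applies the Gwynne--Robson structure theorem to get $B \cong M_k(D)$ for a non-commutative complete DVR $D$, and only \emph{then} identifies $C = B/J(B)$ as a central simple algebra (using a PI argument and Kaplansky, followed by Skolem--Noether to make the symbol $X$ central so that $\gr B \cong C[X]$). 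Your plan to identify $C$ with $R_S/\pi R_S$ skips all of this and has no mechanism for proving simplicity; and your argument that $v|_{Z(Q)}$ is a valuation because ``$Z(Q)\hookrightarrow Z(\gr Q)$'' is circular, since the symbol map on $Z(Q)$ is a ring homomorphism only once one already knows $v|_{Z(Q)}$ is a valuation. In the paper, that last claim is instead deduced from $B \cong M_k(D)$ with $D$ a non-commutative DVR, so that $Z(Q) \subseteq Q(D)$ inherits the valuation from $D$.
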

Even though $R$ itself is prime, the associated graded ring $\gr R$ with respect to the original filtration $w$ is in general not prime; worse still, it could contain non-zero nilpotent elements. For an example of such behaviour, consider the (commutative!) ring $R = k[[x,y]] / \langle x^2 - y^3 \rangle$ equipped with the $\langle x,y \rangle$-adic filtration. Theorem \ref{MainC} shows that under rather mild hypotheses it is always possible ``improve" this filtration to one whose associated graded ring is as nice as one could possibly hope for. Perhaps our $v$ deserves to be called a ``non-commutative valuation".

We hope that Theorem \ref{MainC} will be of independent interest, since it is applicable to prime factor rings of not only Iwasawa algebras, but also universal enveloping algebras of finite dimensional Lie algebras. It is proved in $\S \ref{ConstNCVal}$.

\subsection{Another application}\label{JustInfIntro} Let $G$ be a compact $p$-adic analytic group. We say that a finitely generated $kG$-module $M$ is \emph{just infinite} if $M$ is infinite dimensional over $k$ but $M/N$ is finite dimensional over $k$ for every non-zero $kG$-submodule $N$ of $M$. Equivalently $M$ is a critical $kG$-module of Krull dimension $1$. 

Using Theorem \ref{MainB} we can give an example of a just infinite ``parabolic Verma module" for $kG$. We do not strive for the maximal generality here, and just wish to illustrate the method.

\begin{MainThm}\label{MainD} Let $G$ be the second congruence subgroup of $\SL_n(\Zp)$, let $\mathfrak{p}$ be a maximal parabolic subalgebra of $\mathfrak{g} = \mathfrak{sl}_n(\Qp)$, and let $P = \exp( \mathfrak{p} \cap \log(G) )$ be the corresponding uniform subgroup of $G$. Then the induced module $k \otimes_{kP} kG$ is just infinite.
\end{MainThm}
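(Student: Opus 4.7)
The plan is to verify the two defining properties of just-infinite-ness for $M := k\otimes_{kP}kG$, using Theorem~\ref{MainB} at the crucial step. Infinite-dimensionality and the computation of the Krull dimension follow from a standard filtration argument: equipping $kG$ with the filtration induced by the $p$-valuation on $G$ makes $\gr kG$ a polynomial algebra in $\dim_{\Qp}\mathfrak{g}$ variables; the subalgebra $\gr kP$ is the polynomial subalgebra on those variables indexed by $\mathfrak{p}\cap\log G$; and the right ideal $(P-1)kG$ defining $M$ has associated graded generated by precisely these variables. Consequently $\gr M$ is a polynomial ring in $\dim_{\Qp}\mathfrak{g}/\mathfrak{p}$ variables, so $M$ is infinite-dimensional with Krull dimension equal to $\dim_{\Qp}\mathfrak{g}/\mathfrak{p}$.

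For the criticality condition, let $0\neq N\subseteq M$ and set $I := \Ann_{kG}(M/N)$. Assuming for contradiction that $kG/I$ is infinite-dimensional over $k$, Noetherianness lets us extract a prime ideal $\mathfrak{q}$ of $kG$ containing $I$ such that $kG/\mathfrak{q}$ is infinite-dimensional and appears as a subquotient of $M/N$. I would first verify that $\mathfrak{q}$ is faithful: by Lemma~\ref{IsoSub}, the subgroup $\mathfrak{q}^{\dag}$ is closed, isolated and normal in $G$, so its Lie algebra is an ideal of the simple Lie algebra $\mathfrak{g} = \mathfrak{sl}_n(\Qp)$, hence either $0$ or $\mathfrak{g}$; the second alternative would force $\mathfrak{q}^{\dag}$ to be open in $G$ and $kG/\mathfrak{q}$ to be finite-dimensional.

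The decisive step is to apply Theorem~\ref{MainB} to the faithful prime $\mathfrak{q}$, which requires producing a non-trivial $\varphi \in \Aut^{\omega}_Z(G)$ with $\varphi(\mathfrak{q}) = \mathfrak{q}$. I would construct $\varphi$ from conjugation by a suitable element of the centre of the Levi component of $\mathfrak{p}$: this normalises $P$, hence preserves $(P-1)kG$, the ideal $I$, and ultimately $\mathfrak{q}$. Theorem~\ref{MainB} then yields a proper closed subgroup $H\subsetneq G$ controlling $\mathfrak{q}$, so $\mathfrak{q} = (\mathfrak{q}\cap kH)kG$. Combining this control relation with the fact that $kG/\mathfrak{q}$ is a subquotient of $M/N$, the maximality of $\mathfrak{p}$ inside $\mathfrak{g}$, and the Krull-dimension computation from the first paragraph should force a contradiction with the original assumption. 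The main obstacle will be the construction of $\varphi$: the group $\Aut^{\omega}_Z(G)$ is quite restrictive, and exhibiting a non-trivial element of it that preserves $\mathfrak{q}$ is delicate given that $G$ has small centre; it is here that the specific choice of second congruence subgroup of $\SL_n(\Zp)$ (which gives good control over the exponential map and the Levi structure) is expected to play a crucial role.
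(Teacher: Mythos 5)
The decisive gap is in your proposed application of Theorem~\ref{MainB}, and it is not merely ``delicate'' but impossible: the group $G$ here has \emph{trivial} centre. Indeed $G$ is a uniform group with $\Qp$-Lie algebra $\mathfrak{sl}_n(\Qp)$, which is simple; since $Z(G)$ is an isolated normal subgroup its Lie algebra would be a central ideal of $\mathfrak{sl}_n(\Qp)$, hence zero, so $Z(G)=1$. Consequently $\Aut^\omega_Z(G)$ consists of all $\varphi$ with $\varphi(g)g^{-1}\in Z(G)=\{1\}$ for all $g$, i.e.\ $\Aut^\omega_Z(G)=\{1\}$, and Theorem~\ref{MainB} produces nothing when applied directly to $G$. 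Your proposed $\varphi$ --- conjugation by an element $\ell$ in the centre of the Levi --- would lie in $\Aut^\omega_Z(G)$ only if $[\ell,x]\in Z(G)$ for all $x\in G$, i.e.\ only if $\ell$ is central in $G$, i.e.\ only if $\varphi$ is trivial. There is no way around this obstruction while staying inside $kG$.

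The paper therefore takes a genuinely different route that you have not reproduced. It introduces the opposite parabolic $P_-$ and its abelian nilradical subgroup $A$, uses the decompositions $PP_-=G$, $P\cap P_-=L$ to identify $k\otimes_{kP}kG\cong k\otimes_{kL}kP_-$ as right $kP_-$-modules, and then --- since every $kG$-submodule is a $kP_-$-submodule --- reduces just-infiniteness to the $kP_-$-module $k\otimes_{kL}kP_-$. This lands in the semidirect product setting $P_-=A\rtimes L$ of Theorem~\ref{JustInf}, whose proof passes to $\Gamma$-invariant ideals of the \emph{commutative} ring $kA$ and invokes the analogue of Roseblade's Theorem (Corollary~\ref{RoseD}). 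Theorem~\ref{MainB} is ultimately applied to the abelian group $A$, where $Z(A)=A$ and the hypothesis on automorphisms becomes vacuous; the hypothesis $[\Gamma,A]\leq A^p$, which is where the choice of the \emph{second} congruence subgroup matters, guarantees that $\Gamma$ acts on $A$ through $\Aut^\omega(A)$. In other words, the whole point of the reduction to $P_-$ and then to $kA$ is to transport the problem into a group with large centre, and your proposal skips exactly this crucial step.

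One smaller issue: in your first paragraph you conclude that $M$ has ``Krull dimension equal to $\dim_{\Qp}\mathfrak{g}/\mathfrak{p}$''. That is the \emph{canonical} (Gelfand--Kirillov) dimension, not the Krull dimension; a just infinite module has Krull dimension $1$ by definition, and the paper explicitly remarks that the canonical dimension $\dim\mathfrak{g}/\mathfrak{p}$ can be arbitrarily large. If you later plan to use a Krull-dimension count in the contradiction, this conflation would need to be fixed. The computation of $\gr M$ is fine; it is the interpretation that needs care.
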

The proof is given in $\S \ref{ParaVerma}$. This result can be viewed as further (rather weak) evidence for the Krull dimension conjecture --- see \cite[Question D]{ArdBro2006}. Note also that $k \otimes_{kP} kG$ can be arbitrarily ``large", since its canonical dimension $\dim \mathfrak{g}/\mathfrak{p}$ has no upper bound as $n$ increases.

\subsection{Acknowledgements} This research was supported by an Early Career Fellowship from the Leverhulme Trust. I am very grateful to the Trust for giving me the opportunity to focus on the problem of prime ideals in Iwasawa algebras without any distractions. 

I would also like to thank: James Zhang for the invitation to spend two weeks in Seattle; the ICMS and the University of Washington for hosting conferences during which large parts of this paper were written; Simon Wadsley for his continued interest in my work and many valuable discussions; and Jon Nelson and Rishi Vyas for finding several inaccuracies in an earlier version of this paper.
\section{Preliminaries}

\subsection{Filtered rings}
\label{Filt}
Recall that a \emph{filtration} on a ring $A$ is a function
\[v : A \to \mathbb{R} \cup \{\infty\}:= \mathbb{R}_\infty,\]
such that $v(ab) \geq v(a)+v(b)$, $v(a+b)\geq$ min$\{v(a),v(b)\}$ for all $a,b \in A$, $v(1) = 0$ and $v(0) = \infty$. If the filtration on $A$ is understood, then we say that $A$ is a \emph{filtered ring}. If the stronger condition $v(ab) = v(a) + v(b)$ is satisfied for all $a,b\in A$, then we say that $v$ is a \emph{valuation}. 

We now fix a filtration $v$ on $A$ and define an additive subgroup $A_\lambda$ of $A$ for any $\lambda \in \mathbb{R}$ as follows:
\[A_\lambda := \{x \in A : v(x) \geq \lambda\}.\]
These subgroups have the following properties:
\begin{itemize}
\item $A_\lambda \cdot A_\mu \subseteq A_{\lambda + \mu}$ for all $\lambda,\mu\in\mathbb{R}$,
\item $A_\lambda \supseteq A_\mu$ if $\lambda \leq \mu$,
\item $\cup_{\lambda\in\mathbb{R}} A_\lambda = A$, and
\item $1 \in A_0$.
\end{itemize}
The filtration $v$ is said to be \emph{separated} if the two-sided ideal $v^{-1}(\infty) = \cap_{\lambda\in\mathbb{R}} A_\lambda$ is zero. Since this ideal is proper, we see that any filtration on a field is necessarily separated.

For any $\lambda \in \mathbb{R}$, let $A_{\lambda^+} := \{x \in A : v(x) > \lambda\}$, and define
\[\gr_\lambda A := A_\lambda / A_{\lambda^+}\].
Since $A_{\lambda^+}\cdot A_{\mu} + A_{\lambda}\cdot A_{\mu^+} \subseteq A_{(\lambda + \mu)^+}$ for all $\lambda,\mu\in\mathbb{R}$, the direct sum
\[\gr A := \bigoplus_{\lambda\in\mathbb{R}} \gr_\lambda A\]
is naturally an $\mathbb{R}$-graded ring, called the \emph{associated graded ring} of $A$. The filtration $v$ is a valuation if and only if $\gr A$ is a domain.

\subsection{Zariskian filtrations}
Let $A$ be a filtered ring with filtration $w : A \to \mathbb{R}_\infty$. We say that $w$ is a \emph{Zariskian filtration} if
\begin{itemize}
\item $w$ takes integral values,
\item the Rees ring $\widetilde{A} = \bigoplus_{n \in \mathbb{Z}} A_n t^n \subseteq A[t,t^{-1}]$ is Noetherian,
\item the Jacobson radical of the subring $A_0$ contains $A_1$:
\[1 + A_1 \subseteq A_0^\times.\]
\end{itemize}
This agrees with the standard definition given in \cite{LVO}, except that our filtrations are descending and those in \cite{LVO} are ascending. 

\subsection{Filtered modules}

Let $A$ be a filtered ring with filtration $v$ and let $M$ be a (left) $A$-module. Then a \emph{filtration} on $M$ is a function
\[v_M : M \to \mathbb{R}_\infty,\]
such that $v_M(am) \geq  v(a) + v_M(m)$ and $v_M(m + n) \geq \min\{v_M(m),v_M(n)\}$ for all $m,n \in M$ and $a\in A$. If the filtration on $v_M$ is understood, we will say that $M$ is a \emph{filtered $A$-module}.

The filtration $v_M$ gives rise to a \emph{filtration topology} on $M$, such that $M$ is a topological group under addition, and such that the subgroups 
\[M_{\lambda} := \{m \in M : v_M(m)\geq \lambda\} \]
form a base for the open neighbourhoods of $0$. This topology is Hausdorff if and only if $v_M^{-1}(\infty) = 0$; in this case the filtration $v_M$ is \emph{separated}. We say that $v_M$ is \emph{complete} if every Cauchy sequence in $M$ with respect to this topology converges to a unique limit in $M$. Thus every complete filtration is by definition separated.

\subsection{Bounded linear maps}
\label{BddHoms}
Let $k$ be a field equipped with the trivial valuation $v(k^\times) = 0$, and let $M$ and $N$ be two separated filtered $k$-vector spaces. We say that a $k$-linear map $f : M \to N$ is \emph{bounded} if there exists $\lambda \in \mathbb{R}_\infty$ such that 
\[v_N(f(x)) \geq v_M(x) + \lambda \quad \mbox{for all} \quad  x \in M.\]
The set $\mathcal{B}(M,N)$ of all such maps is a $k$-vector space. The \emph{degree} of a bounded $k$-linear map $f$ is given by
\[\deg(f) := \inf\{v_N(f(x)) - v_M(x) : x \in M \backslash \{0\}\}.\]
The degree function turns $\mathcal{B}(M,N)$ into a separated filtered $k$-vector space and can be viewed as a generalization of the operator norm from functional analysis. In that setting, our next result is well-known --- see, for example \cite[Chapter I, Proposition 3.3]{SchNFA}. We give the proof for the convenience of the reader.

\begin{lem} Let $M$ and $N$ be separated filtered $k$-vector spaces, and suppose that $N$ is complete. Then $\mathcal{B}(M,N)$ is also complete with respect to the degree filtration.
\end{lem}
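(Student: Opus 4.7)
The plan is to mimic the classical proof that the space of bounded linear operators into a Banach space is itself a Banach space, reading ``degree $\geq \lambda$'' in place of ``operator norm $\leq e^{-\lambda}$''. Concretely, start with a Cauchy sequence $(f_n)_{n\geq 1}$ in $\mathcal{B}(M,N)$ with respect to the degree filtration, and construct a candidate limit $f$ by pointwise evaluation.

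First, I would observe that for every $x \in M$ the sequence $(f_n(x))_{n\geq 1}$ is Cauchy in $N$: by definition of degree,
\[ v_N(f_n(x) - f_m(x)) \geq v_M(x) + \deg(f_n - f_m), \]
and $\deg(f_n - f_m) \to \infty$ since $(f_n)$ is Cauchy. Completeness of $N$ then yields a unique limit $f(x) := \lim_{n \to \infty} f_n(x)$, and $k$-linearity of $f$ is immediate from $k$-linearity of each $f_n$ together with the fact that addition and scalar multiplication in $N$ are continuous with respect to the filtration topology (here I would use that the valuation on $k$ is trivial, so scalar multiplication is trivially continuous).

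Next, I would verify that $f$ is bounded. Fix $n_0$ so large that $\deg(f_n - f_{n_0}) \geq 0$ for all $n \geq n_0$; then $v_N(f_n(x) - f_{n_0}(x)) \geq v_M(x)$ for all $x \in M$ and $n \geq n_0$. Passing to the limit in $n$ and using that the subgroup $N_{v_M(x)}$ is closed in $N$ (which follows from separatedness, since $N_{v_M(x)}$ is the preimage of $\{0\}$ under the continuous map $N \to N/N_{v_M(x)^+}$ composed with a quotient, or directly since filtration subgroups are open-and-closed), I get $v_N(f(x) - f_{n_0}(x)) \geq v_M(x)$. Therefore $f - f_{n_0}$ has non-negative degree, and since $f_{n_0} \in \mathcal{B}(M,N)$, we conclude $f \in \mathcal{B}(M,N)$.

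Finally, I would show $f_n \to f$ in the degree filtration. Given any $\lambda \in \mathbb{R}$, choose $n_0$ such that $\deg(f_n - f_m) \geq \lambda$ for all $m,n \geq n_0$. Then for every $x \in M$ and every $m,n \geq n_0$,
\[ v_N(f_n(x) - f_m(x)) \geq v_M(x) + \lambda. \]
Letting $m \to \infty$ and using the same closedness argument, $v_N(f_n(x) - f(x)) \geq v_M(x) + \lambda$ for all $n \geq n_0$ and all $x \in M$, i.e.\ $\deg(f_n - f) \geq \lambda$. Hence $f_n \to f$ in $\mathcal{B}(M,N)$, as required. The only subtlety worth watching is the passage to the limit inside the inequality $v_N(\cdot) \geq v_M(x) + \lambda$, which is justified by the fact that each sublevel set $\{y \in N : v_N(y) \geq \mu\}$ is closed in the filtration topology on $N$; everything else is bookkeeping.
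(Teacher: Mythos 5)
Your proof is correct and follows the same approach as the paper: construct the candidate limit $f$ by pointwise evaluation and then verify boundedness and convergence $f_n \to f$. You differ only in bookkeeping --- you pass to the limit inside inequalities by invoking closedness of the filtration subgroups $N_\mu$ (which is correct, since $N_\mu$ is an open subgroup and open subgroups of a topological group are automatically closed; your first parenthetical justification via separatedness is not really the right reason, but the second one is), whereas the paper establishes boundedness via eventual constancy of $\deg f_n$ and the identity $v_N(\lim y_n) = \lim v_N(y_n)$, and handles convergence via an ultrametric triangle inequality with an auxiliary index $m$ chosen after $x$ is fixed.
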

\begin{proof} Let $(f_n)_n$ be a Cauchy sequence in $\mathcal{B}(M,N)$. For each $x \in M$, the sequence $\left(f_n(x)\right)_n$ is Cauchy, hence converges to an element $f(x) \in N$ because $N$ is complete. The function $x \mapsto f(x)$ is clearly $k$-linear. Now if $\deg f_n \to \infty$ then $f_n \to 0$ by definition, so assume that the sequence $(\deg f_n)_n$ is bounded. It is then eventually constant with value $d$ say. Because each $f_n$ is bounded,
\[v_N(f(x)) = v_N(\lim_{n\to\infty} f_n(x)) = \lim_{n\to\infty}v_N(f_n(x)) \geq \lim_{n\to \infty} \deg f_n + v_M(x) = d + v_M(x)\]
for any non-zero $x \in M$, so $f$ is also bounded. It remains to show that $f_n \to f$.

Fix a non-zero element $x \in M$. Since $(f_n)_n$ is Cauchy, for any $\lambda \in \mathbb{R}$ there exists an integer $t$, independent of $x$, such that $v_N(f_n(x) - f_m(x)) \geq \lambda + v_M(x)$ for all $n,m \geq t$. Since $f_m(x) \to f(x)$ as $m \to \infty$, we can find an integer $m \geq t$ such that $v_N(f_m(x) - f(x)) \geq \lambda + v_M(x)$. Hence
\[ v_N(f_n(x) - f(x)) \geq \min \left\{ v_N(f_n(x) - f_m(x)), v_N(f_m(x) - f(x))\right\} \geq \lambda + v_M(x)\]
for any non-zero $x \in M$, so $f_n \to f$ with respect to the degree filtration.
\end{proof}
Whenever $A$ is a filtered $k$-algebra and $N$ is a filtered $A$-module, $\mathcal{B}(M,N)$ becomes a filtered $A$-module, with the action of $A$ given by $(a.f)(m) = a.f(m)$ for all $a\in A, m\in M$. Note also that $\mathcal{B}(A) := \mathcal{B}(A,A)$ is a filtered ring and $\mathcal{B}(A,N)$ is a filtered right $\mathcal{B}(A)$-module, via composition of functions.

\section{The construction of a non-commutative valuation}
\label{ConstNCVal}
We now start working towards the proof of Theorem \ref{MainC}, which is concluded in $\S$\ref{PfMainZarThm}. Assume from now on that $R$ satisfies the hypotheses of the Theorem.

\subsection{Minimal prime ideals of $\gr R$}\label{MinPrimes}
Because $\gr R$ is a commutative Noetherian $\mathbb{Z}$-graded ring by assumption, it has finitely many minimal primes $\mathfrak{p}_1,\ldots,\mathfrak{p}_m$ say. It is well-known that these ideals are graded.

\begin{lem} At least one of the $\mathfrak{p}_i$ differs from $\gr R$ in at least one non-zero homogeneous component.
\end{lem}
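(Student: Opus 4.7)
The plan is to argue by contradiction, after first unpacking what the statement is actually saying. The content of the lemma is really about homogeneous components at non-zero degrees: since $(\gr R)_0 = R_0/R_1 = F$ is a field, the proper graded ideal $\mathfrak{p}_i$ automatically satisfies $(\mathfrak{p}_i)_0 = 0 \neq F = (\gr R)_0$, so the $\lambda = 0$ case is vacuous. Thus the real claim is that at least one $\mathfrak{p}_i$ must differ from $\gr R$ in some homogeneous component of non-zero degree.

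Suppose for contradiction that for every $i \in \{1,\dots,m\}$ and every $\lambda \neq 0$ we have $(\mathfrak{p}_i)_\lambda = (\gr R)_\lambda$. Set
\[ I := \bigoplus_{\lambda \neq 0} (\gr R)_\lambda, \]
a homogeneous ideal of $\gr R$. Then $I \subseteq \mathfrak{p}_i$ for every $i$, so $I \subseteq \bigcap_{i=1}^m \mathfrak{p}_i$, which equals the nilradical of the commutative Noetherian ring $\gr R$. The nilradical of a commutative Noetherian ring is nilpotent, so $I^n = 0$ for some $n \geq 1$.

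To reach a contradiction I would then invoke Noetherianity a second time: each ideal $I^k$ is finitely generated, so the subquotient $I^k/I^{k+1}$ is a finitely generated module over $\gr R / I \cong F$, and is therefore finite-dimensional over $F$. The finite filtration
\[ \gr R \supseteq I \supseteq I^2 \supseteq \cdots \supseteq I^n = 0 \]
then exhibits $\gr R$ as a finite-dimensional $F$-vector space, contradicting the hypothesis that $\gr R$ is infinite-dimensional over $F$. The only conceptual subtlety is the interpretation of ``non-zero homogeneous component'' as ``homogeneous component of non-zero degree''; once this is fixed, the argument is a short exercise in commutative Noetherian algebra and I do not anticipate any genuine obstacle.
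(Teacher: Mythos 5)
Your argument is essentially the paper's, and it reaches the right conclusion, but there is one step you assert rather than prove, and it is genuinely false without using the contradiction hypothesis: the claim that $I := \bigoplus_{\lambda \neq 0}(\gr R)_\lambda$ is an ideal of $\gr R$. Theorem~C only assumes $w : R \to \mathbb{Z}\cup\{\infty\}$ is Zariskian with $R_0/R_1 = F$ a field; nothing forces the filtration to be non-negative, so $\gr R$ may be a genuinely $\mathbb{Z}$-graded ring with non-zero pieces in negative degree. In that generality $I$ need not be closed under multiplication by $\gr R$ --- for instance in $F[X,X^{-1}]$, graded with $X$ in degree $1$, one has $X \cdot X^{-1} = 1 \notin I$. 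Since you go on to use that $I^k$ is a finitely generated ideal and that $I^k/I^{k+1}$ is a module over $\gr R/I \cong F$, the ideal property is load-bearing and cannot simply be announced.

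The fix is short and makes your proof coincide with the paper's. Under the contradiction hypothesis you have $I \subseteq \mathfrak{p}_i$ for every $i$, hence $I \subseteq \mathfrak{n} := \bigcap_i \mathfrak{p}_i$. Conversely, $\mathfrak{n}$ is a proper graded ideal, so $\mathfrak{n}_0 \subsetneq (\gr R)_0 = F$ forces $\mathfrak{n}_0 = 0$ and therefore $\mathfrak{n} \subseteq I$. Thus $I = \mathfrak{n}$, which is an ideal, and the rest of your argument (nilpotence of $\mathfrak{n}$, finite-dimensionality of each $\mathfrak{n}^k/\mathfrak{n}^{k+1}$ over $F$, contradiction with infinite-dimensionality) goes through exactly as written. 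This is precisely what the paper does: it starts from $\mathfrak{n}$, which is an ideal for free, shows $\gr R = F \oplus \mathfrak{n}$, and then runs the same Noetherian finite-length argument. So once the identification $I = \mathfrak{n}$ is made explicit, your proof is not a different route but a slightly re-ordered presentation of the same one.
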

\begin{proof} Suppose not. Let $\mathfrak{n} = \mathfrak{p}_1\cap\cdots\cap\mathfrak{p}_m$ be the prime radical of $\gr R$, a graded ideal. Because $(\gr R) / \mathfrak{n}$ embeds into the direct sum of all the $(\gr R) / \mathfrak{p}_i$, the graded module $(\gr R)/\mathfrak{n}$ is concentrated in degree zero. But $(\gr R)_0 = F$ is a field by assumption, so $\mathfrak{n} \cap (\gr R)_0 = 0$ and therefore $\gr R = F \oplus \mathfrak{n}$. In particular, the factor ring $(\gr R) / \mathfrak{n}$ is isomorphic to $F$. Since $\gr R$ is Noetherian, $\mathfrak{n}^k = 0$ for some $k$ and $\mathfrak{n}^r / \mathfrak{n}^{r+1}$ is a finitely generated $(\gr R)/\mathfrak{n}$-module for all $r \geq 0$. Therefore $\gr R$ must be finite dimensional over $F$, contradicting our assumption on $\gr R$.
\end{proof}

\subsection{Homogeneous localisation}\label{HomogLoc}
We now fix a minimal prime ideal, $\mathfrak{p}_1$ say, which differs from $\gr R$ in at least one non-zero homogeneous component, and define
\[ T := \{X \in \gr R \backslash \mathfrak{p}_1 : X \quad \mbox{is homogeneous}\}.\]
This is a homogeneous multiplicatively closed set, so the localisation $(\gr R)_T$ is a $\mathbb{Z}$-graded ring.

\begin{prop} Let $B := (\gr R)_T = \oplus_{n\in\mathbb{Z}} B_n$ and let $B_{\geq 0} := \oplus_{n \geq 0}B_n$ be the non-negative part of $B$.
\begin{enumerate}[{(}a{)}] \item The ideal $\mathfrak{p} := (\mathfrak{p}_1)_T$ of $B$ is nilpotent.
\item $B_0$ is a local ring with maximal ideal $B_0 \cap \mathfrak{p}$.
\item There exists a homogeneous element $Y \in B$ of positive degree such that 
\[B / \mathfrak{p} \cong (B/\mathfrak{p})_0[\overline{Y},\overline{Y}^{-1}].\]
\item $B$ is a finitely generated $B_0[Y,Y^{-1}]$-module and $B_0$ is Artinian.
\item $B$ is $\gr$-Artinian: every descending chain of graded ideals terminates.
\item $B_{\geq 0} / Y B_{\geq 0}$ is an Artinian ring.
\end{enumerate}\end{prop}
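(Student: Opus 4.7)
The plan is to exploit the fact that, by construction of $T$, every homogeneous element of $\gr R$ outside $\mathfrak{p}_1$ becomes a unit in $B$; in particular, every homogeneous element of $B$ outside $\mathfrak{p}$ is invertible. The first observation is that among the minimal primes $\mathfrak{p}_1, \ldots, \mathfrak{p}_m$ of $\gr R$, only $\mathfrak{p}_1$ is disjoint from $T$: for $j \ne 1$, the graded prime $\mathfrak{p}_j$ is not contained in $\mathfrak{p}_1$, so (graded primes being generated by their homogeneous elements) it contains a homogeneous element lying in $T$; hence $(\mathfrak{p}_j)_T = B$. Thus $\mathfrak{p}$ is the unique minimal prime, i.e.\ the nilradical, of $B$; and because $B$ is Noetherian as a localisation of the Noetherian ring $\gr R$, this nilradical is nilpotent, giving (a). Statement (b) follows immediately: any $b \in B_0 \setminus \mathfrak{p}$ is homogeneous and outside $\mathfrak{p}$, hence a unit.

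For (c), since $\mathfrak{p}$ is graded, $B/\mathfrak{p}$ is a graded domain in which every non-zero homogeneous element is a unit. The set of degrees supporting non-zero homogeneous components of $B/\mathfrak{p}$ is therefore a subgroup of $\mathbb{Z}$, and by the preceding lemma it is non-trivial. Writing it as $d\mathbb{Z}$ with $d > 0$ and picking any homogeneous $Y \in B$ of degree $d$ whose image in $B/\mathfrak{p}$ is non-zero, $Y$ is itself a unit in $B$ by the same principle; multiplication by powers of $\overline{Y}$ then identifies each non-zero homogeneous component of $B/\mathfrak{p}$ with $(B/\mathfrak{p})_0$, yielding the required Laurent polynomial description.

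For (d) and (e), the key tool is the finite filtration $B \supset \mathfrak{p} \supset \mathfrak{p}^2 \supset \cdots \supset \mathfrak{p}^k = 0$ from (a). Each slice $\mathfrak{p}^i/\mathfrak{p}^{i+1}$ is a finitely generated graded $B/\mathfrak{p}$-module; since $B/\mathfrak{p}$ is a Laurent polynomial ring over a field, any such module has finite graded length, which gives $\gr$-Artinianity on each slice and hence on $B$, proving (e). Likewise $B/\mathfrak{p}$ is cyclic as a $B_0[Y,Y^{-1}]$-module, so by induction along the filtration, $B$ is module-finite over $B_0[Y,Y^{-1}]$; Eakin--Nagata then forces $B_0[Y,Y^{-1}]$, and therefore $B_0$, to be Noetherian, and combined with the nilpotence of the maximal ideal $B_0 \cap \mathfrak{p}$, Akizuki's theorem yields that $B_0$ is Artinian, which is (d).

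Finally, for (f) we unpack the positive part: since $Y$ is a unit in $B$ of degree $d$, multiplication by $Y^q$ is a $B_0$-module isomorphism $B_r \cong B_{qd+r}$ for each $q \geq 0$ and $0 \leq r < d$, so
\[
B_{\geq 0} = \bigoplus_{q \geq 0,\, 0 \leq r < d} Y^q B_r.
\]
Module-finiteness of $B$ over $B_0[Y,Y^{-1}]$ from (d) forces each of the finitely many homogeneous components $B_0, B_1, \ldots, B_{d-1}$ to be finitely generated over $B_0$, whence $B_{\geq 0}$ is finitely generated over $B_0[Y]$. Therefore $B_{\geq 0}/Y B_{\geq 0}$ is finitely generated over the Artinian ring $B_0[Y]/Y B_0[Y] = B_0$, and so is itself Artinian. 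The main technical step is (c) --- one must verify that the generator $Y$ can be chosen simultaneously of positive degree and as a unit in $B$, which is where the lemma of \S\ref{MinPrimes} intervenes critically; once this is set up, the remaining parts flow mechanically from the finite filtration by powers of the nilpotent radical $\mathfrak{p}$.
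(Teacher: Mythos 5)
Your proposal is correct, and in several places you take a genuinely different route from the paper, all of which are valid.

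For part~(a), you show that $\mathfrak{p}_1$ is the unique minimal prime of $\gr R$ disjoint from $T$ (because any other $\mathfrak{p}_j$, being graded and not contained in $\mathfrak{p}_1$, contains a homogeneous element of $T$), so $\mathfrak{p}$ is the nilradical of the Noetherian ring $B$ and hence nilpotent. The paper instead produces an explicit homogeneous annihilator $t \in \mathfrak{p}_2^{n_2}\cdots\mathfrak{p}_m^{n_m}\setminus\mathfrak{p}_1$ from a zero product of powers of the minimal primes; your version is shorter and perhaps more conceptual. For part~(d), you go through Eakin--Nagata to get Noetherianness of $B_0[Y,Y^{-1}]$ and then of $B_0$, and then invoke Akizuki (Noetherian local with nilpotent maximal ideal $\Rightarrow$ Artinian). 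The paper avoids Eakin--Nagata entirely: it observes that each graded piece $(\mathfrak{p}^i/\mathfrak{p}^{i+1})_0$ is finite-dimensional over $D_0$ (because each $\mathfrak{p}^i/\mathfrak{p}^{i+1}$ is a finitely generated graded module over the gr-field $D$), so $B_0$ has a finite filtration with $D_0$-finite-dimensional subquotients, hence is Artinian outright. The paper's route is more self-contained; yours is fine but leans on heavier general theorems. For part~(f), you exploit the fact that $Y$ is a unit of $B$ (which you correctly derive from $\overline{Y}$ being a unit in $B/\mathfrak{p}$ together with nilpotence of $\mathfrak{p}$) to get $YB_{\geq 0} = B_{\geq d}$ and hence $B_{\geq 0}/YB_{\geq 0} \cong B_0\oplus\cdots\oplus B_{d-1}$, each summand being a finitely generated $B_0$-module. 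The paper does not invoke invertibility of $Y$ here; it instead bounds degrees by choosing homogeneous generators of positive degree and noting $B_k\subseteq YB_{\geq 0}$ for $k$ large. Both are correct; yours is cleaner once $Y$ is known to be a unit, a fact the paper only records later (in the proof of the proposition in \S\ref{MicroLoc}).

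Two small points you should make explicit. In~(b), you conclude that any $b \in B_0\setminus\mathfrak{p}$ is a unit of $B$, but $B_0$ being local requires $b^{-1}\in B_0$; this is immediate (the inverse of a degree-zero homogeneous unit is again of degree zero) but should be said. In~(d), the inductive step ``module-finite over $B_0[Y,Y^{-1}]$ by induction along the filtration'' requires that every slice $\mathfrak{p}^i/\mathfrak{p}^{i+1}$ (not merely $B/\mathfrak{p}$, which you call cyclic) be finitely generated over $B/\mathfrak{p}$; this is true because $B$ is Noetherian, and you do in fact use it in the previous paragraph for~(e), but the sentence as written leans only on cyclicity of $B/\mathfrak{p}$, which is not enough on its own.
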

\begin{proof}(a) Because $\gr R$ is Noetherian, some product of the minimal primes of $\gr R$ is zero:
\[ \mathfrak{p}_1^{n_1} \cdot \mathfrak{p}_2^{n_2} \cdot \hspace{1mm} \cdots \hspace{1mm} \cdot \mathfrak{p}_m^{n_m} = 0.\]
If $\mathfrak{p}_2^{n_2} \cdot \cdots \cdot \mathfrak{p}_m^{n_m} \subseteq \mathfrak{p}_1$ then $\mathfrak{p}_i \subseteq \mathfrak{p}_1$ for some $i>1$, which forces $\mathfrak{p}_i = \mathfrak{p}_1$ because $\mathfrak{p}_1$ is a minimal prime. But the $\mathfrak{p}_i$ are all distinct, so
\[\mathfrak{p}_2^{n_2} \cdot \hspace{1mm} \cdots \hspace{1mm} \cdot \mathfrak{p}_m^{n_m} \nsubseteq \mathfrak{p}_1\]
and we can find some homogeneous element $t \in \mathfrak{p}_2^{n_2} \cdot \cdots \cdot \mathfrak{p}_m^{n_m} \backslash \mathfrak{p}_1$. Hence
$\mathfrak{p_1}^{n_1} t = 0$ and therefore $\mathfrak{p}$ is nilpotent.

(b) Let $r/t \in B_0 \backslash \mathfrak{p}$ for some $r \in \gr R$ and $t \in T$. Then $r \in B_0 t$ is homogeneous because $t$ is homogeneous, and $r \notin \mathfrak{p}_1$. So $r \in T$ and $r/t$ is a unit in $B$. Because $r$ and $t$ have the same degree, the inverse $t/r$ lies in $B_0$, so every element of $B_0 \backslash B_0 \cap \mathfrak{p}$ is a unit in $B_0$.

(c) The argument used in part (b) above shows that $D:=B/\mathfrak{p}  = \oplus_{n\in\mathbb{Z}}D_n$ is a \emph{$\gr$-field}: every non-zero homogeneous element of $D$ is a unit. Moreover, $D_n \neq 0$ for some non-zero $n$ by construction. Therefore the set $\{n \in \mathbb{Z} : D_n \neq 0\}$ is a non-zero subgroup of $\mathbb{Z}$, and hence equals $\ell \mathbb{Z}$ for some $\ell > 0$. Pick $Y \in B_\ell$ whose image $\overline{Y}$ in $D$ is non-zero. Now if $x \in D_{\ell k}$, then $x\overline{Y}^{-k} \in D_0$, so
\[D = D_0[\overline{Y},\overline{Y}^{-1}].\]
Because $\mathfrak{p}$ is a graded ideal of $B$, $D_0 = (B/\mathfrak{p})_0 = B_0/\mathfrak{p}_0 = B_0 / (B_0 \cap \mathfrak{p})$ is the residue field of $B_0$.

(d) By part (a), we have a finite filtration of $B$ by graded ideals:
\[B > \mathfrak{p} > \mathfrak{p}^2 > \cdots > \mathfrak{p}^{n_1} = 0.\]
Each subquotient $\mathfrak{p}^i / \mathfrak{p}^{i+1}$ is finitely generated over $B / \mathfrak{p} \cong D_0[\overline{Y}, \overline{Y}^{-1}]$, so $B$ is finitely generated over $B_0[Y,Y^{-1}]$. Also, $(\mathfrak{p}^i/\mathfrak{p}^{i+1})_0$ is finite dimensional over $D_0$ for all $i \geq 0$, so $B_0$ admits a finite filtration
\[ B_0 > \mathfrak{p}_0 > (\mathfrak{p}^2)_0 > \cdots > (\mathfrak{p}^{n_1})_0 = 0\]
with each subquotient finite dimensional over the residue field $D_0$. Hence $B_0$ is Artinian.

(e) By part (a), we can find a finite composition series consisting of graded ideals for $B$, with each factor isomorphic to $B/\mathfrak{p}$ (possibly with shifted degrees). But $B/\mathfrak{p}$ has no proper non-zero graded ideals because it is a $\gr$-field. So $B$ is $\gr$-Artinian.

(f) Let $x_1,\ldots,x_r$ be a generating set for $B$ as a $B_0[Y,Y^{-1}]$-module consisting of homogeneous elements. By multiplying these generators by a power of $Y$, we may assume that $d_i := \deg(x_i) > 0$ for all $i$. Hence
\[B_k = \sum_{i=1}^r x_i B_0 Y^{\frac{k - d_i}{\ell}}\]
for all $k \in \mathbb{Z}$, with the understanding that fractional powers of $Y$ are zero. Now $B_k \subseteq Y B_{\geq 0}$ whenever $k > \max d_i + \ell$, so the factor ring $B_{\geq 0} / Y B_{\geq 0}$ is concentrated in finitely many degrees and is therefore a finitely generated $B_0$-module. Since $B_0$ is Artinian by part (d), this ring must also be Artinian.
\end{proof}

\subsection{Ore localisation}
\label{OreLoc}
The \emph{saturated lift} of $T$, namely
\[ S := \{r \in R : \gr(r) \in T\}\]
is a right and left Ore set in $R$ by \cite[Corollary 2.2]{Li}. By \cite[Proposition 2.3]{Li}, the Ore localisation $R_S$ carries a filtration such that
\[ \gr (R_S) \cong (\gr R)_T\]
and this filtration is actually Zariskian by \cite[Proposition 2.8]{Li}.
\begin{lem}$R_S$ is equal to the classical ring of quotients $Q$ of $R$.
\end{lem}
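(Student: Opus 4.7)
The strategy is to show that $R_S$ is a simple Artinian ring; once this is established, the identification $R_S = Q$ is essentially formal. Note that $R$ is prime Noetherian (Noetherianity being inherited from the Rees ring), so $Q$ exists and is simple Artinian by Goldie's theorem, and $R_S$ itself is prime Noetherian as an Ore localisation of $R$.

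First I verify that the natural map $R \to R_S$ is injective. The kernel $K := \{r \in R : sr = 0 \text{ for some } s \in S\}$ is a two-sided ideal of $R$, as one checks using the left Ore property of $S$. Since $R$ is Noetherian, $K$ is finitely generated as a right ideal, and the usual left common-denominator argument applied to the finitely many elements of $S$ witnessing the torsion of the generators produces a single $s \in S$ with $sK = 0$. Hence $RsR \cdot K = 0$; because $s \neq 0$ (its principal symbol lies in $T$, which omits zero) and $R$ is prime, we conclude $K = 0$.

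The main technical step is to prove that $R_S$ is Artinian. By \cite[Proposition 2.8]{Li}, the filtration on $R_S$ is Zariskian with $\gr R_S \cong B$, and $B$ is $\gr$-Artinian by part (e) of the preceding Proposition. Given a descending chain $I_1 \supseteq I_2 \supseteq \cdots$ of right ideals of $R_S$, I equip each $I_j$ with its induced filtration; the resulting chain of graded right ideals $\gr I_1 \supseteq \gr I_2 \supseteq \cdots$ of $B$ must stabilise. Since the Zariskian filtration on $R_S$ is complete and separated and each $I_j$ is finitely generated in the Noetherian ring $R_S$, each $I_j$ is closed in the filtration topology; the standard lifting argument (iteratively correcting principal symbols and taking limits via completeness) then forces $I_j = I_{j+1}$ whenever $\gr I_j = \gr I_{j+1}$. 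Thus the original chain stabilises, and a symmetric argument handles the left-Artinian property.

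Being a prime Artinian ring, $R_S$ is simple Artinian and therefore coincides with its own classical ring of quotients. Every $s \in S$ is a unit in $R_S$, hence regular in $R$ by the injectivity established above, so $R_S \subseteq Q$; conversely every element of $R$ that is regular in $R$ remains regular in $R_S$ (again by injectivity) and is therefore a unit in the Artinian $R_S$, giving $Q \subseteq R_S$, so $R_S = Q$. The main obstacle is the filtered-to-graded transfer of the Artinian property in paragraph three; the injectivity argument and final identification are essentially formal once that is in place.
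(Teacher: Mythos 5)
Your proof follows the same overall architecture as the paper's: establish that $R_S$ is Artinian from the Zariskian filtration and the $\gr$-Artinian associated graded ring, use primeness of $R$ to show the torsion submodule vanishes (so $S$ consists of regular elements), then identify $R_S$ with $Q$ via the Artinian property. Your torsion argument works with the left torsion module rather than the paper's $\ass(S)$, which is immaterial. The one place where you diverge is that you unfold the citation to \cite[Chapter II, Corollary 3.1.2]{LVO} into a direct lifting argument for the Artinian transfer, and it is here that an error appears.

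You assert that ``the Zariskian filtration on $R_S$ is complete and separated.'' This is false. Looking at the definition in \S 2.2, a Zariskian filtration is required to take integer values, have Noetherian Rees ring, and satisfy $A_1 \subseteq J(A_0)$; completeness is not part of the definition and does not follow from it. Indeed the paper introduces the microlocalisation $\widehat{Q}$ in \S 3.5 precisely \emph{because} $R_S$ is not complete. Your phrase ``taking limits via completeness'' is therefore unjustified as written. Fortunately the argument does not actually need completeness, only closedness of ideals. Suppose $\gr I_j = \gr I_{j+1}$ and $x \in I_j$; the iterative symbol-correction produces partial sums $s_k \in I_{j+1}$ with $w(x - s_k) \to \infty$. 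There is no abstract Cauchy sequence needing a home: the putative limit is the pre-existing element $x$, and what one needs is that $x \in \bigcap_n \bigl(I_{j+1} + (R_S)_n\bigr) = \overline{I_{j+1}}$ forces $x \in I_{j+1}$, i.e.\ that $I_{j+1}$ is closed. Closedness of ideals holds for Zariskian filtered rings without any completeness hypothesis (this is the content of \cite[Chapter II, Theorem 2.1.2]{LVO}, a consequence of the Artin--Rees property and $A_1 \subseteq J(A_0)$). Replace the appeal to completeness with an appeal to closedness and your lifting argument is correct.

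A smaller imprecision: ``every element of $R$ that is regular in $R$ remains regular in $R_S$ (again by injectivity)'' is not quite right as a justification — injectivity of $R \hookrightarrow R_S$ gives one-sided regularity directly, but for the other side one should invoke the embedding $R_S \hookrightarrow Q$ that you have at this point (since $S$ consists of regular elements): a regular element of $R$ is a unit in $Q$ and hence regular in any intermediate subring.
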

\begin{proof} Because the filtration on $R_S$ is Zariskian and because $\gr(R_S)$ is $\gr-$Artinian by Proposition \ref{HomogLoc}(e), it follows from \cite[Chapter II, Corollary 3.1.2]{LVO} that $R_S$ is an Artinian ring. Now $R$ is prime by assumption and $0 \notin S$ because $0 \notin T$. Let $\ass(S)$ be the right $S$-torsion submodule of $R$; then $\ass(S)$ is a two-sided ideal of $R$ by \cite[Lemma 2.1.9]{MCR} and is finitely generated as a left ideal since $R$ is left Noetherian. Let $\ass(S) = \sum_{i=1}^tRx_i$ and choose $s_i \in S$ such that $x_is_i = 0$. Since $S$ is a right Ore set, by \cite[Lemma 2.1.8]{MCR} we can find elements $s' \in S$ and $r_i' \in R$ such that $s' = s_ir_i'$ for all $i$. Now 
\[ \ass(S) \cdot (Rs'R) = \ass(S)s'R = \sum_{i=1}^t Rx_i s_i r_i'R = 0\]
so $\ass(S) = 0$ because $R$ is prime and $s'\neq 0$. Hence $S$ consists of regular elements in $R$ by \cite[Proposition 2.1.10(ii)]{MCR}, even though $T$ may contain zero-divisors in $\gr R$. Therefore $R_S$ is a subring of $Q$ containing $R$. But $R_S$ is Artinian and regular elements in $R$ stay regular in $R_S$, so every regular element of $R$ is a unit in $R_S$ by \cite[Proposition 3.1.1]{MCR}. Therefore $R_S = Q$ as claimed.\end{proof}

\subsection{Microlocalisation}
\label{MicroLoc}By definition, the \emph{microlocalisation} of $R$ at the homogeneous set $T$ is the completion $\widehat{Q}$ of $R_S$ with respect to the Zariskian filtration used in $\S$\ref{OreLoc}. This ring still carries a natural Zariskian filtration $\deg$, with respect to which we have
\[ \gr \widehat{Q} \cong \gr(R_S) \cong (\gr R)_T.\]
So $\widehat{Q}$ is still Artinian. However, in general it will not be a simple ring; worse still, it may fail to be semi-simple even in the case when $R$ is a commutative domain. We will deal with this issue very soon, but let us first focus on the ``unit ball" of $\widehat{Q}$, namely
\[U := (\widehat{Q})_0 = \{u \in \widehat{Q} : \deg(u) \geq 0\}.\]
It follows from \cite[Chapter II, Lemma 2.1.4]{LVO} that $U$ is Noetherian. Proposition \ref{HomogLoc} now translates into the following properties of $U$:

\begin{prop} There exists a regular normal element $y \in J(U)$ such that $U / yU$ is Artinian and $U$ is $y$-adically complete. $U$ has Krull dimension at most $1$ on both sides and $U/J(U)$ is a commutative field. \end{prop}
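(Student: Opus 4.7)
The plan is to lift the homogeneous unit $Y \in B_\ell$ produced by Proposition \ref{HomogLoc}(c) to a regular normal element $y \in U$, and to transfer each claimed property of $U$ from its counterpart in the graded ring $\gr U = B_{\geq 0}$, as established in Proposition \ref{HomogLoc}. First I pick $s \in S$ with $\gr(s) = Y$ — possible because $Y \in T$ and $S = \{r \in R : \gr(r) \in T\}$ — and set $y := s$, viewed inside $R \subseteq R_S = Q \subseteq \widehat{Q}$. Since the Lemma in $\S\ref{OreLoc}$ shows that $S$ consists of regular elements of $R$ and is an Ore set, $y$ is a unit in $Q$ and in particular regular in $U$. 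Normality $yU = Uy$ follows from commutativity of $\gr \widehat{Q} = B$: for any $u \in \widehat{Q}$ the commutator $yu - uy$ has degree strictly greater than $\deg(u) + \ell$, so conjugation by $y$ restricts to an automorphism $\sigma$ of $U$ satisfying $\sigma(u) \equiv u \pmod{U_{\deg(u)+1}}$. The Zariskian condition $1 + \widehat{Q}_1 \subseteq U^\times$ forces $U_1 \subseteq J(U)$, so $y \in U_\ell \subseteq J(U)$.

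Since $\bar y = Y$ is a unit in $B$, the quotient filtration on $U/yU$ has associated graded equal to $B_{\geq 0}/YB_{\geq 0}$, which is Artinian by Proposition \ref{HomogLoc}(f). Together with completeness, this yields $U/yU$ Artinian via the standard principle that a complete filtered module with Artinian associated graded is itself Artinian \cite[Ch.~II]{LVO}. For $y$-adic completeness, $y^n U \subseteq U_{n\ell}$ is clear. For the reverse inclusion up to a shift, Proposition \ref{HomogLoc}(d) supplies homogeneous generators $x_1, \ldots, x_r$ of $B$ over $B_0[Y, Y^{-1}]$ of positive degrees $d_i$, and elementary degree counting shows that $B_k \subseteq Y^n B_{\geq 0}$ whenever $k \geq n\ell + \max_i d_i$. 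Lifting this graded statement via a telescoping sum — convergent by completeness of the degree filtration on $U$ — produces $U_{n\ell + C} \subseteq y^n U$ for a constant $C$ independent of $n$. Hence the $y$-adic and degree topologies on $U$ coincide, and $U$ is $y$-adically complete.

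For the residue ring, the inclusion $U_1 \subseteq J(U)$ provides a surjection from the commutative local Artinian ring $B_0 = U/U_1$ (cf.~Proposition \ref{HomogLoc}(b),(d)) onto $U/J(U)$, which is semisimple Artinian as a quotient of the Artinian ring $U/yU$ by its Jacobson radical. The only semisimple quotient of a commutative local ring is its residue field, so $U/J(U) \cong D_0$ is a commutative field. For the Krull-dimension bound, any finitely generated (left or right) $U$-module $M$ satisfies $\bigcap_n y^n M = 0$ by completeness of the $y$-adic topology; each quotient $y^n M / y^{n+1} M$ is a $\sigma^n$-twist of $M/yM$, and so is Artinian as a finitely generated module over the Artinian ring $U/yU$. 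Hence $M/y^n M$ has finite length for every $n$, and a standard diagonal argument on the images of a descending chain of submodules $M_0 \supseteq M_1 \supseteq \cdots$ in the Artinian modules $M/y^n M$ shows that almost all quotients $M_i/M_{i+1}$ are Artinian; thus $K.\dim M \leq 1$ on either side.

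The main technical obstacle is the passage in the second paragraph from the graded inclusion $B_{\geq N} \subseteq Y^n B_{\geq 0}$ to the filtered inclusion $U_N \subseteq y^n U$: this telescoping-convergence argument, powered by completeness of the degree filtration, is what underlies both the $y$-adic completeness and, through the separation $\bigcap_n y^n M = 0$, the Krull-dimension bound. A minor subsidiary point is the bookkeeping with the twist $\sigma$ in the filtration of $M$ by powers of $y$, but since $\sigma$ is an automorphism of $U$ it preserves length and does not affect Artinianness.
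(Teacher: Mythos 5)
Your construction of $y$ contains a genuine gap. You claim to pick $s\in S$ with $\gr(s)=Y$, ``possible because $Y\in T$''. But $Y$ is chosen in Proposition~\ref{HomogLoc}(c) as a homogeneous element of $B_\ell$, where $B=(\gr R)_T$ is the \emph{localisation} of $\gr R$ at $T$; there is no reason for $Y$ to lie in the image of $\gr R$, let alone in $T$ itself, so the assertion is not even well-typed. What one actually knows is that $Y$ is a homogeneous unit of $\gr\widehat{Q}\cong(\gr R)_T$ of degree $\ell>0$, and the correct move (which is what the paper does) is to lift $Y$ to any $y\in U=(\widehat{Q})_0$ with $\gr y=Y$, using completeness of the degree filtration on $\widehat{Q}$. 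Once this is done, $y$ is automatically a unit in $\widehat{Q}$ because its symbol is a unit, which gives regularity and normality without any appeal to the Ore-set Lemma, and also collapses your telescoping argument: since $y^{-n}$ has degree exactly $-n\ell$, one gets the exact equality $y^nU=U_{n\ell}$, immediately showing topological equivalence of the two filtrations and hence $y$-adic completeness.

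On the Krull dimension bound, your ``standard diagonal argument'' is doing more work than it lets on. The statement $\bigcap_n y^nM=0$ for every finitely generated $M$ is not a consequence of completeness of $U$ alone; it is the Krull intersection theorem, which in a non-commutative setting rests on the Artin--Rees/Zariskian property of the $y$-adic filtration. Moreover, passing from ``$M/y^nM$ has finite length for all $n$'' to ``$K.\dim M\le 1$'' is precisely the content of the Zariskian lifting theorem for Krull dimension. The paper makes this clean by forming $\gr_yU=(U/yU)[X;\sigma]$, bounding $\mathcal{K}(\gr_yU)\le 1$ via \cite[Proposition 6.5.4(i)]{MCR}, and then invoking \cite[Chapter II, Corollary 3.1.2(2)]{LVO}. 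Your argument for the residue field $U/J(U)$ is essentially the same as the paper's and is fine. Overall the strategy is the right one, but the lifting step for $y$ must be redone inside $\widehat{Q}$ rather than inside $R$, and the Krull dimension step should be routed through the associated graded ring rather than asserted as a diagonal argument.
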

\begin{proof} Equip $U \subseteq \widehat{Q}$ with the subspace filtration and identify $\gr U$ with the positive part $(\gr \widehat{Q})_{\geq 0}$ of $\gr \widehat{Q} \cong (\gr R)_T$. Choose a homogeneous element $Y \in \gr \widehat{Q}$ as in Proposition \ref{HomogLoc} and choose any lift $y \in U$ such that $\gr y = Y$. Since $Y$ has positive degree, $y \in U \cap (\widehat{Q})_1 = U_1 \subseteq J(U)$ because the filtration on $\widehat{Q}$ is Zariskian.

Now $Y$ is a homogeneous unit in $\gr \widehat{Q}$ because it is a unit in $\gr \widehat{Q} / \mathfrak{p}$ by construction and $\mathfrak{p}$ is nilpotent by Proposition \ref{HomogLoc}(a). Therefore $y$ is a unit in $\widehat{Q}$ and
\[ \deg( y^{-1} u y ) = \deg(u)\]
for all $u \in \widehat{Q}$. So $y^{-1}Uy = U$ and hence $y \in U$ is a regular normal element. Now
\[ \gr(U/yU) = (\gr U)/(Y \gr U)\]
is Artinian by Proposition \ref{HomogLoc}(f), so $U /yU$ must also be Artinian.

To show that $U$ is $y$-adically complete, it is sufficient to show that the $y$-adic filtration is topologically equivalent to the degree filtration on $U$, since the latter is complete by definition of $\widehat{Q}$. Since $\deg(y) \geq 1$, $U_n$ contains $y^nU$. Given $y^n U$ choose an integer $m$ larger than $n \deg (y)$ and let $u \in U_m$; then $\deg(y^{-n}u) = \deg(u) - n \deg(y) \geq m - n\deg(y) \geq 0$ so $y^{-n}U_m \subseteq U$ and $y^nU$ contains $U_m$. 

Because $y \in U$ is normal, the associated graded ring of $A$ with respect to the $y$-adic filtration is a skew polynomial ring
\[\gr_y U = (U/yU)[X; \sigma]\]
where $\sigma$ is the ring automorphism of $U/yU$ induced by conjugation by $y$. Since $U/yU$ is Artinian, $\mathcal{K}(\gr_y U) \leq 1$ by \cite[Proposition 6.5.4(i)]{MCR}. Since the $y$-adic filtration on $U$ is complete, $\mathcal{K}(U) \leq \mathcal{K}(\gr_y U) \leq 1$ by \cite[Chapter II, Corollary 3.1.2(2)]{LVO}.

Now $U/U_1 \cong ((\gr R)_T)_0$ is a commutative local Artinian ring by Proposition \ref{HomogLoc}. Let $\mathfrak{m} / U_1$ be its maximal ideal; then $U / \mathfrak{m}$ is a commutative field and $\mathfrak{m}^n \subseteq U_1 \subseteq J(U)$ for some $n$, because the filtration on $\widehat{Q}$ is Zariskian. Hence $J(U) = \mathfrak{m}$.\end{proof}

\subsection{Prime factor rings of $U$}\label{AProps} Let $A$ be the factor ring of $U$ by any of its minimal prime ideals. Then $A$ is a prime Noetherian ring and we will denote its classical ring of quotients by $Q(A)$. This is one of the finitely many prime factor rings of $\widehat{Q}$.

\begin{prop} There exists a regular normal element $z \in J(A)$ such that $A/zA$ is Artinian and $A$ is $z$-adically complete. The ring $A$ has Krull dimension at most $1$ on both sides and $A/J(A)$ is a commutative field. Moreover $Q(A)$ is the localisation $A_z$ of $A$ at the powers of the regular normal element $z \in A$. \end{prop}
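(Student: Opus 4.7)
The natural candidate is $z := y + \mathfrak{q}$, where $y$ is the regular normal element of $U$ constructed in the previous proposition. My plan is to transfer each asserted property from $U$ to $A$, and the key preliminary step is to show $y \notin \mathfrak{q}$. Since $U/yU$ is Artinian, every prime of $U$ containing $y$ is maximal, and $U$ is local with unique maximal ideal $J(U)$, so $J(U)$ is the only such prime. To rule out $\mathfrak{q} = J(U)$ I would observe that the hypothesis that $\gr R$ is infinite-dimensional over $F$ propagates to $\gr U$ (the positive part of $(\gr R)_T$, which contains $Y^k B_0 \neq 0$ for all $k \geq 0$), forcing $U$ itself to be infinite-dimensional over $F$ and hence not Artinian. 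Therefore $\mathfrak{q} \subsetneq J(U)$ and $y \notin \mathfrak{q}$.

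Normality of $z$ in $A$ is inherited from $yU = Uy$. For regularity, if $z\bar{a} = 0$ then lifting gives $ya \in \mathfrak{q}$; normality of $y$ upgrades this to $Uya = yUa \subseteq \mathfrak{q}$, and primality of $\mathfrak{q}$ together with $y \notin \mathfrak{q}$ forces $a \in \mathfrak{q}$. The symmetric argument handles right annihilators. That $z \in J(A)$ follows from $y \in J(U)$ and $J(A) = J(U)/\mathfrak{q}$. The surjection $U/yU \twoheadrightarrow A/zA$ presents $A/zA$ as a quotient of an Artinian ring, hence Artinian. For $z$-adic completeness, the degree filtration on $U$ is Zariskian, so the ideal $\mathfrak{q}$ is closed and the induced filtration on $A$ is again Zariskian and complete \cite[Chapter II]{LVO}; the argument from the previous proof showing that the $y$-adic and degree topologies on $U$ coincide transfers verbatim to $A$, yielding $z$-adic completeness. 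Krull dimension at most $1$ is inherited through the quotient map, and $A/J(A) \cong U/J(U)$ is a commutative field since $A$ is local.

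For the identification $Q(A) = A_z$: the Ore localization $A_z$ is well-defined and sits inside $Q(A)$ because $z$ is regular. Primes of $A_z$ correspond to primes of $A$ avoiding $\{z^n\}$; since $A$ is local with $z \in J(A)$ and $\mathcal{K}(A) \leq 1$, every non-zero prime of $A$ equals $J(A)$ and contains $z$, so the only prime of $A_z$ is $(0)$. Thus $A_z$ is prime Noetherian of Krull dimension zero, hence simple Artinian. Every regular element of $A$ remains regular in $A_z$ and therefore becomes a unit there, so $Q(A) \subseteq A_z$ by the universal property of the classical ring of quotients, giving equality. The step I expect to be most technically delicate is the transfer of completeness, which requires the Zariskian filtration machinery of \cite{LVO} to ensure both the closedness of $\mathfrak{q}$ in $U$ and the Zariskianness of the quotient filtration on $A$.
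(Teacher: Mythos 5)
Your overall strategy -- pushing each property down from $U$ to the prime quotient $A = U/\mathfrak{q}$ -- matches the paper's, and most of the transfers are fine. But two steps contain real gaps.

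First, a minor one: to see that $z \neq 0$ you argue that $U$ is infinite-dimensional over $F$ ``and hence not Artinian.'' That inference is not automatic: an Artinian local ring can easily be infinite-dimensional over a chosen base field if its residue field $U/J(U)$ is (and here $U/J(U) = D_0$, whose degree over $F$ has not been bounded). A correct local fix is to note that $y \in J(U)$ is regular, so $J(U)$ cannot be nilpotent. But the paper sidesteps all of this: $U \to A \hookrightarrow Q(A)$ factors through $\widehat{Q}$, and $y$ is a unit in $\widehat{Q}$, so its image $z$ is a unit in $Q(A)$ and in particular nonzero. That is the clean argument.

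Second, and more seriously, the proof of $Q(A) = A_z$ does not work as written. You argue that the only prime of $A_z$ is $(0)$, ``thus $A_z$ is prime Noetherian of Krull dimension zero, hence simple Artinian,'' and then deduce that every regular element of $A$ becomes a unit. For noncommutative Noetherian rings, ``unique prime ideal $(0)$'' does \emph{not} imply Krull dimension $0$ or Artinian -- the Weyl algebra $A_1(\mathbb{C})$ is a Noetherian domain whose only prime is $(0)$, yet it has Krull dimension $1$ and is certainly not Artinian. Your subsequent appeal to ``regular elements become units in an Artinian ring'' therefore rests on an unestablished premise, and the argument is circular (you need $A_z = Q(A)$ to know $A_z$ is Artinian). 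The paper's proof of this step is direct and elementary: given $ac^{-1} \in Q(A)$ with $c$ regular, the chain $A \supsetneq cA \supsetneq c^2A \supsetneq \cdots$ has every factor $\cong A/cA$, which must have finite length since $\mathcal{K}(A) \leq 1$; as $z \in J(A)$, this gives $z^t \in cA$ for some $t$, so $c^{-1} = xz^{-t}$ and $ac^{-1} \in A_z$. You should replace your argument with something along these lines.
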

\begin{proof} Let $z \in A$ be the image of the element $y \in U$ given by Proposition \ref{MicroLoc}. This element is normal; it is non-zero because the map $U \to Q(A)$ factors through $\widehat{Q}$ and because $y$ is a unit in $\widehat{Q}$. Non-zero normal elements in a prime ring are necessarily regular. Proposition \ref{MicroLoc} also implies that $\mathcal{K}(A) \leq 1$ and that $A/J(A)$ is a commutative field. Since $U$ is $y$-adically complete and $\gr_y U$ is Noetherian, every ideal of $U$ is closed in the $y$-adic topology by \cite[Chapter II, Theorem 2.1.2]{LVO}, so $A$ is also $z$-adically complete.

Since $z \in A$ is regular and normal, its powers form an Ore set in $A$ and we can form the partial localisation $A_z \subseteq Q(A)$. Now if $ac^{-1} \in Q(A)$ for some $a \in A$ and some regular element $c \in A$, then the descending chain of right ideals $A > cA > c^2A > \cdots $ has each subquotient isomorphic to $A/cA$, so $A/cA$ must have finite length as an $A$-module since $\mathcal{K}(A) \leq 1$. Hence $z^t \in cA$ for some $t$ because $z\in J(A)$, so $z^t = cx$ for some $x \in A$. Therefore $ac^{-1} = axz^{-t} \in A_z$ and $Q(A) = A_z$.
\end{proof}

\subsection{Orders and maximal orders}\label{StructThm}
Let $B$ be a subring of $Q(A)$ containing $A$. Recall \cite[\S 3.1.9]{MCR} that $B$ is \emph{equivalent} to $A$ as an order if there are units $a, b \in Q(A)$ such that $aBb \subseteq A$. We define
\[\mathcal{S} := \{ B \leq Q(A) : A \leq B \quad\mbox{and}\quad B\quad\mbox{is equivalent to}\quad A\}.\]
Elements of $\mathcal{S}$ maximal with respect to inclusion are called \emph{maximal orders}. It turns out that these maximal orders have a very precise structure. We say that a Noetherian domain $D$ with skewfield of fractions $Q(D)$ is a \emph{non-commutative discrete valuation ring} if for all non-zero $x \in Q(D)$, either $x \in D$ or $x^{-1} \in D$.
\begin{thm} Let $B \in \mathcal{S}$ be a maximal order. Then there exists an integer $k \geq 1$ and a non-commutative complete discrete valuation ring $D$, such that $B$ is isomorphic to a complete $k \times k$ matrix ring over $D$:
\[B \cong M_k(D).\]
\end{thm}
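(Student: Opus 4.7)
The plan is to first transfer the structural results for $A$ from Proposition \ref{AProps} across to the maximal order $B$, and then apply idempotent-lifting and a Peirce decomposition to obtain the matrix form. First I would show that $B$ is a prime Noetherian ring with Krull dimension at most $1$, containing a regular normal element $z \in J(B)$ such that $B/zB$ is Artinian and $B$ is $z$-adically complete. Noetherianity of $B$ follows from the equivalence of orders: if $aBb \subseteq A$ for units $a, b \in Q(A)$, then $B \subseteq a^{-1}Ab^{-1}$ is sandwiched inside a finitely generated $A$-bimodule, making $B$ finitely generated as both a left and a right $A$-module. Primeness is inherited from $Q(A) = Q(B)$. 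For the normal element I would take the image in $B$ of the $z$ supplied by Proposition \ref{AProps}; its regularity in $B$ is automatic since $z$ is already a unit in $Q(A)$. The crucial point is that maximality of $B$ forces $z^{-1}Bz = B$: conjugation by $z$ is an inner automorphism of $Q(A)$ preserving $A$, so it carries $\mathcal{S}$ to itself, and a standard maximality argument (for instance via $B \cap zBz^{-1}$ and $B + zBz^{-1}$, both orders in $\mathcal{S}$) forces equality. Completeness of $B$ and Artinianness of $B/zB$ then descend from the corresponding properties of $A$ via finite generation.

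Next I would show that $B/J(B)$ is simple Artinian. Since $B$ is prime Noetherian of Krull dimension at most $1$ with $B/zB$ Artinian and $zB \subseteq J(B)$, the ring $B$ is semilocal. Maximality and primeness of $B$ prevent a nontrivial central-idempotent decomposition of $B/J(B)$: otherwise a non-trivial lift via completeness would yield a non-trivial central idempotent of $B$ itself, contradicting primeness. Thus $B/J(B) \cong M_k(E)$ for some skew field $E$ and some integer $k \geq 1$, by Wedderburn's theorem.

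Now I would lift a complete set of primitive orthogonal idempotents $\overline{e_1}, \ldots, \overline{e_k}$ of $M_k(E) = B/J(B)$ to pairwise orthogonal idempotents $e_1, \ldots, e_k \in B$ summing to $1$. Because $z \in J(B)$ and $B$ is $z$-adically complete, this lifting proceeds by the standard Hensel iteration available in complete semiperfect settings. The resulting Peirce decomposition gives an isomorphism $B \cong M_k(D)$ with $D := e_1 B e_1$.

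Finally, I would verify that $D$ is a non-commutative complete discrete valuation ring. The ring $D$ is Noetherian and prime as the corner of the prime ring $B$ by a full idempotent, and $D/J(D) \cong \overline{e_1}(B/J(B))\overline{e_1} \cong E$ is a skew field, so $D$ is local. The element $\pi := e_1 z e_1$ is regular and normal in $D$, and it generates $J(D)$ up to units since the Peirce pieces of $J(B) = zB$ are accounted for by conjugates of $\pi$. Primeness of $D$ together with the principal regular normal generator $\pi$ of $J(D)$ forces $D$ to be a domain, since any zero-divisor would propagate to a zero-divisor in $B \cong M_k(D)$, contradicting primeness of $B$. The $\pi$-adic filtration then shows that every nonzero element of $Q(D)$ is of the form $u\pi^n$ with $u \in D^\times$ and $n \in \mathbb{Z}$, whence either $x \in D$ or $x^{-1} \in D$, giving the valuation property. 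Completeness of $D$ is inherited from the $\pi$-adic completeness of $B$. The main obstacle I expect is the first step, specifically the rigorous use of maximality of $B$ to conclude $z^{-1}Bz = B$ and to pin down the structure of $J(B)$ precisely enough for the Peirce decomposition to work; once that is in place, the remaining steps are fairly standard for complete semilocal Noetherian orders.
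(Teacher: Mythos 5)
Your proposal takes a genuinely different route from the paper's proof, but it has several gaps centered on the same issue, which you yourself flag as the "main obstacle."

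The paper proceeds by showing $B$ is a Dedekind prime ring (Proposition \ref{Dedekind}), hence an Asano order; uses the Asano order structure of $J(B) = P_1\cdots P_n$ to deduce $\widehat{B}^J \cong \bigoplus \widehat{B}^{P_i}$; then uses $z$-adic completeness of $B$ (as a finitely generated $A$-module) together with $J(B)^m \subseteq Bz$ and $z^n \in J(B)$ to show $\widehat{B}^J = B$, forcing $B$ to have a unique non-zero prime $P$ with $\widehat{B}^P = B$; and finally invokes Gwynne--Robson \cite[Theorem 2.3]{GwyRob}, which is precisely the theorem whose conclusion delivers the PID $D$. Your proposal replaces the Asano-order/Gwynne--Robson machinery with a direct idempotent-lifting and Peirce decomposition, but this relocates the hard part to exactly the place where your argument is weakest.

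The first concrete gap: you assert that maximality of $B$ forces $z^{-1}Bz = B$, suggesting a proof via "$B \cap zBz^{-1}$ and $B + zBz^{-1}$, both orders in $\mathcal{S}$." But $B + zBz^{-1}$ is not a ring, and neither is the ring generated by $B \cup zBz^{-1}$ obviously equivalent to $A$ (products of bounded elements from $B$ and $zBz^{-1}$ can have unbounded $z$-denominators), so the standard maximality argument does not apply. And $z$ normal in $A$ genuinely does not imply $z$ normal in $B$: for instance if $B = M_2(D)$ and $z = \mathrm{diag}(\pi, \pi^2)$ with $\pi$ a uniformizer of $D$, then $zB \neq Bz$ although $z$ can be normal in a smaller order $A$. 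Without normality of $z$ in $B$, your $z$-adic filtration on $B$ is not a ring filtration, so the completeness needed for Hensel-lifting of idempotents is not established; the paper instead proves $\widehat{B}^{J(B)} = B$ by a separate argument.

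The second, more serious gap: you assert $J(B) = zB$ in order to identify $J(D)$ with $\pi D$ for $\pi = e_1 z e_1$. This is false in general: even in the commutative case $A = k[[t^2,t^3]]$, $z = t^2$, $B = k[[t]]$, one has $J(B) = tB \supsetneq zB = t^2 B$, i.e. $z$ lies in $J(B)^2$. So $\pi = e_1ze_1$ need not generate $J(D)$, and your final step — producing a uniformizer and deducing the DVR property — does not go through. Establishing that $J(D)$ is principal (equivalently, that $D$ is a PID) is precisely the content that Gwynne--Robson supplies in the paper's proof, and it does not come for free from the Peirce decomposition; one really needs either the Asano/Dedekind-prime-ring ideal theory that the paper sets up in \S\ref{MaxRefl}--\S\ref{Dedekind}, or some equivalent input, to conclude that every two-sided ideal of $B$ (hence $J(B)$) is invertible, which is what ultimately makes $D$ a complete DVR rather than merely a complete local Noetherian prime ring of Krull dimension one.
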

The proof is given below in $\S$\ref{PfStThm}.

\subsection{Rings of Krull dimension $1$}\label{Kdim1}
Orders $B \in \mathcal{S}$ have properties similar to $A$. More precisely:
\begin{prop}Let $B\in\mathcal{S}$. Then
\begin{enumerate}[{(}a{)}]
\item $B$ is contained in $Az^{-k} = z^{-k}A$ for some $k \geq 0$,
\item $B$ is a prime, Noetherian order in $Q(A)$,
\item $B$ has right and left Krull dimension 1,
\item $B$ is semilocal,
\item $B$ is right and left bounded.
\end{enumerate}
\end{prop}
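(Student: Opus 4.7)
The plan is to establish (a)--(c) directly from the structural results of Proposition~\ref{AProps}, and then to reduce (d) and (e) to the key observation that $z$ lies in the Jacobson radical $J(B)$.

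For (a), the equivalence of $B$ to $A$ gives units $a, b \in Q(A)$ with $aBb \subseteq A$, so $B \subseteq a^{-1}Ab^{-1}$. Since $Q(A) = A_z$ by Proposition~\ref{AProps}, I can write $a^{-1} = \alpha z^{-m}$ and $b^{-1} = z^{-n}\beta$ for some $\alpha,\beta \in A$ and $m, n \geq 0$; the normality of $z$ in $A$ then lets me commute the powers of $z$ across $A$, yielding $B \subseteq Az^{-k} = z^{-k}A$ with $k := m+n$. Part (b) then follows at once: $B$ is a left (and right) $A$-submodule of the cyclic Noetherian left $A$-module $z^{-k}A \cong A$, hence finitely generated over $A$ on both sides and so Noetherian as a ring. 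Every element of $Q(A) = A_z$ is a fraction from $A \subseteq B$, and every regular element of $B$ becomes a unit in $Q(A)$ after clearing a power of $z$, so $Q(B) = Q(A)$. Because $Q(A)$ is simple Artinian, $B$ is a semiprime Goldie ring with simple classical quotient ring, forcing $B$ to be prime. For (c), every left ideal of $B$ is in particular a left $A$-submodule of $B$, so the left Krull dimension of $B$ as a ring is bounded above by its Krull dimension as a left $A$-module, which by (a) satisfies $\mathcal{K}({}_A B) \leq \mathcal{K}(A) \leq 1$. The symmetric right-hand bound is analogous, and equality $\mathcal{K}(B) = 1$ follows because $B$ is not Artinian: otherwise $z$ would be a unit in $B$, forcing $B = A = Q(A)$, contradicting $z \in J(A) \setminus A^\times$.

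The crucial step is to prove $z \in J(B)$. Let $\mathfrak{m}$ be an arbitrary maximal left ideal of $B$ and set $M := B/\mathfrak{m}$. Since $M$ is cyclic over $B$, part (b) makes it finitely generated as a left $A$-module. The regular element $z$ acts injectively on the simple module $M$; I claim it cannot act surjectively either. If $zM = M$, then since $z \in J(A)$ and $M$ is a finitely generated $A$-module, Nakayama's lemma forces $M = 0$, which is absurd. Hence $zM$ is a proper submodule of the simple module $M$, so $zM = 0$, giving $z \in \Ann_B(M) \subseteq \mathfrak{m}$. As $\mathfrak{m}$ was arbitrary, $z \in J(B)$.

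Both (d) and (e) now follow easily. For (d), $B/zB$ is finitely generated over the Artinian ring $A/zA$ (by (b) and Proposition~\ref{AProps}), hence Artinian, and $B/J(B)$ is a semiprimitive quotient of $B/zB$ (since $zB \subseteq J(B)$), so semisimple Artinian. For (e), let $I$ be an essential right ideal of $B$; by Goldie's theorem, $I$ contains a regular element $c$, and the factor $B/cB$ is Artinian because its prime ideals are non-zero primes of $B$ (as $c \neq 0$), all of which are maximal as $B$ is prime of Krull dimension one. So $J(B)$ acts nilpotently on $B/cB$, giving $J(B)^n \subseteq cB \subseteq I$ for some $n$. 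Finally $J(B)^n \neq 0$, since otherwise $J(B)$ would be nilpotent and $B$ Artinian, contradicting $\mathcal{K}(B)=1$; thus $J(B)^n$ is a non-zero two-sided ideal contained in $I$, and left-boundedness is symmetric. I expect the Nakayama argument in the middle paragraph to be the main obstacle; the rest is essentially bookkeeping once the structural content of Proposition~\ref{AProps} has been absorbed.
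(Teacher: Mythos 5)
Your arguments for parts (a), (b), (c), (e) follow essentially the paper's route (with small wrinkles: in (b) you assert $B$ is semiprime Goldie without establishing semiprimeness first, and in (c) the parenthetical ``$B = A$'' does not follow; these are minor and repairable). The real problem is the central claim $z \in J(B)$.

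You argue: $M = B/\mathfrak{m}$ is a simple left $B$-module, $zM$ is a proper submodule by Nakayama, hence $zM = 0$ by simplicity. The gap is that $zM$ need not be a left $B$-\emph{sub}module of $M$. To have $B(zM) \subseteq zM$ you would need $Bz \subseteq zB$, i.e.\ that $z$ is normal in $B$. But $z$ is only known to be normal in $A$; the overring $B$ can move $z$ by conjugation out of $B$ (indeed $z^{-1}Bz$ is in general a different member of $\mathcal{S}$), so $zM$ is merely a left $A$-submodule of $M$, and simplicity of $M$ over $B$ gives you nothing. (The side remark that ``$z$ acts injectively on $M$'' is also unjustified --- regularity of $z$ in $B$ does not imply injective action on an arbitrary simple module --- but that step is not actually used.)

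This is exactly why the paper proves the weaker statement $z^n \in J(B)$ for some $n \geq 1$, not $z \in J(B)$. The paper's route: $B/Bz$ is a finitely generated $(A, A/zA)$-bimodule, so Lenagan's Lemma makes it Artinian as a left $A$-module; since $z \in J(A)$ this forces $z^nB \subseteq Bz$ for some $n$. Then for a simple right $B$-module $M$ one has $Mz^nB \subseteq MBz \subseteq Mz < M$, and \emph{now} $Mz^nB$ is a genuine $B$-submodule, so it vanishes and $z^n \in J(B)$. The rest of (d) and (e) goes through with $z^n$ in place of $z$; your proof of (e) already only uses a power of $J(B)$, but (d) needs to be rephrased: what you get directly is that $B/Bz^n$ is Artinian over $A$ and $z^n \in J(B)$, which still gives $B/J(B)$ Artinian. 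I would not try to upgrade to $z \in J(B)$; it is not needed and I do not see how to prove it in this generality.
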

\begin{proof}
(a) Since $B$ is equivalent to $A$ we can find units $a,b \in Q(A)$ such that $B \subseteq a^{-1} A b^{-1}$. By Proposition \ref{AProps}, $Q(A) = A_z$ so there exists an integer $k$ such that $a^{-1}, b^{-1} \in Az^{-k}$. Since $z \in A$ is normal, $B \subseteq Az^{-k} = z^{-k}A$.

(b) By part (a), $B$ is a Noetherian $A$-module on both sides, so $B$ is itself Noetherian. Also $B$ is a prime order in $Q(A)$ by \cite[Corollary 3.1.6 (i)]{MCR}.

(c) Since $B$ is finitely generated over $A$ on both sides, \cite[Lemma 6.2.5]{MCR} and Proposition \ref{AProps} together imply
\[\mathcal{K}(B_B) \leq \mathcal{K}(B_A) \leq \mathcal{K}(A_A) \leq 1.\]
If $\mathcal{K}(B) = 0$, then the regular element $z \in B$ is a unit in $B$ by \cite[Proposition 3.1.1]{MCR}, and $A < z^{-1}A < z^{-2}A < z^{-3}A < \cdots$ is a strictly ascending chain in the Noetherian $A$-module $B$. Hence $\mathcal{K}(B_B) = 1$ and similarly $\mathcal{K}(_BB) = 1$.

(d) Since $z \in A$ is normal, $B/Bz$ is an $A-A/zA$-bimodule, which is finitely generated on both sides. Since $A/zA$ is Artinian, $B/Bz$ must also be Artinian as a left $A$-module by Lenagan's Lemma \cite[Theorem 4.1.6]{MCR}. Because $z \in J(A)$, we deduce that $z^nB \subseteq Bz$ for some $n \geq 1$. Now if $M$ is a simple right $B$-module, then $M$ is a finitely generated non-zero right $A$-module, so $Mz < M$ by Nakayama's Lemma. Hence
\[ M z^nB \subseteq  MBz \subseteq Mz < M;\]
but $M z^nB$ is a $B$-submodule of $M$, so $Mz^nB = 0$ because $M$ is simple. Therefore $z^n \in J(B)$ and hence $B/J(B)$ is Artinian as a right $A$-module and therefore as a right $B$-module. Hence $B$ is semilocal.

(e) Recall that $B$ is \emph{right bounded} if every essential right ideal of $B$ contains a non-zero two-sided ideal of $B$. Now if $I$ is an essential right ideal of $B$, then $\mathcal{K}(B/I) < \mathcal{K}(B) = 1$ by \cite[Proposition 6.3.10(i)]{MCR}, so $B/I$ is Artinian and therefore $J(B)^m \subseteq I$ for some $m$. Now $J(B) \neq 0$ because $\mathcal{K}(B) = 1$ by part (c) and $\mathcal{K}(B/J(B)) = 0$ by part (d). Finally $B$ is prime, so $J(B)^m$ is a non-zero two-sided ideal of $B$ contained in $I$. A similar argument shows that $B$ is also left bounded.
\end{proof}

\subsection{Reflexive ideals}\label{MaxRefl}
Recall that an essential left ideal $I$ of an order $B \in \mathcal{S}$ is \emph{reflexive} if $(I^{-1})^{-1} = I$ where $I^{-1} = \{q \in Q(A) : Iq \subseteq B\}$ and $(I^{-1})^{-1} = \{q \in Q(A) : qI^{-1} \subseteq B\}$. Reflexive right ideals are defined similarly. A \emph{prime $c$-ideal} is a non-zero prime ideal of $B$ which is reflexive as a left ideal. In the case when $B$ is a maximal order, it follows from \cite[Proposition 5.1.8]{MCR} that a prime ideal is reflexive as a left ideal if and only if it is reflexive as a right ideal.

\begin{prop} Let $B \in\mathcal{S}$ be a maximal order. Then every non-zero prime ideal $I$ of $B$ is reflexive.
\end{prop}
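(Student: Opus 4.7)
The plan is to execute the standard double-dual argument for maximal orders, reducing the claim to ruling out the degenerate case $(I^{-1})^{-1} = B$, and then to produce an explicit obstruction using the $z$-adic structure developed in the preceding sections.

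First I would verify that $I$ is a maximal two-sided ideal of $B$. By Proposition \ref{Kdim1}(c), $B$ has Krull dimension $1$, so $B/I$ is a prime ring of Krull dimension $0$, hence simple Artinian; thus $I$ is maximal. Setting $I^{**} := (I^{-1})^{-1}$, the standard formalism of maximal orders cited in $\S\ref{MaxRefl}$ (via \cite[Proposition 5.1.8]{MCR}) shows that $I^{**}$ is a two-sided reflexive ideal of $B$ containing $I$. Since $I$ is maximal, either $I^{**} = I$ --- in which case $I$ is reflexive and we are done --- or $I^{**} = B$. So the substance of the proof lies in excluding the latter.

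Second, suppose for contradiction that $I^{**} = B$. Applying $(-)^{-1}$ once more, and using that $B$ itself is its own dual fractional ideal (a formal consequence of maximality), we obtain $I^{-1} = B$. The desired contradiction will come from producing an element $q \in Q(A) \setminus B$ with $Iq \subseteq B$. To construct $q$, I would invoke the $z$-adic structure: since $B$ is semilocal and $z^n \in J(B)$ for some $n \geq 1$ (from the proof of Proposition \ref{Kdim1}(d)), maximality of $I$ gives $z^n \in I$; moreover $B \subseteq z^{-k}A$ for some $k$ by Proposition \ref{Kdim1}(a) and $A/zA$ is Artinian by Proposition \ref{AProps}, so $B/Bz$ is a right $A/zA$-module of finite length. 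A carefully selected element $b \in B$ whose image in $B/Bz$ lies outside the submodule cut out by $I$ should then yield a witness $q$ of the form $z^{-j}b$.

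The main obstacle is precisely this last step: producing the explicit element $q$. All the preceding reductions are routine consequences of the maximality of $B$ and of $I$; the real content is in using the interplay between the two-sided ideal $I$, the Jacobson radical $J(B)$, and the $z$-adic filtration inherited from $A$ to certify that $Iq$ really lands inside $B$, rather than merely inside $z^{-k}A$. One plausible way to close this gap is to argue, by descending induction on $n$, that if $z^n \in I$ then either $I$ contains a strictly smaller power of $z$ --- eventually forcing $z \in I$ and allowing the clean choice $q = z^{-1}$ --- or one can find $b \in B \setminus Bz$ with $Ib \subseteq Bz$ via a socle-type argument inside the finite-length $A/zA$-module $B/Bz$, using that $I + Bz$ must be a proper submodule lest $I$ coincide with $B$ modulo $Bz$ and contradict the maximality of $I$ in a controlled way.
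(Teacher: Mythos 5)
Your opening reduction is sound and matches the first step of the paper's proof: since $\mathcal{K}(B)=1$ and $I$ is a non-zero prime, $B/I$ is simple Artinian, so $I$ is a maximal two-sided ideal, and reflexivity is equivalent to $I^{-1}\neq B$ (the identity $I^{***}=I^{-1}$ together with $B^{-1}=\mathcal{O}_r(B)=B$ is fine). The difficulty is that everything after that is a sketch, and the sketch does not close.

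The genuine gap is in producing the witness $q$. Your ``socle-type argument'' asks for $b\in B\setminus Bz$ with $Ib\subseteq Bz$, i.e.\ a non-zero right annihilator of the image of $I$ in the Artinian ring $\overline{B}=B/Bz$. But a proper two-sided ideal of an Artinian ring can have zero right annihilator: for example, in the ring of $2\times 2$ upper triangular matrices over a field, the ideal $\left\{\left(\begin{smallmatrix} \ast & \ast \\ 0 & 0 \end{smallmatrix}\right)\right\}$ is proper, two-sided, and right-faithful. So ``$I+Bz$ is proper'' (which does follow, via $z^n\in J(B)\subseteq I$ and Nakayama) is not by itself enough. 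You would need to use that $B$ is a prime maximal order in an essential way, which your sketch does not do. The other branch of your dichotomy is also shaky: even if you could force $z\in I$, the element $q=z^{-1}$ satisfies $Iq\subseteq B$ only if $I\subseteq Bz$; but $z\in I$ gives the reverse inclusion $Bz\subseteq I$, and these agree only in the special case $I=Bz$.

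The paper avoids the issue entirely by a different argument. It shows $J(B)\subseteq I$, then invokes localisability of prime $c$-ideals in a prime maximal order (Goldie) and Chamarie's decomposition $B=B_1\cap\cdots\cap B_n$ for bounded maximal orders to conclude that $J(B)$ contains the product $P_1\cdots P_n$ of the (finitely many) prime $c$-ideals. Primeness of $I$ then forces $I=P_i$ for some $i$, and $P_i$ is reflexive by definition. Boundedness of $B$ --- established in Proposition~\ref{Kdim1}(e) --- is the crucial hypothesis that makes Chamarie's theorem available, and it is exactly the structural input that your elementary $z$-adic approach is missing. If you want to push your route through, you would essentially have to re-derive a piece of the Chamarie theory from scratch; as written, the proposal has a real gap at its central step.
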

\begin{proof} Since $\mathcal{K}(B) = 1$ by Lemma \ref{Kdim1}(c), $B/I$ is Artinian by \cite[Proposition 6.3.11(ii)]{MCR} so $I$ contains a power of $J(B)$. Because $I$ is prime, $J(B) \subseteq I$. Since $B$ is semilocal by Lemma \ref{Kdim1}(d), we see that $B$ only has finitely many non-zero prime $c$-ideals $P_1,\ldots, P_n$, say.

Since $B$ is a prime maximal order, every prime $c$-ideal of $B$ is localisable by a result of Goldie \cite{Goldie1967} --- see also \cite[Proposition 1.7]{Cham1981}. Let $B_i$ denote the localisation of $B$ at $\mathcal{C}(P_i)$. Because $B$ is bounded by Lemma \ref{Kdim1}(e), it follows from the work of Chamarie \cite[Proposition 1.10(b)]{Cham1981} that
\[B = B_1 \cap B_2 \cap \cdots \cap B_n.\]
Note that this result implies that $B$ has at least one prime $c$-ideal. Moreover each $B_i$ is a local ring, with Jacobson radical $J(B_i) = P_iB_i$, by \cite[Proposition 1.9]{Cham1981}. 

Let $x \in P_1 \cap P_2 \cap \cdots \cap P_n$. Then $x \in J(B_i)$ for all $i$, so
\[1 + BxB \subseteq 1 + B_ixB_i \subseteq B_i^\times\]
for all $i$, and therefore $1 + BxB \subseteq B_1^\times \cap \cdots \cap B_n^\times \subseteq B^\times$. Hence $x \in J(B)$:
\[P_1\cdot P_2 \cdot \hspace{1mm} \cdots \hspace{1mm}\cdot P_n \subseteq  P_1 \cap P_2 \cap \cdots \cap P_n \subseteq J(B) \subseteq I.\]
Because $I$ is prime, we deduce that $P_i \subseteq I$ for some $i$. But $P_i$ is a maximal two-sided ideal since $J(B) \subseteq P_i$. Hence $I = P_i$ is reflexive.
\end{proof}

\subsection{Dedekind prime rings}\label{Dedekind} Recall that a Noetherian ring $B$ is \emph{left (right) hereditary} if every left (right) ideal of $B$ is projective. Equivalently, $B$ has left (right) global dimension $\leq 1$. $B$ is said to be a \emph{Dedekind prime ring} if
\begin{itemize}
\item $B$ is a prime maximal order,
\item $B$ is left and right hereditary.
\end{itemize}

\begin{prop} Let $B \in \mathcal{S}$ be a maximal order. Then $B$ is a Dedekind prime ring.
\end{prop}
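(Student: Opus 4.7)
\medskip

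\noindent\textbf{Plan.} Since $B$ is a prime maximal order by hypothesis and is Noetherian by Lemma \ref{Kdim1}(b), it remains to show that $B$ is left and right hereditary. The key additional inputs are Lemma \ref{Kdim1} (which gives $\mathcal{K}(B) = 1$, and that $B$ is semilocal and bounded) and Proposition \ref{MaxRefl} (every non-zero prime ideal of $B$ is reflexive, hence a prime $c$-ideal). My plan is to establish hereditariness by a local-global argument using the prime $c$-ideals $P_1,\ldots,P_n$ and the local rings $B_i$ obtained from them.

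First, I would reuse the machinery invoked in the proof of Proposition \ref{MaxRefl}. Since $B$ is a prime bounded maximal order in which every non-zero prime ideal is a prime $c$-ideal, Chamarie's results \cite[Propositions 1.9, 1.10]{Cham1981} give that each $P_i$ is localisable, that the localisation $B_i := B_{\mathcal{C}(P_i)}$ is local with $J(B_i) = P_i B_i$, and that $B = B_1 \cap \cdots \cap B_n$. Each $B_i$ inherits from $B$ the property of being a prime Noetherian maximal order, and since $\mathcal{K}(B_i) \leq \mathcal{K}(B) = 1$, its unique non-zero prime ideal $P_i B_i$ is still reflexive.

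Next, I would argue that each $B_i$ is a non-commutative discrete valuation ring in the sense of \ref{StructThm}. In the local setting, reflexivity of the only non-zero prime $P_iB_i$ upgrades to outright invertibility (because the group of reflexive fractional ideals of a local prime maximal order of Krull dimension $1$ is cyclic, generated by the unique prime), and invertibility combined with locality forces $P_iB_i = \pi_i B_i = B_i \pi_i$ for a regular normal generator $\pi_i$. The resulting ring $B_i$ is then a local Noetherian prime ring with principal Jacobson radical, hence a non-commutative DVR and in particular hereditary of global dimension $\leq 1$.

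To pass from the $B_i$ to $B$ itself, I would then use that each localisation map $B \to B_i$ is flat and that $B = \bigcap_i B_i$, so that an essential right ideal $I$ of $B$ is projective as soon as $IB_i$ is projective over $B_i$ for every $i$; symmetrically on the left. This gives $B$ left and right hereditary, and combined with the maximal order hypothesis this is the definition of a Dedekind prime ring. The main obstacle in this plan is the middle step, namely the upgrade from reflexivity of $P_iB_i$ to actual invertibility (equivalently, to principality of $J(B_i)$ by a normal element); everything before it is a direct citation and everything after it is standard local-global descent. That step uses in an essential way that $B$ is a \emph{bounded} maximal order of Krull dimension exactly $1$, which is why the earlier preparations in Lemma \ref{Kdim1} were needed.
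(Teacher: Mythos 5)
Your route is genuinely different from the paper's and, as written, has gaps that are not merely cosmetic.

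The paper never returns to the Chamarie localisations $B_i$ after Proposition~\ref{MaxRefl}; once it knows every non-zero prime is reflexive, it works directly in $B$. It shows every maximal two-sided ideal $P$ is invertible by the short argument: $P \subseteq P^{-1}P \subseteq B$, and if $P^{-1}P = P$ then $P^{-1} \subseteq \mathcal{O}_l(P) = B$ (since $B$ is a maximal order), whence $P = (P^{-1})^{-1} \supseteq B$, a contradiction; so $P^{-1}P = B$. Invertibility plus the Dual Basis Lemma gives projectivity of each maximal ideal, and then semilocality ($B/P$ a sum of copies of a simple $M$) gives $\pd(M) \le 1$ for every simple $M$. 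Dimension-shifting and Schanuel's Lemma, together with $\mathcal{K}(B) = 1$ (so $B/L$ has finite length for $L$ essential), finish the argument. This is entirely internal to $B$ and avoids any local-global descent.

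Your plan has two real gaps. First, the claim you flag as the "main obstacle" — that reflexivity of $P_iB_i$ "upgrades to outright invertibility because the group of reflexive fractional ideals of a local prime maximal order of Krull dimension $1$ is cyclic, generated by the unique prime" — is circular: that structure theorem for the group of reflexive ideals is itself the non-trivial content of the theory of Asano/Dedekind orders and is not available before one has shown the prime to be invertible. Note that the paper's three-line computation $P^{-1}P = B$ shows this \emph{directly} from reflexivity and the maximal order axiom; you could have used exactly that computation inside $B_i$ (or, better, inside $B$ itself, which makes the detour through $B_i$ unnecessary). Second, the final step "an essential right ideal $I$ of $B$ is projective as soon as $IB_i$ is projective over each $B_i$" is not "standard local-global descent" in the noncommutative setting. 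For Ore localisations at a finite set of localisable primes with $B = \bigcap_i B_i$ one needs a genuine theorem (Chamarie/Maury--Raynaud prove statements of this type for bounded maximal orders), and you neither prove it nor cite it. There is also a smaller unjustified assertion that each $B_i$ is itself a maximal order; this too holds in Chamarie's framework but requires a reference. In short, the skeleton of your argument is defensible and appears in the classical literature on Asano orders, but the two steps above are exactly where the content lies and are left unresolved, whereas the paper's direct route discharges them in a few elementary lines.
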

\begin{proof} By symmetry, it is enough to show that $B$ is left hereditary. Let $P$ be a maximal two-sided ideal of $B$. Then $P$ is reflexive by Proposition \ref{MaxRefl}. Now $P \subseteq P^{-1}P \subseteq B$ so $P^{-1}P = P$ or $P^{-1}P = B$ by the maximality of $P$. But $P^{-1}P = P$ implies $P^{-1} \subseteq \mathcal{O}_l(P) = B$ because $B$ is a maximal order, and then $B = (P^{-1})^{-1} = P$, a contradiction. So $P^{-1}P = B$: every maximal two-sided ideal of $B$ is left invertible. It now follows from the Dual Basis Lemma \cite[Lemma 3.5.2(ii)]{MCR} that every maximal two-sided ideal of $B$ is projective as a left $B$-module.

Let $M$ be a simple left $B$-module. Since $B$ is semilocal by Lemma \ref{Kdim1}(d), $P := \Ann_B(M)$ is a maximal ideal of $B$ and $B / P$ is isomorphic to a direct sum of finitely many copies of $M$ as a left $B$-module. Let $\pd(N)$ denote the projective dimension of a $B$-module $N$; then
\[ \pd(M) = \pd(B/P) \leq 1\]
because $P$ is projective.  Now let $I$ be any non-zero left ideal of $B$; then we can find another left ideal $J$ of $B$ such that $L := I \oplus J$ is essential. Since $\mathcal{K}(B) \leq 1$, $B / L$ has finite length by \cite[Proposition 6.3.10(i)]{MCR} and therefore
\[\pd(B/L) \leq 1\]
by \cite[\S 7.1.6]{MCR}. Hence $L$ is projective by Schanuel's Lemma \cite[\S 7.1.2]{MCR} and therefore $I$ is also projective.
\end{proof}

\subsection{Proof of Theorem \ref{StructThm}}\label{PfStThm}
The hard work has already been done; it remains to apply a result of Gwynne and Robson \cite{GwyRob}.

By Proposition \ref{Dedekind}, $B$ is a Dedekind prime ring. So $B$ is an Asano order, by \cite[Theorem 5.2.10]{MCR}. Let $P_1,\ldots, P_n$ be the maximal two-sided ideals of $B$ and $J = J(B)$. Then $J = P_1P_2 \cdots P_n$ and
\[J^k = P_1^k P_2^k \cdots P_n^k = P_1^k \cap P_2^k \cap \cdots \cap P_n^k \quad\mbox{for all} \quad k \geq 1\]
by \cite[Theorem 5.2.9]{MCR}. Hence each factor ring $B/J^k$ decomposes as a direct sum
\[\frac{B}{J^k} \cong \frac{B}{P_1^k} \oplus \frac{B}{P_2^k} \oplus \cdots \oplus \frac{B}{P_n^k}.\]
Passing to the inverse limit, we see that the $J$-adic completion of $B$ isomorphic to the direct sum of the $P_i$-adic completions of $B$:
\[\widehat{B}^J \cong \widehat{B}^{P_1} \oplus \widehat{B}^{P_2} \oplus \cdots \oplus \widehat{B}^{P_n}.\]
But $B$ is a finitely generated $A$-module which is $z$-adically complete by Proposition \ref{AProps}, so $B$ is also complete with respect to the $z$-adic filtration
\[B > Bz > Bz^2 > \cdots \,.\]
We saw in the proof of Lemma \ref{Kdim1}(d) that $z^n \in J(B)$ for some $n$ and that $B/Bz$ is an Artinian left $A$-module; therefore $B/Bz$ is also an Artinian left $B$-module and $J(B)^m \subseteq Bz$ for some $z$. It follows that $B$ is $J(B)$-adically complete: $\widehat{B}^J = B$.

Since $B$ is prime, we deduce that $B$ has a unique non-zero prime ideal $P$ and $\widehat{B}^P = B$. In this situation, \cite[Theorem 2.3]{GwyRob} states that 
\[B \cong M_k(D)\]
for some complete, scalar local, principal ideal domain $D$. But any such $D$ is a non-commutative complete discrete valuation ring. \qed

\subsection{Existence of maximal orders in $\mathcal{S}$}
\label{ExistsMO}
The theory developed above must be well-known to the experts. However it would not be very useful unless we could show that maximal orders in $\mathcal{S}$ actually \emph{exist}. Our assumptions on $A$ are fortunately strong enough to allow us to prove precisely this. 
\begin{defn} The \emph{left conductor} of $B\in \mathcal{S}$ is the largest left ideal $I_B$ of $B$ contained in $A$.\end{defn}

\begin{lem} $I_B$ is a non-zero two-sided ideal of $A$, and $I_C \subseteq I_B$ whenever $B \subseteq C$ are in $\mathcal{S}$.
\end{lem}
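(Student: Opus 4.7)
The plan is to give an explicit description of the left conductor as
\[I_B = \{a \in A : Ba \subseteq A\},\]
from which all three assertions follow essentially by inspection. To verify this description, note that if $a$ belongs to the right-hand side then $Ba$ is a left ideal of $B$ contained in $A$, hence contained in the largest such, which is $I_B$; conversely $I_B$ itself is a left $B$-ideal contained in $A$, so each $a \in I_B$ satisfies $Ba \subseteq I_B \subseteq A$.

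Next, to show $I_B$ is non-zero, I would invoke Lemma \ref{Kdim1}(a), which gives an integer $k \geq 0$ with $B \subseteq z^{-k}A = Az^{-k}$. Since $z$ is normal, multiplying on the right by $z^k$ yields $Bz^k \subseteq Az^{-k}\cdot z^k = A$, so $z^k \in I_B$. As $z$ is regular, $z^k \neq 0$, so $I_B \neq 0$.

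For the two-sided property, $I_B$ is by construction a left $B$-module, and since $A \subseteq B$ it is in particular a left $A$-module. To show stability under right multiplication by $A$, take $a \in I_B$ and $r \in A$: then $B(ar) = (Ba)r \subseteq Ar \subseteq A$, so $ar \in I_B$. Finally, for the monotonicity statement: if $B \subseteq C$ are both in $\mathcal{S}$ and $a \in I_C$, then $Ba \subseteq Ca \subseteq A$, so $a \in I_B$, yielding $I_C \subseteq I_B$.

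I do not anticipate any real obstacle; the only substantive ingredient is Lemma \ref{Kdim1}(a), which supplies the element $z^k \in I_B$ witnessing non-triviality. Everything else is a direct manipulation of the defining condition $Ba \subseteq A$.
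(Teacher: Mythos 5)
Your proof is correct and follows essentially the same route as the paper: the non-triviality hinges on Proposition~\ref{Kdim1}(a) giving $Bz^k\subseteq A$, and the two-sidedness and monotonicity are straightforward consequences of the defining maximality condition. Your preliminary identification $I_B=\{a\in A: Ba\subseteq A\}$ (which uses $1\in B$ to pass from $Ba\subseteq I_B$ to $a\in I_B$) is a mild repackaging of the paper's direct maximality argument, not a different method.
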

\begin{proof} By Proposition \ref{Kdim1}(a), $B$ is contained in $Az^{-k}$ for some $k \geq 0$. Hence $Bz^k$ is a non-zero left ideal of $B$ contained in $A$, whence $I_B \neq 0$. If $a \in A$ then $I_Ba \subseteq A$ is still a left ideal of $B$ contained in $A$ so $I_Ba \subseteq I_B$ by the maximality of $I_B$; hence $I_B$ is a two-sided ideal of $A$. Finally if $B \subseteq C$ then $B I_C \subseteq I_C$ so $I_C$ is a left ideal of $B$ contained in $A$, whence $I_C \subseteq I_B$.
\end{proof}
We need one more preparatory result.
\begin{prop}Suppose that $I_1 \supseteq I_2 \supseteq I_3 \supseteq \cdots $ is a descending chain of left ideals of $A$ such that $I_n \nsubseteq Az$ for all $n$. Then $I_\infty := \cap_{n=1}^\infty I_n$ is non-zero.
\end{prop}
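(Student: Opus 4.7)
The plan is to use the $z$-adic completeness of $A$ together with the Artinian property of $A/Az$ to realize $I_\infty$ as an inverse limit of non-zero modules with surjective transition maps, from which non-vanishing is immediate.

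First I will upgrade the Artinian property from $A/Az$ to every quotient $A/Az^k$. Since $z$ is normal, each $Az^j = z^jA$ is a two-sided ideal of $A$, and the left $A$-module map $a \mapsto az^j + Az^{j+1}$ from $A$ to $Az^j/Az^{j+1}$ is surjective with kernel exactly $Az$, by normality and regularity of $z$. Thus $Az^j/Az^{j+1} \cong A/Az$ as left $A$-modules, and the finite filtration $A \supseteq Az \supseteq \cdots \supseteq Az^k \supseteq 0$ exhibits $A/Az^k$ as a left $A$-module of finite length, hence in particular as a left Artinian ring.

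Now fix $k$ and look at the descending chain of left ideals $(I_n + Az^k)/Az^k$ of $A/Az^k$. By Artinian descent it stabilizes at some index $N_k$, which I choose non-decreasing in $k$, at a left ideal $\bar{J}_k \subseteq A/Az^k$. This $\bar{J}_k$ is non-zero because $Az^k \subseteq Az$ and $I_{N_k} \nsubseteq Az$ by hypothesis. Moreover, the reduction $A/Az^{k+1} \to A/Az^k$ sends $\bar{J}_{k+1}$ onto $\bar{J}_k$, both being equal to $(I_{N_{k+1}} + Az^k)/Az^k$. This yields an inverse system of non-zero left $A$-modules with surjective transition maps.

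Using the $z$-adic completeness $A = \varprojlim A/Az^k$, I identify $\varprojlim \bar{J}_k$ with the subset $\{x \in A : x + Az^k \in \bar{J}_k \text{ for all } k\}$ of $A$ and claim this equals $I_\infty$. The containment $I_\infty \subseteq \varprojlim \bar{J}_k$ is immediate since $I_\infty \subseteq I_{N_k}$. For the reverse, I use that each $I_n$ is $z$-adically closed by \cite[Chapter II, Theorem 2.1.2]{LVO}: given $x \in \varprojlim \bar{J}_k$ and any $n \geq 1$, choosing $k$ with $N_k \geq n$ yields $x \in I_{N_k} + Az^k \subseteq I_n + Az^k$ for all sufficiently large $k$, so $x \in \bigcap_k (I_n + Az^k) = I_n$. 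Surjectivity of the transition maps then allows me to lift any non-zero element of $\bar{J}_1$ recursively through the inverse system to obtain a non-zero element of $\varprojlim \bar{J}_k = I_\infty$. The only delicate ingredient is the closedness of each $I_n$; everything else is a routine inverse-limit argument that exploits normality of $z$.
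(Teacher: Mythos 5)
Your argument is correct and is essentially the same as the paper's, repackaged in the language of inverse limits rather than explicit Cauchy sequences: the paper constructs a sequence $x_n \in I_{k_n}$ with $x_n \equiv x_{n-1} \bmod Az^{n-1}$ by the same Artinian stabilization in each $A/Az^k$, takes its $z$-adic limit, and uses closedness of the $I_n$ to land in $I_\infty$, while you instead form the inverse system of stabilized images $\bar{J}_k$, observe the transition maps are surjective, identify $\varprojlim \bar{J}_k$ with $I_\infty$ via completeness and closedness, and lift a non-zero element of $\bar{J}_1$ through the tower. Your explicit verification that $A/Az^k$ is Artinian via the filtration $A \supseteq Az \supseteq \cdots \supseteq Az^k$ with factors $\cong A/Az$ (using regularity and normality of $z$) is a detail the paper leaves implicit, but the underlying mechanism and all the key ingredients (Artinian descent, $z$-adic completeness, closedness of left ideals from \cite[Chapter II, Theorem 2.1.2]{LVO}) are identical.
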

\begin{proof} Since $A/Az$ is Artinian, the chain
\[ I_1 + Az \supseteq I_2 + Az \supseteq \cdots \]
stops: there exists $k_1 \geq 1$ such that $I_n + Az = I_{k_1} + Az$ for all $n \geq k_1$. Pick $x_1 \in I_{k_1} \backslash Az$. Since $A / Az^2$ is Artinian, the chain
\[ I_1 + Az^2 \supseteq I_2 + Az^2 \supseteq \cdots \]
stops: there exists $k_2 > k_1$ such that $I_n + Az^2 = I_{k_2} + Az^2$ for all $n \geq k_2$. Pick $x_2 \in I_{k_2}$ such that $x_2 \equiv x_1 \mod Az$. Continuing like this, we construct a sequence of integers $1 \leq k_1 < k_2 < k_3 < \cdots$ and a sequence of elements
\[x_1 \in I_{k_1}, \quad x_2 \in I_{k_2}, \quad x_3 \in I_{k_3},\quad \cdots , \]
such that $x_n \equiv x_{n-1} \mod Az^{n-1} $ for all $n$.

Since $A$ is $z$-adically complete by assumption, the limit
\[x_\infty := \lim_{n\to \infty} x_n\]
exists in $A$. Fix $n \geq 1$; then $x_m \in I_{k_m} \subseteq I_m \subseteq I_n$ whenever $m \geq n$ because $k_m \geq m$. Since the $z$-adic filtration on $A$ is Zariskian by Lemma \ref{AProps}, each left ideal $I_n$ is closed by \cite[Chapter II, Theorem 2.1.2]{LVO}, so $x_\infty \in I_n$ for all $n \geq 1$. Moreover $x_\infty \equiv x_1 \mod Az$ so $x_\infty$ is non-zero by construction. Hence
\[0 \neq x_\infty \in \cap_{n=1}^\infty I_n\]
as claimed.
\end{proof}

\begin{thm}
The collection $\mathcal{S}$ of orders containing $A$ and equivalent to $A$ satisfies the ascending chain condition.
\end{thm}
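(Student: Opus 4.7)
The plan is to argue by contradiction: suppose $B_1 \subsetneq B_2 \subsetneq \cdots$ is a strictly ascending chain in $\mathcal{S}$, and seek a contradiction by squeezing the chain inside a cyclic (hence Noetherian) left $A$-module. Set $I_n := I_{B_n}$; by the previous Lemma these form a descending chain of nonzero two-sided ideals of $A$, and write $J := \bigcap_n I_n$.

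The key reduction is this: if $J \neq 0$, the chain stabilises. Indeed, $J$ is a left ideal of every $B_n$, since for $m \ge n$ the ideal $I_m$ is left $B_m$-stable and hence (as $B_n \subseteq B_m$) also left $B_n$-stable, giving $B_n J \subseteq \bigcap_{m \ge n} I_m = J$. Since $A$ is prime by Proposition~\ref{AProps}, Goldie's theorem provides a regular element $c \in J$, so $B_n c \subseteq J \subseteq A$ and hence $B_n \subseteq A c^{-1}$ for every $n$. The left $A$-module $A c^{-1}$ is cyclic, hence Noetherian, so the ascending chain $(B_n)$ of left $A$-submodules of $Ac^{-1}$ must stabilise---contradicting strictness.

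It therefore suffices to prove $J \neq 0$. If $I_n \nsubseteq Az$ for every $n$, this is exactly the conclusion of the preceding Proposition. Otherwise, after dropping finitely many initial terms, $I_n \subseteq Az$ for all $n$; then the normality of $z$ in $A$ lets us write $I_n = z I_n'$ where $I_n' := z^{-1} I_n$ is again a descending chain of nonzero two-sided ideals of $A$, and a nonzero element of $\bigcap_n I_n'$ yields (on multiplying by $z$) a nonzero element of $J$. Applying the same dichotomy to $(I_n')$ and iterating, the process must terminate in finitely many steps: $A$ is $z$-adically complete and so $\bigcap_{r \ge 0} A z^r = 0$, which rules out a nonzero ideal of $A$ being divisible by $z$ arbitrarily often.

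The main obstacle lies in making the iterative step rigorous, since $z$ need not normalise the individual $B_n$, and so the ``shifted'' ideals $I_n'$ are not obviously conductors of orders in $\mathcal{S}$. This is remedied by conjugating the whole chain by $z$: each $z^{-1} B_n z$ again lies in $\mathcal{S}$---conjugation by the unit $z \in Q(A)^\times$ preserves $A$ (because $z$ is normal in $A$) and the equivalence-of-orders relation, and it preserves strict containments---and the conductors of the conjugated chain relate cleanly to $(I_n')$, thereby reducing the inductive step to an instance of the original setup.
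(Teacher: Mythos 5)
Your overall strategy matches the paper's: bound the ascending chain inside a Noetherian left $A$-module by first showing $J := \bigcap_n I_{B_n} \neq 0$ and then using a regular element $c \in J$. The reduction from $J \neq 0$ to ACC is correct (and slightly more direct than the paper, which passes through $\mathcal{O}_l(J) \in \mathcal{S}$ rather than $Ac^{-1}$): your observation $B_n J \subseteq J$, Goldie's theorem, and the containment $B_n \subseteq Ac^{-1}$ all check out.

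The gap is in the step $J \neq 0$. You correctly reduce to the Proposition of \S\ref{ExistsMO}, which requires $I_n \nsubseteq Az$ for all $n$, and you try to dispose of the alternative $I_n \subseteq Az$ by dividing by $z$ and iterating. But the termination argument is wrong. If the iteration never terminates, what you obtain is: for every $k$ there is a threshold $n_k$ (possibly unbounded in $k$) such that $I_n \subseteq Az^k$ for all $n \geq n_k$. This does not place any single fixed nonzero ideal inside $\bigcap_r Az^r$, so invoking $\bigcap_r Az^r = 0$ does not rule out nontermination. In fact, nontermination of the iteration is essentially equivalent to $J = 0$ (since $J \subseteq I_{n_k} \subseteq Az^k$ for all $k$), so using $\bigcap_r Az^r = 0$ to force termination is circular. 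The conjugation-by-$z$ remark in your last paragraph does not repair this: the left conductor of $z^{-1}B_nz$ is $z^{-1}I_n z$, which is not $z^{-1}I_n$ unless $z$ normalizes $I_n$, and in any case termination remains the issue.

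What you are missing is that the case $I_n \subseteq Az$ never occurs, for a reason you have not exploited: $I_n$ is the \emph{largest} left ideal of $B_n$ contained in $A$. If $I_n \subseteq Az$, then $I_n z^{-1} \subseteq A$ is again a left ideal of $B_n$ (right multiplication by the unit $z^{-1}$ commutes with the left $B_n$-action), so by maximality $I_n z^{-1} \subseteq I_n$, hence $I_n \subseteq I_n z \subseteq I_n$, i.e.\ $I_n z = I_n$. Since $z \in J(A)$ and $I_n$ is a finitely generated right $A$-module, Nakayama forces $I_n = 0$, a contradiction. So $I_n \nsubseteq Az$ for every $n$, the Proposition applies directly, and no iteration is needed. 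This is exactly the argument the paper uses; once it is in place, your proof of the ACC from $J \neq 0$ goes through.
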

\begin{proof} Let $B_1 \subseteq B_2 \subseteq B_3 \subseteq \cdots$ be an ascending chain in $\mathcal{S}$. Let $I_n = I_{B_n}$ be the left conductor of $B_n$; then
\[ I_1 \supseteq I_2 \supseteq I_3 \supseteq \cdots\]
is a descending chain of non-zero two-sided ideals of $A$ by the Lemma. If $I_n \subseteq Az$ for some $n$ then $I_nz^{-1} \subseteq A$ is still a left ideal of $B_n$ so $I_n z^{-1} \subseteq I_n$ by the maximality of $I_n$. Therefore $I_nz = I_n$, which forces $I_n = 0$ by Nakayama's Lemma, a contradiction --- so in fact $I_n \nsubseteq Az$ for any $n$. Since $A$ is $z$-adically complete, $I_\infty := \cap_{n=1}^\infty I_n$ is non-zero by the Proposition.

Fix $n \geq 1$ and let $m \geq n$. Then
\[B_n I_\infty \subseteq B_n I_m \subseteq B_m I_m \subseteq I_m\]
so $B_n I_\infty \subseteq I_\infty$ for all $n$. Hence every term $B_n$ in our ascending chain is contained in $\mathcal{O}_l(I_\infty) := \{q \in Q(A) : qI_\infty \subseteq I_\infty\}$. Since $I_\infty$ is a non-zero two-sided ideal of the prime ring $A$, it contains a regular element by Goldie's Theorem \cite[Proposition 2.3.5(ii)]{MCR}. Therefore $\mathcal{O}_l(I_\infty) \in \mathcal{S}$ by \cite[Lemma 3.1.12(i)]{MCR}. In particular, $\mathcal{O}_l(I_\infty)$ is a Noetherian $A$-module on both sides by Proposition \ref{Kdim1}(a). So the chain $B_1 \subseteq B_2 \subseteq B_3 \subseteq \cdots \subseteq \mathcal{O}_l(I_\infty)$ of $A$-modules must terminate.
\end{proof}

\subsection{Remarks}
1. Theorem \ref{ExistsMO} and Proposition \ref{ExistsMO} both fail if $A$ is not assumed to be $z$-adically complete, even in the case when $A$ is commutative. This is clearly illustrated by Akizuki's example \cite{Ak} of a one-dimensional commutative Noetherian local domain $A$ whose integral closure is not a finitely generated $A$-module. See \cite{ReidAki} for a more modern version of this example. This explains the need to pass to the microlocalisation of $R$.

2. It is well-known \cite[Th\'eor\`eme 23.1.5]{EGAIV1} that a commutative \emph{complete} local Noetherian domain $A$ is a Japanese ring, so in particular the integral closure of $A$ in its field of fractions is a finitely generated $A$-module. This is usually proved using Cohen's Structure Theorem for complete local commutative Noetherian rings, which is not available in the non-commutative case. In the special case when $\mathcal{K}(A) = 1$, Theorem \ref{ExistsMO} gives another proof of this fact: the maximal order $B$ is a commutative complete discrete valuation ring by Theorem \ref{StructThm} and it is integral over $A$, so it must be the integral closure of $A$ in $Q(A)$.

\subsection{Properties of $B/J(B)$}\label{CSA}Before we can give the proof of Theorem \ref{MainC}, we need to study the factor ring $B/J(B)$ more carefully.

\begin{prop} Let $B \in \mathcal{S}$ be a maximal order. Then $C:=B/J(B)$ is a central simple algebra and the associated graded ring of $B$ with respect to the $J(B)$-adic filtration is isomorphic to the polynomial ring $C[X]$.
\end{prop}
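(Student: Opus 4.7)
The first step is to invoke Theorem \ref{StructThm} and write $B \cong M_k(D)$ for some integer $k \geq 1$ and some complete non-commutative discrete valuation ring $D$ with uniformizer $\pi_D$. Standard matrix-ring calculations then give $J(B) = M_k(\pi_D D) = \pi B = B\pi$, where $\pi := \pi_D \cdot I_k$ is a regular normal element of $B$, so that
\[ C = B/J(B) \;\cong\; M_k(D/\pi_D D) \;=\; M_k(\bar D), \]
where $\bar D$ denotes the residue skew field of $D$. In particular $C$ is simple Artinian.

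Since $J(B)$ is a principal two-sided ideal generated by the regular normal element $\pi$, a direct computation shows that the $J(B)$-adic associated graded ring takes the form of a skew polynomial ring
\[ \gr_{J(B)} B \;\cong\; C[X; \bar\sigma], \]
where $\bar\sigma \in \Aut(C)$ is induced by conjugation by $\pi$. To complete the proof it therefore suffices to show (i) that $C$ is finite-dimensional over its centre $Z(C) = Z(\bar D)$ and (ii) that $\bar\sigma$ fixes $Z(C)$ pointwise. Given (i) and (ii), Skolem--Noether supplies $u \in C^\times$ with $\bar\sigma = \mathrm{ad}(u)$; lifting $u$ to $\tilde u \in B^\times$ and replacing the normal generator $\pi$ by $\tilde u^{-1}\pi$ produces a new normal generator whose conjugation action on $C$ is the identity, so the skew polynomial ring collapses to the ordinary polynomial ring $C[X]$ with $X$ central.

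The main obstacle is establishing (i) and (ii). My strategy is to exploit the ambient filtration: by construction $\gr\widehat{Q} \cong (\gr R)_T$ is commutative, and Proposition \ref{MicroLoc} identifies $U/J(U)$ as a commutative field. Passing to $A = U/\mathfrak{q}$ and then to $B$ via the containment $B \subseteq z^{-k}A$ of Proposition \ref{Kdim1}(a), one should be able to realise $C$ as an Artinian quotient of a finitely generated module over $A/J(A)$, giving (i). For (ii), the commutativity of $\gr\widehat{Q}$ forces conjugation by $\pi$ to act trivially on any commutative graded subring, and chasing this through the filtration comparison should pin down that $\bar\sigma|_{Z(C)} = \mathrm{id}$. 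The technical crux is carrying out this filtration comparison cleanly.
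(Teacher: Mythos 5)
Your overall shape coincides with the paper's: decompose $B \cong M_k(D)$ via Theorem \ref{StructThm}, observe that $J(B)$ is generated by a regular normal element so that $\gr B \cong C[X;\alpha]$ is a skew polynomial ring, prove $C$ is a central simple algebra, and then use Skolem--Noether to absorb the twist into the uniformizer. For the CSA step the paper takes a slightly cleaner route than what you sketch: it exhibits $C$ as a finitely generated module over $A/J(A)^t$ for some $t \geq 1$ --- not over $A/J(A)$ as you propose; the kernel of $A \to C$ is $A \cap J(B)$, which is only known to contain some power $J(A)^t$, not $J(A)$ itself --- then notes that $A/J(A)^t$ satisfies the polynomial identity $(xy-yx)^t$ because $A/J(A)$ is commutative, concludes that $C$ is a PI ring, and invokes Kaplansky's theorem (primitive PI $\Rightarrow$ central simple). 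Your route via finite-dimensionality over a subfield is morally the same but this $t$-versus-$1$ subtlety needs to be handled.

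The more substantive issue is your item (ii). You are right that Skolem--Noether, in the form needed here (an automorphism of a central simple $Z(C)$-algebra fixing $Z(C)$ pointwise is inner), requires one to check $\alpha|_{Z(C)} = \mathrm{id}$; the paper applies the theorem without commenting on this, and the normality of $c$ in $B$ only forces $\alpha$ to fix the image of $Z(B)$ in $C$, which is a priori a proper subfield of $Z(C)$. So flagging (ii) is a legitimate observation. But your proposed resolution --- that ``the commutativity of $\gr\widehat{Q}$ forces conjugation by $\pi$ to act trivially on any commutative graded subring, and chasing this through the filtration comparison should pin down $\bar\sigma|_{Z(C)} = \mathrm{id}$'' --- is a declared intention, not an argument. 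The element $c$ lives in $B$, not in $U$ or $\widehat{Q}$; $B$ is only sandwiched between $A$ and $z^{-k}A$; and the $J(B)$-adic filtration on $B$ is not the restriction of the degree filtration on $\widehat{Q}$. That comparison is exactly where the work lies, and the proposal does not carry it out. As it stands, (ii) is the genuine gap in your plan.
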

\begin{proof} By Theorem \ref{StructThm}, $B$ is isomorphic to $M_k(D)$ for some non-commutative discrete valuation ring $D$. Pick any element $c \in J(D) \backslash J(D)^2$; then $c$ is a regular normal element in $B$ which generates $J(B)$, and
\[\gr B = C[\gr c; \alpha]\]
is a skew-polynomial ring, where $\alpha : C\to C$ is the automorphism induced by conjugation by $c$.

By Proposition \ref{Kdim1}(a), $C$ is a finitely generated Artinian $A$-module on both sides. By Proposition \ref{AProps}, $A$ is scalar local with maximal ideal $J(A)$ and commutative residue field $A/J(A)$. Hence $C$ is a finitely generated right $A/J(A)^t$ module for some $t \geq 1$, say. Now because $A/J(A)$ is commutative and $J(A)/J(A)^t$ is nilpotent, $A/J(A)^t$ satisfies the polynomial identity $(xy-yx)^t$. Hence $C$ is a PI ring by \cite[Corollary 13.4.9(i)]{MCR}. But $C = B/J(B)$ is primitive by construction, so $C$ is a central simple algebra by Kaplansky's Theorem \cite[Theorem 13.3.8]{MCR}.

Now by the Skolem-Noether Theorem \cite[Theorem 3.1.2]{Rowen}, the automorphism $\alpha : C \to C$ is given by conjugation by some element $q + cB \in C^\times$. Because $cB = J(B)$, $q$ must be a unit in $B$. Replacing the uniformizer $c$ by $q^{-1}c$ then has the effect of making the symbol $X$ of $c$ central in the graded ring $\gr B$, so $\gr B \cong C[X]$ as claimed. \end{proof}

\subsection{Proof of Theorem \ref{MainC}}
\label{PfMainZarThm}
By Theorem \ref{ExistsMO}, we can find a maximal order $B$ of $Q(A)$ equivalent to $A$. Then we have the following commutative diagram of rings, where the vertical maps are inclusions of the rings in the top row into their respective classical rings of quotients:
\[\xymatrix{ R \ar@{^{(}->}[r]\ar[d] &  U \ar@{>>}[r]\ar[d] & A \ar@{^{(}->}[r]\ar[d] & B \ar[d] \\
 Q \ar@{^{(}->}[r] & \widehat{Q} \ar@{>>}[r]_{\eta} & Q(A) \ar@{=}[r] & Q(B).}\]
Let $\eta:\widehat{Q} \twoheadrightarrow Q(A)$ be the natural surjection, and let $v : Q \to \mathbb{Z}\cup \{\infty\}$ be the restriction of the $J(B)$-adic filtration on $Q(A)$ to $Q$; thus
\[ v(x) = \min\{ n \in \mathbb{Z} : \eta(x) \in J(B)^n\}\]
if $x \neq 0$ and $v(0) = \infty$. Note that this filtration is separated because $Q$ is a simple ring and $v(1) = 0$.

(a) Recall from Proposition \ref{HomogLoc}(c) that $Y \in \gr \widehat{Q} \cong (\gr R)_T$ is a unit of degree $\ell > 0$. By construction, the element $y \in J(U)$ satisfies $\gr y = Y$, so
\[ (\widehat{Q})_{k\ell} = y^k U\quad\mbox{for all}\quad k \in\mathbb{Z}.\]
By the proof of Lemma \ref{Kdim1}(d), $z^t \in J(B)$ for a large enough integer $t$, where $z = \eta(y) \in J(A)$. Hence
\[ \eta\left( (\widehat{Q})_{tn\ell}\right) = z^{tn}A \subseteq J(B)^n\]
for all $n \geq 0$. This means that the map $\eta$ is continuous with respect to the natural filtration on $\widehat{Q}$ and the $J(B)$-adic filtration on $Q(A)$. But the map $R \to \widehat{Q}$ is continuous by the definition of the filtration on $Q$ so the composite map $(R,w) \to (Q, v)$ must also be continuous.

(b) This is clear, because $\eta(R_0) \subseteq \eta(U) = A \subseteq B$ by construction.

(c) The inclusion $Q\hookrightarrow \widehat{Q}$ is continuous with dense image, and $\eta : \widehat{Q} \to Q(A)$ is a continuous surjection. Therefore $\eta(Q)$ is dense in $Q(A)$, and 
\[\gr^v Q \cong \gr \eta(Q) = \gr Q(A).\]
By Proposition \ref{CSA}, $\gr B = C[X]$ where $C = B/J(B)$ is a central simple algebra, so $\gr Q(A) = C[X,X^{-1}]$.  

Finally, the restriction of $v$ to $Q(D)$ is a valuation by construction because $D$ is a non-commutative discrete valuation ring, and $Z(Q)$ is a subring of $Q(D)$.

\section{Automorphisms of $p$-valued groups}

\subsection{$p$-valued groups and $p$-saturated groups}
\label{pVal}
Recall \cite[Definition III.2.1.2]{Laz1965} that a \emph{$p$-valuation} on a group $G$ is a function 
\[\omega : G \to \mathbb{R}_\infty\]
such that for all $x,y \in G$ we have 
\begin{itemize}
\item $\omega(xy^{-1}) \geq \min \{ \omega(x), \omega(y)\}$,
\item $\omega(x^{-1}y^{-1}xy) \geq \omega(x) + \omega(y)$,
\item $\omega(x) = \infty$ if and only if $x = 1$,
\item $\omega(x) > \frac{1}{p-1}$, and
\item $\omega(x^p) = \omega(x) + 1$.
\end{itemize}
The group $G$ is said to be \emph{$p$-valued} if it has a $p$-valuation $\omega$. A \emph{morphism} of $p$-valued groups is a group homomorphism $f : G \to H$ such that $\omega(f(x)) \geq \omega(x)$ for all $x \in G$. 

Define, for each $\nu \in \mathbb{R}$, $G_\nu = \{ g \in G : \omega(g) \geq \nu\}$; this is a normal subgroup of $G$. The group $G$ carries a natural topology which has the $G_\nu$ as a fundamental system of open neighbourhoods of the identity; in this way $G$ becomes a topological group and we say that $G$ is a \emph{complete $p$-valued group} if it is complete with respect to this topology.

Recall \cite[Definition III.2.1.6]{Laz1965} that a complete $p$-valued group $G$ is said to be \emph{$p$-saturated} if the following condition holds:

$\bullet$ if $\omega(x) > 1/(p-1) + 1$, there exists $y \in G$ such that $x = y^p$.

\subsection{Ordered bases}\label{OrdBas} Let $G$ be a complete $p$-valued group. Recall \cite[III.2.2.4]{Laz1965} that an \emph{ordered basis} for $G$ is a subset $\{x_i : i \in I\}$ of $G$ for some totally ordered index set $I$, such that
\begin{itemize}
\item every element $y \in G$ can be written uniquely as a convergent product $y = \prod\limits_{i\in I} x_i^{\lambda_i}$ for some $\lambda_i \in \Zp$, and
\item $\omega(y) = \inf\limits_{i \in I} (\omega(x_i) + v_p(\lambda_i))$.
\end{itemize}
Recall that the \emph{associated graded group} of $G$ is the group
\[\gr G = \bigoplus_{\nu \in \mathbb{R}} G_\nu/G_{\nu^+}\]
where $G_{\nu^+} := \{ g \in G : \omega(g) > \nu\}$. This has the structure of an $\mathbb{R}$-graded $\Fp[\pi]$-Lie algebra, where the action of $\pi$ on homogeneous components is given by $\pi\cdot g G_{\nu^+} = g^pG_{(\nu+1)^+}$. The $p$-valuation $\omega$ on $G$ is said to be \emph{discrete} if $\omega(G \backslash \{1\})$ is a discrete subset of $\mathbb{R}$. 

We say that the complete $p$-valued group $G$ has \emph{finite rank} if it has a finite ordered basis. This property turns out to be independent of the particular $p$-valuation on $G$. 

\begin{lem} Let $H$ be a closed subgroup of a complete $p$-valued group $G$ of finite rank. Then there exists a sequence of integers $n_1 \leq n_2 \leq \cdots \leq n_e$ and an ordered basis $\{g_1,\ldots,g_d\}$ for $G$ such that $\{g_1^{p^{n_1}},g_2^{p^{n_2}}, \cdots, g_d^{p^{n_e}}\}$ is an ordered basis for $H$.
\end{lem}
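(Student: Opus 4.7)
The strategy is to pass to the associated graded group, reduce the problem to linear algebra over the graded principal ideal domain $\mathbb{F}_p[\pi]$, and then lift a compatible pair of bases back to $G$.

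The associated graded group $\gr G$ is a free graded $\mathbb{F}_p[\pi]$-module of rank $d$, with $\pi$ acting homogeneously of degree $+1$, and any ordered basis of $G$ descends to a homogeneous free $\mathbb{F}_p[\pi]$-basis of $\gr G$. Since $H$ is closed in $G$, the restriction of $\omega$ makes it a $p$-valued group, and $\gr H$ embeds as a graded $\mathbb{F}_p[\pi]$-submodule of $\gr G$. I would then apply a graded version of the Smith normal form to this inclusion to produce a homogeneous free basis $\xi_1, \ldots, \xi_d$ of $\gr G$ and non-negative integers $n_1 \leq n_2 \leq \cdots \leq n_e$ such that $\pi^{n_1}\xi_1, \ldots, \pi^{n_e}\xi_e$ is a homogeneous free basis of $\gr H$. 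The usual Smith normal form proof adapts directly because $\mathbb{F}_p[\pi]$ is a graded PID with $\pi$ homogeneous and positively graded, so all elementary operations may be chosen grading-preserving.

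Having found such a compatible basis at the graded level, I would lift each $\xi_i$ to a representative $g_i \in G$ with $\omega(g_i) = \deg \xi_i$. Lazard's criterion from $\S$\ref{OrdBas} then automatically makes $\{g_1, \ldots, g_d\}$ into an ordered basis of $G$; and \emph{if} the lifts can be chosen so that $g_i^{p^{n_i}} \in H$ for $i = 1, \ldots, e$, the same criterion applied to $H$ yields at once that $\{g_1^{p^{n_1}}, \ldots, g_e^{p^{n_e}}\}$ is an ordered basis of $H$, since its image in $\gr H$ is the chosen free basis by construction.

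The main obstacle is precisely this last compatibility. Starting from any initial lifts $g_i^{(0)}$, there exist $h_i \in H$ whose image in $\gr G$ coincides with that of $(g_i^{(0)})^{p^{n_i}}$, so the discrepancy $(g_i^{(0)})^{p^{n_i}} h_i^{-1}$ lies in $G_{(\deg \xi_i + n_i)^+}$. Using the $p$-valuation identities $\omega(x^p) = \omega(x) + 1$ and $\omega([x,y]) \geq \omega(x) + \omega(y)$, I would iteratively modify $g_i^{(0)}$ by correction factors drawn from successively deeper $G_\nu$'s, exploiting the expansion of $(xu)^{p^{n_i}}$ as $x^{p^{n_i}} u^{p^{n_i}}$ plus higher-order commutator terms to drive the error into arbitrarily small subgroups. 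Completeness of $G$ then produces a limit $g_i$, and closedness of $H$ forces $g_i^{p^{n_i}} \in H$. The technical crux is checking that each iteration strictly decreases the $\omega$-value of the error, which is where the interaction between the $p$-th power map and the commutator identities really has to be tracked; once that is done, everything else is routine bookkeeping via Lazard's criterion.
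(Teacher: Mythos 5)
Your overall approach coincides with the paper's: pass to $\gr G$, apply the elementary divisors theorem over $\Fp[\pi]$ to $\gr H \subseteq \gr G$ to obtain a homogeneous basis $\xi_1,\dots,\xi_d$ with $\pi^{n_i}\xi_i$ ($i\le e$) spanning $\gr H$, and then lift to $G$, using Lazard's characterisation of ordered bases via their symbols. The paper's own treatment of the lifting step is terse --- it takes ``any lift $g_i$'' of $\xi_i$ --- and the compatibility issue you raise is real: an arbitrary lift $g_i$ need not satisfy $g_i^{p^{n_i}}\in H$. Already for $G=\Zp^2$ with the standard $p$-valuation and $H=\{(pa,pa):a\in\Zp\}$, the adapted graded basis is $\xi_1'=\xi_1+\xi_2$, $\xi_2'=\xi_2$ with $n_1=1$, yet the lift $g_1'=(1,1+p)$ of $\xi_1'$ has $(g_1')^p=(p,p+p^2)\notin H$. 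So some care in the choice of lift is genuinely needed, and you are right to flag it.

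Where your write-up falls short is exactly the step you defer as ``the technical crux''. At each stage of the proposed iteration you want to decrease $\omega(\delta_r)$ for $\delta_r=g_r^{p^{n_i}}h_r^{-1}$ by replacing $g_r\mapsto g_ru$ and $h_r\mapsto h_rh'$ with $h'\in H$; after expanding $(g_ru)^{p^{n_i}}$ and discarding the commutator corrections of strictly higher valuation, this forces the leading-order equation $\gr\delta_r=\gr h'-\pi^{n_i}\gr u$ with $\gr h'\in\gr H$ homogeneous and $\gr u\in\gr G$ homogeneous of degree $\omega(\delta_r)-n_i$. Equivalently $\gr\delta_r$ must lie in the subspace $\gr H+\pi^{n_i}\gr G$ of $\gr G$ in the relevant degree. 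That is not automatic for a general homogeneous element of that degree --- for a fixed adapted basis this subspace can be proper, typically because some generator $\xi_j$ has $\deg\xi_j>\omega(\delta_r)-n_i$ and $n_j\ge n_i$ --- so you cannot simply assert that a suitable $u$ and $h'$ exist. You would need to show that the specific element $\gr\delta_r$ arising from the construction always lands in this subspace, or reorganise the construction (for instance, first lifting $\pi^{n_i}\xi_i$ to $h_i\in H$ and arguing that $h_i$ has a $p^{n_i}$-th root in $G$, or running the induction in an order that keeps the obstructing directions under control). Until that is carried out, the iteration may stall at its very first step, and the proof is incomplete.
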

\begin{proof} 
Consider the $\Fp[\pi]$-modules $\gr H \subseteq \gr G$; by the elementary divisors theorem \cite[Theorem I.1.2.4]{Laz1965} we can find a homogeneous basis $\{\xi_1,\ldots, \xi_d\}$ for $\gr G$ over $\Fp[\pi]$ such that $\{\pi^{n_1}\xi_1, \ldots, \pi^{n_e}\xi_e\}$ is a basis for $\gr H$ over $\Fp[\pi]$ for some $e \leq d$ and some increasing sequence of non-negative integers $n_i$. 

Let $g_i \in G$ be any lift of $\xi_i \in \gr G$. Because the $p$-valuation on $G$ is discrete by \cite[Proposition III.2.2.6]{Laz1965}, we deduce that $\{g_1,\ldots, g_d\}$ is an ordered basis of $G$ and that $\{g_1^{p^{n_1}}, \ldots, g_e^{p^{n_e}}\}$ is an ordered basis for $H$ by applying \cite[Proposition III.2.2.5]{Laz1965}. \end{proof}
Clearly $d = \dim G$ and $e = \dim H$ are the ranks of $H$ and $G$ respectively, and $e = d = \dim G$ if and only if $H$ is open in $G$.

\subsection{Lazard's equivalence of categories}\label{LazEq}
Let $\mathfrak{g}$ be a Lie algebra over $\Zp$. Recall \cite[I.2.2.4]{Laz1965} that $\mathfrak{g}$ is said to be \emph{valued} if there exists a function $w : \mathfrak{g} \to \mathbb{R}_\infty$ satisfying
\begin{itemize}
\item $w(x - y) \geq \min \{w(x), w(y)\}$,
\item $w([x,y]) \geq w(x) + w(y)$,
\item $w(x) = \infty$ if and only if $x = 0$,
\item $w(\lambda x) = v_p(\lambda) + w(x)$
\end{itemize}
for all $x,y \in \mathfrak{g}$ and $\lambda \in \Zp$. The Lie algebra $\mathfrak{g}$ is said to be \emph{saturated} if it is complete with respect to the topology defined by the submodules $\mathfrak{g}_\nu = \{x \in \mathfrak{g} : w(x) \geq \nu\}$ of $\mathfrak{g}$, and the following extra conditions hold:
\begin{itemize}
\item $w(x) > \frac{1}{p-1}$ for all $x \in \mathfrak{g}$, and
\item if $w(x) > 1/(p-1) + 1$, there exists $y \in \mathfrak{g}$ such that $x = py$.
\end{itemize}
A \emph{morphism} of saturated $\Zp$-Lie algebras is a Lie homomorphism $f : \mathfrak{g} \to \mathfrak{g}'$ such that $w(f(x)) \geq w(x)$ for all $x \in \mathfrak{g}$.

Lazard proved \cite[IV.3.2.6]{Laz1965} that there is an isomorphism between the category of $p$-saturated groups and the category of saturated $\Zp$-Lie algebras. Let us recall how this isomorphism works. If $G$ is a $p$-saturated group, let the corresponding saturated $\Zp$-Lie algebra be called $\log(G)$. We view it as a set of formal symbols $\{ \log(g) : g \in G \}$; the $\Zp$-Lie algebra structure on this set is given by the formulas
\[ \begin{array}{cllc} 
\lambda \cdot \log(g) &=& \log(g^\lambda),  & g \in G, \lambda \in \Zp\\
\log(g) + \log(h) &=& \log\left(\lim\limits_{r \to \infty} (g^{p^r}h^{p^r})^{p^{-r}}\right), & g,h \in G\\

[\log(g),\log(h)] &=& \log\left(\lim\limits_{r \to \infty} (g^{p^r}h^{p^r} g^{-p^r}h^{-p^r})^{p^{-2r}}\right) & g,h \in G
\end{array}\]
and the valuation $w$ is given by $w(\log(g)) = \omega(g)$. Conversely, if $\mathfrak{g}$ is a saturated $\Zp$-Lie algebra, let the corresponding $p$-saturated group be called $\exp(\mathfrak{g})$. We view it as a set of formal symbols $\{ \exp(u) = e^u : u \in \mathfrak{g}\}$; the group structure on this set is given by 
\[\begin{array}{cllc}e^u \cdot e^v &=& \exp( \Phi(u,v) )&\quad u,v \in \mathfrak{g},\\
(e^u)^{-1} &=& e^{-u} & \quad u \in \mathfrak{g}
\end{array}\]
and the $p$-valuation is given by $\omega(e^u) = w(u)$ for all $u \in \mathfrak{g}$. Here $\Phi(u,v) = u + v + \frac{1}{2}[u,v] + \cdots $ is the \emph{Baker-Campbell-Hausdorff series}, an infinite series with rational coefficients consisting only of Lie words in $u$ and $v$; see \cite[Th\'eor\`eme IV.3.2.2]{Laz1965}.

\subsection{Transport of structure}\label{TransStr}
Let $f : G \to H$ be an increasing map between two $p$-saturated groups $G$ and $H$ in the sense that
\[\omega(f(g)) \geq \omega(g)\quad\mbox{for all}\quad g \in G\]
but $f$ is not necessarily a group homomorphism. Because $\log : G \to \log(G)$ and $\exp : \log(G) \to G$ are isometries by definition, $f$ induces an increasing map
\[f_\ast := \log\circ f \circ \exp : \log(G) \to \log(H)\]
between the associated saturated $\Zp$-Lie algebras. Similarly if $g : \mathfrak{g} \to \mathfrak{h}$ is an increasing map between two saturated $\Zp$-Lie algebras $\mathfrak{g}$ and $\mathfrak{h}$, then
\[g^\ast = \exp \circ g \circ \log : \exp(\mathfrak{g}) \to \exp(\mathfrak{h})\]
is an increasing map between the associated $p$-saturated groups $\exp(\mathfrak{g})$ and $\exp(\mathfrak{h})$. These notations extend Lazard's equivalence of categories in the sense that $f_\ast$ is the morphism of saturated $\Zp$-Lie algebras associated with a morphism of $p$-saturated groups $f$, and $g^\ast$ is the morphism of $p$-saturated groups associated with a morphism of saturated $\Zp$-Lie algebras $g$.

\subsection{Automorphisms}\label{AutLaz1965} Define the \emph{degree} of an automorphism $\varphi : G \to G$ of a $p$-valued group $G$ by the formula
\[\deg_\omega(\varphi) :=\inf\limits_{g \in G}(\omega(\varphi(g)g^{-1}) - \omega(g)).\]
Thus $\omega( \varphi(g) g^{-1} ) \geq \omega(g) + \deg_\omega(\varphi)$ for all $g \in G$. Note that 
\[\deg_\omega(\varphi) \geq 0 \quad\mbox{ if and only if }\quad \omega(\varphi(g)) \geq \omega(g)\quad \mbox{ for all }\quad g \in G\]
because $\omega(\varphi(g)) \geq \min \{\omega(\varphi(g)g^{-1}) , \omega(g)\}$. Thus $\varphi$ is an increasing map whenever $\deg_\omega(\varphi) \geq 0$, and moreover $\deg_\omega(\varphi) > 0$ forces $\varphi$ to be an isometry. Define
\[\Aut^\omega(G) := \left\{\varphi \in \Aut(G) : \deg_\omega(\varphi) >  \frac{1}{p-1}\right\};\] 
it is not difficult to see that this is a subgroup of $\Aut(G)$. 

Similarly, we define the \emph{degree} of a $\Zp$-linear endomorphism $\sigma : \mathfrak{g} \to \mathfrak{g}$ of a valued $\Zp$-Lie algebra $\mathfrak{g}$ by the formula
\[\deg_w(\sigma) := \inf\limits_{u \in \mathfrak{g}}( w(\sigma(u)) - w(u) ).\]
In this way $A = \End_{\Zp}(\mathfrak{g})$ becomes a valued associative $\Zp$-algebra in the sense of \cite[I.2.2.4]{Laz1965}. Then
\[ \GL^w(\mathfrak{g}) := \left\{\sigma \in A : \deg_w(\sigma - 1) > \frac{1}{p-1}\right\}\]
is a subgroup of the group of units $\GL(\mathfrak{g})$ of $A$ and the map 
\[\sigma \mapsto \deg_w(\sigma - 1)\]
is a \emph{$p$-valuation} on $\GL^w(\mathfrak{g})$ by \cite[Exercise III.3.2.6]{Laz1965}. 

\subsection{Proposition}\label{IsomIsom} Let $G$ be a $p$-saturated group and let $\mathfrak{g} = \log(G)$. The transport of structure map $\varphi \mapsto \varphi_\ast$ defines an isometric monomorphism $\Aut^\omega(G) \hookrightarrow \GL^w(\mathfrak{g})$. 
\begin{proof} The fact that $\varphi_\ast$ is an automorphism of $\mathfrak{g}$ and that $\varphi \mapsto \varphi_\ast$ is a group homomorphism follows from the isomorphism of categories theorem \cite[IV.3.2.6]{Laz1965}. Now
\[ \begin{array}{lll} w(\varphi_\ast(\log(g)) - \log(g)) &=& w(\log (\varphi(g)) + \log (g^{-1}))  \\
&=& \omega\left(\lim\limits_{r \to \infty} (\varphi(g)^{p^r}g^{-p^r})^{p^{-r}}\right).\end{array}\] 
However \cite[Proposition III.2.1.4]{Laz1965} shows that
\[\omega\left( (\varphi(g)^{p^r} g^{-p^r})^{p^{-r}}\right) = \omega( \varphi(g) g^{-1} )\]
for all $r$, so we see that
\[ w(\varphi_\ast(\log(g)) - \log(g)) = \omega( \varphi(g) g^{-1} )\quad\mbox{for all} \quad g \in G.\]
Therefore
\[\begin{array}{lllll} \deg_w(\varphi_\ast - 1) &=& \inf\limits_{u\in \mathfrak{g}}(w(\varphi_\ast(u) - u) - w(u)) &=& \\
& = & \inf\limits_{g \in G}(\omega(\varphi(g)g^{-1}) - \omega(g)) &=& \deg_\omega(\varphi)\end{array}\]
which shows that $\varphi_\ast \in \GL^\omega(\mathfrak{g})$ whenever $\varphi \in \Aut^\omega(G)$. 
\end{proof}

\begin{cor} Let $G$ be a $p$-saturated group. Then $\deg_\omega$ is a $p$-valuation on $\Aut^\omega(G)$ and $\Aut^\omega(G)$ is saturated with respect to this filtration.\end{cor}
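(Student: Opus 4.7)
The plan is to bootstrap from Proposition \ref{IsomIsom}, which provides an isometric group embedding $\iota:\Aut^\omega(G)\hookrightarrow \GL^w(\mathfrak{g})$ sending $\varphi$ to $\varphi_\ast$, where $\mathfrak{g}=\log(G)$. On the target, the cited Exercise III.3.2.6 of \cite{Laz1965} already gives that $\deg_w(\,\cdot\,-1)$ is a $p$-valuation on $\GL^w(\mathfrak{g})$; combined with completeness of $\mathfrak{g}$ (which holds since $G$ is $p$-saturated), it in fact makes $\GL^w(\mathfrak{g})$ a $p$-saturated group. So the whole corollary should follow by transport along $\iota$.

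First I would verify that $\deg_\omega$ is a $p$-valuation on $\Aut^\omega(G)$. Because $\iota$ is both a group homomorphism and an isometry in the sense that $\deg_w(\varphi_\ast-1)=\deg_\omega(\varphi)$ for all $\varphi$, every axiom of a $p$-valuation listed in $\S\ref{pVal}$ — the ultrametric inequality, the commutator estimate, the identity condition, the bound $>1/(p-1)$ (built into the definition of $\Aut^\omega(G)$), and the relation $\deg_\omega(\varphi^p)=\deg_\omega(\varphi)+1$ — pulls back directly from the corresponding statement on $\GL^w(\mathfrak{g})$.

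Next I would deal with saturation, which splits into completeness and extraction of $p$-th roots. For completeness: the image of $\iota$ is precisely the subgroup $\Aut^w_{\text{Lie}}(\mathfrak{g})$ of $\GL^w(\mathfrak{g})$ consisting of Lie-algebra automorphisms, because of Lazard's equivalence $\S\ref{LazEq}$ and the transport-of-structure formulas of $\S\ref{TransStr}$. Preserving the Lie bracket is a closed condition in the degree topology, so $\iota(\Aut^\omega(G))$ is closed in the complete group $\GL^w(\mathfrak{g})$, hence complete; pulling back along the isometry $\iota$ yields completeness of $\Aut^\omega(G)$.

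For the $p$-th root property, suppose $\varphi\in\Aut^\omega(G)$ with $\deg_\omega(\varphi)>1/(p-1)+1$. By saturation of $\GL^w(\mathfrak{g})$ we obtain $\sigma\in\GL^w(\mathfrak{g})$ with $\sigma^p=\varphi_\ast$, explicitly $\sigma=\exp(\tfrac1p\log\varphi_\ast)$ where both series converge on the relevant open ball. The main obstacle is to show that $\sigma$ lies in $\iota(\Aut^\omega(G))$, i.e.\ that $\sigma$ preserves the Lie bracket on $\mathfrak{g}$. I would resolve this by writing $\varphi_\ast=\exp(D)$ where $D:=\log\varphi_\ast=-\sum_{n\geq 1}(1-\varphi_\ast)^n/n\in\End_{\Zp}(\mathfrak{g})$, and checking that $D$ is a derivation: the identity $\varphi_\ast^t[u,v]=[\varphi_\ast^t u,\varphi_\ast^t v]$ holds for every $t\in\Zp$ (since $\varphi_\ast^t\in\Aut^w_{\text{Lie}}(\mathfrak{g})$ by saturation and continuity), and differentiating this one-parameter family at $t=0$ yields the Leibniz rule for $D$. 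Then $\sigma=\exp(D/p)$ lies in $\exp(\mathrm{Der}(\mathfrak{g}))\subseteq\Aut^w_{\text{Lie}}(\mathfrak{g})$, so $\sigma=\psi_\ast$ for some $\psi\in\Aut(G)$, and $\deg_\omega(\psi)=\deg_w(\sigma-1)>1/(p-1)$ places $\psi$ in $\Aut^\omega(G)$ with $\psi^p=\varphi$. This establishes the saturation condition and concludes the proof.
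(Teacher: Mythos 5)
Your proof is correct, and the underlying strategy — transport the $p$-valuation and saturation from $\GL^w(\mathfrak{g})$ to $\Aut^\omega(G)$ along the isometric monomorphism $\varphi \mapsto \varphi_\ast$ of Proposition \ref{IsomIsom}, using \cite[Exercise III.3.2.6]{Laz1965} for the target — is exactly the one the paper takes; the paper simply cites the Proposition and the Exercise and leaves the descent implicit. You have filled in the two nontrivial descent steps: (i) the image of $\iota$ is the closed subgroup of Lie-algebra automorphisms in the complete group $\GL^w(\mathfrak{g})$, giving completeness of $\Aut^\omega(G)$; and (ii) the $p$-th root $\exp(\tfrac1p\log\varphi_\ast)$ constructed in $\GL^w(\mathfrak{g})$ is again a Lie automorphism because $\log\varphi_\ast$ is a derivation. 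For the latter you can make the ``differentiate at $t=0$'' step explicit without appealing to any analytic coefficient-extraction: writing $\sigma_t := \exp(tD)$ for $t = p^k \to 0$, one has $\frac{\sigma_t - 1}{t}[u,v] = \bigl[\frac{\sigma_t-1}{t}u,\,\sigma_t v\bigr] + \bigl[u,\,\frac{\sigma_t-1}{t}v\bigr]$, and passing to the limit gives the Leibniz rule directly.
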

\begin{proof}Apply the Proposition and \cite[Exercise III.3.2.6]{Laz1965}. \end{proof}

\subsection{The functor $\Sat$}\label{Sat}
The restriction of a $p$-valuation on a group $G$ to any subgroup of $G$ is again a $p$-valuation, so every subgroup of a $p$-valued group is $p$-valued. In particular, every subgroup of a $p$-saturated group is $p$-valued. Conversely, Lazard shows in \cite[III.3.3.1]{Laz1965} that if $G$ is a $p$-valued group then there exists an isometric inclusion $\iota_G : G \to \Sat(G)$ into a $p$-saturated group $\Sat(G)$, and that $\Sat(G) = G$ if and only if $G$ is $p$-saturated. Moreover every morphism $f : G \to H$ of $p$-valued groups extends to a unique morphism $\Sat(f) : \Sat(G) \to \Sat(H)$ making $\Sat$ into a functor. Thus $p$-valued groups are precisely the subgroups of $p$-saturated groups.

\begin{lem} Let $G$ be a complete $p$-valued group of finite rank. Then $f \mapsto \Sat(f)$ is an isometric embedding of $\Aut^\omega(G)$ into $\Aut^\omega(\Sat(G))$. \end{lem}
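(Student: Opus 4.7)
The plan is to show this by transporting everything over to the associated $\mathbb{Z}_p$-Lie algebras, where Proposition \ref{IsomIsom} allows us to compute $\deg_\omega$ as a degree of a linear endomorphism. That $f\mapsto\Sat(f)$ is a group homomorphism follows from functoriality of $\Sat$, and the same functoriality shows $\Sat(f)$ is a bijection with inverse $\Sat(f^{-1})$, so $\Sat(f)\in\Aut(\Sat(G))$ whenever $f\in\Aut(G)$. Injectivity is immediate: since $\Sat(f)\circ\iota_G=\iota_G\circ f$ and $\iota_G$ is injective, $\Sat(f)=1$ forces $f=1$. So the real content is the isometry claim, i.e.\ that $\deg_\omega(\Sat(f))=\deg_\omega(f)$ for $f\in\Aut^\omega(G)$ (which then also shows $\Sat(f)\in\Aut^\omega(\Sat(G))$ as soon as $f\in\Aut^\omega(G)$).

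The key structural input is that $G$ is open in $\Sat(G)$. This follows from Lemma \ref{OrdBas} together with the fact that $G$ and $\Sat(G)$ have the same (finite) rank; equivalently, the $\mathbb{Z}_p$-Lie subalgebra $\mathfrak{g}:=\log(\iota_G(G))$ of $\mathfrak{h}:=\log(\Sat(G))$ satisfies $\mathfrak{g}\otimes_{\mathbb{Z}_p}\mathbb{Q}_p = \mathfrak{h}\otimes_{\mathbb{Z}_p}\mathbb{Q}_p$, so every $u\in\mathfrak{h}$ can be written as $u=p^{-k}x$ for some $x\in\mathfrak{g}$ and some $k\geq 0$.

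Now I would apply Proposition \ref{IsomIsom} to the $p$-saturated group $\Sat(G)$ to obtain
\[\deg_\omega(\Sat(f)) \;=\; \deg_w(\Sat(f)_\ast-1) \;=\; \inf_{u\in\mathfrak{h}\setminus\{0\}}\bigl(w(\Sat(f)_\ast(u)-u)-w(u)\bigr).\]
Because $\Sat(f)_\ast$ is $\mathbb{Z}_p$-linear and $w(pu)=w(u)+1$, the quantity $w(\Sat(f)_\ast(u)-u)-w(u)$ is invariant under scaling $u$ by powers of $p$, so by the previous paragraph the infimum on the right is already attained on $\mathfrak{g}$. Finally, the identity $\Sat(f)\circ\iota_G=\iota_G\circ f$ combined with the computation in the proof of Proposition \ref{IsomIsom} gives, for every $g\in G$,
\[w\bigl(\Sat(f)_\ast(\log g)-\log g\bigr)\;=\;\omega\bigl(\Sat(f)(g)\,g^{-1}\bigr)\;=\;\omega\bigl(f(g)\,g^{-1}\bigr),\]
so restricting the infimum to $\mathfrak{g}$ and using $w(\log g)=\omega(g)$ returns exactly $\deg_\omega(f)$.

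The step I expect to be most delicate is the second one: the reduction of the infimum from $\mathfrak{h}$ to $\mathfrak{g}$. It is here that one genuinely uses the finite-rank hypothesis on $G$, since without it $\mathfrak{g}$ need not be $\mathbb{Q}_p$-cofinal in $\mathfrak{h}$ and the argument would break down. Everything else — functoriality, injectivity, and the linear-to-group translation via Lazard's isometries $\log$ and $\exp$ — is routine once Proposition \ref{IsomIsom} is in hand.
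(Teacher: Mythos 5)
Your proof is correct, and it takes a mildly different route than the paper. The paper works entirely at the group level: given $g\in\Sat(G)$, it invokes \cite[IV.3.4.1]{Laz1965} to find $n$ with $g^{p^n}\in G$, then uses the Lazard identity $\omega\bigl(\tilde\varphi(g)^{p^n}g^{-p^n}\bigr)=\omega\bigl(\tilde\varphi(g)g^{-1}\bigr)+n$ (from \cite[Prop.\ III.2.1.4]{Laz1965}) together with $\tilde\varphi|_G=\varphi$ to get $\omega(\tilde\varphi(g)g^{-1})\geq\deg_\omega(\varphi)+\omega(g)$ directly. You instead transport to $\mathfrak{h}=\log(\Sat(G))$ via Proposition~\ref{IsomIsom}, observe that $u\mapsto w(\Sat(f)_\ast(u)-u)-w(u)$ is invariant under multiplication by $p$, and reduce the infimum over $\mathfrak{h}$ to the cofinal subset $\log(\iota_G(G))$. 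These are genuinely the same mechanism in two coordinate systems: your cofinality statement ``every $u\in\mathfrak{h}$ has $p^ku\in\log(G)$'' \emph{is} the statement that $g^{p^k}\in G$, and the scaling-invariance you use in $\mathfrak{h}$ is precisely the image under $\log$ of the Lazard identity the paper applies. The paper's version avoids invoking Proposition~\ref{IsomIsom} and is slightly more self-contained; yours makes the role of $p$-linearity more visible and will feel more natural to a reader who already thinks in terms of the Lie algebra.

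Two small points worth tightening if you were to write this up. First, calling $\log(\iota_G(G))$ a $\mathbb{Z}_p$-\emph{Lie subalgebra} of $\mathfrak{h}$ is more than you know and more than you need: since $G$ is not assumed $p$-saturated, $\log(G)$ is a $\mathbb{Z}_p$-submodule (closed under the $\mathbb{Z}_p$-action because $\lambda\log g=\log(g^\lambda)$), but closure under $+$ and $[\,,\,]$ would require taking $p$-power roots that may leave $G$. Your argument only uses $\mathbb{Z}_p$-scaling and cofinality, so nothing breaks, but the label should be ``$\mathbb{Z}_p$-submodule.'' Second, be explicit that the identity $w(\Sat(f)_\ast(\log g)-\log g)=\omega(\Sat(f)(g)g^{-1})$ from the proof of Proposition~\ref{IsomIsom} holds for \emph{any} automorphism of a $p$-saturated group, not just those already known to lie in $\Aut^\omega$; otherwise the application to $\Sat(f)$ before you have verified $\Sat(f)\in\Aut^\omega(\Sat(G))$ could look circular. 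The computation there genuinely requires nothing of $\varphi$ beyond being an automorphism, so there is no circularity, but this is worth saying.
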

\begin{proof}
Let $\tilde{G} = \Sat(G)$ and let $\tilde{\varphi} = \Sat(\varphi)$ be the extension of $\varphi \in \Aut^\omega(G)$ to $\Aut(\tilde{G})$. Since $\iota_G:G \to \Sat(G)$ is an isometry, we will view $G$ as a subgroup of $\tilde{G}$ and denote the $p$-valuation on $\tilde{G}$ by the same letter $\omega$. 

Now clearly $\deg_\omega(\tilde{\varphi}) \leq \deg_\omega(\varphi)$. To see that the reverse inequality holds, let $g \in \tilde{G}$. Because $G$ has finite rank, \cite[Theorem IV.3.4.1]{Laz1965} tells us that we can find $n \in \mathbb{N}$ such that $g^{p^n} \in G$. Now by \cite[Proposition III.2.1.4]{Laz1965},
\[\begin{array}{lll} \omega(\tilde{\varphi}(g)g^{-1}) &=& \omega(\tilde{\varphi}(g)^{p^n}g^{-p^n}) - n \\
&=& \omega( \varphi( g^{p^n} ) g^{-p^n} ) - n\geq \\
&\geq & \deg_\omega(\varphi) + \omega( g^{p^n} ) -n = \deg_\omega(\varphi) + \omega(g).\end{array}\]
because $\tilde{\varphi}_{|G} = \varphi$. Therefore
\[\omega(\tilde{\varphi}(g)g^{-1}) \geq \deg_\omega(\varphi) + \omega(g)\quad \mbox{for all}\quad g \in \tilde{G},\]
and $\deg_\omega(\tilde{\varphi}) \geq \deg_\omega(\varphi)$. 
\end{proof}

\begin{cor} Let $G$ be a complete $p$-valued group of finite rank. Then
\begin{enumerate}[{(}a{)}]
\item $\deg_\omega$ is a $p$-valuation on $\Aut^\omega(G)$, and
\item $\Aut^\omega(G)$ is torsion-free.
\end{enumerate}\end{cor}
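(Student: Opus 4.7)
The plan is to deduce both parts from the Corollary following Proposition \ref{IsomIsom} via the isometric embedding $\varphi \mapsto \Sat(\varphi)$ of $\Aut^\omega(G)$ into $\Aut^\omega(\Sat(G))$ constructed in the Lemma above. Because $G$ has finite rank, so does $\Sat(G)$ (and it is by construction $p$-saturated), so the earlier Corollary applies to $\Sat(G)$ and tells us that $\Aut^\omega(\Sat(G))$ is already a $p$-saturated group with $p$-valuation $\deg_\omega$.

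For part (a), the isometric embedding means that the function $\deg_\omega$ on $\Aut^\omega(G)$ coincides with the restriction of the $p$-valuation $\deg_\omega$ on $\Aut^\omega(\Sat(G))$ to the image of $\Aut^\omega(G)$. Since all axioms of a $p$-valuation are inherited by any subgroup under restriction, part (a) will follow with no further work.

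For part (b), the strategy is to invoke the general fact that every $p$-saturated group is torsion-free, and apply this to $\Aut^\omega(\Sat(G))$, which contains $\Aut^\omega(G)$ as a subgroup. To justify this general fact briefly: Lazard's equivalence \cite[IV.3.2.6]{Laz1965} identifies a $p$-saturated group $H$ with $\exp(\mathfrak{h})$ for a saturated $\Zp$-Lie algebra $\mathfrak{h}$; the valuation axiom $w(\lambda u) = v_p(\lambda) + w(u)$ forces $\mathfrak{h}$ to be torsion-free as a $\Zp$-module, and since the equivalence transports $h^n$ to $n \cdot \log(h)$, the group $H$ itself must be torsion-free.

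No significant obstacle is anticipated: the substantive work has already been performed in the preceding Lemma (providing the isometric extension) and in the earlier Corollary (handling the $p$-saturated case). The only auxiliary point worth noting is that $\Sat(G)$ is of finite rank when $G$ is, but this follows from \cite[Theorem IV.3.4.1]{Laz1965}, which has already been invoked in the proof of the Lemma.
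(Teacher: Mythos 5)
Your proof is correct and matches the paper's approach: part (a) is obtained exactly as you describe, by restricting the $p$-valuation on $\Aut^\omega(\Sat(G))$ (given by Corollary \ref{IsomIsom}) along the isometric embedding from Lemma \ref{Sat}. For (b), the paper simply says ``this is clear,'' and indeed once (a) is known the shortest route is the direct observation that any $p$-valued group is torsion-free (if $x^n=1$ with $x\neq 1$ then $\infty = \omega(x^n) = \omega(x) + v_p(n) < \infty$); your detour through Lazard's Lie-algebra correspondence and the torsion-freeness of saturated $\Zp$-Lie algebras is sound but heavier than needed.
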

\begin{proof}(a) This follows from Corollary \ref{IsomIsom}.

(b) This is clear.
\end{proof}
\subsection{The logarithm of an automorphism}\label{LogAut}
Let $G$ be a $p$-saturated group. By Proposition \ref{IsomIsom}, $\deg_w(\varphi_\ast - 1) > 1/(p-1)$ for any $\varphi \in \Aut^\omega(G)$. Hence the logarithm series
\[\sum_{k=1}^\infty \frac{(-1)^{k+1}}{k} (\varphi_\ast - 1)^k\]
converges to an element $\log \varphi_\ast$, say, inside $\End_{\Zp}(\mathfrak{g})$ by \cite[III.1.1.5]{Laz1965}. It is easy to see that
\[ w( (\log \varphi_\ast)(u) ) \geq w(u) + \deg_\omega(\varphi)\]
so $\log \varphi_\ast $ is an increasing function $\mathfrak{g}\to \mathfrak{g}$ and we can transport it back to $G$.

\begin{defn} Let $G$ be a $p$-saturated group and let $\varphi \in \Aut^\omega(G)$. Define the \emph{logarithm} of $\varphi$ by the formula
\[z(\varphi) = (\log \varphi_\ast)^\ast : G \to G.\]
\end{defn}

Recalling the notation of $\S \ref{TransStr}$, we see that $z(\varphi)$ is \emph{a priori} just an increasing map $z(\varphi) : G \to G$. We will shortly see that in some cases, there is a way of defining $z(\varphi)$ more directly using the group structure on $G$.

\subsection{Automorphisms trivial mod centre}\label{AutModZ}
Let $G$ be a $p$-saturated group and let $Z$ be its centre. Let $\Aut^\omega_Z(G)$ denote the subgroup of $\Aut^\omega(G)$ which consists of automorphisms that induce the trivial automorphism of $G/Z$. Equivalently,
\[\Aut^\omega_Z(G) = \{ \varphi \in \Aut^\omega(G) : \varphi(g)g^{-1} \in Z \quad\mbox{for all}\quad g \in G.\}\]

\begin{prop} Let $G$ be a $p$-saturated group and let $\varphi\in\Aut^\omega_Z(G)$. 
\begin{enumerate}[{(}a{)}]
\item For all $g\in G$ and all $r \geq 0$, there exists $\epsilon_r(g) \in G$ such that
\[\varphi^{p^r}(g)g^{-1} = z(\varphi)(g)^{p^r}\epsilon_r(g)^{p^{2r}}.\]
\item $z(\varphi)(g) = \lim\limits_{r \to \infty} (\varphi^{p^r}(g) g^{-1})^{p^{-r}}$ for all $g \in G$.
\item $z(\varphi)$ is a group homomorphism from $G$ to $Z$.
\end{enumerate}
\end{prop}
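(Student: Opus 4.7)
The plan is to exploit the observation that, when $\varphi(g)g^{-1} \in Z$ for all $g \in G$, the linear map $D := \log \varphi_\ast \in \End_{\Zp}(\mathfrak{g})$ constructed in $\S\ref{LogAut}$ carries all of $\mathfrak{g}$ into the central Lie subalgebra $\mathfrak{z} := \log Z$. Indeed, setting $z := \varphi(g) g^{-1} \in Z$, the element $z$ commutes with $g$, so $\log(\varphi(g)) = \log(zg) = \log z + \log g$ with no Baker--Campbell--Hausdorff corrections, giving $(\varphi_\ast - 1)(\log g) = \log z \in \mathfrak{z}$. Since $Z$ is characteristic, $\varphi_\ast$ preserves $\mathfrak{z}$, and summing the logarithm series then forces $D(\mathfrak{g}) \subseteq \mathfrak{z}$. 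A trivial induction shows $\varphi^n(g) g^{-1} \in Z$ for every $n \geq 1$, so the same commutation tricks apply with $\varphi^{p^r}$ in place of $\varphi$ throughout.

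For part (a), write $\varphi_\ast^{p^r} = \exp(p^r D)$ inside $\End(\mathfrak{g})$ and split off the first-order term:
\[
(\varphi_\ast^{p^r} - 1)(\log g) \;=\; p^r D(\log g) \;+\; p^{2r}\,\eta_r(\log g),
\qquad
\eta_r(u) \;:=\; \sum_{k \geq 2} \frac{p^{r(k-2)}}{k!}\, D^k(u).
\]
Since $\deg_w(D) \geq \deg_\omega(\varphi) > 1/(p-1)$ by Proposition \ref{IsomIsom}, a routine $p$-adic estimate shows that $\eta_r(\log g)$ converges in $\mathfrak{z}$ (using $D^k(\mathfrak{g}) \subseteq \mathfrak{z}$ for all $k \geq 1$) and that its $w$-value strictly exceeds $1/(p-1)$, so $\epsilon_r(g) := \exp(\eta_r(\log g))$ is a well-defined element of $Z$. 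Because $z(\varphi)(g)$, $\epsilon_r(g)$, and $\varphi^{p^r}(g) g^{-1}$ all lie in $Z$, they commute with $g$, so taking $\log$ of each relevant product linearises it; exponentiating the displayed identity then yields the required factorisation $\varphi^{p^r}(g) g^{-1} = z(\varphi)(g)^{p^r} \, \epsilon_r(g)^{p^{2r}}$.

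Part (b) is now immediate: working inside the abelian group $Z$, extract $p^r$-th roots factorwise to get $(\varphi^{p^r}(g) g^{-1})^{p^{-r}} = z(\varphi)(g)\,\epsilon_r(g)^{p^r}$, and observe that $\omega(\epsilon_r(g)^{p^r}) = r + \omega(\epsilon_r(g)) \to \infty$. For part (c), write $\varphi^{p^r}(g) = u_r g$ and $\varphi^{p^r}(h) = v_r h$ with $u_r, v_r \in Z$; the centrality of $v_r$ gives
\[
\varphi^{p^r}(gh)\,(gh)^{-1} \;=\; u_r v_r \;=\; (\varphi^{p^r}(g) g^{-1})(\varphi^{p^r}(h) h^{-1}),
\]
and since everything is happening in the abelian group $Z$ we may take $p^{-r}$-th roots factor-by-factor and pass to the limit via (b).

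The chief obstacle is the delicate interchange between the group and the Lie algebra in part (a): $\log$ converts group multiplication into Lie addition only when the two factors commute, so the argument leans crucially on the centrality of $\varphi^{p^r}(g) g^{-1}$, of $z(\varphi)(g)$, and of $\epsilon_r(g)$, together with the strict inequality $\deg_\omega(\varphi) > 1/(p-1)$ that is needed both for convergence of the series defining $\eta_r$ and to guarantee that $\epsilon_r(g)$ lies in $G$ rather than merely in its saturation.
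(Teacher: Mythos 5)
Your proof is correct and takes essentially the same route as the paper: transport of structure via $\log$/$\exp$, using the centrality of $\varphi^{p^r}(g)g^{-1}$ (and hence the vanishing of the Baker--Campbell--Hausdorff corrections) to reduce the group-theoretic identity to the linear identity $(\varphi_\ast^{p^r}-1)(\log g) = p^r D(\log g) + p^{2r}\eta_r(\log g)$. The only cosmetic difference is that you name $D = \log\varphi_\ast$, record explicitly that $D^k(\mathfrak{g}) \subseteq \mathfrak{z}$ for $k\geq 1$, and write out the tail $\eta_r$ as a convergent series, whereas the paper packages the tail as an unspecified $\beta_r(u)\in\mathfrak{g}$ after a BCH computation of $\Phi(e^{p^r\alpha}(u),-u)$; both amount to the same estimate.
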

\begin{proof} (a) Using transport of structure, let us compare the expressions
\[ \varphi^{p^r}(g) g^{-1}\quad\mbox{and}\quad z(\varphi)(g)^{p^r} .\]
Write $g = e^u \in G$ for some $u \in \mathfrak{g} = \log(G)$, and $\sigma = \varphi_\ast = e^\alpha$ for some $\alpha \in \End_{\Zp}(\mathfrak{g})$. Then
\[ \log(z(\varphi)(e^u)^{p^r}) = p^r(\log \sigma)(u) = p^r \alpha(u),\]
whereas
\[ \log(\varphi^{p^r}(e^u) e^{-u}) = \log(e^{\sigma^{p^r}(u)} e^{-u}) = \Phi( e^{p^r \alpha}(u), -u).\]
Now because $\varphi^{p^r} \in \Aut^\omega_Z(G)$, $\sigma^{p^r} - 1$ maps $\mathfrak{g}$ into $Z(\mathfrak{g})$ and therefore 
\[ [e^{p^r \alpha}(u), -u] = [e^{p^r \alpha}(u) - u, -u] = 0.\]
Therefore
\[ \Phi( e^{p^r \alpha}(u), -u) = e^{p^r \alpha}(u) - u = p^r\alpha(u) + p^{2r} \beta_r(u)\]
for some $\beta_r(u) \in \mathfrak{g}$. So
\[ \log(z(\varphi)(g)^{-p^r} \varphi^{p^r}(g) g^{-1}) = \Phi(-p^r \alpha(u), p^r \alpha(u) + p^{2r} \beta_r(u)) \in p^{2r} \mathfrak{g}\]
and part (a) follows.

(b) The above computation shows that 
\[\log( (\varphi^{p^r}(g) g^{-1})^{p^{-r}} ) = \alpha(\log(g)) + p^r\beta_r(\log(g)) = \log( z(\varphi)(g) ) + p^r \beta_r(\log(g))\]
for all $g \in G$. Therefore
\[ \lim\limits_{r \to \infty}\log( (\varphi^{p^r}(g) g^{-1})^{p^{-r}} ) = \log ( z(\varphi)(g) ) \]
for all $g \in G$. Part (b) now follows because $\log : G \to \mathfrak{g}$ is a homeomorphism.

(c) For each $r \geq 0$ and $g, h \in G$ we have
\[ \varphi^{p^r}(gh) (gh)^{-1} = \varphi^{p^r}(g) \varphi^{p^r}(h) h^{-1} g^{-1} = \varphi^{p^r}(g) g^{-1} \cdot \varphi^{p^r}(h) h^{-1}\]
because $\varphi^{p^r}(h) h^{-1}$ is central in $G$ by assumption on $\varphi$. So $g \mapsto \varphi^{p^r}(g) g^{-1}$ is a group homomorphism $G \to Z$ for all $r$. Now take limits.
\end{proof}

\subsection{Proposition}\label{SatIsNice} Let $G$ be a $p$-valued group of finite rank, let $\varphi \in \Aut^\omega_Z(G)$ and let $\tilde{\varphi}$ be the extension of $\varphi$ to $\tilde{G} = \Sat(G)$. Then $\tilde{\varphi}$ is also trivial mod centre.
\begin{proof} By \cite[Theorem IV.3.4.1]{Laz1965}, we can find an integer $n$ such that $\tilde{G}^{p^n} \leq G$. Because $\tilde{G}$ is torsion-free, it follows that $Z(G) \leq Z(\tilde{G})$. Hence $\varphi(g)g^{-1} \in Z(\tilde{G})$ for all $g \in \tilde{G}^{p^n}$. 

Now fix $g \in \tilde{G}$ and consider $\log(\tilde{\varphi}(g)) - \log(g) \in \log(\tilde{G})$. We have
\[p^n (\log(\tilde{\varphi}(g)) - \log(g)) = \log(\varphi(g^{p^n})) - \log(g^{p^n}) = \log( \lim\limits_{r \to \infty} (\varphi(g^{p^{n+r}}) g^{-p^{n+r}})^{p^{-r}}) \]
which lies in $\log(Z(\tilde{G}))$ because $\tilde{G}/Z(\tilde{G})$ is torsion-free. Since $\log(\tilde{G})$ is a torsion-free $\Zp$-module we deduce that $\log(\tilde{\varphi}(g)) - \log(g) = \log(z) $ for some $z \in Z(\tilde{G})$, and therefore $\tilde{\varphi}(g) = gz$ because $\log$ is a bijection. Thus $\tilde{\varphi}$ is trivial mod centre as claimed.
\end{proof}

\section{$\Gamma$-primes and open subgroups}
\label{Gamma}

\subsection{Completed group rings}
\label{ComplGpRng}
From now on, $k$ will denote an arbitrary field of characteristic $p$ and $G$ will denote a compact $p$-adic analytic group. Let $kG$ denote the \emph{completed group ring} of $G$ with coefficients in $k$:
\[ kG := \invlim k[G/U]\]
where the inverse limit is taken over all the open normal subgroups $U$ of $G$. 

\begin{lem} Let $H$ be a closed subgroup of $G$, and let $I_1, \ldots, I_m, J$ be right ideals of $kH$. Then
\be
\item $I_1kG \cap \cdots \cap I_m kG = (I_1\cap \cdots \cap I_m)kG$, and
\item $JkG \cap kH = J$.
\ee\end{lem}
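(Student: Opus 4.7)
The plan is to exploit the fact that for a closed subgroup $H$ of a compact $p$-adic analytic group $G$, the completed group ring $kG$ is topologically free as a left $kH$-module, with a topological basis given by any set of right coset representatives for $H$ in $G$. In the setting of this paper, one can make this very explicit via Lemma~\ref{OrdBas}: by choosing compatible ordered bases for $H$ and for an open $p$-valued subgroup of $G$, the ``extra'' Mahler-type monomials in the generators not appearing in the ordered basis of $H$ provide such coset representatives.

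Fix then a set $T \subseteq G$ of right coset representatives for $H$ in $G$, arranged so that $1 \in T$. Every element $x \in kG$ has a unique convergent expansion $x = \sum_{t \in T} a_t\, t$ with $a_t \in kH$, giving a topological direct sum decomposition
$$kG \;=\; \widehat{\bigoplus}_{t \in T} kH \cdot t$$
of left $kH$-modules. I would next check that for any right ideal $I$ of $kH$, the extension satisfies
$$I \cdot kG \;=\; \widehat{\bigoplus}_{t \in T} I \cdot t$$
as closed right ideals of $kG$. The inclusion $\supseteq$ is immediate; for the reverse, given $i \in I$ and $x = \sum_t a_t t \in kG$, one has $i x = \sum_t (i a_t)\, t$ with $i a_t \in I$ because $I$ is a right ideal of $kH$, and topological closure handles general elements of $I \cdot kG$.

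Both statements then follow from this decomposition by inspection. For (a), intersections commute with the topological direct sum, yielding
$$\bigcap_{j=1}^m I_j kG \;=\; \widehat{\bigoplus}_{t \in T} \Bigl(\bigcap_{j=1}^m I_j\Bigr) \cdot t \;=\; \Bigl(\bigcap_{j=1}^m I_j\Bigr) kG.$$
For (b), the summand $kH \cdot 1 = kH$ intersects $JkG = \widehat{\bigoplus}_{t \in T} J \cdot t$ in exactly $J \cdot 1 = J$, since the remaining summands $kH \cdot t$ for $t \neq 1$ meet $kH$ trivially. The main obstacle is justifying the free-module decomposition of $kG$ over $kH$ rigorously for \emph{closed} (rather than open) subgroups, together with the compatibility of intersections with the completed direct sum; once these standard pseudocompact-module facts are in place, the rest is purely formal.
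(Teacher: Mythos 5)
Your approach is correct in substance, but it takes a different (and more hands-on) route than the paper. The paper's proof is a one-liner: it cites \cite[Lemma 5.1]{Ard2006} to get that $kG$ is faithfully flat over $kH$, then derives (a) by applying $-\otimes_{kH} kG$ to the exact sequence $0 \to I_1 \cap \cdots \cap I_m \to kH \to \oplus_j kH/I_j$, and (b) by applying \cite[Lemma 7.2.5]{MCR} to $kH/J$. You instead unwind the faithful-flatness black box into the explicit topological free decomposition $kG = \widehat{\bigoplus}_{t\in T} kH\cdot t$ and check both identities summand-by-summand. These are really two formulations of the same underlying fact — the pseudocompact freeness you invoke is precisely what the cited \cite[Lemma 5.1]{Ard2006} establishes, and freeness is what gives faithful flatness — so your argument amounts to re-proving the quoted lemma and then doing the homological algebra by hand. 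What the paper's route buys is modularity: the flatness argument handles (a) and (b) uniformly via standard functorial identities, with no need to manipulate completed direct sums or to verify that $IkG = \widehat{\bigoplus}_t I\cdot t$ (which, as you note, needs the observation that $IkG$ is closed — automatic here since $kG$ is Noetherian). What your route buys is explicitness and self-containedness.

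One small inaccuracy to flag: you point to Lemma~\ref{OrdBas} for the choice of coset representatives, but that lemma assumes $G$ is a complete $p$-valued group of finite rank, whereas the lemma being proved lives in $\S\ref{ComplGpRng}$ where $G$ is merely a compact $p$-adic analytic group. So Lemma~\ref{OrdBas} is not directly applicable; you would need the more general pseudocompact free-module statement for closed subgroups of arbitrary compact $p$-adic analytic groups (which is exactly the content of \cite[Lemma 5.1]{Ard2006}, or of the standard Brumer-type results). You flag this yourself as "the main obstacle," and it is indeed the nontrivial input; once granted, the rest of your argument is sound.
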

\begin{proof} The proof of \cite[Lemma 5.1]{Ard2006} shows that $kG$ is a faithfully flat $kH$-module. Now part (a) follows by applying the $- \otimes_{kH}kG$ functor to the exact sequence $0 \to I_1 \cap \cdots \cap I_m \to kH \to \oplus_{j=1}^m kH/I_j$, and part (b) follows by applying \cite[Lemma 7.2.5]{MCR} to the $kH$-module $kH/J$.
\end{proof}

\subsection{$I^\dag$ and $I^\chi$}\label{ContSub}
Let $I$ be a right ideal in $kG$ and recall the controller subroup $I^\chi$ of $I$ from $\S \ref{ContSubIntro}$. Following Roseblade, we define another subgroup associated to $I$ as follows:
\[ I^\dag := (1 + I)\cap G\]
and say that $I$ is \emph{faithful} precisely when $I^\dag$ is trivial. Since $kG$ has a Zariskian filtration that generates its natural topology, every right ideal $I$ is closed. Since the natural map $G \to kG$ that sends $g \to g-1$ is continuous and $I^\dag$ is the preimage of $I$ under this map, we see that $I^\dag$ is always a closed subgroup of $G$.

\begin{lem} Let $I$ be a right ideal of $kG$ and let $\varphi \in \Aut(G)$. Suppose that the extension of $\varphi$ to an algebra automorphism of $kG$ preserves $I$. Then 
\be 
\item $\varphi$ preserves both $I^\dag$ and $I^\chi$, and
\item $I^\dag$ is contained in $I^\chi$ whenever $I \neq kG$.
\ee
\end{lem}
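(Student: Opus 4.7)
The plan is to prove (a) directly from the definitions and the hypothesis that $\varphi$, extended to a continuous $k$-algebra automorphism of $kG$, stabilizes $I$. For $I^\dag$: if $g \in I^\dag$ then $g-1 \in I$, so $\varphi(g) - 1 = \varphi(g-1) \in \varphi(I) = I$, and since $\varphi(g) \in G$ we get $\varphi(g) \in I^\dag$; applying the same argument to $\varphi^{-1}$ gives $\varphi(I^\dag) = I^\dag$. For $I^\chi$: I would show that an open subgroup $U \leq_o G$ controls $I$ if and only if $\varphi(U)$ controls $I$. Indeed, applying $\varphi$ to $I = (I \cap kU)kG$ gives $I = \varphi(I) = (\varphi(I) \cap \varphi(kU))\varphi(kG) = (I \cap k\varphi(U))kG$, and $\varphi(U)$ is again open because $\varphi$ is a topological group automorphism. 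Hence $\varphi$ permutes the collection of open controlling subgroups, and since $\varphi$ commutes with intersections (being a bijection) we conclude $\varphi(I^\chi) = I^\chi$.

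For (b) the plan is to show, for every open subgroup $U \leq_o G$ that controls $I$, that $I^\dag \subseteq U$; intersecting over such $U$ then gives $I^\dag \subseteq I^\chi$. The key tool is that $kG$ is free as a left $kU$-module with basis a set of right coset representatives $\{g_j\}_{j \in J}$ of $U$ in $G$, which is finite because $U$ is open. Taking $g_0 = 1$, the unique expansion of any element of $kG$ in this basis shows that $(I \cap kU) \cdot kG$ consists precisely of sums $\sum_j a_j g_j$ with each $a_j \in I \cap kU$.

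Now let $g \in I^\dag$, so $g - 1 \in I = (I \cap kU) kG$. Writing $g = u g_j$ with $u \in U$ and $j$ the unique index such that $g \in Ug_j$, the expansion of $g - 1$ in the basis $\{g_j\}$ has coefficient $-1$ at $g_0 = 1$ if $j \neq 0$, i.e., if $g \notin U$. By the uniqueness of the expansion, this would force $-1 \in I \cap kU$, whence $I = kG$, contradicting the hypothesis. Therefore $g \in U$, and $I^\dag \subseteq U$ as required.

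This argument is straightforward; the only step that requires a little care is the description of $(I \cap kU)kG$ via the free-module decomposition, which amounts to a standard fact about completed group algebras over open subgroups of finite index and presents no real obstacle.
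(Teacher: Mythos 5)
Your proof of part (b) is correct and essentially identical to the paper's: both use the free $kU$-module decomposition $kG = \bigoplus_j kU g_j$ with $g_0 = 1$, expand $g - 1$ for $g \in I^\dag$, and derive a contradiction from the constant-coefficient term if $g \notin U$.

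For part (a), however, there is a genuine gap. The hypothesis that $\varphi$ ``preserves'' $I$ means $\varphi(I) \subseteq I$, not $\varphi(I) = I$. You silently use the equality $\varphi(I) = I$ in two places: once when writing $\varphi(g-1) \in \varphi(I) = I$ (where $\subseteq$ would actually suffice for that step), and critically when applying $\varphi$ to the controlling equation $I = (I \cap kU)kG$ to conclude $I = (I \cap k\varphi(U))kG$. You also need $\varphi^{-1}(I) \subseteq I$ in order to conclude that $\varphi$ \emph{permutes} the open controlling subgroups (rather than merely mapping them into controlling subgroups), and likewise to upgrade $\varphi(I^\dag) \subseteq I^\dag$ to equality. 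The inclusion $\varphi(I) \subseteq I$ does not in general imply $\varphi^{-1}(I) \subseteq I$; the paper supplies the missing step via the Noetherianity of $kG$: the ascending chain $I \subseteq \varphi^{-1}(I) \subseteq \varphi^{-2}(I) \subseteq \cdots$ must stabilize, and applying a suitable power of $\varphi$ to the resulting equality yields $\varphi^{-1}(I) = I$, hence $\varphi(I) = I$. You should insert this argument before the manipulations involving $\varphi^{-1}$.

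One structural difference worth noting: once $\varphi(I) = I$ is established, your argument for $\varphi(I^\chi) = I^\chi$ works directly from the \emph{definition} of $I^\chi$ as the intersection of all open controlling subgroups, showing that $\varphi$ permutes this collection. The paper instead applies $\varphi^{-1}$ to $I = (I \cap kI^\chi)kG$ and invokes the characterization from \cite[Theorem A]{Ard2011} (that a closed subgroup controls $I$ if and only if it contains $I^\chi$) to conclude $\varphi^{-1}(I^\chi) \supseteq I^\chi$. Your route is more elementary in that it avoids citing that theorem; both are valid once the Noetherian step is in place.
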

\begin{proof} (a) Since $kG$ is Noetherian, the ascending chain $I \subseteq \varphi^{-1}(I) \subseteq \varphi^{-2}(I) \subseteq \cdots $ terminates so $\varphi^{-1}$ preserves $I$. Applying $\varphi^{-1}$ to the equation $I = (I \cap kI^\chi)kG$ shows that $\varphi^{-1}(I^\chi)$ controls $I$ and therefore contains $I^\chi$. Hence $\varphi(I^\chi) \subseteq I^\chi$, and $\varphi(I^\dag) \subseteq I^\dag$ is clear.

(b) Choose an open subgroup $U$ that controls $I$, let $\{g_1 = 1,\ldots,g_m\}$ be a complete set of right coset representatives for $U$ in $G$ and let $x \in I^\dag$. Then $x - 1 \in I$ so $x - 1 = \sum_{i=1}^mr_ig_i$ for some $r_i \in I \cap kU$. Since $I$ is a proper right ideal by assumption, equating coefficients shows that $x$ must lie in $U$, since otherwise $-1 = r_1 \in I$. Hence $I^\dag \subseteq U$ for every open subgroup $U$ that controls $I$ and the result follows.
\end{proof}

\subsection{Isolated subgroups}
\label{IsoSub}
We say that a closed normal subgroup $H$ of a complete $p$-valued group $G$ is \emph{isolated} if $G/H$ is torsion-free.

\begin{lem} Let $I$ be a two-sided ideal of $kG$. 
\be\item $I^\dag$ and $I^\chi$ are closed \emph{normal} subgroups of $G$.
\item If $I$ is semiprime and $G$ is pro-$p$, then $G/I^\dag$ has no non-trivial finite normal subgroups.
\item If $I$ is semiprime and $G$ is $p$-valued, then $I^\dag$ is isolated.
\ee \end{lem}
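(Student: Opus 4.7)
For part (a), $I^\dag = G \cap (1+I)$ is closed in $G$ as the intersection of two closed subsets of $kG$ (using that $I$ is closed in the Zariskian ring $kG$), and $I^\chi$ is closed as an intersection of open (hence closed) subgroups of $G$. Normality of both follows immediately from part (a) of the preceding lemma in \S\ref{ContSub}: since $I$ is two-sided, every inner automorphism of $G$ extends to an automorphism of $kG$ preserving $I$, and so preserves $I^\dag$ and $I^\chi$.

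For part (b), I would let $F$ be a non-trivial finite normal subgroup of $\bar G := G/I^\dag$ with preimage $N \trianglelefteq G$. Because $G$ is pro-$p$, $F$ is a finite $p$-group, so $\omega(F)^n = 0$ in $k[F]$ for some $n$. The key step is to use the continuous surjection $kN \twoheadrightarrow k[F]$, whose kernel is the two-sided ideal $\omega(I^\dag)\cdot kN$ (two-sided because $I^\dag \triangleleft N$) and which sends $\omega(N)$ onto $\omega(F)$, to deduce $\omega(N)^n \subseteq \omega(I^\dag) \cdot kN$. The closed two-sided ideal $K := \omega(N) \cdot kG$ (two-sided since $N \trianglelefteq G$) then satisfies
\[K^n \;=\; \omega(N)^n \cdot kG \;\subseteq\; \omega(I^\dag) \cdot kG \;\subseteq\; I,\]
the last inclusion holding because $I^\dag - 1 \subseteq I$ and $I$ is closed. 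Semiprimeness of $I$ forces $K \subseteq I$, i.e.\ $N \subseteq I^\dag$, contradicting non-triviality of $F$.

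For part (c), which is the hard part, my plan is to reduce to (b) by producing a non-trivial finite normal subgroup of $\bar G$ whenever $I^\dag$ fails to be isolated. I would pass to the saturation $\tilde G = \Sat(G)$ of \S\ref{Sat} and set
\[\widetilde{I^\dag} \;:=\; \{\, x \in \tilde G : x^{p^m} \in I^\dag \text{ for some } m \geq 0 \,\}.\]
Via the Lazard correspondence of \S\ref{LazEq}, $\widetilde{I^\dag}$ exponentiates the $\Zp$-saturation of $\log I^\dag$ inside $\log \tilde G$, which is a closed $\Zp$-Lie subalgebra; hence $\widetilde{I^\dag}$ is a closed subgroup of $\tilde G$, and its quotient $\widetilde{I^\dag}/I^\dag$ has the same cardinality as the torsion submodule of the finitely generated $\Zp$-module $\log \tilde G / \log I^\dag$, which is finite. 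Normality of $I^\dag$ in $G$ immediately yields $G$-invariance of $\widetilde{I^\dag}$ under conjugation (if $x^{p^m} \in I^\dag$ and $g \in G$, then $(gxg^{-1})^{p^m} = gx^{p^m}g^{-1} \in I^\dag$), so $H := \widetilde{I^\dag} \cap G$ is closed normal in $G$ with $H/I^\dag$ a finite normal subgroup of $\bar G$; part (b) then forces $H = I^\dag$. Since $G$ is pro-$p$, any torsion element of $\bar G$ lifts to some $g \in G$ with $g^{p^k} \in I^\dag$, i.e.\ $g \in \widetilde{I^\dag} \cap G = I^\dag$, so $G/I^\dag$ is torsion-free and $I^\dag$ is isolated. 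The main technical obstacle is the Lie-theoretic construction of $\widetilde{I^\dag}$ inside $\tilde G$ together with the finiteness of $\widetilde{I^\dag}/I^\dag$; once those are in hand, the reduction to (b) is immediate.
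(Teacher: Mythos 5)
Parts (a) and (b) are correct and follow the paper's argument essentially verbatim; you just write out the nilpotence-of-the-augmentation-ideal step for (b) in more explicit detail (and your $K^n = \omega(N)^n \cdot kG$ should be an inclusion $\subseteq$, which is all that is needed, since $kG\cdot\omega(N) \subseteq \omega(N)\cdot kG$ only gives one direction).

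For part (c) you identify the same key object as the paper: your $\widetilde{I^\dag}$ is precisely $\Sat(I^\dag)$ viewed inside $\Sat(G)$ (this requires checking, but follows from $p$-saturation of $\Sat(I^\dag)$ together with uniqueness of $p$-th roots in $\Sat(G)$), and your reduction to (b) via $H := \widetilde{I^\dag} \cap G$ is exactly the paper's. Your direct argument for torsion-freeness of $G/I^\dag$ from $\widetilde{I^\dag}\cap G = I^\dag$ is a valid alternative to the paper's embedding $G/I^\dag \hookrightarrow \Sat(G)/\Sat(I^\dag)$.

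The gap is in your Lie-theoretic justification that $\widetilde{I^\dag}$ is a closed subgroup with $\widetilde{I^\dag}/I^\dag$ finite. You assert that $\widetilde{I^\dag}$ ``exponentiates the $\Zp$-saturation of $\log I^\dag$ inside $\log\tilde G$, which is a closed $\Zp$-Lie subalgebra,'' and that $\widetilde{I^\dag}/I^\dag$ has ``the same cardinality as the torsion submodule of $\log\tilde G/\log I^\dag$.'' But $\log I^\dag$ — the image of the subset $I^\dag \subseteq \tilde G$ under $\log$ — is not in general a $\Zp$-Lie subalgebra of $\log\tilde G$: it is closed under $\Zp$-scaling (since $\lambda\log g = \log(g^\lambda)$ and $I^\dag$ is a closed subgroup), but not under Lie-algebra addition, because $\log(g) + \log(h) = \log\bigl(\lim_r (g^{p^r}h^{p^r})^{p^{-r}}\bigr)$ need not land back in $I^\dag$ when $I^\dag$ is not $p$-saturated. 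So ``$\Zp$-saturation of $\log I^\dag$'' and ``torsion submodule of $\log\tilde G/\log I^\dag$'' are not well-defined as you use them. The paper sidesteps this entirely by citing Lazard \cite[IV.3.4.1]{Laz1965}, which gives directly that $\Sat(I^\dag)\cap G$ is a closed normal subgroup of $G$ containing $I^\dag$ with finite index. Your argument becomes correct if you replace the Lie-theoretic sketch with that citation; if you insist on a Lie-theoretic derivation, you should work with $\log\Sat(I^\dag)$ (which genuinely is a saturated $\Zp$-Lie subalgebra by Lazard's equivalence) rather than with $\log I^\dag$.
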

\begin{proof}
(a) Lemma \ref{ContSub}(a) shows that $I^\chi$ is stable under every inner automorphism of $G$, so $I^\chi$ is normal. It is clear that $I^\dag$ is also normal.

(b) Let $N / I^\dag$ be a finite normal subgroup of $G/I^\dag$. Since $G$ is pro-$p$, $N/I^\dag$ is a finite $p$-group, so some power of the augmentation ideal of $k[N/I^\dag]$ is zero. Hence $((N - 1)kG)^a \subseteq I$ for some integer $a$, but $I$ is semiprime so in fact $(N-1)kG \subseteq I$. Therefore $N \leq I^\dag$ and $N/I^\dag$ is trivial.

(c) By \cite[IV.3.4.1]{Laz1965}, $\Sat(I^\dag) \cap G$ is a closed normal subgroup of $G$ containing $I^\dag$ as a subgroup of finite index, so $I^\dag = \Sat(I^\dag) \cap G$ by part (b). Hence $G/I^\dag$ embeds into $\Sat(G) / \Sat(I^\dag)$, which is $p$-saturated by \cite[III.3.3.2.4]{Laz1965} and therefore torsion-free.\end{proof}
The prime ideal $\langle x - y^p \rangle$ of the commutative power series ring $k[[x,y]]$ shows that the controller subgroup need not be isolated, in general.

\subsection{$\Gamma$-prime ideals}\label{GammaPrime}
Let $\Gamma$ be a group acting on $G$ by automorphisms. We say that an ideal $P$ of $kG$ is \emph{$\Gamma$-prime} if $P$ is $\Gamma$-invariant, and whenever $I,J$ are $\Gamma$-invariant ideals of $kG$ such that $IJ \subseteq P$, we have either $I \subseteq P$ or $J \subseteq P$.

\begin{lem} Let $P$ be a $\Gamma$-prime ideal of $kG$. 
\begin{enumerate}[{(}a{)}]
\item $P$ is semiprime.
\item The minimal primes $P_1,\ldots, P_m$ above $P$ form a single $\Gamma$-orbit.
\end{enumerate}
\end{lem}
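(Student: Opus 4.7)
The plan is to adapt the classical argument that $\Gamma$-prime ideals are semiprime, using the Noetherianity of $kG$ (mentioned in the introduction) and the finiteness of the set of minimal primes above $P$.

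First, since $kG$ is Noetherian, there exist finitely many minimal primes $P_1, \ldots, P_m$ above $P$, and their intersection $N := P_1 \cap \cdots \cap P_m$ is the prime radical of $kG/P$ (lifted to $kG$); in particular, $N \supseteq P$ and some power $N^n$ is contained in $P$. Because $P$ is $\Gamma$-invariant and prime ideals lying over a $\Gamma$-invariant ideal are permuted by $\Gamma$, the group $\Gamma$ acts on the finite set $\{P_1, \ldots, P_m\}$. Let $\mathcal{O}_1, \ldots, \mathcal{O}_t$ be the $\Gamma$-orbits, and define the $\Gamma$-invariant ideals
\[ J_j := \bigcap_{P_i \in \mathcal{O}_j} P_i \quad \text{for } 1 \leq j \leq t. \]

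Next, I would observe that $J_1 J_2 \cdots J_t \subseteq J_1 \cap \cdots \cap J_t = N$, so $(J_1 \cdots J_t)^n \subseteq P$. Expanding this out, we obtain a product of $\Gamma$-invariant ideals lying in $P$. A straightforward induction on the number of factors, using the definition of $\Gamma$-primeness, shows that if a product of $\Gamma$-invariant ideals is contained in $P$, then some factor must be contained in $P$. Applying this to $(J_1 \cdots J_t)^n$, we conclude that $J_j \subseteq P$ for some index $j$.

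Finally, since $P \subseteq P_i$ for every $i$, we have $P \subseteq J_j$ as well, so $P = J_j = \bigcap_{P_i \in \mathcal{O}_j} P_i$. This exhibits $P$ as an intersection of prime ideals, proving part (a). For part (b), note that the minimal primes of $kG$ above $P = \bigcap_{P_i \in \mathcal{O}_j} P_i$ are exactly the primes in $\mathcal{O}_j$; but by definition these minimal primes constitute the whole set $\{P_1, \ldots, P_m\}$, so $\mathcal{O}_j = \{P_1, \ldots, P_m\}$ is a single $\Gamma$-orbit.

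The only mildly nontrivial point is the inductive extension of the $\Gamma$-prime condition from two-factor products to arbitrary finite products, which is routine but needs to be flagged. Everything else is a direct application of standard Noetherian techniques, and no ingredients beyond what is already established in the excerpt are needed.
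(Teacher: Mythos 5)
Your proof is correct. Your approach differs from the paper's in organization: the paper first proves (a) directly by observing that the prime radical $\sqrt{P}$ is itself a $\Gamma$-invariant ideal with $\sqrt{P}^n \subseteq P$ by Noetherianity, so $\Gamma$-primeness immediately gives $\sqrt{P} \subseteq P$, i.e.\ $P = \sqrt{P}$. It then uses (a) to prove (b) by a two-group splitting argument --- taking $I$ to be the intersection of one $\Gamma$-orbit and $J$ the intersection of all the rest, observing $I \cap J = P$ because $P$ is semiprime, and then ruling out both $I \subseteq P$ and $J \subseteq P$ separately. You instead decompose the full set of minimal primes into orbits at the outset, form the orbit-intersections $J_1, \ldots, J_t$, and show $(J_1 \cdots J_t)^n \subseteq P$ forces $P = J_j$ for exactly one $j$; this yields (a) and (b) in one stroke. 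Both arguments rest on the same core ingredients (Noetherianity of $kG$, the fact that $\Gamma$ permutes the minimal primes over a $\Gamma$-invariant ideal, and the routine induction extending $\Gamma$-primeness from two factors to finitely many), so the difference is mainly a matter of bookkeeping: the paper's route is a touch more economical for (a), while your unified treatment avoids the ad hoc two-group split in (b).
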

\begin{proof} (a) Clearly the prime radical $\sqrt{P}$ of $P$ is $\Gamma$-invariant, and $\sqrt{P}^n \subseteq P$ for some $n$ since $kG$ is Noetherian. Therefore $P = \sqrt{P}$ as $P$ is $\Gamma$-prime.

(b) Let $P_1,\ldots, P_\ell$ be the $\Gamma$-orbit of $P_1$. If $\ell < m$ then $I := \cap_{i \leq \ell} P_i$ and $J := \cap_{i > \ell} P_i$ are $\Gamma$-invariant ideals and $I \cap J = P$ since $P = P_1 \cap \cdots\cap P_m$ is semiprime by part (a). Since $P$ is $\Gamma$-prime, either $I \subseteq P$ or $J \subseteq P$. If $I \subseteq P$ then $P_1 \cdots P_\ell \subseteq P \subseteq P_m$ forces $P_m$ to be equal to one of the $P_i$ for some $i \neq m$, a contradiction. $J \subseteq P$ is similarly impossible, so $\ell = m$ and $\Gamma$ acts transtively on the $P_i$.
\end{proof}
If $B$ is a subring of a commutative ring $A$ and $P$ is a prime ideal of $A$ then $P \cap B$ is always a prime ideal of $B$. In the non-commutative setting, $P \cap B$ will in general not be a prime ideal; it may even fail to be semiprime. However for completed group rings (and for group algebras of polycyclic groups) we have the following positive result.

\begin{prop}
Let $P$ be a prime ideal of $kG$ and let $N$ be a closed normal subgroup of $G$. Then $P \cap kN$ is a $G$-prime ideal of $kN$. In particular $P \cap kN$ is semiprime.
\end{prop}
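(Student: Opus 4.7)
The plan is to verify the two defining properties of a $G$-prime ideal. First, $P \cap kN$ is visibly $G$-invariant: $N$ is normal in $G$, so $kN$ is stable under conjugation by $G$; and $P$ is stable under conjugation since it is a two-sided ideal of $kG$. Second, I need to verify the $G$-prime property itself, which is where the real work lies.

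So suppose $I$ and $J$ are $G$-invariant ideals of $kN$ with $IJ \subseteq P \cap kN$. The key idea is to push $I$ and $J$ back up to ideals of $kG$ and use primeness of $P$ there. Because $I$ is $G$-invariant and $N$ is normal in $G$, every element of $G$ normalizes $I$, which means $I \cdot kG = kG \cdot I$; consequently $IkG$ is a genuine two-sided ideal of $kG$, and likewise for $JkG$. Then
\[
(IkG)(JkG) \;=\; I\,J\,kG \;\subseteq\; (P \cap kN)\,kG \;\subseteq\; P,
\]
where the first equality uses that $kG$ normalizes $J$. Since $P$ is prime, either $IkG \subseteq P$ or $JkG \subseteq P$.

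To descend the conclusion back to $kN$, I invoke Lemma \ref{ComplGpRng}(b) (the faithful flatness of $kG$ over $kN$), which gives $IkG \cap kN = I$ and $JkG \cap kN = J$. Intersecting with $kN$ in either case yields $I \subseteq P \cap kN$ or $J \subseteq P \cap kN$, establishing $G$-primeness. The final assertion that $P \cap kN$ is semiprime is then immediate from Lemma \ref{GammaPrime}(a).

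There is no serious obstacle here: the proof is a straightforward lifting-and-restricting argument. The only point requiring care is ensuring $IkG$ and $JkG$ are actually two-sided ideals of $kG$, for which the $G$-invariance of $I, J$ combined with the normality of $N$ is exactly what is needed. Everything else is supplied by the faithful flatness lemma already recorded in $\S\ref{ComplGpRng}$.
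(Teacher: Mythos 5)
Your proof is correct and is essentially the paper's own argument: lift to $IkG$ and $JkG$, note these are two-sided by $G$-invariance and normality of $N$, apply primeness of $P$, and descend via Lemma \ref{ComplGpRng}(b). The only difference is that you spell out a couple of steps the paper leaves implicit (why $IkG$ is two-sided, and the verification that $P\cap kN$ is itself $G$-invariant), which is fine.
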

\begin{proof} Let $I, J$ be $G$-invariant ideals of $kN$ with $IJ \subseteq P \cap kN$. Then $IkG$ and $JkG$ are two-sided ideals in $kG$ whose product is contained in $P$, so without loss of generality we may assume that $IkG \subseteq P$. Therefore $I = IkG \cap kN \subseteq P \cap kN$ by Lemma \ref{ComplGpRng}(b) and the result follows.
\end{proof}

\subsection{Non-splitting primes}\label{NonSplit} To prove our analogue of Zalesskii's Theorem for a prime ideal $P$, we would like to first reduce to the case when $P^\chi = G$. Since $(P \cap kP^\chi)^\chi = P^\chi$, it is tempting to try to replace $P$ by $P\cap kP^\chi$. However $P \cap kP^\chi$ will not in general be a prime ideal.

\begin{defn} Let $P$ be a prime ideal of $kG$. We say that $P$ is \emph{non-splitting} if $P \cap kU$ is again prime for any open normal subgroup $U$ of $G$ that controls $P$.
\end{defn}
The reason for this definition is the following
\begin{prop} Let $P$ be a non-splitting prime ideal of $kG$. Then $P \cap kP^\chi$ is a prime ideal of $kP^\chi$.
\end{prop}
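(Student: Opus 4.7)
The plan is to prove the contrapositive: show that non-primeness of $Q := P \cap kP^\chi$ would propagate, via a cleverly chosen open normal subgroup $K$ of $G$ containing $P^\chi$, to non-primeness of $P \cap kK$, thereby violating the non-splitting hypothesis.

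I would begin by applying the Proposition of $\S\ref{GammaPrime}$ with the closed normal subgroup $N = P^\chi$ (normal by Lemma \ref{IsoSub}(a)) to conclude that $Q$ is $G$-prime. By Lemma \ref{GammaPrime}(b) it is semiprime, with finitely many minimal primes $P_1, \ldots, P_m$ forming a single $G$-orbit under conjugation; the goal reduces to showing $m = 1$. Next I would let $K$ be the kernel of the $G$-action on the finite set $\{P_1, \ldots, P_m\}$. Each stabiliser is closed (since conjugation in $kG$ is continuous and each $P_i$ is closed) and of finite index, so $K$ is a closed normal subgroup of finite index, hence open in $G$. Crucially, elements of $P^\chi$ act on $kP^\chi$ by inner automorphisms (they are units of $kP^\chi$), and inner automorphisms fix every two-sided ideal, so $P^\chi \subseteq K$. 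Thus $K$ is an open normal subgroup of $G$ containing $P^\chi$, hence controls $P$ by \cite[Theorem A]{Ard2011}, and the non-splitting hypothesis yields that $P \cap kK$ is a prime ideal of $kK$.

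The linking identity is $P \cap kK = Q \cdot kK$: since $P^\chi$ controls $P$ one has $P = Q \cdot kG$, so applying Lemma \ref{ComplGpRng}(b) to the right ideal $Q \cdot kK$ of $kK$ gives $(QkK)kG \cap kK = QkK$, whereas $(QkK)kG = QkG = P$. Because $K$ stabilises every $P_i$ individually, conjugation lets one slide $kK$ past each $P_i$: for $u \in K$ and $p \in P_i$ we have $up = (upu^{-1})u \in P_i \cdot kK$, so $kK \cdot P_i \subseteq P_i \cdot kK$ and $J_i := P_i \cdot kK$ is a two-sided ideal of $kK$. The same sliding gives
\[J_1 J_2 \cdots J_m \subseteq (P_1 P_2 \cdots P_m) \cdot kK \subseteq \Bigl(\bigcap_j P_j\Bigr) \cdot kK = Q \cdot kK = P \cap kK.\]
Primeness of $P \cap kK$ forces some $J_i \subseteq P \cap kK$. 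Intersecting with $kP^\chi$ via Lemma \ref{ComplGpRng}(b) yields $P_i \subseteq Q = \bigcap_j P_j$, and minimality of the $P_j$ then forces $P_i = P_j$ for every $j$, so $m = 1$.

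The main obstacle is the construction of $K$ together with the verification that $P^\chi \subseteq K$: only this containment allows the non-splitting hypothesis to be applied to $P \cap kK$, and only $K$-stability of every $P_i$ makes the ideals $J_i$ two-sided so that the product-and-contract argument can run. Once these are in place the final contradiction is mechanical.
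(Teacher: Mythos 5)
Your proposal is correct and follows essentially the same route as the paper: construct the open normal subgroup $K$ (the paper calls it $U$) as the kernel of the $G$-action on the minimal primes $P_i$, observe $P^\chi \subseteq K$ because the $P_i$ are two-sided in $kP^\chi$, apply the non-splitting hypothesis to conclude $P\cap kK$ is prime, and then pull back one of the $P_i$ via Lemma \ref{ComplGpRng}. The only cosmetic difference is that you deduce a single $P_i kK \subseteq P\cap kK$ via the product $J_1\cdots J_m$, whereas the paper directly uses $P\cap kK = \bigcap_i P_i kK$ together with primeness; these are the same deduction.
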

\begin{proof} Since $P$ is a two-sided ideal, $P^\chi$ is a closed normal subgroup of $G$. Let $P_1, \ldots, P_m$ be the minimal primes over $P \cap kP^\chi$. Since $P \cap kP^\chi$ is $G$-prime by Proposition \ref{GammaPrime}, the conjugation action of $G$ on the $P_i$ is transitive by Lemma \ref{GammaPrime}(b). Let $U$ be the kernel of this action; then $U$ is an closed normal subgroup of $G$ of finite index and therefore also open. Moreover $U$ contains $P^\chi$ since the $P_i$ are two-sided ideals in $kP^\chi$, so $P \cap kU$ is prime by assumption. Now
\[P \cap kU = (P \cap kP^\chi)kU = P_1 kU \cap \cdots \cap P_mkU\]
by Lemma \ref{ComplGpRng}(a), and the $P_ikU$ are two-sided ideals in $kU$ by the definition of $U$. Since $P \cap kU$ is prime, $P_ikU =  P \cap kU$ for some $i$ and therefore
\[P \cap kP^\chi = P \cap kU \cap kP^\chi = (P_i kU) \cap kP^\chi = P_i\]
by Lemma \ref{ComplGpRng}(b). Hence $P \cap kP^\chi = P_i$ is prime.\end{proof}
\subsection{Essential decompositions}\label{EssDec}
\begin{defn} Let $A$ be a ring and let $J_1,\ldots, J_m$ be proper right ideals of $A$ with intersection $I$. 
\be \item We say that $I = J_1 \cap \cdots \cap J_m$ is an \emph{essential decomposition} of $I$ if the natural embedding
$\frac{A}{I} \hookrightarrow \frac{A}{J_1} \oplus \frac{A}{J_2} \oplus \cdots \oplus \frac{A}{J_m}$ has essential image. 
\item If $H$ is a subgroup of the group of units of $A$ then we call the decomposition \emph{$H$-invariant} if $H$ acts transtively by conjugation on the right ideals $J_i$.
\ee \end{defn}

It follows from the definition of \emph{uniform dimension} \cite[\S 2.2.10]{MCR} that
\[\udim (A/I) = \sum_{i=1}^m \udim(A / J_i)\]
whenever $I = J_1 \cap \cdots \cap J_m$ is an essential decomposition of $I$. This implies that the number of terms $m$ in any essential decomposition of $I$ is bounded above by $\udim(A/I)$.

\begin{example} Let $A$ be a semiprime Noetherian ring and let $P_1, \ldots, P_m$ be the minimal primes of $A$. Then $0 = P_1 \cap \cdots \cap P_m$ is an essential decomposition of the zero ideal.
\end{example}
\begin{proof} Let $A' = (A/P_1) \oplus \cdots \oplus (A/P_m)$ and let $Q$ be the classical ring of quotients of $A$. Then there exists a unit $q \in Q$ such that $qA' \subseteq A \subseteq A'$ by \cite[Proposition 3.2.4(iii)]{MCR}. By clearing denominators we may assume that $q \in A$ is a regular element. Suppose that $M$ is an $A$-submodule of $A'$ such that $A \cap M = 0$. Then $qA \cap qM = 0$, but $qA \cong A$ as a right ideal so $\udim(qA) = \udim(A)$ and therefore $qA$ is essential in $A$ by \cite[Corollary 2.2.10(iii)]{MCR}. Hence $qM = 0$, but $q$ is regular so $M = 0$ and $A$ is essential in $A'$.
\end{proof}

\subsection{Virtually prime right ideals}
\label{VirtNonSplit}
\begin{defn} Let $I$ be a right ideal of $kG$. We say that $I$ is \emph{virtually prime} if $I = PkG$ for some prime ideal $P$ of $kU$ for some open subgroup $U$ of $G$. If in addition $P$ is non-splitting, then we say that $I$ is \emph{virtually non-splitting}.
\end{defn}

Clearly every prime ideal is virtually prime as a right ideal, and every non-splitting prime ideal is a virtually non-splitting right ideal.

\begin{lem} Suppose that $G$ is a pro-$p$ group, let $V$ be an open subgroup of $G$ and let $M$ be a $kV$-module. If $N$ is an essential $kV$-submodule of $M$ then $N \otimes_{kV} kG$ is an essential $kG$-submodule of $M \otimes_{kV} kG$.
\end{lem}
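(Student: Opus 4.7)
The strategy is to reduce by induction on $[G:V]$ to the base case $V \triangleleft G$ with $[G:V] = p$, and then to handle that case by an explicit direct-sum decomposition.

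For the reduction, fix an open normal subgroup $U$ of $G$ with $U \leq V$, which exists because $G$ is pro-$p$. Then $V/U$ is a subgroup of the finite $p$-group $G/U$, so it is subnormal in $G/U$; any such subnormal chain in a finite $p$-group can be refined to have all steps of index $p$. Pulling back to $G$ yields a chain
\[
V = W_0 \triangleleft W_1 \triangleleft \cdots \triangleleft W_n = G
\]
with $[W_{i+1}:W_i] = p$ for every $i$. By associativity of the tensor product,
\[
N \otimes_{kV} kG \;\cong\; \bigl(\cdots\bigl(N \otimes_{kW_0} kW_1\bigr) \otimes_{kW_1} kW_2 \cdots\bigr) \otimes_{kW_{n-1}} kW_n,
\]
and similarly for $M$, so iterating the base case $n$ times settles the general statement.

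For the base case, suppose $V \triangleleft G$ and $[G:V] = p$, and pick $g \in G$ whose image generates $G/V$. Then $kG = \bigoplus_{i=0}^{p-1} kV \cdot g^i$ as a free left $kV$-module, so we have direct-sum decompositions
\[
M \otimes_{kV} kG = \bigoplus_{i=0}^{p-1} M \otimes g^i \quad\text{and}\quad N \otimes_{kV} kG = \bigoplus_{i=0}^{p-1} N \otimes g^i.
\]
Since $V$ is normal in $G$, the right action of $v \in V$ preserves each summand, via $(m \otimes g^i) \cdot v = m(g^i v g^{-i}) \otimes g^i$, so each $M \otimes g^i$ is a right $kV$-submodule on which $kV$ acts through the automorphism $\mathrm{ad}(g^i)$ of $kV$. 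Since essentiality of a submodule is preserved under twisting the ambient module structure by a ring automorphism, $N \otimes g^i$ is essential in $M \otimes g^i$ as a right $kV$-module for every $i$. A finite direct sum of essential inclusions of right $R$-modules is again essential by a standard argument: given a non-zero submodule $Y$ of the target, either $Y$ meets some summand non-trivially and one concludes immediately, or some projection $\pi_j$ is injective on $Y$, and essentiality in the $j$-th summand together with the submodule property of the other $N_i$'s lets one produce a non-zero element of $Y$ lying in $\bigoplus_i N_i$. Applying this to our decomposition, $N \otimes_{kV} kG$ is essential in $M \otimes_{kV} kG$ as a right $kV$-module, and \emph{a fortiori} as a right $kG$-module, since every non-zero $kG$-submodule is a non-zero $kV$-submodule.

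The main obstacle is that the direct-sum decomposition of the induced module as a right $kV$-module really does require normality: when $V$ is not normal in $G$, the right action of $V$ mixes the summands $M \otimes t$ attached to different coset representatives, so the individual pieces are not $kV$-submodules at all. This is precisely why the reduction chain is pushed down to index-$p$ normal steps, where the pro-$p$ hypothesis on $G$ guarantees that index-$p$ open subgroups can always be normalised one step at a time.
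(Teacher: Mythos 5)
Your proof is correct and follows essentially the same route as the paper's: reduce by induction on $[G:V]$ to the case of a normal subgroup of index $p$ (which the pro-$p$ hypothesis provides automatically for a maximal open subgroup), decompose the induced module as a finite direct sum of twists $M \otimes g^i$ on each of which $kV$ acts through conjugation by $g^i$, and then use that a finite direct sum of essential extensions is essential, passing finally from $kV$-essential to $kG$-essential. The only cosmetic difference is that the paper states the reduction tersely as ``an easy induction'' and cites McConnell--Robson \cite[Lemma~2.2.2(iv)]{MCR} for the direct-sum-of-essentials fact rather than sketching it.
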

\begin{proof} By an easy induction on the index of $V$ in $G$, we are reduced to the case when $V$ is maximal in $G$. Because $G$ is pro-$p$, $V$ is normal in $G$. Now $M \otimes_{kV} kG$ is isomorphic as a $kV$-module to a finite direct sum of twists $Mg$ of $M$, as $g$ ranges over a set of coset representatives for $V$ in $G$. Since $Ng$ is essential in $Mg$ for all $g \in G$, $N \otimes_{kV} kG$ is essential in $M \otimes_{kV}kG$ by \cite[Lemma 2.2.2(iv)]{MCR} as a $kV$-module, and therefore per force also as a $kG$-module.
\end{proof}

We now present a method of constructing virtually non-splitting right ideals starting from arbitrary prime ideals.

\begin{thm} Suppose that $G$ is a pro-$p$ group, let $P$ be a prime ideal of $kG$ and let $P = I_1 \cap I_2 \cap \cdots \cap I_m$ be a $G$-invariant essential virtually prime decomposition of $P$ with $m$ as large as possible. Then each $I_j$ is virtually non-splitting.
\end{thm}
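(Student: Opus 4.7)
My strategy is to argue by contradiction: assuming some $I_j$ is not virtually non-splitting, I will refine the given decomposition of $P$ into a strictly finer $G$-invariant essential virtually prime decomposition, contradicting the maximality of $m$. Writing $I_j = Q_j \cdot kG$ for a prime $Q_j$ of $kU_j$ with $U_j$ open in $G$, the hypothesis provides an open normal subgroup $W$ of $U_j$ that controls $Q_j$ and for which $Q_j \cap kW$ is not prime.

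By Proposition \ref{GammaPrime} (with ambient group $U_j$), $Q_j \cap kW$ is $U_j$-prime in $kW$, so by Lemma \ref{GammaPrime}(b) its minimal primes $R_1, \dots, R_s$ form a single $U_j$-orbit, with $s \geq 2$ since $Q_j \cap kW$ is not prime. The Example in $\S \ref{EssDec}$ applied to the semiprime ring $kW/(Q_j \cap kW)$ yields an essential decomposition $Q_j \cap kW = R_1 \cap \dots \cap R_s$ in $kW$. Using that $W$ controls $Q_j$ together with Lemma \ref{ComplGpRng}(a), and iterating Lemma \ref{VirtNonSplit} along the pro-$p$ open inclusions $W \leq U_j \leq G$, this pushes forward to an essential decomposition $I_j = R_1 kG \cap \dots \cap R_s kG$ in $kG$, each $R_k kG$ being virtually prime.

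Since the given decomposition of $P$ is $G$-transitive, write $I_i = g_i I_j g_i^{-1}$ for some coset representatives $g_i$. Conjugation gives an essential decomposition $I_i = \bigcap_k (g_i R_k g_i^{-1}) kG$ for each $i$; since a direct sum of essential embeddings is essential, combining with $P = I_1 \cap \dots \cap I_m$ produces an essential multi-set decomposition $P = \bigcap_{i,k} (g_i R_k g_i^{-1}) kG$ with $ms$ terms, whence $\udim(kG/P) = ms \cdot d$ where $d := \udim(kG/R_1 kG)$. Let $\mathcal{O}$ denote the $G$-orbit of $R_1 kG$ under conjugation: every refined piece lies in $\mathcal{O}$, each $X \in \mathcal{O}$ is virtually prime with $\udim(kG/X) = d$, and $G$-invariance of $P$ yields $P = \bigcap_{X \in \mathcal{O}} X$, a $G$-transitive intersection.

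The injection $kG/P \hookrightarrow \bigoplus_{X \in \mathcal{O}} kG/X$ together with $\udim\bigl(\bigoplus_{X \in \mathcal{O}} kG/X\bigr) = |\mathcal{O}| \cdot d$ gives $|\mathcal{O}| \geq ms$; the matching upper bound $|\mathcal{O}| \leq ms$ comes from an orbit-stabilizer analysis in which Lemma \ref{ComplGpRng}(b) identifies $\Stab_{U_j}(R_1 kG) = \Stab_{U_j}(R_1)$ (of index $s$ in $U_j$), after a preliminary normalization of $W$ inside $G$ (for instance replacing $W$ by its $G$-core and checking that the non-splitting phenomenon persists). Equality of finite uniform dimensions across an injection of Noetherian modules forces essentiality by the standard maximal-complement argument, so $P = \bigcap_{X \in \mathcal{O}} X$ is a $G$-invariant essential virtually prime decomposition with $|\mathcal{O}| = ms > m$ terms, contradicting the maximality of $m$. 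The main obstacle is the upper bound $|\mathcal{O}| \leq ms$, which requires controlling how elements of $G \setminus U_j$ act on the refinement.
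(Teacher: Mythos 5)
Your overall strategy matches the paper's: assume for contradiction that some $I_j$ is not virtually non-splitting, refine the decomposition along the minimal primes $R_1,\dots,R_s$ of $Q_j \cap kW$ (with $s \geq 2$), conjugate across the $I_i$, and contradict the maximality of $m$. The refinement itself, the essentiality arguments (Example \ref{EssDec} plus Lemma \ref{VirtNonSplit} plus the composite of essential embeddings), and the consequent distinctness of the $ms$ terms and the uniform-dimension count are all correct and parallel the paper's proof.

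Where you diverge is in establishing $G$-invariance of the refined decomposition, and this is where the genuine gap lies. The paper simply uses the $ms$-term set $\{\,{}^{g_i}R_k\,kG : i,k\}$ as the new decomposition and asserts that $G$ permutes it transitively because $U_j$ acts transitively on the $R_k$ by Lemma \ref{GammaPrime}(b). You instead replace this set by the full $G$-orbit $\mathcal{O}$ of $R_1 kG$ and try to prove $|\mathcal{O}| = ms$ by orbit--stabilizer; the lower bound $|\mathcal{O}| \geq ms$ and the ``equal uniform dimension forces essentiality'' step are fine, but the upper bound $|\mathcal{O}| \leq ms$ is a real gap, as you concede. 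Your proposed fix does not close it: replacing $W$ by its $G$-core is problematic because control is preserved under \emph{enlarging}, not shrinking, so the $G$-core of $W$ need not contain the controller subgroup $Q_j^\chi$ and hence need not control $Q_j$, which would change the set $\{R_k\}$ and the value of $s$ entirely. And even after patching that, bounding $|\mathcal{O}|$ still requires understanding how $\Stab_G(I_j) \smallsetminus U_j$ acts on $\{R_k kG\}$ --- exactly the obstacle you flag. The cleaner route, following the paper, is to take the $ms$-term set itself (not $\mathcal{O}$) as the candidate decomposition and argue directly that $G$ permutes it transitively; note that if this holds, it is automatically equal to $\mathcal{O}$, so you were in effect trying to prove the paper's transitivity claim by an unnecessarily indirect counting argument.
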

\begin{proof} By symmetry, it is enough to prove that $I := I_1$ is virtually non-splitting. Choose an open subgroup $U$ of $G$ such that $P := I \cap kU$ is prime and such that $I = PkG$. Let $V$ be an open normal subgroup of $U$ which controls $P$ and let $Q_1,\ldots, Q_r$ be the minimal primes above $P \cap kV$. Then $(Q_i kG) \cap kV = Q_i$ for each $i$ by Lemma \ref{ComplGpRng}(b), so each $Q_i kG$ is virtually prime and we obtain a virtually prime decomposition
\[I = (P \cap kV)kG = (Q_1 \cap \cdots \cap Q_r)kG = Q_1kG \cap \cdots \cap Q_r kG\]
by applying Lemma \ref{ComplGpRng}(a). Since $P \cap kV$ is semiprime by Proposition \ref{GammaPrime}, $k V / P \cap kV$ is an essential $kV$-submodule of $(kV / Q_1) \oplus \cdots \oplus (kV / Q_r)$ by Example \ref{EssDec}. Therefore $kG / I$ is an essential $kG$-submodule of $(kG/Q_1kG)\oplus \cdots \oplus (kG / Q_rkG)$ by the Lemma. Since our original decomposition of $P$ was $G$-invariant, we can find $g_j \in G$ such that $I_j = {}^{g_j}I$ for each $j$, and then the composite embedding
\[\frac{kG}{P} \hookrightarrow \bigoplus_{j=1}^m \frac{kG}{I_j} \hookrightarrow \bigoplus_{j=1}^m \bigoplus_{i=1}^r \frac{kG}{\left({}^{g_j}Q_i\right)kG} \]
has essential image. Therefore
\[P = \bigcap_{j=1}^m \bigcap_{i=1}^r \left({}^{g_j}Q_i\right)kG\]
is another essential virtually prime decomposition of $P$. Because $U$ acts transitively on the $Q_i$ by Lemma \ref{GammaPrime}(b), we see that $G$ acts transitively on the ${}^{g_j}Q_ikG$, so this decomposition is also $G$-invariant. The maximality of $m$ now forces $r = 1$, so $P\cap kV$ is prime for any open normal subgroup $V$ of $U$. Therefore $P = I \cap kU$ is a non-splitting prime and $I$ is virtually non-splitting.
\end{proof}

\subsection{Orbital subgroups}\label{GammaOrb}
Let the group $\Gamma$ act on a set $X$. Imitating Roseblade \cite[\S 1.3]{Roseblade}, we say that an element $x \in X$ is \emph{$\Gamma$-orbital} if the $\Gamma$-orbit of $x$ is finite, and that a profinite group $G$ is \emph{orbitally sound} if for any closed $G$-orbital subgroup $H$ of $G$, the intersection $H^\circ$ of all $G$-conjugates of $H$ has finite index in $H$. 

\begin{thm} Let $G$ be a torsion-free, orbitally sound, pro-$p$, $p$-adic analytic group. Let $A$ be a closed subgroup of $G$ such that every faithful virtually non-splitting right ideal $I$ of $kG$ is controlled by $A$. Then every faithful prime ideal $P$ of $kG$ is also controlled by $A$.
\end{thm}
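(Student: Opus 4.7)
The plan is to apply Theorem \ref{VirtNonSplit} to decompose $P$ into virtually non-splitting right ideals, show that each piece is faithful (this is the key step, where orbital soundness and torsion-freeness enter), then recombine using Lemma \ref{ComplGpRng}(a) to deduce that $P$ itself is controlled by $A$.

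More concretely, since $P$ is itself a trivial essential $G$-invariant virtually prime decomposition of $P$, such decompositions exist, so we may choose one $P = I_1 \cap \cdots \cap I_m$ with $m$ maximal. By Theorem \ref{VirtNonSplit}, each $I_j$ is then virtually non-splitting. Because the decomposition is $G$-invariant in the sense of Definition \ref{EssDec}, the ideals $I_1,\ldots,I_m$ form a single $G$-orbit under conjugation. The $\dag$-operation commutes with conjugation, so the closed subgroups $I_1^\dag,\ldots,I_m^\dag$ also form a single $G$-orbit, and in particular $I_1^\dag$ has only finitely many $G$-conjugates, i.e.\ $I_1^\dag$ is $G$-orbital.

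The heart of the argument is showing $I_j^\dag = 1$ for each $j$. Directly from the definitions,
\[P^\dag = (1+P)\cap G = \bigcap_{j=1}^m \bigl((1+I_j)\cap G\bigr) = \bigcap_{j=1}^m I_j^\dag,\]
and this equals the trivial subgroup since $P$ is faithful. Because the $I_j^\dag$ are precisely (up to repetition) the $G$-conjugates of $I_1^\dag$, this intersection is exactly the core $(I_1^\dag)^\circ = \bigcap_{g\in G}{}^{g}I_1^\dag$. Since $G$ is orbitally sound and $I_1^\dag$ is $G$-orbital, $(I_1^\dag)^\circ$ has finite index in $I_1^\dag$, so $I_1^\dag$ itself is finite; since $G$ is torsion-free, $I_1^\dag = 1$. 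Hence every $I_j$ is a faithful virtually non-splitting right ideal of $kG$, and the hypothesis on $A$ gives $I_j = (I_j \cap kA)kG$ for each $j$.

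Finally, applying Lemma \ref{ComplGpRng}(a) to the right ideals $I_j \cap kA$ of $kA$,
\[P = \bigcap_{j=1}^m I_j = \bigcap_{j=1}^m (I_j \cap kA)\,kG = \Bigl(\bigcap_{j=1}^m (I_j\cap kA)\Bigr) kG = (P\cap kA)\,kG,\]
so $P$ is controlled by $A$. The only genuinely non-routine step is the faithfulness argument in the previous paragraph, where the orbital soundness hypothesis is used in an essential way to upgrade the faithfulness of the intersection $P$ to the faithfulness of each individual component $I_j$; the rest is a direct combination of Theorem \ref{VirtNonSplit} with the faithful flatness Lemma \ref{ComplGpRng}.
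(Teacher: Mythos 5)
Your proof is correct and follows the same route as the paper: take a maximal $G$-invariant essential virtually prime decomposition $P = I_1 \cap \cdots \cap I_m$, invoke Theorem \ref{VirtNonSplit} to get each $I_j$ virtually non-splitting, use faithfulness of $P$ together with orbital soundness and torsion-freeness to show each $I_j$ is faithful, then apply the hypothesis and Lemma \ref{ComplGpRng}(a) to recombine. You fill in two details the paper leaves implicit --- the identification $P^\dag = \bigcap_j I_j^\dag = (I_1^\dag)^\circ$ and the final intersection computation showing $P = (P \cap kA)kG$ --- but the structure of the argument is the same.
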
\begin{proof} Since $kG/P$ is Noetherian, its uniform dimension provides an upper bound to the number of terms in any essential decomposition of $P$. Since $P$ is prime, it is virtually prime as a right ideal, so $P = P$ is a $G$-invariant essential virtually prime decomposition of $P$. Choose such a decomposition $P = I_1 \cap \cdots \cap I_m$ with $m$ as large as possible. Fix $j$; then $I_j$ is virtually non-splitting by Theorem \ref{VirtNonSplit} and $(I_j^\dag)^\circ = P^\dag = 1$ because $P$ is faithful, so the $G$-orbital subgroup $I_j^\dag$ is finite since $G$ is orbitally sound. But $G$ is torsion-free so $I_j^\dag = 1$ and each $I_j$ is faithful. Therefore every $I_j$ is controlled by $A$ by assumption and thus $P$ is also controlled by $A$.
\end{proof}

This result is very useful. As we will see in \S\ref{Zal}, it allows us to assume that the ideal $P \cap kP^\chi$ is actually \emph{prime} and not just $G$-prime, after the minor inconvenience of passing to an open subgroup of $G$. By replacing $G$ by $P^\chi$ and $P$ by $P\cap kP^\chi$, we may then focus on showing that a faithful prime $P$ of $kG$ which is not controlled by any proper subgroup of $G$ must be \emph{rigid}: it cannot be stabilized by any sufficiently nice non-trivial automorphism in $\Aut^\omega(G)$. We expect Theorem \ref{GammaOrb} to come in useful in future work on prime ideals in Iwasawa algebras. 

The wide applicability of Theorem \ref{GammaOrb} is guaranteed by our next result.

\subsection{Proposition}
Every complete $p$-valued group $G$ of finite rank is orbitally sound.
\begin{proof} Let $H$ be a closed $G$-orbital subgroup of $G$. Since $G$ has finite index in its saturation by \cite[IV.3.4.1]{Laz1965}, we may assume that $G$ is $p$-saturated. We will now show that $\tilde{H} := \Sat(H)$ is normal in $G$.

Let $g \in G$ and let $\varphi \in \Aut^\omega(G)$ be the conjugation action of $g$.  Because $H$ is $G$-orbital and $G$ is pro-$p$, $\varphi^{p^m}$ stabilizes $H$ for some integer $m$, so $\varphi^{p^m}$ also stabilizes $\tilde{H}$. By Lazard's equivalence of categories $\S \ref{LazEq}$, $(\varphi^{p^m})_\ast = (\varphi_\ast)^{p^m}$ stabilizes the Lie subalgebra $\mathfrak{h} := \log (\tilde{H})$ of $\mathfrak{g} := \log(G)$. Since $\varphi \in \Aut^\omega(G)$, we can consider the logarithm $\psi := \log \varphi_\ast : \mathfrak{g}\to \mathfrak{g}$ of $\varphi_\ast$ defined in $\S \ref{LogAut}$. Now
\[ p^m \psi(\mathfrak{h}) = \log( (\varphi_\ast)^{p^m} ) ( \mathfrak{h} )\subseteq \mathfrak{h} \]
so $\psi$ preserves $\mathfrak{h}$ since $\mathfrak{h}$ is a saturated Lie algebra. Hence $\varphi_\ast = \exp(\psi)$ also preserves $\mathfrak{h}$  and therefore $\varphi = (\varphi_\ast)^\ast$ stabilizes $\tilde{H}$. 

Thus $\tilde{H}$ is normal in $G$ as claimed, and its open subgroups $\tilde{H}^{p^n}$ are also normal in $G$ for all $n$. But $H$ contains one of these subgroups by \cite[IV.3.4.1]{Laz1965}, $\tilde{H}^{p^r}$ say, so 
\[\tilde{H}^{p^r} = \left(\tilde{H}^{p^r}\right)^\circ \leq H^\circ.\]
Hence $H^\circ$ is open in $H$. 
\end{proof}

\section{The Mahler expansion of an automorphism}
\label{MahlerChapter}

\subsection{Rational $p$-valuations}\label{RatVal}
From now on, $G$ will denote a complete $p$-valued group of rank $d$. 

By \cite[Proposition III.3.1.11]{Laz1965}, $G$ has a $p$-valuation $\omega$ which takes takes rational values. In fact, by \cite[Lemma 7.3]{CoaSchSuj2003} it is possible to find a $p$-valuation $\omega$ on $G$ and an integer $e$ such that 
\begin{itemize} 
\item $\omega(g) \in e^{-1} \mathbb{Z}$ for all $1 \neq g \in G$, and
\item $\gr G$ is an abelian $\Fp[\pi]$-Lie algebra.
\end{itemize}
We will henceforth fix such a $p$-valuation $\omega$ on $G$. Until the end of $\S \ref{MahlerChapter}$, we also fix an ordered basis $\mathbf{g} := \{g_1,\ldots, g_d\}$ for $G$ in the sense of $\S \ref{OrdBas}$. Whenever $\alpha \in \mathbb{N}^d$ is a multi-index, define 
\[\wt{\alpha} := \sum_{i=1}^d \alpha_i \omega(g_i).\]
We also define \emph{coordinates of the second kind} on $G$ to be the function
\[ \begin{array}{llll} \theta_{\mathbf{g}} :&  G & \to & \Zp^d \\
& \mathbf{g}^\lambda & \mapsto & \lambda.
\end{array}\]
Let $b_i = g_i - 1 \in k[G]$ for each $i$, and write
\[ \mathbf{b}^\alpha = b_1^{\alpha_1} \cdots b_d^{\alpha_d} \in k[G] \quad\mbox{for each}\quad \alpha \in \mathbb{N}^d.\]

\subsection{The valuation $w$ on $k[G]$}\label{valkG}
Recall the associated graded group $\gr G$ of $G$ from $\S \ref{OrdBas}$, let $\overline{\gr G}$ denote the $\Fp$-vector space $\gr G / \pi \cdot \gr G$ and let $\overline{\xi}$ denote the image of $\xi \in \gr G$ in $\overline{\gr G}$.

\begin{lem} \be
\item There is a filtration $w$ on $k[G]$ such that 
\[ \gr k[G] \cong \Sym (\overline{\gr G} \otimes_{\Fp} k).\]
\item $\gr k[G]$ can be identified with the polynomial algebra $k[X_1,\ldots, X_d]$ where
$X_i = \gr b_i \in \gr kG$ has degree $w(b_i) = \omega(g_i)$ for all $i$. 
\item $w ( \mathbf{b}^\alpha )= \wt{\alpha}$ for all $\alpha \in\mathbb{N}^d$.
\item The completion of $k[G]$ with respect to the filtration $w$ is isomorphic to $kG$. 
\ee\end{lem}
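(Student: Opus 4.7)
The plan is to follow Lazard's construction from Chapter III of \cite{Laz1965}. First I would define the filtration $w$ on $k[G]$ by
\[F_\lambda k[G] := \text{$k$-span of products } (g_{i_1}-1)(g_{i_2}-1)\cdots(g_{i_r}-1) \text{ with } \sum_j \omega(g_{i_j}) \geq \lambda,\]
together with $k \cdot 1$ when $\lambda \leq 0$, and set $w(x) = \sup\{\lambda : x \in F_\lambda k[G]\}$. The axioms $w(xy) \geq w(x) + w(y)$ and $w(x+y) \geq \min\{w(x),w(y)\}$ are built into the definition, and $w$ takes values in $e^{-1}\mathbb{Z}$ because $\omega$ does.

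For parts (a) and (b), the key observation is that $\gr G$ being an abelian $\Fp[\pi]$-Lie algebra means $\omega([g,h]) > \omega(g)+\omega(h)$ for all non-identity $g,h \in G$. Using the identity $b_ib_j - b_jb_i = g_jg_i([g_i,g_j]-1)$ in $k[G]$, this forces $w(b_ib_j - b_jb_i) > \omega(g_i)+\omega(g_j)$, so the symbols $X_i := \gr b_i$ commute in $\gr k[G]$. Hence there is a surjective graded $k$-algebra homomorphism
\[\varphi : k[X_1,\ldots, X_d] \twoheadrightarrow \gr k[G], \qquad \deg X_i = \omega(g_i).\]
Since $\{\overline{\gr g_i}\}$ is an $\Fp$-basis of $\overline{\gr G}$, the source of $\varphi$ is canonically isomorphic to $\Sym(\overline{\gr G} \otimes_{\Fp} k)$, so injectivity of $\varphi$ will give both (a) and (b) simultaneously. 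Injectivity is equivalent to showing that the monomials $\mathbf{b}^\alpha$ with $\wt{\alpha} = \lambda$ are linearly independent modulo $F_{\lambda^+}k[G]$, which I would deduce by invoking Lazard's fundamental expansion theorem \cite[III.2.3.3]{Laz1965}: every element of $kG$ has a unique convergent Mahler expansion $\sum_{\alpha \in \mathbb{N}^d} c_\alpha \mathbf{b}^\alpha$ with $c_\alpha \in k$, and the valuation of such an expansion equals $\inf\{\wt{\alpha}: c_\alpha \neq 0\}$. Part (c) then follows immediately, since $\mathbf{b}^\alpha$ maps under $\varphi^{-1}\circ\gr$ to the nonzero monomial $X^\alpha$ of degree $\wt{\alpha}$.

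For part (d), I would show that the completion $\widehat{k[G]}$ identifies with $kG$ as follows. Continuity of the natural map $k[G] \to kG$ follows because each open normal subgroup $U$ of $G$ contains $G_\lambda$ for some $\lambda$, and the kernel of $k[G]\to k[G/U]$ then contains $F_\lambda k[G]$. The image is dense because every coset $gU$ in $G/U$ is hit. Since $kG$ is complete (as an inverse limit of discrete quotients) and its filtration pulled back from the expansion description agrees with the completion of $w$, the universal property of completion produces the desired isomorphism $\widehat{k[G]} \xrightarrow{\sim} kG$.

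The main obstacle is the injectivity of $\varphi$ in the second paragraph: this is the PBW-type statement that underpins everything else, and it rests critically on both the ordered basis property of $\mathbf{g}$ and the abelianness of $\gr G$. Once that is in hand, parts (c) and (d) are formal consequences.
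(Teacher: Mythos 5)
Your overall route is the same as the paper's: both proofs ultimately offload the hard content onto \cite[Theorem III.2.3.3]{Laz1965}, and the observation that $\gr G$ abelian forces the symbols $\gr b_i$ to commute is exactly the point that makes the graded ring a polynomial ring rather than a general quotient of a tensor algebra.

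There is, however, a genuine gap in your explicit definition of the filtration. You set
\[F_\lambda k[G] := \text{$k$-span of } (g_{i_1}-1)\cdots(g_{i_r}-1) \text{ with } \textstyle\sum_j \omega(g_{i_j}) \geq \lambda,\]
where the $g_{i_j}$ run over the fixed ordered basis $\{g_1,\ldots,g_d\}$. The union $\bigcup_\lambda F_\lambda$ is then the polynomial subalgebra $k[b_1,\ldots,b_d]$, which is a \emph{proper} subalgebra of $k[G]$: for instance $g_1^{-1}$, or more generally $\mathbf{g}^\lambda$ for $\lambda \in \Zp^d \setminus \mathbb{N}^d$, is a basis element of the abstract group algebra $k[G]$ that is not a finite polynomial in $b_1,\ldots,b_d$. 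Consequently your $w$ would assign $-\infty$ (or be undefined) on most of $k[G]$, so it is not a filtration on $k[G]$ and part (d) in particular cannot be stated for it. Lazard's actual construction takes $F_\lambda k[G]$ to be the span of products $(h_1-1)\cdots(h_r-1)$ with $h_1,\ldots,h_r$ ranging over \emph{all} of $G$ (equivalently, it is generated by declaring $w(h-1) \geq \omega(h)$ for every $h \in G$ and closing under the ring axioms); with that definition every group element $g$ acquires the finite value $w(g-1) = \omega(g)$, and the union is all of $k[G]$. A secondary point worth flagging: your injectivity argument for $\varphi$ invokes the unique convergent expansion and the valuation formula in $kG$, which is essentially part (d) of the lemma, so as written parts (a) and (d) lean on each other. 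That circularity is harmless if you are content (as the paper is) to treat \cite[III.2.3.3]{Laz1965} as a black box that delivers (a)–(d) simultaneously, but it means your proof is not actually a self-contained derivation of (a) prior to (d).
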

\begin{proof}(a) When $k$ is the finite field $\Fp$, this follows from \cite[Theorem III.2.3.3]{Laz1965}; the general case follows from an easy extension of scalars argument. 

(b) The following are equivalent by \cite[Proposition III.2.2.5]{Laz1965}:
\begin{itemize}
\item $\{g_1,\ldots,g_d \}$ is an ordered basis of $G$,
\item $\{\gr g_1,\ldots,\gr g_d\}$ is a basis for $\gr G$ as an $\Fp[\pi]$-module,
\item $\{\overline{\gr g_1},\ldots,\overline{\gr g_d}\}$ is an $\Fp$-basis for $\overline{\gr G}$.
\end{itemize}
But $\gr b_i \in \gr k[G]$ corresponds to $\overline{\gr g_i}$ in the isomorphism of part (a).

(c) The polynomial algebra $k[X_1,\ldots, X_d]$ has no zero-divisors, so the filtration $w$ is a valuation. Now apply part (b). 

(d) This follows from the proof of \cite[Theorem III.2.3.3]{Laz1965}. \end{proof}

\begin{cor}\be \item Every element of $kG$ can be written as a convergent power series in $b_1,\ldots, b_d$:
\[kG = \left\{ \sum_{\alpha \in \mathbb{N}^d} \lambda_\alpha \mathbf{b}^\alpha : \lambda_\alpha \in k \right\}.\]
\item The extension of the valuation $w$ to $kG$ is given by 
\[w\left(\sum_{\alpha \in \mathbb{N}^d} \lambda_\alpha \mathbf{b}^\alpha\right) = \inf \{ \wt{\alpha} : \lambda_\alpha \neq 0 \}.\]
\ee\end{cor}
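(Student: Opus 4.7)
The plan is to deduce both parts of the corollary from the preceding lemma via the standard argument relating a complete filtered ring to its associated graded ring. The key point is that by part (b) of the lemma, $\gr kG \cong k[X_1,\ldots,X_d]$ with $\mathbf{X}^\alpha$ homogeneous of degree $\wt{\alpha}$, and the distinct values $\wt{\alpha}$ form a discrete subset of $\mathbb{R}$ because $\omega$ takes values in $e^{-1}\mathbb{Z}$ by our choice in $\S\ref{RatVal}$. Moreover $w(b_i) = \omega(g_i) > 1/(p-1) > 0$, so $\wt{\alpha} \to \infty$ as $|\alpha| \to \infty$: only finitely many $\alpha$ satisfy $\wt{\alpha} \leq \lambda$ for any fixed $\lambda$. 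Hence any formal series $\sum_{\alpha} \lambda_\alpha \mathbf{b}^\alpha$ is automatically Cauchy in $kG$, and converges because $kG$ is complete with respect to $w$ by part (d) of the lemma.

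For the uniqueness half of (a), and simultaneously for (b), I would argue as follows. Suppose that $x = \sum_\alpha \lambda_\alpha \mathbf{b}^\alpha$ with not all $\lambda_\alpha$ zero, and let $\mu := \inf\{\wt{\alpha} : \lambda_\alpha \neq 0\}$; this infimum is attained and is achieved by only finitely many $\alpha$, by discreteness. By the filtration axioms $w(x) \geq \mu$, and the image of $x$ in $\gr_\mu kG$ equals $\sum_{\wt{\alpha}=\mu} \lambda_\alpha \mathbf{X}^\alpha$. Since the $\mathbf{X}^\alpha$ with $\wt{\alpha} = \mu$ are $k$-linearly independent in the polynomial ring $k[X_1,\ldots,X_d] \cong \gr kG$, this image is non-zero, whence $w(x) = \mu$. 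This simultaneously shows that no non-trivial series represents zero (giving uniqueness in (a)) and proves the formula in (b).

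For existence in (a), I would construct the coefficients by successive approximation. Given $x \in kG$, enumerate the (countable, discrete, unbounded) set of values $\mu_1 < \mu_2 < \cdots$ of $\wt{\alpha}$. Inductively, suppose I have chosen $\lambda_\alpha$ for all $\alpha$ with $\wt{\alpha} < \mu_n$ such that $x - \sum_{\wt{\alpha} < \mu_n} \lambda_\alpha \mathbf{b}^\alpha$ lies in $(kG)_{\mu_n}$. Then its image in $\gr_{\mu_n} kG$ is a (finite) $k$-linear combination of the basis elements $\mathbf{X}^\alpha$ with $\wt{\alpha} = \mu_n$; choose $\lambda_\alpha \in k$ to realise that combination. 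Subtracting $\sum_{\wt{\alpha}=\mu_n}\lambda_\alpha\mathbf{b}^\alpha$ pushes the remainder into $(kG)_{\mu_n^+} \subseteq (kG)_{\mu_{n+1}}$, completing the induction. The partial sums converge to $x$ because their difference from $x$ has $w$-value tending to $\infty$.

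The only mildly subtle step is the induction in the existence argument, where one must use the specific form of the graded piece $\gr_{\mu_n} kG$ as a finite-dimensional $k$-vector space with basis the $\mathbf{X}^\alpha$ of weight $\mu_n$; this is exactly what is provided by the identification $\gr kG = k[X_1,\ldots,X_d]$ with the stated degrees in part (b) of the lemma. Everything else is a routine unwinding of the filtered completion formalism.
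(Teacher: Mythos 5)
Your proof is correct and takes essentially the same approach as the paper: the paper's own proof checks that $kG$ is complete, that $\gr kG$ is free over $\gr k = k$ with the monomial basis $\{\mathbf{X}^\alpha\}$ (Lemma \ref{valkG}(b)), and that the filtration is discrete since $\omega$ takes values in $e^{-1}\mathbb{Z}$, and then simply cites Lazard's Th\'eor\`eme I.2.3.17, which packages exactly the convergence, uniqueness-via-graded-pieces and successive-approximation arguments you spell out. Your version is a correct self-contained unwinding of that cited result rather than a genuinely different route.
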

\begin{proof} View $k$ as a complete filtered ring with the trivial filtration $v$ given by $v(\lambda) = 0$ if $\lambda \neq 0$ and $v(0) = \infty$. Then $kG$ is a complete filtered $k$-module and $\gr kG$ is free over $\gr k$ by the Lemma. The valuation $w$ on $kG$ is discrete because the $p$-valuation $\omega$ on $G$ takes values in $e^{-1} \mathbb{Z}$ by construction, so the result follows from \cite[Th\'eor\`eme I.2.3.17]{Laz1965}.
\end{proof}
\subsection{Mahler's Theorem}
\label{Mahler}
For each multiindex $\alpha \in \mathbb{N}^d$, there is a continuous function
\[\begin{array}{llll} \binom{-}{\alpha} :& \Zp^d &\to& \Zp \\
& \lambda &\mapsto & \binom{\lambda}{\alpha} := \binom{\lambda_1}{\alpha_1} \cdots \binom{\lambda_d}{\alpha_d}.\end{array}
\]
It turns out that these binomial coefficients form a nice topological basis for the space of continuous functions $C(\Zp^d,\Zp)$. More generally, Mahler's Theorem \cite[III.1.2.4]{Laz1965} states the following:

\begin{thm} Let $M$ be a complete $\Zp$-module and let $f : \Zp^d \to M$ be a continuous function. Then there is a collection of elements $\{C_\alpha(f) \in M : \alpha \in \mathbb{N}^d\}$ depending only on $f$ such that
\begin{itemize}
\item $C_\alpha(f) \to 0$ as $\alpha \to \infty$,
\item $f(\lambda) = \sum_{\alpha\in\mathbb{N}^d} C_\alpha(f) \binom{\lambda}{\alpha}$ for all $\lambda \in \Zp^d$.
\end{itemize}
\end{thm}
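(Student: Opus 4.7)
My plan is to follow the classical route via finite differences. First I would introduce the forward-difference operators $\Delta_i$ on functions $\Zp^d \to M$ by $(\Delta_i f)(\lambda) = f(\lambda + e_i) - f(\lambda)$, where $e_i$ is the $i$-th standard basis vector of $\Zp^d$. These operators commute, so for any $\alpha \in \mathbb{N}^d$ the composition $\Delta^\alpha := \Delta_1^{\alpha_1} \cdots \Delta_d^{\alpha_d}$ is unambiguously defined, and I would set
\[ C_\alpha(f) := (\Delta^\alpha f)(0) \in M. \]
These coefficients manifestly depend only on $f$, and they are the only possible candidates: any two expansions $\sum_\alpha c_\alpha \binom{\lambda}{\alpha}$ with $c_\alpha \to 0$ that agree on $\mathbb{N}^d$ must have the same coefficients, by a triangular system argument against $\lambda = n \in \mathbb{N}^d$.

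Next I would verify the combinatorial identity
\[ f(n) = \sum_{\alpha \in \mathbb{N}^d} \binom{n}{\alpha} C_\alpha(f) \qquad \text{for all } n \in \mathbb{N}^d. \]
This is a purely algebraic fact, proved by a straightforward induction on $|n|$ using Pascal's rule, and the sum is finite because $\binom{n_i}{\alpha_i}$ vanishes whenever $\alpha_i > n_i$. So no analysis is needed at this stage.

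The heart of the argument, and the main obstacle, is the decay $C_\alpha(f) \to 0$ as $|\alpha| \to \infty$. I would reduce to $d=1$ by induction on $d$, viewing $f : \Zp^{d-1} \times \Zp \to M$ as a continuous map $\Zp \to C(\Zp^{d-1}, M)$ into the complete $\Zp$-module of continuous functions with the sup-type filtration inherited from $M$. For $d = 1$ the key input is the operator identity $E^{p^k} = (I + \Delta)^{p^k}$, where $E$ is the unit shift; expanding and rearranging gives
\[ f(\lambda + p^k) - f(\lambda) = \Delta^{p^k} f(\lambda) + \sum_{0 < j < p^k} \binom{p^k}{j} \Delta^j f(\lambda). \]
Because $v_p\binom{p^k}{j} \geq 1$ for $0 < j < p^k$ and $f$ is uniformly continuous on the compact space $\Zp$, this forces $\Delta^{p^k} f(\lambda) \in pM$ for all $\lambda$, provided $k$ is large enough. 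Combined with the obvious fact that $\Delta$ preserves the set of $pM$-valued functions, this shows that $\Delta^n f$ is $pM$-valued for all sufficiently large $n$. A bootstrap — applying the same argument to the $pM$-valued function $\Delta^{p^k} f$ — then produces $\Delta^n f$ valued in $p^N M$ for all sufficiently large $n$, for any prescribed $N$, which is the required decay.

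Finally, once $C_\alpha(f) \to 0$ has been established, the series $\sum_\alpha C_\alpha(f) \binom{\lambda}{\alpha}$ converges in $M$ for every $\lambda \in \Zp^d$ since $\binom{\lambda}{\alpha} \in \Zp$ there, and defines a continuous function of $\lambda$. By the combinatorial step it agrees with $f$ on the dense subset $\mathbb{N}^d \subseteq \Zp^d$, so by continuity it equals $f$ everywhere, completing the proof.
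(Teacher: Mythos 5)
This is Mahler's theorem, which the paper does not prove but cites directly from Lazard \cite[III.1.2.4]{Laz1965}, so there is no ``paper's proof'' to compare your argument against. Your route is the standard one via finite differences and the identity $E^{p^k} = (I+\Delta)^{p^k}$; the uniqueness observation, the Gregory--Newton identity on $\mathbb{N}^d$, the reduction to $d=1$ by currying, and the final density-plus-continuity step are all fine.

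There is, however, a genuine gap in the decay estimate, and it is exactly the point that matters for the paper's applications. You read ``complete $\Zp$-module'' as carrying the $p$-adic topology, and accordingly conclude $\Delta^{p^k}f(\lambda)\in pM$ and bootstrap to $p^N M$. But the paper applies this theorem (Proposition~\ref{Extend}, Lemma~\ref{CalcMahler}) with $M = kG$ equipped with the filtration topology of Lemma~\ref{valkG}. There $k$ has characteristic $p$, so $pM = 0$: the $p$-adic topology on $M$ is discrete, and your argument would assert $\Delta^{p^k}f = 0$ identically for $k$ large, which is false in general (Mahler expansions over $kG$ are honestly infinite). What must be shown is that for each level $M_n$ of the \emph{filtration}, $(\Delta^\alpha f)(0)\in M_n$ for $|\alpha|$ large. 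The repair is small and keeps your structure: in the identity $\Delta^{p^k}f(\lambda) = f(\lambda+p^k)-f(\lambda) - \sum_{0<j<p^k}\binom{p^k}{j}\Delta^j f(\lambda)$, the binomial sum lies in $pM\subseteq M_1$ because Lazard's notion of a filtered $\Zp$-module requires $pM_n\subseteq M_{n+1}$, while $f(\lambda+p^k)-f(\lambda)\in M_1$ for $k\gg 0$ by uniform continuity in the \emph{filtration} topology; hence $\Delta^{p^k}f(\lambda)\in M_1$, and one bootstraps inside the induced filtration $M_1\supseteq M_2\supseteq\cdots$ rather than inside $pM\supseteq p^2M\supseteq\cdots$. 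As written, though, your proof only covers the case where the topology on $M$ is $p$-adic, which is not the case the paper needs.
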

We call these $C_\alpha(f)$ the \emph{Mahler coefficients} of $f$. There is an explicit formula for $C_\alpha(f)$ in terms of the values that $f$ takes:
\[ C_\alpha(f) = (\Delta^\alpha f)(0) := \sum_{\beta \leq \alpha} (-1)^{\alpha - \beta} \binom{\alpha}{\beta} f(\beta)\]
where by convention $\beta \leq \alpha$ means that $\beta_i \leq \alpha_i$ for all $i=1,\ldots, d$.

\subsection{The action of $C^\infty$ on $kG$}
\label{DivPow}
Let $C^\infty = C^\infty(G,k)$ denote the set of locally constant functions $f : G \to k$. Because we always view the base field $k$ as a discrete topological space and because the group $G$ is profinite, $C^\infty$ is also the set of continuous functions $C(G,k)$. 

We showed in \cite[\S 2]{Ard2011} that $C^\infty$ is naturally a commutative Hopf algebra over $k$ and that $kG$ is a $C^\infty$-module algebra. Let  $\rho : C^\infty \to \End_k(kG)$ be the associated $k$-algebra homomorphism; the proof \cite[Proposition 2.5]{Ard2011} shows that the action of $C^\infty$ on $kG$ has the following properties:
\begin{itemize}
\item if $U$ is an open subgroup of $G$ with characteristic function $\delta_U \in C^\infty$ and $\{1, g_2, \ldots, g_m\}$ is a complete set of right coset representatives for $U$ in $G$, then $\rho(\delta_U)$ is the projection of $kG$ onto $kU$ along the decomposition 
\[kG = kU \oplus \bigoplus_{i=2}^m kUg_i,\]
\item $ f\cdot g = f(g) g$ for all $f \in C^\infty$ and $g \in G$. 
\end{itemize}
The $k$-vector space spanned by the characteristic functions $\delta_{Ug}$ of right cosets of $U$ in $G$ can be identified with the subalgebra $C^{\infty \hspace{1mm} U}$ of right $U$-invariants in $C^\infty$:
\[C^{\infty \hspace{1mm} U} = \{f \in C^\infty : f(Ug) = f(g) \quad\mbox{for all}\quad g \in G\}.\]
It follows from \cite[Lemma 2.6(a), Proposition 2.8]{Ard2011} that $U$ controls $I$ if and only if $I$ is a $C^{\infty \hspace{1mm} U}$-submodule of $kG$ via $\rho$.

\subsection{Quantized divided powers}\label{qdel}
Since $k$ is a field of characteristic $p$, there is a unique ``reduction mod $p$" ring homomorphism $\iota_k : \Zp \to k$. We will frequently abuse notation and simply write $\lambda = \iota_k(\lambda)$ for any $\lambda \in \Zp$. The following endomorphisms of $kG$ will play a crucial role in what follows.

\begin{defn} Let $\qdel{\alpha} := \rho\left( \iota_k \circ \binom{ - }{\alpha} \circ \theta_{\mathbf{g}} \right) \in \End_k(kG)$ for all $\alpha \in \mathbb{N}^d$.\end{defn}

The notation is designed to suggest a ``divided power differential operator" and is supported by the following computation. Recall the notion of \emph{bounded $k$-linear maps} from $\S \ref{BddHoms}$. 

\begin{thm} Let $\alpha \in \mathbb{N}^d$. Then 
\be \item $\qdel{\alpha}( \mathbf{g}^\lambda ) = \binom{\lambda}{\alpha} \mathbf{g}^\lambda$ for all $\lambda \in \Zp^d$.
\item $w\left( \qdel{\alpha}( \mathbf{b}^\beta ) - \binom{\beta}{\alpha} \mathbf{b}^{\beta - \alpha} \right) > w\left(\binom{\beta}{\alpha} \mathbf{b}^{\beta - \alpha}\right)$ for all $\beta \in \mathbb{N}^d$. 
\item The operator $\qdel{\alpha} : kG \to kG$ is bounded in the sense of $\S \ref{BddHoms}$.
\item $\deg \qdel{\alpha} = - \wt{\alpha}.$
\ee\end{thm}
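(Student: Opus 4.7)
The plan is that part (a) is immediate from the definitions, part (b) is the main combinatorial computation, and parts (c)--(d) follow by bootstrapping. For (a): by construction $\theta_{\mathbf{g}}(\mathbf{g}^\lambda) = \lambda$, so $(\iota_k\circ\binom{-}{\alpha}\circ\theta_{\mathbf{g}})(\mathbf{g}^\lambda) = \binom{\lambda}{\alpha}$, and the second bullet of $\S\ref{DivPow}$ says $\rho$ acts diagonally on group elements, yielding $\qdel{\alpha}(\mathbf{g}^\lambda) = \binom{\lambda}{\alpha}\mathbf{g}^\lambda$ at once.

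For part (b), the crucial observation is that, although $G$ is non-commutative, each factor $(1+b_i)^{k_i}$ is a polynomial in the single variable $b_i$, so the identity
\[\mathbf{g}^k = \prod_{i=1}^d(1+b_i)^{k_i} = \sum_{s \leq k}\binom{k}{s}\mathbf{b}^s \qquad (k \in \mathbb{N}^d)\]
holds verbatim in $k[G]$ by multiplying out the $d$ nested binomial expansions in the prescribed order. I would combine this with the dual expansion $\mathbf{b}^\beta = \sum_{0 \leq k \leq \beta}\binom{\beta}{k}(-1)^{|\beta-k|}\mathbf{g}^k$, apply $\qdel{\alpha}$ via (a), and use the elementary identity $\binom{\beta}{k}\binom{k}{\alpha} = \binom{\beta}{\alpha}\binom{\beta-\alpha}{k-\alpha}$ to pull $\binom{\beta}{\alpha}$ outside. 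The remaining double sum has a coefficient on $\mathbf{b}^s$ that factors coordinate-wise into the one-variable finite differences
\[\sum_{m=0}^{\beta_i-\alpha_i}\binom{\beta_i-\alpha_i}{m}(-1)^{\beta_i-\alpha_i-m}\binom{\alpha_i + m}{s_i} = \Delta^{\beta_i-\alpha_i}\!\left[\binom{\alpha_i + \cdot}{s_i}\right]\!(0) = \binom{\alpha_i}{s_i-\beta_i+\alpha_i},\]
using $\Delta\binom{x+\alpha_i}{s_i} = \binom{x+\alpha_i}{s_i-1}$. After reindexing $s = \beta - \alpha + t$, the whole computation collapses to
\[\qdel{\alpha}(\mathbf{b}^\beta) = \binom{\beta}{\alpha}\sum_{0 \leq t \leq \alpha}\binom{\alpha}{t}\mathbf{b}^{\beta-\alpha+t}.\]
The $t=0$ term is exactly $\binom{\beta}{\alpha}\mathbf{b}^{\beta-\alpha}$, and every other term has $w$-value $\wt{\beta-\alpha} + \wt{t} > \wt{\beta-\alpha}$ because every $\omega(g_i) > 0$; in the edge case $\alpha \not\leq \beta$ the formula gives $0$, and the statement holds trivially with both sides $\infty$.

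Parts (c) and (d) follow by bootstrapping from (b) together with Lemma \ref{valkG}(c), which gives $w(\qdel{\alpha}(\mathbf{b}^\beta)) \geq w(\mathbf{b}^\beta) - \wt{\alpha}$ on each basis element. Since the $\mathbf{b}^\beta$ form a topological basis of $kG$ by the Corollary to Lemma \ref{valkG}, this inequality extends by $k$-linearity and continuity to all of $kG$, so $\qdel{\alpha}$ is bounded with $\deg\qdel{\alpha} \geq -\wt{\alpha}$. Testing on $a = \mathbf{b}^\alpha$ saturates the bound: the main term of $\qdel{\alpha}(\mathbf{b}^\alpha)$ is $\binom{\alpha}{\alpha}\mathbf{b}^0 = 1$, so $w(\qdel{\alpha}(\mathbf{b}^\alpha)) = 0 = w(\mathbf{b}^\alpha) - \wt{\alpha}$, forcing $\deg\qdel{\alpha} \leq -\wt{\alpha}$. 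The main obstacle is the combinatorial reduction in (b); the trick that makes it work is recognising that the coefficient on $\mathbf{b}^s$ is a coordinate-wise product of one-dimensional finite differences, which bypasses any need to wrestle with commutator corrections in a direct non-commutative multi-index expansion.
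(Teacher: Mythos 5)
Your proof is correct, and for parts (a), (c), (d) it matches the paper's proof essentially verbatim. For part (b) both arguments hinge on the same crucial idea — the fact that $\mathbf{b}^\beta = b_1^{\beta_1}\cdots b_d^{\beta_d}$ and $\mathbf{g}^\gamma = g_1^{\gamma_1}\cdots g_d^{\gamma_d}$ are products of \emph{univariate} expressions in the prescribed order, so the multi-index binomial expansion factorizes coordinate-wise and commutators never appear. The one place you diverge is in dispatching the resulting one-variable computation. The paper first treats the case $d=1$: it shows by density of the group elements $g^\lambda$ in $k[[b]]$ that $\partial_g^{(\alpha)}$ coincides with the genuine divided-power differential operator $\frac{g^\alpha}{\alpha!}\frac{d^\alpha}{db^\alpha}$, and reads off $\partial_g^{(\alpha)}(b^\beta) = g^\alpha\binom{\beta}{\alpha}b^{\beta-\alpha}$ from that; it then applies this to each factor. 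You instead expand $\mathbf{g}^\gamma$ back into the $\mathbf{b}^s$ basis and compute the coefficient of $\mathbf{b}^s$ as a coordinate-wise finite difference $\Delta^{\beta_i-\alpha_i}\!\bigl[\binom{\alpha_i+\cdot}{s_i}\bigr]\!(0) = \binom{\alpha_i}{s_i-\beta_i+\alpha_i}$, arriving (after reindexing) at the same closed form as the paper's formula $\qdel{\alpha}(\mathbf{b}^\beta) = \binom{\beta}{\alpha}\prod_i(1+b_i)^{\alpha_i}b_i^{\beta_i-\alpha_i}$. Your route is more elementary and self-contained (pure binomial identities, no density/continuity argument needed at this stage); the paper's route is a little slicker and hands you the conceptual picture of $\qdel{\alpha}$ as a bona fide divided power, which justifies the notation. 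One small improvement in your write-up: in part (d) you correctly say that the \emph{leading term} of $\qdel{\alpha}(\mathbf{b}^\alpha)$ is $1$, where the paper writes $\qdel{\alpha}(\mathbf{b}^\alpha) = 1$ outright — the correct value is $\mathbf{g}^\alpha$, which still has $w$-value $0$, so the conclusion $\deg\qdel{\alpha} \le -\wt{\alpha}$ is unaffected either way.
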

\begin{proof} (a) Since $\rho(f)(g) = f \cdot g = f(g) g$ for all $f \in C^\infty$ and $g \in G$, we have
\[\qdel{\alpha}( \mathbf{g}^\lambda ) = \left(\iota_k \circ \binom{-}{\alpha}\circ \theta_{\mathbf{g}}\right)(\mathbf{g}^\lambda) \mathbf{g}^\lambda = \binom{\lambda}{\alpha} \mathbf{g}^\lambda.\]
(b) Suppose first that $d = 1$ and write $g = g_1$ and $b = b_1 = g - 1$. Then 
\[ \partial_g^{(\alpha)}( g^\lambda ) = \frac{g^\alpha}{\alpha!}\frac{d^\alpha}{db^\alpha} ( g^\lambda )\]
for all $\lambda \in \Zp^d$. Since the group elements $g^\lambda$ span a dense subset of $kG = k[[b]]$ and since both $\partial_g^{(\alpha)}$ and the differential operator $\frac{g^\alpha}{\alpha!}\frac{d^\alpha}{db^\alpha}$ are continuous, we see that
\[ \partial_g^{(\alpha)} = \frac{g^\alpha}{\alpha!}\frac{d^\alpha}{db^\alpha}\]
and in particular,
\[\partial_g^{(\alpha)}(b^\beta) = g^\alpha \binom{\beta}{\alpha} b^{\beta - \alpha}\]
in this case. Returning to the general case and applying part (a), we have a factorization
\[\begin{array}{lll} \qdel{\alpha}( \mathbf{b}^\beta ) &=& \sum_{\gamma \leq \beta} (-1)^\gamma \binom{\beta}{\gamma} \binom{\gamma}{\alpha} \mathbf{g}^\gamma  \\
&=& \sum\limits_{\gamma_1=0}^{\beta_1} \cdots \sum\limits_{\gamma_d=0}^{\beta_d} (-1)^{\gamma_1 + \ldots + \gamma_d} \binom{\beta_1}{\gamma_1} \cdots \binom{\beta_d}{\gamma_d} \binom{\gamma_1}{\alpha_1}\cdots \binom{\gamma_d}{\alpha_d} g_1^{\gamma_1}\cdots g_d^{\gamma_d}  \\
&=& \prod\limits_{i=1}^d \sum\limits_{\gamma_i=0}^{\beta_i} (-1)^{\gamma_i} \binom{\beta_i}{\gamma_i} \binom{\gamma_i}{\alpha_i} g_i^{\gamma_i}  \\
&=& \prod\limits_{i=1}^d g_i^{\alpha_i} \binom{\beta_i}{\alpha_i} b_i^{\beta_i - \alpha_i}  \end{array}\]
by the one-dimensional case applied to each procyclic subgroup $\langle g_i\rangle$ of $G$. Thus
\begin{equation}\label{CompQdel} \qdel{\alpha}( \mathbf{b}^\beta )= \binom{\beta}{\alpha} \prod\limits_{i=1}^d (1 + b_i)^{\alpha_i} b_i^{\beta_i - \alpha_i}\quad\mbox{for all}\quad \alpha,\beta \in \mathbb{N}^d.\end{equation}
Using Lemma \ref{valkG}(c), we see that the leading term of this expression with respect to the valuation $w$ is simply $\binom{\beta}{\alpha} \mathbf{b}^{\beta - \alpha}$ as claimed. Note that in particular it is zero whenever $\alpha_i > \beta_i$ for some $i$.

(c), (d) Using part (b) and Corollary \ref{valkG}(b) shows that
\[w(\qdel{\alpha}(x))  \geq w(x) - \langle \alpha, \omega(\mathbf{g})\rangle\]
whenever $x$ is a finite $k$-linear combination of monomials $\mathbf{b}^\beta$. Since elements of this form span a dense subalgebra inside $kG$ by Corollary \ref{valkG}(a), the inequality holds for all $x \in kG$. So $\qdel{\alpha}$ is bounded with 
\[\deg \qdel{\alpha} \geq - \langle \alpha, \omega(\mathbf{g})\rangle.\] 
On the other hand, taking $\beta = \alpha$ in the formula $(\ref{CompQdel})$ above shows that
\[ \qdel{\alpha}(\mathbf{b}^\alpha) = 1\]
which forces $\deg \qdel{\alpha} \leq -\langle \alpha,\omega(\mathbf{g})\rangle$.
\end{proof}

So the graded endomorphism of $\gr kG = k[X_1,\ldots,X_d]$ induced by $\qdel{\alpha}$ is the divided power $\frac{1}{\alpha!} \frac{\partial^{\alpha}}{\partial X^\alpha}$, which suggests that we should think of $\qdel{\alpha}$ as being a ``quantized divided power". 
\begin{cor} $\rho(C^\infty) \subseteq \mathcal{B}(kG)$.\end{cor}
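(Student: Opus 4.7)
The plan is to show that any $f \in C^\infty$ admits a \emph{finite} Mahler expansion, reducing $\rho(f)$ to a finite $k$-linear combination of the quantized divided powers $\qdel{\alpha}$, each of which has already been shown to be bounded in Theorem \ref{qdel}(c).

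First I would pull $f$ back along the bijection $\theta_{\mathbf{g}} : G \to \Zp^d$ to obtain a continuous function $\tilde{f} := f \circ \theta_{\mathbf{g}}^{-1} : \Zp^d \to k$, where $k$ is regarded as a discrete complete $\Zp$-module through $\iota_k$. Mahler's Theorem (\S\ref{Mahler}) then yields coefficients $c_\alpha \in k$ with $c_\alpha \to 0$ such that
\[ \tilde{f}(\lambda) = \sum_{\alpha \in \mathbb{N}^d} c_\alpha \binom{\lambda}{\alpha} \quad\text{for all } \lambda \in \Zp^d. \]
The crucial observation is that since $k$ carries the \emph{discrete} topology, the condition $c_\alpha \to 0$ means that $c_\alpha = 0$ for all but finitely many $\alpha$; equivalently, the locally constant function $\tilde{f}$ factors through a finite quotient $(\mathbb{Z}/p^N \mathbb{Z})^d$, so its Mahler expansion necessarily terminates.

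Therefore, composing with $\theta_{\mathbf{g}}$, we can write $f$ as a finite sum
\[ f = \sum_{\alpha \in F} c_\alpha \cdot \left( \iota_k \circ \binom{-}{\alpha} \circ \theta_{\mathbf{g}} \right) \]
for some finite $F \subseteq \mathbb{N}^d$. Applying the algebra homomorphism $\rho$ and using the definition of $\qdel{\alpha}$ from \S\ref{qdel} gives $\rho(f) = \sum_{\alpha \in F} c_\alpha \, \qdel{\alpha}$. Since $\mathcal{B}(kG)$ is a $k$-subspace of $\End_k(kG)$ (the base field $k$ has the trivial filtration, so scaling by an element of $k$ preserves boundedness) and each $\qdel{\alpha}$ is bounded by Theorem \ref{qdel}(c), this finite linear combination lies in $\mathcal{B}(kG)$, proving $\rho(f) \in \mathcal{B}(kG)$.

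There is no substantial obstacle here: the only non-formal step is the discreteness argument that collapses the \emph{a priori} infinite Mahler series to a finite sum, and this is immediate once one notes that continuous maps from a profinite group into a discrete field are locally constant and hence factor through a finite quotient.
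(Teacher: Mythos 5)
Your proof is correct and follows essentially the same route as the paper: use the discreteness of $k$ to conclude that the Mahler expansion of any $f \in C^\infty$ terminates, so $\rho(f)$ is a finite $k$-linear combination of the $\qdel{\alpha}$, each of which lies in $\mathcal{B}(kG)$ by Theorem \ref{qdel}(c). The only difference is that you spell out the composition with $\theta_{\mathbf{g}}^{-1}$ needed to invoke Mahler's Theorem on $\Zp^d$, which the paper leaves implicit.
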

\begin{proof} Since $k$ carries the discrete topology, the condition $C_\alpha(f) \to 0$ on the Mahler coefficients of $f \in C^\infty$ means that $C_\alpha(f) = 0$ for all sufficiently large $\alpha$. Thus Mahler's Theorem \ref{Mahler} implies that $C^\infty$ is spanned over $k$ by the binomial coefficients $\iota_k \circ \binom{-}{\alpha} \circ \theta_{\mathbf{g}}$. But $\qdel{\alpha} = \rho \left(\iota_k \circ \binom{-}{\alpha} \circ \theta_{\mathbf{g}}\right) \in \mathcal{B}(kG)$ by the Theorem above.
\end{proof}
We will view these quantized divided powers as a particularly nice ``orthonormal basis" for the space $\mathcal{B}(kG)$ of bounded endomorphisms of $kG$.

\subsection{Extending automorphisms to $kG$}
\label{Extend}
\begin{lem} Let $\varphi : G \to H$ be a continuous group homomorphism to another complete $p$-valued group $H$ of finite rank. Then the linear extension
\[k[\varphi] : k[G] \to k[H]\]
is continuous with respect to the topologies on these group rings defined by the valuations $w$ given in $\S\ref{valkG}$ and therefore extends to a continuous map $\varphi : kG\to kH$.\end{lem}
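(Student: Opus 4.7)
My strategy is to reduce continuity of $k[\varphi]$ to a uniform linear estimate on the monomial basis $\{\mathbf{b}^\alpha\}_{\alpha \in \mathbb{N}^d}$ of $k[G]$ furnished by Corollary~\ref{valkG}, and then extend to the completion using Lemma~\ref{valkG}(d).

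First I would set $c_i := \varphi(g_i) - 1 \in k[H]$ for $i = 1, \ldots, d$, so that the ring homomorphism $k[\varphi]$ sends $\mathbf{b}^\alpha$ to $\mathbf{c}^\alpha$ for every multi-index $\alpha$. By Corollary~\ref{valkG}(b), the valuation of any $x = \sum_\alpha \lambda_\alpha \mathbf{b}^\alpha \in k[G]$ equals the infimum of the numbers $\langle \alpha, \omega(\mathbf{g})\rangle$ with $\lambda_\alpha \neq 0$, so it will be enough to produce a constant $c > 0$ with $w_H(\mathbf{c}^\alpha) \geq c \cdot \langle \alpha, \omega(\mathbf{g})\rangle$ for all $\alpha$; that immediately yields $w_H(k[\varphi](x)) \geq c \cdot w(x)$ by $k$-linearity.

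To produce such a bound, I would use that $\varphi$ is continuous, so each $\nu_i := \omega_H(\varphi(g_i))$ is either strictly greater than $1/(p-1)$ or equal to $+\infty$ (the latter precisely when $\varphi(g_i) = 1$, in which case $c_i = 0$). The key auxiliary input is the inequality $w_H(h - 1) \geq \omega_H(h)$ for every $h \in H$, which applied to $h = \varphi(g_i)$ gives $w_H(c_i) \geq \nu_i$; since $w_H$ is a valuation by Lemma~\ref{valkG}(c), multiplying yields
\[
w_H(\mathbf{c}^\alpha) \;\geq\; \sum_i \alpha_i \nu_i \;\geq\; \epsilon \, |\alpha|
\]
where $\epsilon := \min\{\nu_i : \nu_i < \infty\}$ and $|\alpha| := \sum_i \alpha_i$. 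Because $\mathbf{g}$ is a finite ordered basis, $E := \max_i \omega(g_i)$ is finite and $\langle \alpha, \omega(\mathbf{g})\rangle \leq E |\alpha|$, so the previous display forces $w_H(\mathbf{c}^\alpha) \geq (\epsilon/E) \langle \alpha, \omega(\mathbf{g})\rangle$, which is the required estimate with $c = \epsilon/E$. Uniform continuity of $k[\varphi]$ on $k[G]$ then gives the desired unique continuous extension $kG \to kH$ via Lemma~\ref{valkG}(d).

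The main obstacle I anticipate is the auxiliary inequality $w_H(h - 1) \geq \omega_H(h)$ for $h \in H$, which amounts to the compatibility of Lazard's filtration $w$ on $k[H]$ with the $p$-valuation $\omega_H$. This is essentially the content of \cite[Theorem~III.2.3.3]{Laz1965} that is invoked to prove Lemma~\ref{valkG}(a); a self-contained derivation fixes an ordered basis $\mathbf{h} = (h_1, \ldots, h_{d'})$ of $H$, writes $h = \mathbf{h}^\mu$ in coordinates of the second kind, and expands
\[
h - 1 \;=\; \sum_{\beta \neq 0} \binom{\mu}{\beta}\,(h_1 - 1)^{\beta_1} \cdots (h_{d'} - 1)^{\beta_{d'}}
\]
as a convergent binomial series in $k[H]$; the bound then follows from Corollary~\ref{valkG}(b) applied to $k[H]$, the definition $\omega_H(h) = \inf_j(\omega_H(h_j) + v_p(\mu_j))$, and standard estimates on $v_p\!\binom{\mu}{\beta}$.
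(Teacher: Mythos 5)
Your argument is correct, but it takes a genuinely different and more quantitative route than the paper. The paper's proof is purely topological: it reads off from Lemma~\ref{valkG}(d) (ultimately from Lazard) that the $w$-topology on $k[G]$ has the augmentation ideals $(G_\lambda-1)k[G]$ as a fundamental system of neighbourhoods of $0$, and then continuity of $k[\varphi]$ is immediate from continuity of $\varphi$, since $k[\varphi]\bigl((G_\mu-1)k[G]\bigr)\subseteq(H_\lambda-1)k[H]$ whenever $\varphi(G_\mu)\subseteq H_\lambda$. You instead prove an explicit Lipschitz-type estimate $w_H(k[\varphi](x))\geq c\,w(x)$ on the monomial basis, using that $w_H$ is a valuation (so $w_H(\mathbf{c}^\alpha)=\sum_i\alpha_iw_H(c_i)$) together with the one-sided inequality $w_H(h-1)\geq\omega_H(h)$; this is a piece of the same Lazard compatibility, but you need only this one direction, whereas the ``fundamental system of neighbourhoods'' statement the paper invokes encodes both inclusions between the $w$-balls and the augmentation ideals. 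Your approach buys a stronger, quantitative conclusion (a multiplicative degree bound, not just continuity) at the cost of having to carry out a Lucas-type estimate on $v_p\binom{\mu}{\beta}$ that you only sketch; it would be worth recording that the strict bound $\omega_H>1/(p-1)$ is exactly what makes $\sum_i\beta_i\omega_H(h_i)\geq\omega_H(h)$ hold whenever $\binom{\mu}{\beta}\not\equiv0\pmod p$ and $\beta\neq 0$. Two small points to tidy up: the degenerate case where $\varphi$ is trivial (all $\nu_i=\infty$) makes your $\epsilon$ a minimum over the empty set, though of course $k[\varphi]$ is then obviously continuous; and the binomial series for $h-1$ converges in $kH$ rather than in $k[H]$, so one should note that $w_H$ on $k[H]$ is the restriction of the filtration on the completion before applying Corollary~\ref{valkG}(b).
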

\begin{proof} By Lemma \ref{valkG}(d), the topology on $k[G]$ defined by the valuation $w$ has the augmentation ideals $(G_\lambda - 1)k[G]$, $\lambda \in \mathbb{R}$, as a fundamental system of neighbourhoods of $0$. If $f(G_\mu) \subseteq H_\lambda$ then $k[f]$ sends $(G_\mu - 1)k[G]$ into $(H_\lambda - 1)k[H]$ which shows that $k[f]$ is continuous. The second statement is clear.
\end{proof}
The assumption of continuity on $\varphi$ is actually redundant because any abstract group homomorphism from a finitely generated pro-$p$ group to another profinite group is automatically continuous by \cite[Corollary 1.21(i)]{DDMS}.

\begin{prop} Let $\varphi \in \Aut^\omega(G)$ and let $\psi(g) = \varphi(g)g^{-1}$. Then 
\[ \varphi(g) = \sum_{\alpha \in \mathbb{N}^d} (\Delta^\alpha \psi \theta^{-1}_{\mathbf{g}})(0) \qdel{\alpha}(g) \]
for all $g \in G$, the right hand side converging in the topology of $kG$.
\end{prop}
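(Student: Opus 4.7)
The plan is to invoke Mahler's Theorem (Theorem \ref{Mahler}) applied to a suitable continuous function, then translate the resulting expansion into operator form using the eigenvalue identity $\qdel{\alpha}(\mathbf{g}^\lambda) = \binom{\lambda}{\alpha} \mathbf{g}^\lambda$ from Theorem \ref{qdel}(a).

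First I would set $f := \psi \circ \theta_{\mathbf{g}}^{-1} \colon \Zp^d \to kG$ and check it is continuous. The map $\theta_{\mathbf{g}}^{-1}$ is a homeomorphism onto $G$ by the ordered basis property from $\S\ref{OrdBas}$ (the formula $\omega(\mathbf{g}^\lambda) = \inf_i(\omega(g_i) + v_p(\lambda_i))$ shows the coordinate map is a homeomorphism), $\psi(g) = \varphi(g) g^{-1}$ is continuous because $\varphi \in \Aut^\omega(G)$ is an isometry and inversion is continuous, and $G \hookrightarrow kG$ is continuous by Lemma \ref{valkG}. Since $kG$ is a complete $\Zp$-module in its $w$-adic topology, Mahler's Theorem yields coefficients
\[C_\alpha := (\Delta^\alpha f)(0) = (\Delta^\alpha \psi \theta^{-1}_{\mathbf{g}})(0) \in kG\]
with $C_\alpha \to 0$ in $kG$ and $f(\lambda) = \sum_{\alpha \in \mathbb{N}^d} C_\alpha \binom{\lambda}{\alpha}$ in $kG$ for every $\lambda \in \Zp^d$.

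Next, fix $g \in G$ and write $g = \mathbf{g}^\lambda$ with $\lambda = \theta_{\mathbf{g}}(g)$. Since $\varphi(g) = \psi(g)\,g = f(\lambda)\,g$ and right multiplication by $g$ is continuous in $kG$, the Mahler expansion becomes
\[\varphi(g) = \sum_{\alpha \in \mathbb{N}^d} C_\alpha \binom{\lambda}{\alpha}\, g.\]
Applying Theorem \ref{qdel}(a), we have $\binom{\lambda}{\alpha}\,g = \binom{\lambda}{\alpha}\,\mathbf{g}^\lambda = \qdel{\alpha}(g)$, so each summand equals $C_\alpha \qdel{\alpha}(g)$, giving the desired identity. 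Convergence in $kG$ is automatic: since $\qdel{\alpha}(g) \in \Fp \cdot g$, the valuation satisfies $w(C_\alpha \qdel{\alpha}(g)) \geq w(C_\alpha) + w(g) \to \infty$ as $\alpha \to \infty$.

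I do not anticipate any real obstacle. The argument is essentially a bookkeeping exercise that routes Mahler's Theorem through the inclusion $G \hookrightarrow kG$ and uses the fact (Theorem \ref{qdel}(a)) that the quantized divided powers act diagonally on group elements written in coordinates of the second kind. No delicate growth estimate on the Mahler coefficients is needed beyond $C_\alpha \to 0$, precisely because $\qdel{\alpha}$ acts as a $k$-scalar on each fixed $g \in G$.
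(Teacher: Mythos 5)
Your proof is correct and follows the same route as the paper's: apply Mahler's Theorem to the continuous function $\psi \circ \theta_{\mathbf{g}}^{-1} : \Zp^d \to kG$, multiply the resulting expansion of $\psi(\mathbf{g}^\lambda)$ on the right by $\mathbf{g}^\lambda$, and identify $\binom{\lambda}{\alpha}\mathbf{g}^\lambda$ with $\qdel{\alpha}(\mathbf{g}^\lambda)$ via Theorem \ref{qdel}(a). The only difference is that you spell out the continuity check and the convergence estimate, which the paper leaves implicit.
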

\begin{proof} The map $\psi \circ \theta_{\mathbf{g}}^{-1} : \Zp^d \to G$ is continuous, so we can view it as a continuous map from $\Zp^d$ to the complete $\Zp$-module $kG$.  By Mahler's Theorem \ref{Mahler},
\[ \psi(\mathbf{g}^\lambda) = \sum_{\alpha \in \mathbb{N}^d} (\Delta^\alpha \psi \theta^{-1}_{\mathbf{g}})(0) \binom{\lambda}{\alpha}\]
for all $\lambda \in \Zp^d$. But $\qdel{\alpha}(\mathbf{g}^\lambda) = \binom{\lambda}{\alpha} \mathbf{g}^\lambda$ by Theorem \ref{qdel}(a), so multiplying both sides of this convergent sum on the right by $\mathbf{g}^\lambda$ gives the result.
\end{proof}

\begin{defn} Let $\varphi \in \Aut^\omega(G)$ and let $\alpha \in \mathbb{N}^d$. The \emph{$\alpha$-Mahler coefficient} of $\varphi$ is the element 
\[\langle \varphi, \qdel{\alpha}\rangle := (\Delta^\alpha \psi \theta^{-1}_{\mathbf{g}})(0) \in kG\] 
where $\psi : G \to G$ is the function $g \mapsto \varphi(g)g^{-1}$.
\end{defn}

\begin{cor} Suppose that the Mahler coefficients $\langle \varphi, \qdel{\alpha}\rangle$ of $\varphi$ satisfy
\[w(\langle \varphi, \qdel{\alpha}\rangle) - \langle \alpha, \omega(\mathbf{g})\rangle \to \infty\quad\mbox{as}\quad\alpha \to \infty.\]
Then the extension of $\varphi$ to $kG$ is a bounded linear endomorphism of $kG$ and 
\[\varphi = \sum_{\alpha \in \mathbb{N}^d} \langle \varphi, \qdel{\alpha} \rangle \qdel{\alpha}\]
 inside $\mathcal{B}(kG)$.
\end{cor}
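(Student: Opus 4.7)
The plan is to verify first that each summand $\langle \varphi, \qdel{\alpha}\rangle \qdel{\alpha}$ is a bounded operator, then to show convergence of the series in $\mathcal{B}(kG)$ under the stated growth hypothesis, and finally to identify the limit with the continuous extension of $\varphi$ provided by Lemma~\ref{Extend}.

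First I would note that for any $r \in kG$, left multiplication $L_r : kG \to kG$ lies in $\mathcal{B}(kG)$ with $\deg(L_r) \geq w(r)$, because $w$ is a valuation by Corollary~\ref{valkG}. Combined with $\deg \qdel{\alpha} = -\wt{\alpha}$ from Theorem~\ref{qdel}(d), this gives
\[
\deg\bigl(\langle \varphi, \qdel{\alpha}\rangle \qdel{\alpha}\bigr) \;\geq\; w\bigl(\langle \varphi, \qdel{\alpha}\rangle\bigr) - \wt{\alpha}.
\]
By hypothesis the right-hand side tends to $+\infty$ as $\alpha \to \infty$, so the partial sums of $\sum_\alpha \langle \varphi, \qdel{\alpha}\rangle \qdel{\alpha}$ form a Cauchy sequence in $\mathcal{B}(kG)$. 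Since $kG$ is complete and $k$ carries the trivial valuation, the lemma of $\S\ref{BddHoms}$ implies that $\mathcal{B}(kG)$ is complete with respect to the degree filtration, so the series converges to some $T \in \mathcal{B}(kG)$.

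Next I would compare $T$ with $\varphi$. By Proposition~\ref{Extend},
\[
\varphi(g) \;=\; \sum_{\alpha \in \mathbb{N}^d} \langle \varphi, \qdel{\alpha}\rangle \,\qdel{\alpha}(g) \quad\text{for all}\quad g \in G,
\]
the sum converging in $kG$. Evaluation at a fixed element $g \in kG$ is a continuous $k$-linear functional on $\mathcal{B}(kG)$ (it satisfies $w(f(g)) \geq \deg(f) + w(g)$), so we may exchange the sum and evaluation to conclude $T(g) = \varphi(g)$ for every $g \in G$. By $k$-linearity $T$ and $\varphi$ agree on the group algebra $k[G]$, which is dense in $kG$ by Corollary~\ref{valkG}(a). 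Since $T$ is bounded and hence continuous, and since $\varphi$ extends continuously to $kG$ by Lemma~\ref{Extend}, the two continuous maps $T$ and $\varphi$ must coincide on all of $kG$. Therefore $\varphi = T \in \mathcal{B}(kG)$ and the claimed Mahler expansion holds inside $\mathcal{B}(kG)$.

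The only non-routine step is the convergence in $\mathcal{B}(kG)$; but this is essentially immediate once one has the degree estimate for $L_r \circ \qdel{\alpha}$ combined with completeness of $\mathcal{B}(kG)$, and the rest is a density-and-continuity argument.
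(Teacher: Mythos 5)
Your proof is correct and follows essentially the same route as the paper: estimate $\deg(\langle \varphi, \qdel{\alpha}\rangle \qdel{\alpha}) \geq w(\langle \varphi, \qdel{\alpha}\rangle) - \wt{\alpha}$, invoke completeness of $\mathcal{B}(kG)$ to get convergence, and then identify the limit with $\varphi$ by comparing on the dense subspace $k[G]$ via Proposition~\ref{Extend}. The only difference is cosmetic: you spell out the continuity of the evaluation functional to justify exchanging the sum with evaluation, a step the paper leaves implicit.
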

\begin{proof} Let us identify $kG$ with the subring of $\mathcal{B}(kG)$ consisting of left multiplications by elements of $kG$; clearly then $\deg(x) = w(x)$ for all $x \in kG$. Now $\deg \qdel{\alpha} = -\langle \alpha, \omega(\mathbf{g})\rangle$ for all $\alpha$ by Theorem \ref{qdel}(d) and
\[ \deg( \langle \varphi, \qdel{\alpha}\rangle \qdel{\alpha} ) \geq w(\langle \varphi, \qdel{\alpha}\rangle) - \langle \alpha,\omega(\mathbf{g})\rangle \to \infty\]
as $\alpha\to \infty$ by assumption, so the infinite sum $\sum_{\alpha\in\mathbb{N}^d} \langle \varphi, \qdel{\alpha}\rangle \qdel{\alpha}$ converges to an operator in $\mathcal{B}(kG)$ by Lemma \ref{BddHoms}. Since this operator agrees with $\varphi : kG \to kG$ on the dense subspace $k[G]$ of $kG$ by the Proposition, the two operators are equal everywhere and the result follows.
\end{proof}

\subsection{Calculating Mahler coefficients}\label{CalcMahler}

In general it is not completely straightforward to compute the Mahler coefficient $\langle \varphi, \qdel{\alpha}\rangle$ of the extension $\varphi : kG \to kG$; but in some cases we do get a nice result.

\begin{lem} Let $\varphi \in \Aut^\omega_Z(G)$ and let $\psi(g) = \varphi(g)g^{-1}$. Then
\begin{equation}\label{phialpha} \langle \varphi, \qdel{\alpha}\rangle = (\psi(g_1) - 1)^{\alpha_1}\cdots (\psi(g_d) - 1)^{\alpha_d}.\end{equation}
for all $\alpha \in \mathbb{N}^d$. \end{lem}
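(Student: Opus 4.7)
The plan is to write down an explicit Mahler expansion for the function $\lambda \mapsto \psi(\mathbf{g}^\lambda)$ and then invoke uniqueness.

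The crucial first step is to notice that, since $\varphi \in \Aut^\omega_Z(G)$, the map $\psi : G \to Z$ is already a group homomorphism. This is essentially the $r = 0$ case of the computation in the proof of Proposition \ref{AutModZ}(c): for $g, h \in G$,
\[
\psi(gh) = \varphi(g)\varphi(h)h^{-1}g^{-1} = \varphi(g)g^{-1}\cdot\varphi(h)h^{-1} = \psi(g)\psi(h),
\]
where the middle equality uses that $\psi(h) \in Z$ commutes with $g$.

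Because $Z$ is abelian, it follows immediately that $\psi(\mathbf{g}^\lambda) = \prod_{i=1}^d \psi(g_i)^{\lambda_i}$ for every $\lambda \in \Zp^d$. Setting $c_i := \psi(g_i) \in G$, the element $c_i - 1 \in kG$ satisfies $w(c_i - 1) = \omega(c_i) > 0$ by Lemma \ref{valkG}, so it is topologically nilpotent and the binomial theorem yields the convergent expansion
\[
c_i^{\lambda_i} = \bigl(1 + (c_i - 1)\bigr)^{\lambda_i} = \sum_{\alpha_i \geq 0}\binom{\lambda_i}{\alpha_i}(c_i - 1)^{\alpha_i}
\]
inside $kG$, for each $\lambda_i \in \Zp$. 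Since each $c_i$ lies in $Z$, the elements $c_i - 1$ commute pairwise in $kG$, so the $d$ independent expansions may be multiplied and regrouped to give
\[
\psi(\mathbf{g}^\lambda) \;=\; \sum_{\alpha \in \mathbb{N}^d} \binom{\lambda}{\alpha}\prod_{i=1}^d (\psi(g_i) - 1)^{\alpha_i}.
\]

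Comparing this with the Mahler expansion
$
\psi(\mathbf{g}^\lambda) = \sum_{\alpha} \langle \varphi, \qdel{\alpha}\rangle \binom{\lambda}{\alpha}
$
coming from the definition of the Mahler coefficients in $\S\ref{Extend}$, the uniqueness clause of Mahler's Theorem \ref{Mahler}, applied to the complete $\Zp$-module $kG$, forces the claimed identity term by term. The main conceptual point, and really the only substantive one, is the recognition that $\psi$ is a group homomorphism into $Z$: this is precisely what the hypothesis $\varphi \in \Aut^\omega_Z(G)$ provides, and it is what causes the Mahler coefficients to factor completely over the ordered basis. Once that is in hand, convergence and the rearrangement of the product are automatic from $w(c_i - 1) > 0$, and no further difficulty arises.
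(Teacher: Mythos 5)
Your proof is correct, and it rests on the same fundamental observation as the paper's — that $\varphi$ being trivial mod centre makes $\psi : G \to Z$ a group homomorphism, so $\psi(\mathbf{g}^\beta)$ factors as a product — but it runs the computation in the opposite direction. The paper evaluates the Mahler coefficient directly from the definition $\langle \varphi, \qdel{\alpha}\rangle = (\Delta^\alpha \psi\theta_{\mathbf{g}}^{-1})(0)$, factors the finite-difference sum over the $d$ coordinates, and collapses each factor with the \emph{ordinary} binomial theorem; this is a purely finite computation with no convergence issues. You instead expand each $c_i^{\lambda_i}$ via the \emph{generalized} ($p$-adic) binomial theorem, multiply the convergent series together, and then appeal to the uniqueness of Mahler coefficients to read off the answer. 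Your route is valid — the coefficients $\prod_i (c_i-1)^{\alpha_i}$ tend to $0$ in $kG$ because $w(c_i - 1) = \omega(\psi(g_i)) > 1/(p-1) > 0$, and uniqueness is implicit in the explicit formula $C_\alpha(f) = (\Delta^\alpha f)(0)$ quoted after Theorem \ref{Mahler} — but it trades a finite identity for a convergence argument plus a uniqueness invocation, so the paper's version is marginally more economical. The conceptual content is identical.
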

\begin{proof} Because $\varphi$ is trivial mod centre by assumption, $\psi : G \to Z$ is a group homomorphism:
\[\psi(gh) = \varphi(gh)(gh)^{-1} = \varphi(g) \left( \varphi(h) h^{-1} \right) g^{-1} = \psi(g)\psi(h)\]
for all $g, h \in G$. Hence $\psi(\mathbf{g}^\beta) = \prod\limits_{i=1}^d \psi(g_i)^{\beta_i}$ for all $\beta \in \mathbb{N}^d$. Now
\[\begin{array}{lll} \langle \varphi, \qdel{\alpha}\rangle &=& \sum\limits_{\beta \in \mathbb{N}^d} (-1)^{\alpha - \beta} \binom{\beta}{\alpha} \psi(\mathbf{g}^\beta) = \sum\limits_{\beta \in \mathbb{N}^d} (-1)^{\alpha - \beta} \binom{\beta}{\alpha} \prod\limits_{i=1}^d \psi(g_i)^{\beta_i} \\
&=& \prod\limits_{i=1}^d\sum\limits_{\beta_i = 0}^{\alpha_i} (-1)^{\alpha_i - \beta_i} \binom{\beta_i}{\alpha_i} \psi(g_i)^{\beta_i} = \prod\limits_{i=1}^d (\psi(g_i) - 1)^{\alpha_i}
\end{array}\]
by the binomial theorem.\end{proof}
In fact it can be shown that the Lemma holds for an automorphism $\varphi \in \Aut^\omega(G)$ if and only if $\varphi$ is trivial mod centre.
\begin{cor} The extension of any $\varphi \in \Aut^\omega_Z(G)$ to $kG$ is bounded.
\end{cor}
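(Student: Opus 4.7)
The plan is to reduce to Corollary \ref{Extend} by showing that the Mahler coefficients $\langle \varphi, \qdel{\alpha}\rangle$ of $\varphi$ satisfy $w(\langle \varphi, \qdel{\alpha}\rangle) - \wt{\alpha} \to \infty$ as $|\alpha| \to \infty$. The preceding Lemma furnishes the explicit factorisation
\[\langle \varphi, \qdel{\alpha}\rangle \;=\; \prod_{i=1}^d (\psi(g_i) - 1)^{\alpha_i}, \qquad \psi(g) := \varphi(g)g^{-1},\]
and the key structural fact I would use is that $w$ is a genuine valuation on $kG$: by Lemma \ref{valkG}(b) the associated graded ring $\gr k[G]\cong k[X_1,\dots,X_d]$ is a domain, and this property persists after completing to $kG$. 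Hence $w$ is multiplicative on products and
\[w(\langle \varphi, \qdel{\alpha}\rangle) \;=\; \sum_{i=1}^d \alpha_i\, w(\psi(g_i) - 1).\]

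Next I would identify $w(h-1)$ with $\omega(h)$ for every $h \in G\setminus\{1\}$: the leading symbol of $h-1 \in k[G]$ corresponds under the isomorphism of Lemma \ref{valkG}(a) to the non-zero image of $\gr h$ in $\overline{\gr G}\otimes_{\Fp} k$, which sits in filtration degree $\omega(h)$. Applying this with $h = \psi(g_i)$ and using the defining inequality of $\deg_\omega(\varphi)$,
\[\omega(\psi(g_i)) \;=\; \omega(\varphi(g_i) g_i^{-1}) \;\geq\; \omega(g_i) + \deg_\omega(\varphi),\]
yields the uniform estimate
\[w(\langle \varphi, \qdel{\alpha}\rangle) - \wt{\alpha} \;\geq\; |\alpha|\,\deg_\omega(\varphi).\]
Since $\varphi \in \Aut^\omega(G)$ forces $\deg_\omega(\varphi) > 1/(p-1) > 0$, the right-hand side tends to $+\infty$ with $|\alpha|$, and Corollary \ref{Extend} delivers the boundedness of the extension $\varphi: kG \to kG$.

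There is essentially no obstacle here: once the Lemma's product formula is in hand, the argument reduces to the multiplicativity of a valuation. The only points that require a moment's care are the observation that $w$ really is a valuation (and not merely a filtration) on all of $kG$, and the harmless convention to be adopted when some $\psi(g_i) = 1$ — in that case the corresponding factor vanishes and the $i$-th term contributes $+\infty$, which only reinforces the estimate.
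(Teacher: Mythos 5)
Your argument is essentially the paper's own: apply Lemma \ref{CalcMahler}, use multiplicativity of $w$ (Lemma \ref{valkG}(c)), estimate each factor $w(\psi(g_i)-1)$ in terms of $\omega$, and invoke Corollary \ref{Extend}.

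However, the claim $w(h-1)=\omega(h)$ for every $h\in G\setminus\{1\}$ is false, and the justification you offer does not hold up. Take $d=1$ and $h=g_1^p$: then $h-1=(1+b_1)^p-1=b_1^p$, so $w(h-1)=p\,\omega(g_1)$, while $\omega(h)=\omega(g_1)+1$, and these are unequal since $\omega(g_1)>1/(p-1)$. The reason the $\overline{\gr G}$ argument breaks is exactly this: the image of $\gr h$ in $\overline{\gr G}=\gr G/\pi\cdot\gr G$ vanishes whenever $\gr h\in\pi\cdot\gr G$ (i.e.\ when $h$ is a $p$-th power modulo higher filtration), so it is not "non-zero'' and does not pick out the leading symbol of $h-1$. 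What is true, and is all that the argument requires, is the inequality $w(h-1)\geq\omega(h)$: writing $h=\mathbf{g}^\lambda$ one finds $w(h-1)=\min_{\lambda_i\neq 0}p^{v_p(\lambda_i)}\omega(g_i)$, and the elementary estimate $p^n\mu\geq\mu+n$ for $\mu>1/(p-1)$ and $n\geq 0$ gives $w(h-1)\geq\min_i(\omega(g_i)+v_p(\lambda_i))=\omega(h)$. With the equality replaced by this inequality, your chain of estimates still produces $w(\langle\varphi,\qdel{\alpha}\rangle)-\wt{\alpha}\geq|\alpha|\deg_\omega(\varphi)\to\infty$, so the conclusion stands.
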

\begin{proof} By Lemma \ref{CalcMahler} we have
\[w(\langle \varphi, \qdel{\alpha}\rangle) - \langle \alpha, \omega(\mathbf{g})\rangle = \sum\limits_{i=1}^d \alpha_i (\omega(\varphi(g_i)g_i^{-1}) - \omega(g_i)) \geq \deg_\omega(\varphi) |\alpha|\]
which tends to $\infty$ as $\alpha \to \infty$ because $\deg_\omega(\varphi) > 1/(p-1) > 0$ by assumption. Now apply Corollary \ref{Extend}.
\end{proof}

\section{Control theorem for faithful prime ideals}
\label{MainSec}
\subsection{Notation}
We now start working towards the proof of Theorem \ref{MainB}, which is given in $\S \ref{PfCntrlThm}$. From now on, we will fix the complete $p$-valued group $G$ of finite rank with $p$-valuation $\omega$ and centre $Z$. We assume that $\omega$ takes values in $\frac{1}{e}\mathbb{Z} \cup \{\infty\}$ --- see $\S \ref{RatVal}$. We also fix the prime ideal $P$ of $kG$, and assume that $P$ is not the maximal ideal $\mathfrak{m}$ of $kG$.

Let $Q$ be the Goldie classical ring of quotients of the prime Noetherian algebra $R = kG/P$, and let $\tau : kG \to Q$ be the composition of the surjection $kG \twoheadrightarrow R$ with the inclusion $R \hookrightarrow Q$. We will denote by $F$ the centre of $Q$; this is a commutative field which contains $\tau(kZ)$.

\subsection{Finding a good filtration on $kG/P$}
\label{FiltQ}
Recall from Lemma \ref{valkG} that $kG$ carries a filtration $w$ which is independent of any choice of ordered basis for $G$. Since $\omega$ takes values in $\frac{1}{e}\mathbb{Z} \cup \{\infty\}$, the function $x \mapsto ew(x)$ actually takes integer values on non-zero elements in $kG$. Note that this function is also a filtration on $kG$.

Let $\overline{ew} : R \to \mathbb{Z} \cup \{\infty\}$ be the quotient filtration on $R$ defined by 
\[\overline{ew}(\tau(x)) = \sup\limits_{y \in P} ew(x + y).\]
Let $R_n = \{ x + P \in R : \overline{ew}(x+ P) \geq n\}$ be the corresponding subgroups of $R$.
\begin{lem} The filtration $\{R_n : n \in \mathbb{Z}\}$ is Zariskian, the associated graded ring $\gr R$ is commutative, $R_0 / R_1$ is a field and  $\gr R$ is infinite dimensional over $R_0/R_1$.
\end{lem}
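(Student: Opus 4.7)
My plan is to derive all four properties by transferring them from the ambient filtration $ew$ on $kG$ studied in Lemma \ref{valkG}. Since $\omega$ takes values in $e^{-1}\mathbb{Z}$, $ew$ is integer-valued on nonzero elements, and Lemma \ref{valkG}(b) identifies $\gr^{ew}(kG)$ with $k[X_1,\ldots,X_d]$, where $X_i$ has positive integer degree $e\omega(g_i)$. In particular $\gr^{ew}(kG)$ is a commutative Noetherian domain, so the Rees ring of $(kG, ew)$ is Noetherian. Furthermore, $kG$ is a local ring (since $G$ is pro-$p$ and $\operatorname{char}k = p$) with Jacobson radical equal to the augmentation ideal $\mathfrak{m}$, and $(kG)_1 \subseteq \mathfrak{m}$, so $1 + (kG)_1 \subseteq (kG)^\times$. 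Hence $ew$ is a Zariskian filtration on $kG$. It follows that $P$ is closed in $kG$, so the quotient filtration $\overline{ew}$ on $R$ is again Zariskian, exhaustive, separated and complete, and $\gr R$ is a graded quotient of $\gr^{ew}(kG)$, hence commutative.

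To identify $R_0/R_1$, I note that since $ew \geq 0$ on $kG$ one has $R_n = R$ for every $n \leq 0$, and that $(kG)_1 = \mathfrak{m}$ because by Corollary \ref{valkG}(b) an element has strictly positive $w$-value exactly when its constant term (coefficient of $\mathbf{b}^0 = 1$) vanishes. Thus $R_1 = (\mathfrak{m}+P)/P$; because $kG$ is local with maximal ideal $\mathfrak{m}$ and $P$ is a proper ideal, $P \subseteq \mathfrak{m}$, and so
\[ R_0/R_1 \;=\; kG/(\mathfrak{m}+P) \;=\; kG/\mathfrak{m} \;=\; k, \]
which is a field.

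Finally, for infinite-dimensionality of $\gr R$ over $k$, I argue contrapositively. If $\gr R$ were finite-dimensional, then since $\overline{ew}$ is exhaustive, separated and complete, a standard argument (a finite-dimensional graded space has trivial pieces in high degree, so the filtration on $R$ terminates) shows that $R$ itself would be finite-dimensional over $k$, hence Artinian. The Jacobson radical $J(R)$ would then be nilpotent, and since the image of $\mathfrak{m}$ in $R$ is contained in $J(R)$, we would obtain $\mathfrak{m}^n \subseteq P$ for some $n$; primality of $P$ then forces $\mathfrak{m} \subseteq P$, i.e.\ $P = \mathfrak{m}$, contradicting the standing hypothesis that $P$ is not the maximal ideal.

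The only step requiring care is this passage from finite-dimensionality of $\gr R$ back to that of $R$, which rests on the three properties of exhaustiveness, separation and completeness of $\overline{ew}$; all three are in turn guaranteed by $ew$ being Zariskian on $kG$ together with the closedness of the ideal $P$. The commutativity, Zariskian property, and field statement are then all immediate corollaries of the basic structure of the $w$-filtration recorded in Lemma \ref{valkG}.
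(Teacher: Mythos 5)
Your proof is essentially the paper's own argument: you pass to the Zariskian filtration $ew$ on $kG$, use that $\gr^{ew}(kG)\cong k[X_1,\ldots,X_d]$ to get the Zariskian property and commutativity of $\gr R$, identify $R_0/R_1\cong k$ via the augmentation ideal $\mathfrak m$, and deduce infinite-dimensionality contrapositively from the standing assumption $P\neq\mathfrak m$. The only difference is cosmetic: the paper shows $R = k + R_1$ directly from $kG = k + \sum_i b_i kG$, whereas you identify $R_1 = (\mathfrak m + P)/P$ and compute $R_0/R_1 = kG/(\mathfrak m + P)$; these are equivalent.

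One step should be tightened. You write that because $\gr^{ew}(kG)$ is a commutative Noetherian domain, the Rees ring of $(kG,ew)$ is Noetherian. This implication is \emph{not} valid from Noetherianity of the associated graded ring alone; the standard result (which the paper invokes as \cite[Proposition II.2.2.1]{LVO}) also requires the filtration on $kG$ to be \emph{complete}. Completeness is available here by Lemma \ref{valkG}(d), and you do appeal to completeness later when passing from finite-dimensionality of $\gr R$ to that of $R$, so this is merely a missing citation rather than a flaw in the strategy — but as written, the sentence reads as though $\gr$ Noetherian implies Rees Noetherian unconditionally, which is false in general.
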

\begin{proof} Choose an ordered basis  $\{g_1,\ldots, g_d\}$ for $G$ as in $\S \ref{RatVal}$. Since $\gr kG \cong k[X_1,\ldots, X_d]$ and the filtration $w$ is complete by Lemma \ref{valkG}, $ew$ is a Zariskian filtration on $kG$ by \cite[Proposition II.2.2.1]{LVO}. This property is inherited by the factor ring $R = kG/P$, so $\{R_n : n \in \mathbb{Z}\}$ is a Zariskian iltration on $R$ and moreover $\gr R = \gr kG / \gr P$ is commutative. 

Using Corollary \ref{valkG}(b) we see that $kG = k + \sum_{i=1}^d b_i kG$, so $R = k + \sum_{i=1}^d \tau(b_i) R$. But $\overline{ew}(\tau(b_i)) \geq ew(b_i) = e \omega(g_i) > 0$ for all $i$ by Lemma \ref{valkG}(b), so $\tau(b_i) \in R_1$ for all $i$ since $\overline{ew}$ takes integer values. Hence $R = k + R_1$ and $R_0 / R_1 \cong k$ is a field. 

Finally, if $\gr R$ is finite dimensional over $k$ then so is $R$ --- but then $P = \mathfrak{m}$ since $P$ is prime and $\mathfrak{m}$ is the unique maximal ideal of $kG$. This is not the case by assumption.
\end{proof}

\subsection{Finding a good valuation on $Q$}\label{RescVal}
We will always consider $kG$ as a filtered ring with the filtration $w$ given by Lemma \ref{valkG}. The heart of our proof is concerned with manipulations involving bounded linear maps $kG \to Q$, and it will be essential to know that the natural algebra homomorphism $\tau : kG \to Q$ is bounded.

\begin{thm} There exists a filtration $v : Q \to \mathbb{Z} \cup \{\infty\}$ such that 
\begin{enumerate}[{(}a{)}] 
\item $v(\tau(x)) \geq 0$ for all $x \in kG$, 
\item the restriction of $v$ to $F = Z(Q)$ is a valuation,
\item $v(\tau(x)) \geq w(x)$ for all $x \in kZ$, and
\item the map $\tau : (kG, w) \to (Q, v)$ is bounded.
\end{enumerate}
\end{thm}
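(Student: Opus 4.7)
The strategy is to apply Theorem~\ref{MainC} to the prime Zariskian filtered ring $(R,\overline{ew})$ supplied by Lemma~\ref{FiltQ}, and then rescale the resulting filtration on $Q$ by a positive integer so that all four compatibilities hold.

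By Lemma~\ref{FiltQ}, $(R,\overline{ew})$ satisfies all hypotheses of Theorem~\ref{MainC}: $R=kG/P$ is prime, $\overline{ew}$ is Zariskian, $R_0/R_1\cong k$ is a field, and $\gr R$ is commutative and infinite-dimensional over $k$. Theorem~\ref{MainC} therefore produces a filtration $v_0:Q\to\mathbb{Z}\cup\{\infty\}$ such that $(R,\overline{ew})\hookrightarrow(Q,v_0)$ is continuous, $v_0(y)\ge 0$ whenever $\overline{ew}(y)\ge 0$, and $v_0$ restricts to a valuation on $F=Z(Q)$. Inspecting the construction in $\S$\ref{PfMainZarThm}, the continuity statement is quantitative: the inclusion $\eta((\widehat{Q})_{tn\ell})\subseteq J(B)^n$ produces a positive integer $N:=t\ell$ such that
\[
v_0(y)\;\ge\;\lfloor \overline{ew}(y)/N\rfloor\qquad\mbox{for every } y\in R\mbox{ with } \overline{ew}(y)\geq 0.
\]
I then define $v:=N\cdot v_0$; this is still an integer-valued filtration on $Q$, and $v|_F$ remains a valuation (being a positive integer multiple of the valuation $v_0|_F$), establishing (b).

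Properties (a) and (d) follow routinely. For $x\in kG$, Corollary~\ref{valkG}(b) gives $w(x)\ge 0$, whence $\overline{ew}(\tau(x))\ge ew(x)\ge w(x)\ge 0$ since $e\ge 1$; so $v_0(\tau(x))\ge 0$ by Theorem~\ref{MainC}(b), which is (a). Multiplying the continuity inequality by $N$ yields
\[
v(\tau(x))\;\ge\;N\lfloor \overline{ew}(\tau(x))/N\rfloor\;\ge\;\overline{ew}(\tau(x))-(N-1)\;\ge\;w(x)-(N-1),
\]
so $\tau$ is bounded with $\deg\tau\ge-(N-1)$, giving (d).

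The substantive step is (c), and the idea is to upgrade the crude additive bound in (d) to the sharp bound on $kZ$ using an $n$-th power trick. Since $\gr kG\cong k[X_1,\ldots,X_d]$ is a domain by Lemma~\ref{valkG}(b), $w$ is a valuation on $kG$, hence also on the commutative power series ring $kZ$. Fix $x\in kZ\setminus\{0\}$. Because $Z$ is central in $G$, $\tau(x)\in F$, so $\tau(x^n)=\tau(x)^n\in F$ for every $n\ge 1$. Applying (b) on the target and (d) to $x^n\in kG$,
\[
n\,v(\tau(x))\;=\;v(\tau(x)^n)\;=\;v(\tau(x^n))\;\ge\;w(x^n)-(N-1)\;=\;n\,w(x)-(N-1).
\]
Dividing by $n$ and letting $n\to\infty$ forces $v(\tau(x))\ge w(x)$, as required.

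The main obstacle is precisely the gap between the soft ``continuity'' provided by Theorem~\ref{MainC} and the sharp unshifted inequality demanded by (c): Theorem~\ref{MainC} only pins the two filtrations together up to a multiplicative constant $N$, whereas (c) asks for $v\ge w$ exactly on $kZ$. The $n$-th power trick absorbs the additive error $N-1$, and is only available to us because both $w|_{kZ}$ and $v|_F$ happen to be genuine valuations on either side of $\tau$.
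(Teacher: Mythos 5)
Your proof is correct, and it takes a genuinely different route from the paper's, particularly in establishing (c) and (d). Both proofs begin by applying Theorem~\ref{MainC} to $(R,\overline{ew})$ and then rescale $v_0$ by a positive integer. But where you obtain your constant $N = t\ell$ by inspecting the quantitative form of the continuity inclusion $\eta((\widehat{Q})_{tn\ell}) \subseteq J(B)^n$ inside the proof of Theorem~\ref{MainC}, the paper chooses its constant $M$ from the $p$-valuation of an ordered basis of the open subgroup $U = \{g \in G : v_0(\tau(g)-1) \geq 1\}$. Your proof of (d) then drops out immediately from the rescaled quantitative continuity, while the paper proves (d) by writing $kG$ as a finite free $kU$-module via a transversal $S$ and estimating term by term. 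For (c), you apply an $n$-th power trick to $\tau(x)$ directly, exploiting that both $w|_{kZ}$ and $v|_F$ are genuine valuations to wash away the additive error $-(N-1)$ as $n\to\infty$; the paper instead first shows $Z \subseteq U$ by a $p$-power argument on $\tau(z)-1$, then proves the sharper statement $v(\tau(x)) \geq w(x)$ for all $x\in kU$ by estimating on monomials $\mathbf{c}^\beta$ (the paper needs this stronger version of (c) as an ingredient in its proof of (d), whereas your (d) is independent of (c)). The valuation property of $v_0|_F$ plays the same enabling role in both arguments. One mild caveat worth noting is that your argument is not purely modular: it relies on the constants $t,\ell$ internal to the proof of Theorem~\ref{MainC} rather than only on its statement, which asserts continuity without a linear bound; the paper's route, by contrast, uses only the statement of Theorem~\ref{MainC} and then manufactures its own bound from the topology of $G$.
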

\begin{proof} By Lemma \ref{FiltQ} and Theorem \ref{MainC}, we can find an integer valued filtration $v_0$ on $Q$ such that the natural inclusion $(R,\overline{ew}) \to (Q,v_0)$ is continuous and (a) and (b) are satisfied for $v_0$. Hence $\tau : (kG,w) \to (Q,v_0)$ is also continuous, but unfortunately, not every continuous map is bounded. We will remedy this problem by rescaling $v_0$.

The inclusion of $G$ into the group of units of $kG$ is continuous; since $\tau$ is continuous, the subgroup 
\[ U := \{ g \in G : v_0(\tau(g) - 1) \geq 1\}\]
is open in $G$. By Lemma \ref{OrdBas} we can choose an ordered basis $\{g_1,\ldots,g_d\}$ for $G$ such that $\{h_1,\ldots, h_d\}$ is an ordered basis for $U$ where $h_i = g_i^{p^{n_i}}$ for some integers $n_i$. Let $M$ be any integer greater than each of the $\omega(h_i)$ and define 
\[v := Mv_0 : Q \to \mathbb{Z} \cup \{\infty\}.\]  
Then $v$ is a filtration on $Q$ which satisfies (a) and (b). If $z \in Z$ then $z^{p^a} \in U$ for some integer $a$ because $U$ is open in $G$, so 
\[v_0(\tau(z) - 1) = \frac{v_0\left((\tau(z) - 1)^{p^a}\right)}{p^a} =\frac{v_0(\tau(z^{p^a}) - 1)}{p^a} \geq \frac{1}{p^a}\]
because $v_0$ is a valuation on $F \supseteq \tau(kZ)$. Since $v_0$ takes integer values, we see that $Z$ is contained in $U$. 

Write $\mathbf{c}^\beta = (h_1-1)^{\beta_1}\cdots(h_d-1)^{\beta_d} \in kU$ for any $\beta \in \mathbb{N}^d$. Our choice of $U$ forces $v_0(\tau(\mathbf{c}^\beta)) \geq |\beta|$ for all $\beta \in \mathbb{N}^d$, so 
\[ v(\tau(\mathbf{c}^\beta)) \geq M |\beta| \geq \langle \beta, \omega(\mathbf{h})\rangle = w(\mathbf{c}^\beta)\quad\mbox{for all}\quad \beta \in \mathbb{N}^d\]
by Lemma \ref{valkG}(c). Let $x = \sum\limits_{\beta\in\mathbb{N}^d} \lambda_\beta \mathbf{c}^\beta \in kU$, then $w(x) = \inf \{ w(\mathbf{c}^\beta) : \lambda_\beta \neq 0 \}$ by Corollary \ref{valkG}(b), so
\begin{equation}\label{vtau}  v(\tau(x))  \geq w(x) \mb{for all} x \in kU\end{equation}
and in particular $v(\tau(x)) \geq w(x)$ for all $x \in kZ$ since $Z \subseteq U$. It remains to show that $\tau : (kG, w) \to (Q, v)$ is bounded.

Define $S = \{ \alpha \in \mathbb{N}^d : 0 \leq \alpha_i < p^{n_i}$ for all $i = 1,\ldots, d\}$. Since $\gr kG$ is a free $\gr kU$-module with basis $\{\gr \mathbf{b}^\alpha : \alpha \in S\}$, \cite[Th\'eor\`eme I.2.3.17]{Laz1965} tells us that every element $x \in kG$ can be written in the form $x = \sum_{\alpha \in S} x_\alpha \mathbf{b}^\alpha $ for some $x_\alpha \in kU$, and moreover
\[w(x) = \inf \{ w(x_\alpha) + \wt{\alpha} : \alpha \in S\}.\]
Because $v_0(\tau(\mathbf{b}^\alpha)) \geq 0$ for all $\alpha \in S$ by construction, applying $(\ref{vtau})$ shows that
\[\begin{array}{llllll} v(\tau(x)) &=& v\left( \sum\limits_{\alpha \in S} \tau(x_\alpha) \tau(\mathbf{b}^\alpha) \right) &\geq& \inf \{ v(\tau(x_\alpha)) : \alpha \in S\} \\ &\geq & \inf \{ w(x_\alpha) : \alpha \in S\} &\geq& w(x) - \sup \{ w(\mathbf{b}^\alpha) : \alpha \in S \}&\end{array}\]
for all $x = \sum\limits_{\alpha\in S}x_\alpha \mathbf{b}^\alpha \in kG$. Therefore $\tau : (kG, w) \to (Q, v)$ is bounded, and 
$\deg(\tau) \geq - \sup\{w(\mathbf{b}^\alpha) : \alpha \in S\}.$ \end{proof}

From now on we fix a filtration $v$ on $Q$ satisfying the conclusion of Theorem \ref{RescVal}.

\subsection{The number $\lambda$}
\label{lambda}
Recall from $\S \ref{SatIsNice}$ and Proposition \ref{AutModZ}(c) that for any $\varphi \in \Aut^\omega_Z(G)$ we have defined a group homomorphism $z(\tilde{\varphi}) : \Sat(G) \to \Sat(G)$, and that the image of $z(\tilde{\varphi})$ is contained in $Z(\Sat(G))$ by Propositions \ref{AutModZ} and \ref{SatIsNice}. \emph{A priori} this image is not even contained in $G$. 

\begin{lem} There exists an integer $r_1$ such that for any $\varphi \in \Aut^\omega_Z(G)$ and any $r \geq r_1$, the image of $z(\tilde{\varphi}^{p^r})$ is contained in $Z$ and \[v\left(\tau(z(\tilde{\varphi}^{p^r})(g) - 1)\right) \geq 1 \quad\mbox{for all}\quad g \in G.\]
\end{lem}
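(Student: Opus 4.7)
The plan is to take $r_1 := n$ where $n$ is an integer (depending only on $G$, not on $\varphi$) such that $\tilde{G}^{p^n} \leq G$, which exists by \cite[Theorem IV.3.4.1]{Laz1965}. The main point is that for such $r \geq r_1$, the image of $z(\tilde{\varphi}^{p^r})$ already lies in $Z$, after which the filtration estimate follows from Theorem \ref{RescVal}(c) essentially for free.

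First I would establish the power formula
\[z(\tilde{\varphi}^{p^r})(g) = z(\tilde{\varphi})(g)^{p^r}\quad\mbox{for all}\quad g \in \tilde{G}\mb{and}r \geq 0\]
using the definition $z(\psi) = (\log \psi_\ast)^\ast$ from \S\ref{LogAut}, the standard identity $\log(\sigma^{p^r}) = p^r \log \sigma$ applied to $\sigma = \tilde{\varphi}_\ast$, and the Lazard relation $\exp(p^r u) = \exp(u)^{p^r}$ valid in any $p$-saturated group. Proposition \ref{SatIsNice} tells us that $\tilde{\varphi} \in \Aut^\omega_Z(\tilde{G})$, and then Proposition \ref{AutModZ}(c) gives that $z(\tilde{\varphi})$ is a group homomorphism $\tilde{G} \to Z(\tilde{G})$. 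Therefore $z(\tilde{\varphi}^{p^r})(g) = z(\tilde{\varphi})(g)^{p^r} \in Z(\tilde{G})$, and for $r \geq n$ this element lies in $\tilde{G}^{p^r} \leq G$ by the choice of $n$. Since $Z = Z(G) \leq Z(\tilde{G})$ by the argument in the proof of Proposition \ref{SatIsNice}, and conversely $G \cap Z(\tilde{G}) \leq Z(G)$ because any element of $Z(\tilde{G})$ lying in $G$ commutes with every element of $G$, we conclude that $G \cap Z(\tilde{G}) = Z$. Hence $z(\tilde{\varphi}^{p^r})(g) \in Z$ for all $r \geq r_1$ and $g \in G$, which is the first assertion.

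For the filtration estimate, set $h := z(\tilde{\varphi}^{p^r})(g)$. If $h = 1$ then $v(\tau(h-1)) = v(0) = \infty$, so we may assume $h \neq 1$. Then $w(h-1) \geq \omega(h) > 1/(p-1) > 0$ by the construction of the filtration $w$ on $kG$ from Lemma \ref{valkG}, since $h-1$ lies in the augmentation ideal of $kG$. As $h \in Z$ we have $h - 1 \in kZ$, so Theorem \ref{RescVal}(c) yields $v(\tau(h-1)) \geq w(h-1) > 0$; because $v$ takes integer values, this forces $v(\tau(h-1)) \geq 1$. The main (modest) obstacle is ensuring that $r_1$ can be chosen uniformly in $\varphi \in \Aut^\omega_Z(G)$; this is handled by the fact that Lazard's integer $n$ depends only on the pair $(G, \tilde{G})$ and not on the choice of automorphism, and that the final estimate uses only that $w$ is strictly positive on $kG$ modulo the augmentation ideal, with no need to quantify $\deg_\omega(\varphi)$.
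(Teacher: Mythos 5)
Your proof is correct and follows essentially the same route as the paper: both establish the power formula $z(\tilde{\varphi}^{p^r})(g) = z(\tilde{\varphi})(g)^{p^r}$ directly from the definition of $z$, both use that $z(\tilde{\varphi})$ maps into $Z(\Sat(G))$, both push the image into $Z$ by raising to a high enough $p$-th power, and both finish with Theorem \ref{RescVal}(c) plus integrality of $v$. The only cosmetic difference is your choice of $r_1$: the paper works inside the centre, using $Z(\Sat(G)) = \Sat(Z)$ and choosing $r_1$ so that $Z(\Sat(G))^{p^{r_1}} \subseteq Z$, whereas you take $r_1$ so that $\Sat(G)^{p^{r_1}} \subseteq G$ and then intersect with $Z(\tilde{G})$; your integer may be larger but both are uniform in $\varphi$ and both suffice.
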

\begin{proof} It is easy to see that $Z(\Sat(G)) = \Sat(Z)$. Since $Z$ has finite rank, $Z$ is open in $Z(\Sat(G))$ by \cite[Theorem IV.3.4.1]{Laz1965}, so 
\[Z(\Sat(G))^{p^{r_1}} \subseteq Z\]
for some integer $r_1$. But $z(\tilde{\varphi}^{p^r})(g) = z(\tilde{\varphi})(g)^{p^r}$ by the definition of $z(\tilde{\varphi})$, so the image of $z(\tilde{\varphi}^{p^r})$ is contained in $Z$ whenever $r \geq r_1$. Now $v (\tau( x - 1 )) > 0$ for all $x \in Z$ by Theorem \ref{RescVal}(c); since $v$ takes integer values, actually $v(\tau(x - 1)) \geq 1$ for all $x \in Z$. The result follows.
\end{proof}

We now fix a non-trivial automorphism $\varphi \in \Aut^\omega_Z(G)$ such that $z := z(\tilde{\varphi})$ sends $\Sat(G)$ into $Z$, and such that $v(\tau(z(g) - 1)) \geq 1$ for all $g \in G$. Such an automorphism always exists because $\Aut^\omega(G)$ is torsion-free by Corollary \ref{Sat}(b). We define
\[\lambda := \inf\limits_{g \in G} v(\tau(z(g) - 1)) \geq 1.\]

\subsection{Using the fact that $P$ is faithful}\label{UseFaith}
We now assume that $P$ is faithful, and crucially use this fact in the proof of the following

\begin{prop} $\lambda$ is finite and there exists $1 \neq g \in G$ such that $\lambda = v(\tau(z(g) - 1))$. \end{prop}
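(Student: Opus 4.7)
The plan is to show that $z = z(\tilde{\varphi})$, viewed as a group homomorphism $G \to Z$, is not identically trivial, and then use the faithfulness of $P$ together with separatedness of $v$ to pin down a finite positive integer value. Since $v$ is $\mathbb{Z}$-valued, the infimum over a non-empty set of positive integers is automatically attained.

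First I would argue that $z|_G$ is non-trivial. Suppose for contradiction that $z(g) = 1$ for every $g \in G$. Every $h \in \Sat(G)$ has some power $h^{p^n} \in G$ by \cite[IV.3.4.1]{Laz1965}, so $z(h)^{p^n} = z(h^{p^n}) = 1$; since the image of $z$ lies in $Z \subseteq Z(\Sat(G)) = \Sat(Z)$ which is torsion-free, we get $z(h) = 1$ for all $h \in \Sat(G)$. Unwinding the definition $z = (\log \tilde{\varphi}_\ast)^\ast$ and using that $\exp$ is a bijection, this forces $\log \tilde{\varphi}_\ast = 0$ on $\log(\Sat(G))$. Applying $\exp$ to the endomorphism $\log \tilde{\varphi}_\ast$ then yields $\tilde{\varphi}_\ast = \mathrm{id}$, whence $\tilde{\varphi} = \mathrm{id}$ by Lazard's equivalence (Proposition~\ref{IsomIsom}), so $\varphi = \mathrm{id}$ by Lemma~\ref{Sat}, contradicting the choice of $\varphi$ as non-trivial.

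Next I would exploit faithfulness. Pick some $g_0 \in G$ with $z(g_0) \neq 1$. Because $z(g_0) \in Z \subseteq G$ and $P$ is faithful, we have $P^\dag = \{1\}$, so $z(g_0) \notin 1 + P$; that is, $\tau(z(g_0) - 1) \neq 0$ in $R$. The filtration $v$ on $Q$ is separated (Theorem~\ref{MainC}, since $Q$ is simple and $v(1) = 0$), so $v(\tau(z(g_0) - 1)) < \infty$. This immediately yields $\lambda \leq v(\tau(z(g_0) - 1)) < \infty$.

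Finally, Lemma~\ref{lambda} tells us $v(\tau(z(g) - 1)) \geq 1$ for all $g \in G$, and by the choice of $v$ these values lie in $\mathbb{Z} \cup \{\infty\}$. The infimum of a non-empty subset of positive integers is attained, so there exists some $g \in G$ with $\lambda = v(\tau(z(g) - 1))$. This $g$ cannot equal the identity, for then $z(g) = 1$ would give $v(\tau(z(g) - 1)) = v(0) = \infty \neq \lambda$. The only substantive step is the first one — verifying that $z$ is genuinely non-trivial on the group $G$ (rather than only on $\Sat(G)$) — and this is handled by the torsion-freeness of $Z$ together with the torsion quotient $\Sat(G)/G$.
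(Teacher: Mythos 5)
Your proof is correct and uses the same essential ingredients as the paper: the non-triviality of $\varphi$ via $z = (\log\tilde{\varphi}_\ast)^\ast$, faithfulness of $P$, and the separatedness of $v$. You organize the finiteness step contrapositively --- first proving directly that $z|_G \not\equiv 1$, then applying faithfulness and separatedness to a single $g_0$ --- whereas the paper argues by contradiction from $\lambda = \infty$, passing through $z(g)-1 \in P$ for all $g$; these are logically equivalent. You are somewhat more explicit about the extension from triviality of $z$ on $G$ to triviality on $\Sat(G)$ (via the group-homomorphism property of $z(\tilde{\varphi})$ and torsion-freeness of $\Sat(Z)$), a step the paper leaves compressed. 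The one place you genuinely diverge is in the attainment of the infimum: the paper topologizes $[1,\infty]$, notes that $g \mapsto v(\tau(z(g)-1))$ is continuous, and invokes compactness of $G$; you instead observe that, since $v$ is $\mathbb{Z}$-valued (Theorem~\ref{RescVal}), the set of finite values is a non-empty subset of $\mathbb{Z}_{\geq 1}$ once $\lambda < \infty$ is known, so the minimum exists by well-ordering. Your route is shorter and entirely algebraic, at the cost of leaning on the integrality of $v$ --- which is, however, already built into the construction.
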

\begin{proof} Suppose for a contradiction that $\lambda = \infty$. Because $v_{|F}$ is a valuation by Theorem \ref{RescVal}(b) and $\tau(z(g)) \in F$ for all $g \in G$, we see that  $z(g) - 1 \in P$ for all $g \in G$. Since $P$ is faithful, this implies that $z(g) = 1$ for all $g \in G$. But $z = (\log \tilde{\varphi}_\ast)^\ast$ by definition, so $\log \tilde{\varphi}_\ast$ must send everything in $\log(\tilde{G})$ to zero, which forces $\tilde{\varphi}$ and hence $\varphi$ to be the trivial automorphism, a contradiction.

By Lemma \ref{lambda}, $g \mapsto v(\tau(z(g) - 1))$ is a function $G \to [1,\infty]$. If we give $[1,\infty]$ the topology where the open neighbourhoods of $\infty$ are the sets $(\nu, \infty]$ for all $\nu \geq 1$, then this function is continuous and therefore attains its minimum value at some $g \in G$ because $G$ is compact.
\end{proof}

\subsection{The subgroup $H$}\label{SubH}
Consider the $\lambda$-th piece $Q_\lambda / Q_{\lambda^+}$ of $\gr Q$ as an abelian group, and define
\[\sigma : G \to Q_\lambda / Q_{\lambda^+} \quad\mbox{by} \quad \sigma(g) = \tau(z(g) - 1) + Q_{\lambda^+}.\]
We now construct the subgroup $H$ which features in the statement of Theorem \ref{MainB}.
\begin{lem} The map $\sigma$ is a group homomorphism, and $H := \ker \sigma$ is a proper subgroup of $G$ which contains the Frattini subgroup $\Phi(G)$ of $G$.
\end{lem}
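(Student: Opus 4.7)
The proof plan is as follows.

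First, I would verify that $\sigma$ is a group homomorphism. Since $z$ is a group homomorphism into $Z$ by Proposition \ref{AutModZ}(c), for any $g,h \in G$ we have
\[
z(g)z(h) - 1 \;=\; (z(g)-1) + (z(h)-1) + (z(g)-1)(z(h)-1).
\]
Applying $\tau$ and noting that both $\tau(z(g)-1)$ and $\tau(z(h)-1)$ lie in $F$ with $v$-value at least $\lambda$, while $v_{|F}$ is a valuation by Theorem \ref{RescVal}(b), the cross term has $v$-value at least $2\lambda$, which exceeds $\lambda$ since $\lambda \geq 1$. Thus it lies in $Q_{\lambda^+}$, and $\sigma(gh) = \sigma(g) + \sigma(h)$ in $Q_\lambda/Q_{\lambda^+}$.

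Second, $H$ is a proper subgroup of $G$: Proposition \ref{UseFaith} supplies some $1 \neq g \in G$ realising the infimum $v(\tau(z(g)-1)) = \lambda$, whence $\tau(z(g)-1) \notin Q_{\lambda^+}$ and $\sigma(g) \neq 0$.

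Third, I would establish $\Phi(G) \subseteq H$ in three steps. Since $Q_\lambda/Q_{\lambda^+}$ is abelian, the commutator subgroup $[G,G]$ already lies in $H$. For $p$-th powers, $\tau(z(g)) \in F$ lies in a commutative field of characteristic $p$, so the Frobenius identity gives
\[
\tau(z(g^p) - 1) \;=\; \tau(z(g))^p - 1 \;=\; \bigl(\tau(z(g)) - 1\bigr)^p,
\]
with $v$-value at least $p\lambda > \lambda$; hence $\sigma(g^p) = 0$ and $G^p \subseteq H$. Finally, $H$ is closed in $G$: since $v$ is integer-valued, $Q_{\lambda^+} = Q_{\lambda+1}$ is open in $Q$, while the map $g \mapsto \tau(z(g))$ is continuous (because $z$ is a continuous group homomorphism and $\tau$ is continuous by Theorem \ref{RescVal}(d)). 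So $H$ is open, hence closed, and since $G$ is pro-$p$ we conclude $\Phi(G) = \overline{[G,G]G^p} \subseteq H$.

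The only non-formal step is the $p$-th power argument, which crucially exploits both the multiplicativity of $v$ on $F$ to amplify $\lambda$ to $p\lambda$ and the fact that $F$ has characteristic $p$ so Frobenius applies; the rest is bookkeeping.
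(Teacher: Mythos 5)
Your proof is correct and follows the same overall structure as the paper's: the group-homomorphism step via the cross-term estimate, properness via Proposition \ref{UseFaith}, and Frattini containment via the target being elementary abelian of exponent $p$. The paper's proof of the Frattini step is a single sentence (``$Q_\lambda/Q_{\lambda^+}$ is an abelian group of exponent $p$ so $H$ must contain $\Phi(G)$''), implicitly relying on the fact that for a finitely generated pro-$p$ group any finite-index subgroup is open, so $H = \ker\sigma$ is automatically closed and therefore contains $\overline{[G,G]G^p} = \Phi(G)$; you instead make the closure explicit by observing that $\sigma$ is continuous into a discrete target, so $H$ is open. Both routes are valid, and yours is arguably cleaner since it avoids invoking Serre's theorem. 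One small inefficiency: your Frobenius computation for $G^p \subseteq H$ is redundant --- once $\sigma$ is known to be a group homomorphism into a group of exponent $p$, $\sigma(g^p) = p\,\sigma(g) = 0$ immediately; the observation $(a-1)^p = a^p - 1$ and the $v$-value estimate $p\lambda > \lambda$, while correct, aren't needed.
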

\begin{proof} Since $z$ is a group homomorphism by Proposition \ref{AutModZ}(c), 
\[z(gh) - 1 = (z(g)-1) + (z(h)-1) + (z(g) - 1)(z(h) - 1)\quad\mbox{for any}\quad  g,h \in G\]
and $v(\tau((z(g)-1)(z(h)-1))) \geq 2 \lambda > \lambda$ because $\lambda \geq 1$. So $\sigma$ is a group homomorphism and $\sigma(g) \neq 0$ for some $g\in G$ by Proposition \ref{UseFaith}. Finally $Q_\lambda / Q_{\lambda^+}$ is an abelian group of exponent $p$ so $H$ must contain $\Phi(G)$.
\end{proof}
We choose an ordered basis $\{g_1,\ldots, g_d\}$ for $G$ such that $\{g_1^{p^{n_1}}, \ldots, g_d^{p^{n_d}}\}$ is an ordered basis for $H$ for some increasing sequence of integers $n_i$, using Lemma \ref{OrdBas}. Let us reorder these bases in such a way that the sequence of integers $n_i$ becomes decreasing; we also know that $n_i \leq 1$ for all $i$ because $G^p \subseteq H$ by the Lemma. Let $m$ be the greatest integer such that $n_m = 1$; since $H$ is a proper subgroup by the Lemma, $2 \leq m \leq d$ so
\begin{itemize}
\item $\{g_1,\ldots,g_d\}$ is an ordered basis for $G$, 
\item $\{g_1^p,\ldots, g_m^p, g_{m+1}, \cdots, g_d\}$ is an ordered basis for $H$.
\end{itemize}
So equivalently, $m = \log_p |G/H|$. We fix this ordered basis of $G$ until the end of $\S \ref{MainSec}$.

\subsection{Expansions in $\mathcal{B}(kG, Q)$}
\label{ModPExp}
By Corollary \ref{Extend}, we know that inside $\mathcal{B}(kG)$ 
\[\varphi = \sum_{\alpha \in \mathbb{N}^d} \langle \varphi, \qdel{\alpha} \rangle \qdel{\alpha}.\]
The map $\tau : kG \to Q$ is bounded by Theorem \ref{RescVal}, so the sequence
\[ \tau \varphi - \sum_{|\alpha| \leq k} \tau \left(\langle \varphi, \qdel{\alpha} \rangle\right) \cdot \tau \qdel{\alpha}\]
converges to zero inside $\mathcal{B}(kG,Q)$ and we may write
\[\tau \varphi^{p^r} = \sum_{\alpha \in \mathbb{N}^d} \tau \left(\langle \varphi^{p^r}, \qdel{\alpha} \rangle\right) \cdot \tau \qdel{\alpha} \quad\mbox{for all}\quad r \geq 0\]
inside $\mathcal{B}(kG,Q)$. For each $i = 1,\ldots, d$, define the element
\[y_i := \tau(z(g_i) - 1) \in Q,\]
and for each $\alpha \in \mathbb{N}^d$, write $\mathbf{y}^\alpha := y_1^{\alpha_1} \cdots y_d^{\alpha_d} \in Q$. It turns out that the Mahler coefficient $\tau \left(\langle \varphi^{p^r}, \qdel{\alpha} \rangle\right)$ is asymptotically very close to the power $\mathbf{y}^{\alpha p^r}$ for each $\alpha \in \mathbb{N}^d$. More precisely, we have the following

\begin{prop} There exists an integer $r_2 \geq r_1$ such that 
\[ v\left( \tau \left(\langle \varphi^{p^r}, \qdel{\alpha} \rangle\right) - \mathbf{y}^{\alpha p^r} \right) \geq p^{2r - r_1} + \lambda p^r (|\alpha| - 1)\]
for all $0 \neq \alpha \in \mathbb{N}^d$ and all $r \geq r_2$.
\end{prop}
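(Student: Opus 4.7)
The strategy is to combine Lemma \ref{CalcMahler} applied to $\varphi^{p^r}$ with the factorization from Proposition \ref{AutModZ}(a), the latter applied to $\tilde{\varphi}$ on $\Sat(G)$. This writes
\[\psi_r(g_i) := \varphi^{p^r}(g_i)g_i^{-1} = z(g_i)^{p^r}\cdot \epsilon_r(g_i)^{p^{2r}}\]
with $\epsilon_r(g_i) \in \Sat(G)$, where $z(g_i) \in Z$ by the choice of $\varphi$ in $\S\ref{lambda}$, and $\epsilon_r(g_i)^{p^{2r}} = z(g_i)^{-p^r}\psi_r(g_i) \in Z$ as a product of elements of $Z$. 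Hence the factorization takes place inside $kG$, and applying $\tau$ with $\langle\varphi^{p^r},\qdel{\alpha}\rangle = \prod_i(\psi_r(g_i) - 1)^{\alpha_i}$ from Lemma \ref{CalcMahler} reduces the problem to estimating $\tau(\psi_r(g_i) - 1) - y_i^{p^r}$ inside the centre $F$ of $Q$, then propagating through the commutative product.

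The technical heart of the argument is the refinement of Proposition \ref{AutModZ}(a) asserting that $\epsilon_r(g_i)$ itself, and not just its $p^{2r}$-th power, lies in $Z(\Sat(G))$. This follows by inspecting the Lie-algebraic calculation in the proof of Proposition \ref{AutModZ}(a), which gives $\log(\epsilon_r(g_i)^{p^{2r}}) = p^{2r}\beta_r(u_i)$ with $u_i = \log(g_i)$. Since $p^{2r}\beta_r(u_i)$ lies in the centre of $\log(\Sat(G))$ and that centre is $p$-pure in $\log(\Sat(G))$ (as $\log(\Sat(G))$ is torsion-free as a $\Zp$-module), we have $\beta_r(u_i) \in Z(\log(\Sat(G)))$ and hence $\epsilon_r(g_i) = \exp(\beta_r(u_i)) \in Z(\Sat(G))$. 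By the defining property $Z(\Sat(G))^{p^{r_1}} \subseteq Z$ of $r_1$, the element $\mu_i := \epsilon_r(g_i)^{p^{r_1}}$ lies in $Z$ whenever $r \geq r_1/2$, and the iterated Frobenius identity in the characteristic $p$ field $F$ then yields
\[\tau(\epsilon_r(g_i)^{p^{2r}} - 1) = \bigl(\tau(\mu_i) - 1\bigr)^{p^{2r-r_1}}.\]
Combining the bound $v(\tau(\mu_i) - 1) \geq w(\mu_i - 1) \geq \omega(\mu_i) > 0$ from Theorem \ref{RescVal}(c) with the integrality of $v$ gives $v(\tau(\mu_i) - 1) \geq 1$, so the valuation property of $v|_F$ produces the crucial estimate $v(\tau(\epsilon_r(g_i)^{p^{2r}} - 1)) \geq p^{2r-r_1}$.

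For $|\alpha| = 1$, the analogous identity $\tau(z(g_i)^{p^r}) = 1 + y_i^{p^r}$ gives
\[\tau(\psi_r(g_i) - 1) - y_i^{p^r} = (1 + y_i^{p^r}) \cdot \tau(\epsilon_r(g_i)^{p^{2r}} - 1),\]
whose $v$-value is at least $p^{2r - r_1}$, as required. For general $\alpha$, set $\eta_i := (1 + y_i^{p^r}) \cdot \tau(\epsilon_r(g_i)^{p^{2r}} - 1) \in F$; then $v(\eta_i) \geq p^{2r-r_1}$ and $v(y_i^{p^r}) \geq \lambda p^r$ by definition of $\lambda$. Expanding $\prod_i (y_i^{p^r} + \eta_i)^{\alpha_i}$ by the commutative binomial theorem and subtracting $\mathbf{y}^{\alpha p^r}$ leaves a sum whose monomials are indexed by multi-indices $(k_1,\ldots,k_d)$ with $0 \leq k_i \leq \alpha_i$ and $K := \sum k_i \geq 1$; each such monomial has $v$-value at least $\lambda p^r(|\alpha| - K) + K\cdot p^{2r-r_1}$. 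Under the hypothesis $p^{r - r_1} \geq \lambda$, this quantity is minimized over $K \geq 1$ by $K = 1$ and equals $\lambda p^r(|\alpha|-1) + p^{2r-r_1}$. Taking $r_2$ to be the smallest integer satisfying both $r_2 \geq r_1$ and $p^{r_2 - r_1} \geq \lambda$ completes the proof. The main obstacle is the refinement $\epsilon_r(g_i) \in Z(\Sat(G))$: without it, only the weaker estimate $v(\tau(\epsilon_r(g_i)^{p^{2r}}-1)) \geq \omega(\epsilon_r(g_i)^{p^{2r}}) = 2r + O(1)$ is available, growing linearly rather than exponentially in $r$ and falling short of the required bound.
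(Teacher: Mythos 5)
Your proof is correct and follows essentially the same route as the paper: write $\tau(\langle \varphi^{p^r},\qdel{\alpha}\rangle)$ as a product via Lemma \ref{CalcMahler}, use Proposition \ref{AutModZ}(a) to factor $\varphi^{p^r}(g_i)g_i^{-1} = z(g_i)^{p^r}\epsilon_r(g_i)^{p^{2r}}$, upgrade $\epsilon_r(g_i)$ to an element of $\Sat(Z)$, replace the exponent $p^{2r}$ by $p^{2r-r_1}$ on the element $\epsilon_r(g_i)^{p^{r_1}}\in Z$, and estimate the binomial expansion in the commutative ring $F$. The only cosmetic difference is that the paper gets $\epsilon_r(g_i)\in Z(\Sat(G))$ directly from $\epsilon_r(g_i)^{p^{2r}}\in Z$ and the isolation of the centre (Lemma \ref{Zal}(a) applied to $\Sat(G)$), whereas you re-derive the same fact at the Lie-algebra level via $p$-purity of $Z(\log(\Sat(G)))$ in the torsion-free module $\log(\Sat(G))$ --- these are two equivalent phrasings of the same observation, so your ``refinement'' is precisely the step the paper makes implicitly.
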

\begin{proof} Recall the integer $r_1$ from Lemma \ref{lambda}. Fix $r \geq r_1$, and define for each $i=1,\ldots, d$ the ``error terms"
\[ b_{ir} := \tau(\varphi^{p^r}(g_i) g_i^{-1} - 1) - y_i^{p^r}  \in Q.\]
Using Lemma \ref{CalcMahler}, we can then rewrite the image of the Mahler coefficient $\langle \varphi^{p^r}, \qdel{\alpha}\rangle$ in $Q$ as follows:
\begin{equation}\label{PhiExp}\begin{array}{lll} \tau \left(\langle \varphi^{p^r}, \qdel{\alpha}\rangle\right) &=& \tau\left((\varphi^{p^r}(g_1)g_1^{-1} - 1)^{\alpha_1}\cdots (\varphi^{p^r}(g_d)g_d^{-1} - 1)^{\alpha_d}\right)  \\
&=& (y_1^{p^r} + b_{1r})^{\alpha_1} \cdots (y_d^{p^r} + b_{dr})^{\alpha_d}\end{array}.\end{equation}
On the other hand, for any $g \in G$ there exists some $\epsilon_r(g) \in \Sat(G)$ such that
\[ \varphi^{p^r}(g) g^{-1} = z(g)^{p^r} \epsilon_r(g)^{p^{2r}}\]
by Proposition \ref{AutModZ}(a). Since $z(g) \in Z$ and $\varphi$ is trivial mod centre, we see that $\epsilon_r(g)^{p^{2r}} \in Z$ and therefore $\epsilon_r(g) \in \Sat(Z)$. But $\Sat(Z)^{p^{r_1}} \subseteq Z$ by the definition of $r_1$ so $\epsilon_r'(g) := \epsilon_r(g)^{p^{r_1}} \in Z$ always and
\[\varphi^{p^r}(g) g^{-1} = z(g)^{p^r} \epsilon_r'(g)^{p^{2r-r_1}}\]
whenever $r \geq r_1$, say. Therefore
\[\begin{array}{lll} v(b_{ir}) &=& v\tau\left(\varphi^{p^r}(g_i)g_i^{-1} - 1 - (z(g_i) - 1)^{p^r}\right) = v\tau\left( z(g_i)^{p^r}\left(\epsilon_r'(g_i)^{p^{2r-r_1}} - 1\right)\right) \\
&\geq& v\tau \left( (\epsilon_r'(g_i) - 1)^{p^{2r - r_1}}\right) \geq p^{2r-r_1} \mb{whenever} r \geq r_1\end{array}\]
for each $i$, because $v\tau(x - 1) \geq 1$ for all $x \in Z$. 

Choose $r_2 \geq r_1$ such that $p^{2r - r_1} > \lambda p^r$ whenever $r \geq r_2$; expanding $(\ref{PhiExp})$ shows that if $r \geq r_2$ then $\tau (\langle \varphi^{p^r}, \qdel{\alpha}\rangle) - \mathbf{y}^{\alpha p^r}$ is a linear combination of products of length $|\alpha|$, where each product contains at least one $b_{ir}$ with the $b_{ir}$ of greater value than $\lambda p^r$ by the choice of $r_2$. The result follows.
\end{proof}

\subsection{The values of certain linear forms}\label{Vdet}
Proposition \ref{ModPExp} tells us that the terms $\mathbf{y}^{p^r \alpha} \tau \qdel{\alpha}$ are dominant in the Mahler expansion of $\tau \varphi^{p^r}$. We will now study the growth rates of these terms more closely.
\begin{lem} \begin{enumerate}[{(}a{)}]
\item For any $\mu_1,\ldots,\mu_m \in \Fp$, not all zero, and any $r \geq 0$,
\[v\left( \sum_{i=1}^m \mu_i y_i^{p^r} \right) = p^r \lambda.\]
\item $\lambda = v(y_1) = \cdots = v(y_m) < v(y_\ell)$ for all $\ell > m$.
\end{enumerate} \end{lem}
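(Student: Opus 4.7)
The plan is to dispose of (b) directly from the definitions of $\sigma$ and $H$, then reduce (a) to an $\Fp$-linear independence statement in $Q_\lambda/Q_{\lambda^+}$ via a Frobenius trick.

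For (b), I would start from the fact that $v(y_i) \geq \lambda$ for every $i$ by definition of $\lambda$ in $\S\ref{lambda}$. By the choice of ordered basis in $\S\ref{SubH}$, we have $g_\ell \in H = \ker \sigma$ for $\ell > m$ and $g_i \notin H$ for $i \leq m$. The former yields $\sigma(g_\ell) = 0$, i.e.\ $y_\ell \in Q_{\lambda^+}$, giving $v(y_\ell) > \lambda$; the latter yields $\sigma(g_i) \neq 0$, i.e.\ $y_i \notin Q_{\lambda^+}$, which combined with $v(y_i) \geq \lambda$ forces $v(y_i) = \lambda$.

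For (a), the key observation is that $z(g) \in Z$ for all $g \in G$ by Propositions \ref{AutModZ}(c) and \ref{SatIsNice} (together with the choice of $\varphi$ in $\S\ref{lambda}$), so each $y_i$ lies in $\tau(kZ) \subseteq F = Z(Q)$. Thus all elements in question live in the commutative field $F$, on which $v$ restricts to an honest valuation by Theorem \ref{RescVal}(b). Since $F$ has characteristic $p$ and $\mu_i^{p^r} = \mu_i$ for $\mu_i \in \Fp$, Frobenius in $F$ gives
\[\sum_{i=1}^m \mu_i y_i^{p^r} = \Bigl(\sum_{i=1}^m \mu_i y_i\Bigr)^{p^r},\]
so the claim reduces, via $v(x^{p^r}) = p^r v(x)$, to $v\bigl(\sum_{i=1}^m \mu_i y_i\bigr) = \lambda$ for any non-zero $(\mu_1,\ldots,\mu_m) \in \Fp^m$.

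This in turn amounts to showing that $\sigma(g_1),\ldots,\sigma(g_m)$ are $\Fp$-linearly independent in the $\Fp$-vector space $Q_\lambda/Q_{\lambda^+}$. By Lemma \ref{SubH} we have $\Phi(G) \subseteq H$, so $G/H$ is elementary abelian of order $[G:H] = p^m$ (as read off from the way the chosen ordered basis of $H$ differs from that of $G$), and the $m$ cosets $g_1 H,\ldots,g_m H$ generate it, hence form an $\Fp$-basis. The induced injection $\bar\sigma : G/H \hookrightarrow Q_\lambda/Q_{\lambda^+}$ then sends this basis to a linearly independent set, completing the argument. No serious obstacle arises in this proof; the only subtle point is noticing that each $y_i$ lies in the centre $F$, without which the Frobenius step would be illegitimate.
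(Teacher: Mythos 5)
Your proof is correct and essentially follows the same route as the paper. Both arguments exploit the Frobenius identity $\sum_i \mu_i y_i^{p^r} = \bigl(\sum_i \mu_i y_i\bigr)^{p^r}$ (valid because the $y_i$ lie in the commutative field $F$, on which $v$ is multiplicative), and both reduce part (a) to showing $\sum_i \mu_i y_i \notin Q_{\lambda^+}$ for $(\mu_i)\neq 0$; the paper realises $\sum_i \mu_i \sigma(g_i)$ as $\sigma(g)$ for the explicit element $g = g_1^{\alpha_1}\cdots g_m^{\alpha_m}\notin H$ and uses $H = \ker\sigma$, while you phrase the same fact as the $\Fp$-linear independence of $\sigma(g_1),\ldots,\sigma(g_m)$ via injectivity of the induced map $G/H \hookrightarrow Q_\lambda/Q_{\lambda^+}$ and the observation that $g_1H,\ldots,g_mH$ form an $\Fp$-basis of the elementary abelian group $G/H$. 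Your proof of (b) is handled directly from the choice of ordered basis rather than as a corollary of (a), but the content is identical.
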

\begin{proof}
(a) Choose $\alpha_1,\ldots,\alpha_m \in \mathbb{N}$ such that $\mu_i$ is the image of $\alpha_i$ in $\Fp$ for each $i$, and define $g := g_1^{\alpha_1}\cdots g_m^{\alpha_m} \in G$. Because some $\mu_i$ is non-zero, $g \notin H$, so $\sigma(g) \neq 0$. Now $\sigma(g_i) = y_i + Q_{\lambda^+}$ and $\sigma$ is a group homomorphism by Lemma \ref{SubH}, so
\[\sigma(g) = \sum_{i=1}^m \alpha_iy_i + Q_{\lambda^+} = \sum_{i=1}^m \mu_iy_i + Q_{\lambda^+} \neq 0.\]
Therefore $v( \sum_{i=1}^m \mu_i y_i) = \lambda$ and
\[v\left( \sum_{i=1}^m \mu_i y_i^{p^r} \right) = v\left( (\sum_{i=1}^m \mu_i y_i)^{p^r}\right) = p^r \lambda\]
because $v_{|F}$ is a valuation by Theorem \ref{RescVal}(b).

(b) Part (a) implies that $v(y_1) = \cdots = v(y_m) = \lambda$. If $\ell > m$ then $g_\ell \in H$ by our choice of ordered basis of $G$, so $\sigma(g_\ell) = \tau(z(g_\ell) - 1) + Q_{\lambda^+} = 0$ and $v(y_\ell)> \lambda.$
\end{proof}

\subsection{The Smith matrix}
Write $\partial_i := \qdel{e_i}$ where $e_i = (0,\ldots,1,\ldots, 0) \in \mathbb{N}^d$ is the $i$-th standard unit vector. By Proposition \ref{ModPExp}, we may write
\[ \tau \varphi^{p^r} - \tau = y_1^{p^r} \tau \partial_1 + y_2^{p^r} \tau \partial_2 + \cdots + y_m^{p^r} \tau \partial_m + \cdots \quad\mbox{whenever} \quad r \geq r_2\]
where the undisplayed terms are growing \emph{faster} with $r$ than the $y_1^{p^r},\cdots, y_m^{p^r}$, which are all growing at the \emph{same} uniform rate $\lambda p^r$ by Lemma \ref{Vdet}. We wish to ``extract" the operators $\tau \partial_i$ from these expansions. To do this, we consider $m$ of these expansions at a time starting with $\tau \varphi^{p^r}$:
\[ \begin{array}{ccccccccccc} \tau \varphi^{p^r} - \tau &=& y_1^{p^r} \tau \partial_1 &+& y_2^{p^r} \tau \partial_2 &+ \cdots +& y_m^{p^r} \tau \partial_m &+& \cdots \\

 \tau \varphi^{p^{r+1}} - \tau &=& y_1^{p^{r+1}} \tau \partial_1 &+& y_2^{p^{r+1}} \tau \partial_2 &+ \cdots +& y_m^{p^{r+1}} \tau \partial_m &+& \cdots
\\ \vdots &=& \vdots & & \vdots &  &\vdots  & &
\\ \tau \varphi^{p^{r+m-1}} - \tau &=& y_1^{p^{r+m-1}} \tau \partial_1 &+& y_2^{p^{r+m-1}} \tau \partial_2 &+ \cdots +& y_m^{p^{r+m-1}} \tau \partial_m &+& \cdots
\end{array}\]
and take an appropriate $F$-linear combination of them. For any $r\geq 0$, define the \emph{Smith matrix} $M_r$ with entries in the field $F$ as follows:
\[M_r :=
\begin{pmatrix}
y_1^{p^r} & y_2^{p^r} & \cdots & y_m^{p^r} \\
y_1^{p^{r+1}} & y_2^{p^{r+1}} & \cdots & y_m^{p^{r+1}} \\
\vdots & \vdots & \cdots& \vdots \\
y_1^{p^{r + m-1}} & y_2^{p^{r + m-1}} & \cdots & y_m^{p^{r + m-1}}
\end{pmatrix}.\]
This matrix has already appeared in \cite[\S 1]{AWZ2}. The expansions considered in $\S \ref{ModPExp}$ can now be rewritten in matrix form as follows:
\begin{equation}\label{MatrixEq} \begin{pmatrix} \tau \varphi^{p^r} - \tau \\ \tau \varphi^{p^{r+1}} - \tau \\ \vdots \\ \tau \varphi^{p^{r+m-1}} - \tau \end{pmatrix} = M_r \cdot \begin{pmatrix} \tau \partial_1 \\ \tau \partial_2 \\ \vdots \\ \tau \partial_m \end{pmatrix} + N_r \mathbf{v} + \begin{pmatrix} \eta_r \\ \eta_{r+1} \\ \vdots \\ \eta_{r+m-1} \end{pmatrix} \end{equation}
where $\mathbf{v}$ is the infinite column vector containing the remaining basis vectors
\[\mathbf{v} := \begin{pmatrix} \tau \partial_{m+1} & \cdots & \tau \partial_d & \tau \qdel{2e_1} & \cdots & \tau \qdel{2e_d} & \cdots \end{pmatrix}^T ,\]
$N_r$ is the $m$-by-infinite matrix
\[N_r := \begin{pmatrix} y_{m+1}^{p^r} & \cdots & y_d^{p^r} & y_1^{2p^r} & \cdots & y_d^{2p^r} & \cdots \\
y_{m+1}^{p^{r+1}} & \cdots & y_d^{p^{r+1}} & y_1^{2p^{r+1}} & \cdots & y_d^{2p^{r+1}} & \cdots \\
\vdots & \vdots & \vdots & \vdots & \vdots & \vdots & \vdots \\
y_{m+1}^{p^{r+m-1}} & \cdots & y_d^{p^{r+m-1}} & y_1^{2p^{r+m-1}} & \cdots & y_d^{2p^{r+m-1}} & \cdots \end{pmatrix} \]
and $\eta_r := \sum\limits_{\alpha\in\mathbb{N}^d}\left(\tau\left( \langle \varphi^{p^r} , \qdel{\alpha}\rangle \right) - \mathbf{y}^{\alpha p^r} \right) \tau \qdel{\alpha} $ is an ``error term".
\subsection{Some linear algebra}
The fact that $v_{|F}$ is a valuation is used crucially in the following 
 
\label{LinAlg}
\begin{prop} The matrix $M_r$ is invertible, and the entries of its inverse satisfy
\[v\left( (M_r^{-1})_{ij}\right) \geq - p^{j + r - 1} \lambda \]
for all $i,j=1,\ldots,m$.
\end{prop}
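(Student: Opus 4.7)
The approach is to compute $\det M_r$ explicitly via Moore's determinant formula and then bound the entries of the inverse using Cramer's rule together with the ultrametric inequality.

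First, I would observe that each $y_j = \tau(z(g_j) - 1)$ lies in $F$: indeed $z(g_j) \in Z$ by our choice of $\varphi$ in $\S\ref{lambda}$, so $\tau(z(g_j)) \in \tau(kZ) \subseteq F$ by the definition of $F$ in \S\ref{FiltQ}. Hence $M_r$ is a matrix over the field $F$, on which $v$ restricts to a valuation by Theorem \ref{RescVal}(b); in particular the ultrametric inequality is available throughout the argument.

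Next, since Frobenius $x \mapsto x^{p^r}$ is a ring endomorphism of $F$ and $(M_r)_{ij} = y_j^{p^{r+i-1}}$ is the $p^r$-th power of $(M_0)_{ij} = y_j^{p^{i-1}}$, we have $\det M_r = (\det M_0)^{p^r}$. Moore's classical determinant identity in characteristic $p$ factorises $\det M_0$ as
$$\det M_0 = \prod_{[c]} (c_1 y_1 + c_2 y_2 + \cdots + c_m y_m),$$
where $[c]$ runs over the $(p^m-1)/(p-1)$ nonzero $\Fp$-lines in $\Fp^m$ with a fixed choice of representative. By Lemma \ref{Vdet}(a) each factor has valuation $\lambda$, so
$$v(\det M_r) = \frac{\lambda\,p^r(p^m - 1)}{p - 1} < \infty$$
by Proposition \ref{UseFaith}, and in particular $M_r$ is invertible over $F$.

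Finally, Cramer's rule gives $(M_r^{-1})_{ij} = (-1)^{i+j}\det M_r^{(j,i)} / \det M_r$, where $M_r^{(j,i)}$ denotes the $(m-1) \times (m-1)$ minor obtained by deleting the $j$-th row and $i$-th column of $M_r$. Lemma \ref{Vdet}(b) shows that $v(y_\ell) = \lambda$ for every $\ell \leq m$, so each term in the Leibniz expansion of $\det M_r^{(j,i)}$ has valuation
$$\lambda \sum_{k \neq j} p^{r+k-1} = \frac{\lambda\,p^r(p^m - 1)}{p-1} - \lambda p^{r+j-1}.$$
The ultrametric inequality gives the same lower bound on $v(\det M_r^{(j,i)})$, and subtracting $v(\det M_r)$ delivers $v((M_r^{-1})_{ij}) \geq -\lambda p^{r+j-1}$, as required. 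The key point---and the main obstacle---is that Moore's identity gives \emph{equality} (rather than merely a lower bound) for $v(\det M_r)$; without this exact computation, the ultrametric argument on cofactors would only yield an upper bound on $v((M_r^{-1})_{ij})$ rather than the needed lower bound. It is precisely the factorisation of $\det M_0$ into linear forms of valuation exactly $\lambda$ (guaranteed by Lemma \ref{Vdet}(a)) that makes the whole argument work.
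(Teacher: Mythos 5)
Your proof is correct and takes essentially the same route as the paper: the Moore/Smith determinant factorisation to obtain the exact value of $v(\det M_r)$, then Cramer's rule combined with the ultrametric inequality applied to the minors (your passage through $\det M_r = (\det M_0)^{p^r}$ via Frobenius is a harmless cosmetic variation). Your closing remark correctly identifies the role of the exact determinant computation, a point the paper uses implicitly but does not spell out.
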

\begin{proof} Let $[\mu]$ denote the image of $(\mu_1,\ldots,\mu_m) \in \Fp^m \backslash \{0\}$ in the projective space $\mathbb{P}(\Fp^m)$. By \cite[Lemma 1.1(2)]{AWZ2},
\[\det M_r = c \cdot \prod_{[\mu] \in \mathbb{P}(\Fp^m)} \left(\mu_1y_1^{p^r} + \cdots + \mu_my_m^{p^r}\right)\]
for some non-zero scalar $c \in\Fp$. Since $|\mathbb{P}(\Fp^m)| = (p^m - 1)/(p-1)$ and since $v_{|F}$ is a valuation, Lemma \ref{Vdet}(a) implies that
\[v(\det M_r) = (1 + p +  \cdots + p^{m-1})\lambda p^r.\]
In particular, $\det M_r$ is non-zero and $M_r$ is invertible.

By Cramer's rule, $(M_r^{-1})_{ij} \cdot \det M_r$ is (up to a sign) equal to the determinant of the matrix obtained from $M_r$ by removing the $j$-th row and $i$-th column. This determinant is a signed sum of monomials of the form
\[y_{i_1}^{p^r} y_{i_2}^{p^{r+1}}\cdots \widehat{y_{i_j}^{p^{r + j - 1}}} \cdots y_{i_m}^{p^{r+m-1}},\]
where the hat indicates that the factor $y_{i_j}^{p^{r + j - 1}}$ has been omitted. So
\[v\left( (M_r^{-1})_{ij} \cdot \det M_r\right) \geq (1 + p + \cdots + \widehat{p^{j-1}} + \cdots + p^{m-1})p^r\lambda\]
and the Proposition follows because $v_{|F}$ is a valuation.
\end{proof}

\subsection{The maps $\zeta^{(i)}_r$}
\label{Zeta}
Let us left-multiply the equation $(\ref{MatrixEq})$ by the inverse of $M_r$:
\[ M_r^{-1} \begin{pmatrix} \tau \varphi^{p^r} - \tau \\ \tau \varphi^{p^{r+1}} - \tau \\ \vdots \\ \tau \varphi^{p^{r+m-1}} - \tau \end{pmatrix} =  \begin{pmatrix} \tau \partial_1 \\ \tau \partial_2 \\ \vdots \\ \tau \partial_m \end{pmatrix} + M_r^{-1} N_r \mathbf{v} + M_r^{-1}\begin{pmatrix} \eta_r \\ \eta_{r+1} \\ \vdots \\ \eta_{r+m-1} \end{pmatrix}\]
and let $\zeta^{(i)}_r \in \mathcal{B}(kG,Q)$ be the $i$th element on the left hand side. Precisely,
\[\zeta^{(i)}_r := \sum_{j=1}^m (M_r^{-1})_{ij} (\tau\varphi^{p^{r + j-1}} - \tau).\]
We can now state the main technical result of $\S \ref{MainSec}$. 
\begin{thm} For each $i=1,\ldots,m$, the limit
\[\lim_{r \to \infty} \zeta^{(i)}_r\]
exists in $\mathcal{B}(kG,Q)$ and equals the operator $\tau\partial_i : kG \to Q$.
\end{thm}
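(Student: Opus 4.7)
The plan is to read off the difference $\zeta^{(i)}_r - \tau\partial_i$ directly from equation $(\ref{MatrixEq})$: left-multiplying it by $M_r^{-1}$ — which is invertible by Proposition $\ref{LinAlg}$ — yields
$$\zeta^{(i)}_r - \tau\partial_i \;=\; \bigl(M_r^{-1}N_r\mathbf{v}\bigr)_i \;+\; \sum_{j=1}^m (M_r^{-1})_{ij}\,\eta_{r+j-1},$$
so the theorem reduces to showing that each of the two summands on the right has degree tending to $+\infty$ as $r \to \infty$ inside $\mathcal{B}(kG,Q)$. All the inputs are in hand: $\tau$ is bounded (Theorem $\ref{RescVal}$(d)), $\deg\qdel{\alpha} = -\wt{\alpha}$ (Theorem $\ref{qdel}$(d)), and bounded operators with $N$ complete form a complete filtered space (Lemma $\ref{BddHoms}$).

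First I would handle the ``transverse'' sum, whose $\alpha$-term for $\alpha \in \{e_{m+1},\ldots,e_d\}\cup\{\beta\in\mathbb{N}^d : |\beta|\geq 2\}$ is $\bigl(\sum_{j=1}^m(M_r^{-1})_{ij}\,\mathbf{y}^{\alpha p^{r+j-1}}\bigr)\tau\qdel{\alpha}$. Proposition $\ref{LinAlg}$ gives $v((M_r^{-1})_{ij})\geq -p^{r+j-1}\lambda$, while Lemma $\ref{Vdet}$(b) together with the fact (Theorem $\ref{RescVal}$(b)) that $v|_F$ is a valuation yields $v(\mathbf{y}^\alpha) \geq \lambda|\alpha|$, with the \emph{strict} inequality $v(\mathbf{y}^\alpha) > \lambda$ for every $\alpha$ in the indexing set — hence $v(\mathbf{y}^\alpha) - \lambda \geq 1$ since $v$ is integer-valued. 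Because $p^{r+j-1}$ increases with $j$, the minimum over $j$ is achieved at $j=1$, so
$$\deg\bigl((M_r^{-1}N_r)_{i,\alpha}\cdot\tau\qdel{\alpha}\bigr) \;\geq\; p^r\bigl(v(\mathbf{y}^\alpha)-\lambda\bigr) - \wt{\alpha} + \deg\tau \;\geq\; (p^r\lambda - \max_i\omega(g_i))|\alpha| - p^r\lambda + \deg\tau.$$
For $r$ large enough that $p^r\lambda > \max_i\omega(g_i)$, this lower bound tends to $+\infty$ both with $|\alpha|$ (giving convergence of the sum for fixed $r$) and with $r$ (uniformly in $\alpha$).

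Second I would handle the error sum by expanding $\eta_{r+j-1}$ and interchanging the two sums: the scalar coefficient of $\tau\qdel{\alpha}$ becomes $\sum_{j=1}^m(M_r^{-1})_{ij}\bigl(\tau\langle\varphi^{p^{r+j-1}},\qdel{\alpha}\rangle - \mathbf{y}^{\alpha p^{r+j-1}}\bigr)$. Combining Propositions $\ref{ModPExp}$ and $\ref{LinAlg}$, the $j$th term has value at least $p^{2(r+j-1)-r_1} + \lambda p^{r+j-1}(|\alpha|-2)$; once $r\geq r_2$ is also large enough that $p^{r-r_1} > \lambda$, this expression is positive even when $|\alpha|=1$ and strictly increases with $j$, so its minimum $p^{2r-r_1} + \lambda p^r(|\alpha|-2)$ is attained at $j=1$. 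Adding $\deg(\tau\qdel{\alpha}) \geq \deg\tau - \wt{\alpha}$ and using $\wt{\alpha} \leq |\alpha|\max_i\omega(g_i)$, the degree of the $\alpha$-summand is at least
$$p^{2r-r_1} - \lambda p^r + (\lambda p^r - \max_i\omega(g_i))(|\alpha|-1) - \max_i\omega(g_i) + \deg\tau,$$
which for $r$ sufficiently large is uniformly bounded below by $p^{2r-r_1} - \lambda p^r - \max_i\omega(g_i) + \deg\tau$; since $p^{2r-r_1}$ grows strictly faster than $\lambda p^r$, this tends to $+\infty$ with $r$.

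The main obstacle, and the only real subtlety, is bookkeeping the competition between the inverse-Vandermonde blowup $p^{r+j-1}\lambda$ from Proposition $\ref{LinAlg}$ and the growth rates supplied by Lemma $\ref{Vdet}$ and Proposition $\ref{ModPExp}$: we need the latter to dominate uniformly in the multi-index $\alpha$. The decisive fact that makes this work is that $v$ restricts to a valuation on the centre $F$ of $Q$ — a consequence of Theorem $\ref{MainC}$ — so that values of products of the $y_i$ and of $p^r$th powers are genuinely additive, and hence minima over $j$ and over $\alpha$ behave as one wants.
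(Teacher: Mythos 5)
Your decomposition $\zeta^{(i)}_r - \tau\partial_i = (M_r^{-1}N_r\mathbf{v})_i + \sum_{j=1}^m(M_r^{-1})_{ij}\eta_{r+j-1}$ is precisely what the paper does: the identity $M_r^{-1}M_r = I$ is how the paper shows $\sum_{\ell=1}^m C_{r,e_\ell}\tau\qdel{e_\ell} = \tau\partial_i$, and your two summands correspond to the paper's estimates $(\ref{A})$, $(\ref{C})$ for the first and $(\ref{B})$ for the second. So this is the same route, not a different one.

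There is, however, a bookkeeping flaw in your transverse estimate that you should fix. You correctly observe that $v(\mathbf{y}^\alpha) - \lambda \geq 1$ for every $\alpha$ in the indexing set (using Lemma \ref{Vdet}(b) for $\alpha = e_\ell$ with $\ell > m$, and $v(\mathbf{y}^\alpha)\geq |\alpha|\lambda$ for $|\alpha|\geq 2$), and your intermediate bound $p^r\bigl(v(\mathbf{y}^\alpha)-\lambda\bigr) - \wt{\alpha} + \deg\tau$ is sharp enough. But then you weaken to $(p^r\lambda - \max_i\omega(g_i))|\alpha| - p^r\lambda + \deg\tau$ via $v(\mathbf{y}^\alpha)\geq\lambda|\alpha|$; at $|\alpha|=1$ this last expression collapses to the constant $-\max_i\omega(g_i) + \deg\tau$, so your claim that it tends to $+\infty$ with $r$ ``uniformly in $\alpha$'' fails on the $\alpha = e_\ell$, $\ell > m$ branch. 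The issue is that the passage from $v(\mathbf{y}^{e_\ell}) > \lambda$ to $v(\mathbf{y}^{e_\ell}) \geq \lambda$ discards exactly the gap you need. You should either stay with the intermediate bound and read off $\geq p^r - \omega(g_\ell) + \deg\tau$ for $|\alpha|=1$, or split off the $\alpha = e_\ell$ ($\ell>m$) case as the paper does, using the positive constant $\mu = \min\{v(y_\ell)-\lambda : m < \ell \leq d\}$ to get a lower bound $p^r\mu + \deg\tau - \inf_{i>m}\omega(g_i)$ which grows with $r$. With that repair, your argument is correct and coincides with the paper's.
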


We begin the proof with the following technical estimate.

\begin{lem}$\inf\limits_{|\alpha| \geq 2}
 \{p^r \lambda (|\alpha| - 1) + \deg \tau \qdel{\alpha}\} \geq \frac{1}{2} p^r \lambda$ for all $r \gg 0$.
\end{lem}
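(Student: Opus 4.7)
The plan is to reduce the statement to a straightforward linear bound in $|\alpha|$, by combining the explicit computation $\deg\qdel{\alpha}=-\wt{\alpha}$ from Theorem \ref{qdel}(d) with the boundedness of $\tau$ from Theorem \ref{RescVal}.

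First I would set $D := \deg \tau$, which is a finite real constant since $\tau : (kG,w)\to(Q,v)$ is bounded. Since composition satisfies $\deg(fg) \geq \deg f + \deg g$ in $\mathcal{B}(kG,Q)$, we obtain
\[\deg\tau\qdel{\alpha}\;\geq\;\deg\tau+\deg\qdel{\alpha}\;=\;D-\wt{\alpha}\]
for every $\alpha\in\mathbb{N}^d$. Next, let $C:=\max_{1\le i\le d}\omega(g_i)$; this is a finite positive constant because $G$ has rank $d<\infty$ and each $\omega(g_i)$ is finite. Then $\wt{\alpha}\le C|\alpha|$ for every $\alpha$.

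Combining these two estimates gives, for any $\alpha$ with $|\alpha|\ge 2$,
\[p^r\lambda(|\alpha|-1)+\deg\tau\qdel{\alpha}\;\geq\;p^r\lambda(|\alpha|-1)-C|\alpha|+D\;=\;|\alpha|(p^r\lambda-C)-p^r\lambda+D.\]
Now choose $r$ large enough that $p^r\lambda\ge C$; the right-hand side is then a non-decreasing function of $|\alpha|$, so its infimum over $|\alpha|\ge 2$ is attained at $|\alpha|=2$ and equals $p^r\lambda-2C+D$.

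Finally, since $\lambda\ge 1$ by Lemma \ref{lambda}, we may choose $r$ even larger so that $p^r\lambda\ge 4C-2D$, equivalently $\tfrac{1}{2}p^r\lambda\ge 2C-D$. For such $r$,
\[p^r\lambda-2C+D\;\geq\;\tfrac{1}{2}p^r\lambda,\]
which proves the lemma. There is no real obstacle here: the only subtlety worth flagging is that the lower bound grows linearly in $|\alpha|$ with positive slope $p^r\lambda-C$ once $r$ is large, so taking the infimum over the infinite set $\{|\alpha|\ge 2\}$ reduces at once to the boundary case $|\alpha|=2$.
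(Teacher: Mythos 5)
Your proof is correct and takes essentially the same approach as the paper: both reduce to the explicit formula $\deg\qdel{\alpha} = -\wt{\alpha} \geq -|\alpha|\omega_{\max}$ together with $\deg\tau\qdel{\alpha} \geq \deg\tau + \deg\qdel{\alpha}$, and then observe that for $r$ large the resulting affine bound in $|\alpha|$ is increasing, so the infimum is attained at $|\alpha|=2$ and eventually exceeds $\tfrac12 p^r\lambda$. The only cosmetic difference is that the paper factors the bound as $(\lambda p^r - \omega_{\max})(|\alpha|-1) + \deg\tau - \omega_{\max}$ and drops $(|\alpha|-1)\geq 1$ directly, while you make the monotonicity in $|\alpha|$ explicit; both yield the same final quantity $p^r\lambda - 2\omega_{\max} + \deg\tau$.
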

\begin{proof} Let $\omega_{\max} = \max\limits_{1 \leq i \leq d} \omega(g_i)$ and note that $\deg \qdel{\alpha} \geq -|\alpha| \omega_{\max}$ for all $\alpha \in \mathbb{N}^d$ by Theorem \ref{qdel}(d). Since $\lambda > 0$, we can find $s \geq 0$ such that $\lambda p^s > \omega_{\max}$. Suppose that $|\alpha|\geq 2$ and $r\geq s$; then $\lambda p^r - \omega_{\max} > 0$, so 
\[\begin{array}{lll} p^r \lambda(|\alpha|-1) + \deg \tau \qdel{\alpha} &\geq & p^r \lambda (|\alpha|-1) + \deg \tau - |\alpha| \omega_{\max}  \\
&=& (\lambda p^r - \omega_{\max})(|\alpha| - 1) + \deg \tau - \omega_{\max} \\
&\geq& \lambda p^r + \deg \tau - 2 \omega_{\max} \\
&=& \frac{1}{2}\lambda p^r + \left(\frac{1}{2} \lambda p^r + \deg \tau - 2\omega_{\max}\right).\end{array}
\]
The expression in the brackets on the right hand side is eventually positive, and the result follows. 
\end{proof}

\subsection{Proof of Theorem \ref{Zeta}}
Fix the index $i$ and write $\zeta_r = \zeta^{(i)}_r$. For each $\alpha\in\mathbb{N}^d$, define the ``coefficient''
\[C_{r,\alpha} := \sum_{j=1}^m (M_r^{-1})_{ij} \mathbf{y}^{p^{r + j - 1}\alpha} \in Q.\]
We may now write
\begin{equation}\label{MainEq}\zeta_r = \sum_{0 \neq \alpha \in \mathbb{N}^d} C_{r,\alpha} \tau\qdel{\alpha} + \epsilon_r,\end{equation}
where $\epsilon_r$ is the error term
\[\epsilon_r := \sum_{j=1}^m (M_r^{-1})_{ij} \sum_{\alpha \neq 0} \left(\tau \left(\langle \varphi^{p^{r+j-1}}, \qdel{\alpha} \rangle\right) - \mathbf{y}^{\alpha p^{r+j-1}} \right) \tau \qdel{\alpha} \in \mathcal{B}(kG,Q).\]
The definition of the matrix $M_r$ gives
\[C_{r, e_\ell} = \sum_{j=1}^m (M_r^{-1})_{ij} y_\ell^{p^{r + j - 1}} = \sum_{j=1}^m (M_r^{-1})_{ij} (M_r)_{j\ell} = \delta_{\ell i} := \left\{ \begin{array}{lll} 1 & \mbox{if} & \ell = i \\ 0 & \mbox{if} & \ell \neq i\end{array} \right.\]
provided that $1 \leq \ell \leq m$, whence
\begin{equation}\label{Z}\sum_{\ell=1}^m C_{r,e_\ell} \tau \qdel{e_\ell} = \tau \partial_i .\end{equation}
We now proceed to estimate the remaining terms of equation $(\ref{MainEq})$.  Applying Lemma \ref{Vdet}(b) and Proposition \ref{LinAlg} we see that
\[ v\left((M_r^{-1})_{ij} \mathbf{y}^{p^{r+j-1}\alpha}\right) \geq p^{r + j - 1}\lambda(|\alpha| - 1)\]
for any $\alpha \in \mathbb{N}^d$, any $j=1,\cdots,m$ and any $r \geq 0$. Therefore
\[ v(C_{r\alpha}) \geq p^r \lambda (|\alpha| - 1)\]
for all $r \geq 0$ and $0 \neq \alpha \in\mathbb{N}^d$, and Lemma \ref{Zeta} shows that
\begin{equation}\label{A} \inf\limits_{|\alpha|\geq 2} \deg ( C_{r\alpha} \tau \qdel{\alpha} ) \geq \frac{1}{2} p^r \lambda \end{equation}
for all $r \gg 0$. Next, by Propositions \ref{ModPExp} and \ref{LinAlg}, 
\[\begin{array}{lll} \deg(\epsilon_r) &\geq & \inf\limits_{1\leq j \leq m} \inf\limits_{\alpha\neq 0} \{- p^{r + j - 1}\lambda  + p^{2(r+j-1)-r_1} + p^{r+j-1}\lambda (|\alpha| - 1) + \deg \tau \qdel{\alpha}\} \\
&\geq & \inf\limits_{\alpha \neq 0} \{ p^{2r-r_1} - p^{r+m-1}\lambda + p^r \lambda(|\alpha| - 1) + \deg \tau \qdel{\alpha}\} \\

& \geq & p^{2r-r_1} - p^{r+m-1} \lambda  + \inf\{\deg \tau \partial_1, \cdots , \deg\tau \partial_d, \frac{1}{2}p^r \lambda\} \end{array}\]
for all $r \gg 0$, again using Lemma \ref{Zeta}. Thus
\begin{equation}\label{B} \deg(\epsilon_r) \geq p^{2r-r_1} - p^{r+m-1} \lambda  + C\end{equation}
for some constant $C$, for all $r \gg 0$. This exhausts the terms of $(\ref{MainEq})$, unless $m < d$ and $\alpha = e_\ell$ for some $\ell > m$. In this case, let $\mu := \min\{v(y_\ell) - \lambda : m < \ell \leq d\}$; then $\mu > 0$ by Lemma \ref{Vdet}(b) and for any $\alpha = e_\ell$ with $\ell > m$, we have
\[ v\left((M_r^{-1})_{ij} y_\ell^{p^{r+j-1}}\right) \geq p^{r+j-1} (v(y_\ell) - \lambda) \geq p^r \mu\]
by Proposition \ref{LinAlg}. Therefore 
\begin{equation}\label{C} \deg( C_{r,e_\ell} \tau \qdel{e_\ell} ) \geq p^r \mu + \deg \tau - \inf\limits_{i>m} \omega(g_i) \end{equation}
for all $m < \ell \leq d$ and all $r \geq 0$.  It now follows from the estimates  $(\ref{Z})$, $(\ref{A})$, $(\ref{B})$ and $(\ref{C})$ that
\[\deg (\zeta_r - \tau\partial_i) \to \infty\quad\mbox{as}\quad r \to \infty\]
and the Theorem is proved. \qed

\vspace{0.2cm}
We are now just one Lemma away from our proof of Theorem \ref{MainB}.

\subsection{Lemma}\label{Idemps}
Let $H g^\nu  = H g_1^{\nu_1} \cdots g_m^{\nu_m} $ be a coset of $H$ in $G$ for some $0 \leq \nu_i < p$, and let $\delta_{Hg^\nu}$ be its characteristic function. Then inside the ring $\mathcal{B}(kG)$,
\[ \rho(\delta_{Hg^\nu }) = \prod\limits_{i=1}^m \left(1 - (\partial_i - \nu_i)^{p-1}\right)\]
is a polynomial in the quantized divided powers $\partial_1,\ldots,\partial_m$.
\begin{proof} Both sides are left $kH$-module endomorphisms of $kG$, so it is enough to check that they agree on elements of the form $g^\mu := g_1^{\mu_1}\cdots g_m^{\mu_m}$, $0 \leq \mu_i < p$. Since 
\[1 - (a-b)^{p-1} = \delta_{ab}\quad\mbox{for any}\quad a,b \in \Fp\]
by Fermat's little theorem and $\partial_i(g^\mu) = \mu_i g^\mu$ by Theorem \ref{qdel}(a), we have
\[\prod\limits_{i=1}^m \left(1 - (\partial_i - \nu_i)^{p-1}\right) ( g^\mu ) = g^\mu \prod\limits_{i=1}^m (1 - (\mu_i - \nu_i)^{p-1})  = g^\mu \prod\limits_{i=1}^m \delta_{\nu_i \mu_i} = \rho(\delta_{H g^\nu})(g^\mu)\]
as required. \end{proof}

\subsection{Proof of Theorem \ref{MainB}} 
\label{PfCntrlThm}Using Theorem \ref{RescVal}, choose a filtration $v$ on the Goldie quotient ring $Q$ of $kG/P$ such that the natural map $\tau : kG \to Q$ is bounded and such that $v_{|F}$ is a valuation on the centre $F$ of $Q$. Since $\Aut_Z^\omega(G)$ is torsion-free by Corollary \ref{Sat}(b), $\varphi^{p^r}$ is non-trivial for any $r \geq 0$. In view of Lemma \ref{lambda}, we may assume that $z := z(\tilde{\varphi})$ sends $\Sat(G)$ into $Z$ and that $v(\tau(z(g)-1)) \geq 1$ for all $g \in G$.

Let $x \in P$, let $i = 1,\ldots, m$ and define $\zeta^{(i)}_r : kG \to Q$ as in $\S \ref{Zeta}$. Then $\zeta^{(i)}_r(x) = 0$ for all $r \geq 0$ because $\varphi$ preserves $P$ by assumption. Since $\zeta^{(i)}_r$ converges to $\tau \partial_i$ uniformly on $kG$ by Theorem \ref{Zeta}, it converges pointwise: $\tau \partial_i(x) = \lim_{r \to \infty} \zeta^{(i)}_r(x) = 0$. But $\ker \tau = P$ by definition, so  $\partial_i(x) \in P$ and therefore $\partial_i(P) \subseteq P$ for all $i = 1,\ldots, m.$ Since $\rho(\delta_{Hg})$ is a polynomial in the $\partial_i$ for each $Hg \in G/H$ by Lemma \ref{Idemps}, $P$ is a $(C^\infty)^H$-submodule of $kG$. It now follows from \cite[Lemma 2.9, Definition 2.6 and Proposition 2.8]{Ard2011} that $P = (P\cap kH)kG$. \qed

\section{Applications}

\subsection{Roseblade's Theorem D}\label{RoseD}
Let $A$ be a free abelian pro-$p$ group of finite rank and let $\Gamma$ be a closed subgroup of $\Aut(A)$. Lemma \ref{ContSub} implies that $\Gamma$ acts on $P^\chi / P^\dag$ for any proper $\Gamma$-invariant ideal $P$ of $kA$, and we write $\Gamma_P$ for the image of $\Gamma$ in $\Aut(P^\chi / P^\dag)$.

We begin our list of applications of Theorem \ref{MainB} with an exact analogue of Roseblade's \cite[Theorem D]{Roseblade}. 

\begin{thm} For any $\Gamma$-invariant prime ideal $P$ of $kA$, $\Gamma_P$ is finite.
\end{thm}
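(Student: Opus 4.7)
The plan is to reduce, via the standard controller-and-quotient manoeuvre, to a setting where Theorem \ref{MainB} applies directly and forces any compatible automorphism to be trivial.

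First I would replace $A$ by the closed $\Gamma$-invariant subgroup $B := P^\chi$ (Lemma \ref{ContSub}(a)) and set $P_0 := P \cap kB$. Since $A$ is abelian, $kB$ is a commutative subring of $kA$, and $kB/P_0$ embeds into the domain $kA/P$, so $P_0$ is a $\Gamma$-invariant prime ideal of $kB$ with $P = P_0 kA$; the identity $(I \cap kI^\chi)^\chi = I^\chi$ recalled in \S\ref{ContSubIntro} gives $P_0^\chi = B$. Next I would factor out $P^\dag$: by Lemma \ref{ContSub}(b) we have $P^\dag \subseteq B$, by Lemma \ref{IsoSub} it is isolated in $A$ (hence in $B$), so $\bar B := B/P^\dag$ is again a free abelian pro-$p$ group of finite rank and the image $\bar{P_0}$ of $P_0$ in $k\bar B$ is a \emph{faithful} prime ideal. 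Both $B$ and $P^\dag$ are $\Gamma$-stable, so $\Gamma$ acts on $\bar B$ with image precisely $\Gamma_P$, and $\Gamma_P$ stabilises $\bar{P_0}$. The key bookkeeping claim at this stage is that $\bar{P_0}^\chi = \bar B$: any closed subgroup $H/P^\dag \le \bar B$ controlling $\bar{P_0}$ lifts to a closed subgroup $H$ of $B$ controlling $P_0$, which by $P_0^\chi = B$ forces $H = B$.

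Now the payoff: since $\bar B$ is abelian, $Z(\bar B) = \bar B$ and so $\Aut^\omega_Z(\bar B) = \Aut^\omega(\bar B)$. The subgroup $\Aut^\omega(\bar B)$ of $\Aut(\bar B) \cong \GL_d(\Zp)$ contains a principal congruence subgroup (for $p$ odd, it is exactly $\{\sigma \equiv 1 \bmod p\}$), hence has finite index in $\Aut(\bar B)$. It follows that $\Gamma_P \cap \Aut^\omega(\bar B)$ has finite index in $\Gamma_P$, and it suffices to show that this intersection is trivial. If $\varphi$ lies in it and $\varphi \neq 1$, then $\varphi$ is a non-trivial element of $\Aut^\omega_Z(\bar B)$ fixing the faithful prime $\bar{P_0}$, and Theorem \ref{MainB} produces a proper closed subgroup of $\bar B$ controlling $\bar{P_0}$ --- contradicting $\bar{P_0}^\chi = \bar B$. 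Hence $\varphi = 1$ and $\Gamma_P$ is finite.

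The main obstacle I expect is not the application of Theorem \ref{MainB} itself but the controller-subgroup bookkeeping in the reduction: checking cleanly that $P_0^\chi = B$ after restricting to $kB$, and that $\bar{P_0}^\chi = \bar B$ after passing modulo $P^\dag$. Both of these should be routine consequences of the compatibility of the operation $I \mapsto I^\chi$ with passage to closed subgroups and to isolated normal quotients, as established in \cite{Ard2011} and implicitly used throughout \S\ref{ContSubIntro}; the remaining ingredient, finite-indexedness of $\Aut^\omega(\bar B)$ in $\Aut(\bar B)$, is immediate from the congruence-subgroup description.
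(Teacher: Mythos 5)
Your proof is correct and takes essentially the same route as the paper: reduce to a faithful prime with full controller, identify $\Aut^\omega_Z(\bar B)=\Aut^\omega(\bar B)$ (as $\bar B$ is abelian) as a finite-index congruence subgroup of $\Aut(\bar B)\cong\GL_d(\Zp)$, and use Theorem \ref{MainB} to conclude the intersection $\Gamma_P\cap\Aut^\omega(\bar B)$ is trivial. The only difference is cosmetic: the paper first quotients by $P^\dag$ and then restricts to the controller, whereas you restrict first and then quotient; the paper also sidesteps your controller-bookkeeping worry by simply citing \cite[Theorem A]{Ard2011} to write $P=(P\cap kP^\chi)kA$ and appealing to commutativity of $kA$ to see $P\cap kP^\chi$ is prime.
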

\begin{proof} Since $P^\dag$ is isolated by Lemma \ref{IsoSub}(c), by replacing $A$ by $A / P^\dag$ and $P$ by its image in $k[[A/P^\dag]]$, we may assume that $P$ is faithful. Now $P = (P \cap kP^\chi)kA$ by \cite[Theorem A]{Ard2011} and $P \cap kP^\chi$ is still a $\Gamma$-invariant prime of $kP^\chi$ because $kA$ is commutative, so we may assume that $P^\chi = A$. 

Let $\omega : A \to \mathbb{Z} \cup \{\infty\}$ be the standard $p$-valuation given by $\omega\left(A^{p^n} \backslash A^{p^{n+1}}\right) = n + 1$; then $\varphi \in \Aut^\omega(A)$ if and only if for all $n \geq 0$, $\varphi(a)a^{-1} \in A^{p^{n+1}}$ whenever $a \in A^{p^n}$. Hence $\Aut^\omega(A)$ is the kernel of the natural map $\Aut(A) \to \Aut(A/A^p)$. But $\Gamma_P \cap \Aut^\omega(A)$ is trivial by Theorem \ref{MainB}, so $\Gamma_P$ embeds into the finite group $\Aut(A/A^p)$. 
\end{proof}

We say that $\Gamma$ acts \emph{rationally irreducibly} on $A$ if every non-trivial $\Gamma$-invariant subgroup is open in $A$. If $\mathcal{L}(\Gamma)$ denotes the $\Qp$-Lie algebra of $\Gamma$ then this is equivalent to $\mathcal{L}(A)$ being an irreducible $\mathcal{L}(\Gamma)$-module. We can now prove \cite[Conjecture 5.1]{ArdWad2009}. 

\begin{cor} Suppose that $[\Gamma, A] \leq A^p$. 
\be
\item Every faithful $\Gamma$-invariant prime ideal $P$ of $kA$ is controlled by $A^\Gamma$. 
\item If $\Gamma$ acts rationally irreducibly on $A$ then the zero ideal and the maximal ideal are the only $\Gamma$-invariant prime ideals of $kA$.
\ee
\end{cor}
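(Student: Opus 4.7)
The key observation for (a) is that the hypothesis $[\Gamma,A]\leq A^p$ says precisely that $\Gamma$ lies in the kernel of the natural map $\Aut(A)\to\Aut(A/A^p)$, which (as noted in the proof of the preceding theorem) coincides with $\Aut^\omega(A)$ for the standard $p$-valuation $\omega$. Thus every $\gamma\in\Gamma$ lies in $\Aut^\omega(A)$, and in fact $\deg_\omega(\gamma)\geq 1$.

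Let $P$ be a faithful $\Gamma$-invariant prime ideal of $kA$. By Lemma \ref{ContSub}(a), $P^\chi$ is a $\Gamma$-stable closed subgroup of $A$, and it inherits a $p$-valuation by restriction, becoming a complete $p$-valued group of finite rank. Since the infimum defining $\deg_\omega(\gamma|_{P^\chi})$ ranges over a smaller set than the one defining $\deg_\omega(\gamma)$, restriction sends $\Gamma$ into $\Aut^\omega(P^\chi)$. Using faithfulness $P^\dag=1$ to identify $\Aut(P^\chi/P^\dag)$ with $\Aut(P^\chi)$, the image of this restriction map is precisely $\Gamma_P$. The preceding theorem shows that $\Gamma_P$ is finite, while Corollary \ref{Sat}(b) shows $\Aut^\omega(P^\chi)$ is torsion-free; hence $\Gamma_P=1$. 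Therefore $\Gamma$ fixes $P^\chi$ pointwise, so $P^\chi\leq A^\Gamma$, and since $P^\chi$ controls $P$, so does $A^\Gamma$.

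For (b), let $P$ be a $\Gamma$-invariant prime ideal of $kA$. The subgroup $P^\dag$ is $\Gamma$-invariant (Lemma \ref{ContSub}(a)) and isolated (Lemma \ref{IsoSub}(c)), so rational irreducibility forces it to be either trivial or open. In the latter case $A/P^\dag$ is finite and torsion-free, hence trivial, so $P$ contains the augmentation ideal and $P=\mathfrak{m}$. Otherwise $P$ is faithful, and (a) gives $P^\chi\leq A^\Gamma$; since $A^\Gamma$ is itself $\Gamma$-invariant, rational irreducibility forces it to be trivial or open. If trivial, then $P^\chi=1$ and $P=(P\cap k)kA=0$. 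If open, then $\mathcal{L}(\Gamma)$ acts trivially on $\mathcal{L}(A^\Gamma)=\mathcal{L}(A)$, so irreducibility of the latter as an $\mathcal{L}(\Gamma)$-module forces $\dim_{\Qp}\mathcal{L}(A)\leq 1$; the algebra $kA$ is then $k$ or $k[[t]]$, whose only prime ideals are $0$ and the maximal ideal.

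The main technical point is the reduction in (a) to a torsion-free automorphism group on the closed subgroup $P^\chi$; once that is in place, the remaining steps are direct consequences of the preceding theorem, Corollary \ref{Sat}(b), and standard properties of the controller subgroup.
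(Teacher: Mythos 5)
Your proof is correct and follows essentially the same route as the paper. Part (a) is virtually identical: you reduce to $\Aut^\omega(P^\chi)$ via the restriction-of-$p$-valuation observation, invoke the finiteness of $\Gamma_P$ from the preceding theorem together with the torsion-freeness from Corollary \ref{Sat}(b), and conclude $\Gamma_P$ is trivial. In part (b), when $P^\dag\neq 1$, the paper argues directly that $A^{p^n}\leq P^\dag$ gives $(a-1)^{p^n}\in P$ and primeness gives $a-1\in P$, whereas you invoke Lemma \ref{IsoSub}(c) to show $P^\dag$ is isolated, so $A/P^\dag$ is finite and torsion-free, hence trivial; both reach $P=\mathfrak{m}$ by a short argument, and your variant leans slightly more on the machinery already set up while the paper's is self-contained. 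The remaining steps ($A^\Gamma$ trivial $\Rightarrow P=0$; $A^\Gamma$ open $\Rightarrow\dim_{\Qp}\mathcal{L}(A)\leq 1\Rightarrow kA\cong k$ or $k[[t]]$) match the paper's reasoning.
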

\begin{proof} (a) Let $\omega$ be the standard $p$-valuation on $A$; then $\Gamma$ is contained in $\Aut^\omega(A)$ by assumption. Let $\varphi \in \Gamma$; then $\varphi$ stabilizes $P^\chi$ and $\deg_{\omega_{|P^\chi}}(\varphi_{|P^\chi}) \geq \deg_\omega(\varphi)$ by definition. Hence $\Gamma_P$ is contained in $\Aut^{\omega_{|P^\chi}}(P^\chi)$, which is a torsion-free group by Corollary \ref{Sat}(b). So $\Gamma_P$ is trivial by the Theorem, whence $\Gamma$ fixes $P^\chi$ pointwise and $P^\chi \leq A^\Gamma$.

(b) Suppose first that $P^\dag$ is non-trivial. Then it contains $A^{p^n}$ for some $n$, being a $\Gamma$-invariant subgroup of $A$. But then $(a-1)^{p^n} \in P$ for all $a \in A$; since $P$ is prime we deduce that $P$ is the augmentation ideal of $kA$.  

Now suppose that $P^\dag = 1$. Then $P^\chi \leq A^\Gamma$ by part (a). Since $\Gamma$ acts rationally irreducibly on $A$, either $A^\Gamma$ is trivial or $A = \Zp$ and $\Gamma$ is trivial. In the first case $P^\chi = 1$ which forces $P = 0$, and in the second case $P = 0$ also because the only non-zero prime of $k\Zp \cong k[[t]]$ is the maximal ideal, which isn't faithful.
\end{proof}

Note the ``change of $p$-valuation" technique used in the proof of part (a). Corollary \ref{RoseD} is also of interest in connection with the mod-$p$ local Langlands programme: see \cite{HMS} for more details. In that paper, a special case of part (b) appears as \cite[Theorem 1.1]{HMS}.
\newpage
\subsection{Just infinite induced modules}\label{JustInf}
Recall the definition of \emph{just infinite modules} from $\S \ref{JustInfIntro}$.

\begin{thm} Let $A$ be a free abelian pro-$p$ group of finite rank, let $\Gamma$ be a closed subgroup of $\Aut(A)$ and let $G = A \rtimes \Gamma$ be the semi-direct product. If $[\Gamma, A] \leq A^p$ and $\Gamma$ acts rationally irreducibly on $A$, then the induced module $k \otimes_{k\Gamma} kG$ is just infinite.
\end{thm}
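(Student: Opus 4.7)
The strategy is to identify $M = k \otimes_{k\Gamma} kG$ with $kA$ as a right $kA$-module and then to analyse its $kG$-submodules using Corollary \ref{RoseD}(b). Since $G = A \rtimes \Gamma$, the semidirect product structure of $kG$ yields a canonical isomorphism of right $kA$-modules
\[M \;=\; k \otimes_{k\Gamma} kG \;\cong\; kA,\]
under which the residual $\Gamma$-action on $kA$ is by the ring automorphisms induced from the action of $\Gamma$ on $A$. Consequently, $kG$-submodules of $M$ correspond precisely to the $\Gamma$-invariant ideals of $kA$. As $A$ has positive rank, $kA$ is infinite dimensional over $k$, so it remains to prove that every non-zero $\Gamma$-invariant ideal $I$ of $kA$ has finite codimension.

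Fix such an $I$ and let $P_1, \ldots, P_n$ be the (finitely many) minimal primes of $kA$ above $I$. The group $\Gamma$ permutes these, and for each $i$ the stabilizer $\Gamma_i := \Stab_\Gamma(P_i)$ is a closed subgroup of $\Gamma$ of finite index, hence open in $\Gamma$. Open subgroups share the same $\Qp$-Lie algebra, so $\mathcal{L}(A)$ remains an irreducible $\mathcal{L}(\Gamma_i)$-module; therefore $\Gamma_i$ still acts rationally irreducibly on $A$. Clearly $[\Gamma_i, A] \leq [\Gamma, A] \leq A^p$.

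Now apply Corollary \ref{RoseD}(b) to $\Gamma_i$ acting on $A$ and to the $\Gamma_i$-invariant prime $P_i$: each $P_i$ is either the zero ideal or the augmentation ideal $\mathfrak{m}$ of $kA$. The first alternative is excluded because $0 \neq I \subseteq P_i$, so $P_i = \mathfrak{m}$ for every $i$. Hence $\sqrt{I} = \mathfrak{m}$, and Noetherianity yields $\mathfrak{m}^k \subseteq I$ for some $k \geq 1$. Since $kA$ is a complete commutative local Noetherian $k$-algebra with residue field $k$, the quotient $kA/\mathfrak{m}^k$ is finite-dimensional over $k$, and therefore so is $kA/I$, as required.

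The main obstacle is essentially notational: the identification $M \cong kA$, and the verification that the $\Gamma_i$ inherit from $\Gamma$ the hypotheses needed to apply Corollary \ref{RoseD}(b). The deep work has already been absorbed into that corollary (and hence into Theorem \ref{MainB}); no further module-theoretic analysis of $kG$ is required.
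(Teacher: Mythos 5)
Your proof is correct and takes essentially the same route as the paper's: identify $M$ with $kA$ as a right $kA$-module, pass to the $\Gamma$-invariant ideal $I$ of $kA$ corresponding to a nonzero submodule, observe that stabilizers of the minimal primes over $I$ are open and hence still act rationally irreducibly, invoke Corollary \ref{RoseD}(b) to get $\sqrt{I}=\mathfrak{m}$, and conclude $I$ has finite codimension. The only (cosmetic) difference is that you phrase the correspondence between $kG$-submodules of $M$ and $\Gamma$-invariant ideals of $kA$ as a bijection, whereas the paper only uses and verifies the direction it needs.
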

\begin{proof} Let $\pi : kG \twoheadrightarrow M:=k \otimes_{k\Gamma} kG$ be the map $x \mapsto 1 \otimes x$ and let $N$ be a non-zero $kG$-submodule of $M$. Then $\pi^{-1}(N)$ is a right ideal of $kG$ since $\pi$ is right $kG$-linear. Let $x \in \pi^{-1}(N)$ and $\gamma \in \Gamma$; then
\[ \pi(\gamma x \gamma^{-1}) = 1 \otimes \gamma x \gamma^{-1} = 1.\gamma \otimes x \gamma^{-1} = 1 \otimes x \gamma^{-1} = \pi(x) \gamma^{-1} \in N,\]
so $\pi^{-1}(N)$ is a $\Gamma$-invariant right ideal of $kG$. Since $A$ is stable under conjugation by $\Gamma$ inside $G$, $I = \pi^{-1}(N) \cap kA$ is a $\Gamma$-invariant right ideal of $kA$ and $I$ is non-zero because the restriction of $\pi$ to $kA$ is bijective by construction. Furthermore $I$ is two-sided since $A$ is abelian.

Let $P$ be a minimal prime ideal above $I$; since $I$ is $\Gamma$-invariant, $P$ is $\Gamma$-orbital so its stabilizer $S$ has finite index in $\Gamma$. Therefore $\mathcal{L}(S) = \mathcal{L}(\Gamma)$ so $S$ still acts rationally irreducibly on $A$ which forces $P$ to be the maximal ideal $\mathfrak{m}$ of $kA$ by Corollary \ref{RoseD}(b). Hence the prime radical $\sqrt{I}$ of $I$ is equal to $\mathfrak{m}$ and therefore $I$ contains some power of $\mathfrak{m}$ which has finite codimension in $kA$. The result follows since $\pi$ induces a $k$-linear bijection $kA/I \stackrel{\cong}{\longrightarrow} M/N$.
\end{proof}
We now present another example of a just infinite induced module, arising from split semisimple groups.

\subsection{Proof of Theorem \ref{MainD}} \label{ParaVerma}
Let $\mathfrak{g} = \mathfrak{sl}_n(\Qp) = \mathcal{L}(G)$, let $\mathfrak{p}_-$ be the opposite parabolic to $\mathfrak{p}$ and let $\mathfrak{a}$ be its nilradical. Note that $\mathfrak{a}$ is abelian because $\mathfrak{g} = \mathfrak{sl}_n(\Qp)$ and $\mathfrak{p}$ is a maximal parabolic. Following \cite[\S II.1.8]{Jantzen} we call $\mathfrak{l} := \mathfrak{p} \cap \mathfrak{p}_-$ the \emph{standard Levi factor} of $\mathfrak{p}$. Then we have a semi-direct product decomposition
\[\mathfrak{p}_- = \mathfrak{a} \rtimes \mathfrak{l} \]
and a vector space decomposition
\[\mathfrak{g} = \mathfrak{a} \oplus \mathfrak{p}.\]
If $P_-$, $A$ and $L$ denote the corresponding uniform subgroups of $G$, these vector space decompositions imply that $P P_- = G$ and and $P_- \cap P = L$. Therefore there is an isomorphism 
\[ L \backslash P_- \stackrel{\cong}{\longrightarrow} P \backslash G\]
of right $P_-$-spaces, which induces an isomorphism
\[ k \otimes_{kL} kP_- \stackrel{\cong}{\longrightarrow} k \otimes_{kP} kG\]
of right $kP_-$-modules. Since every $kG$-submodule of $k\otimes_{kP} kG$ is also a $kP_-$-submodule, it is now enough to prove that $k \otimes_{kL} kP_-$ is just infinite as a $kP_-$-module. 

Consider the adjoint action of $\mathfrak{l}$ on $\mathfrak{a}$. Using the natural representation of $\mathfrak{g} = \mathfrak{sl}_n(\Qp)$ and the assumption that $\mathfrak{p}$ is a \emph{maximal} parabolic, we can identify $\mathfrak{l}$ with the reductive Lie subalgebra $\left(\mathfrak{gl}_m(\Qp) \times \mathfrak{gl}_{n-m}(\Qp)\right) \cap \mathfrak{g}$ of $\mathfrak{g}$ for some $1 \leq m \leq n-1$. Then $\mathfrak{l}$ contains the block diagonal subalgebra $\mathfrak{d} := \mathfrak{sl}_m(\Qp) \times \mathfrak{sl}_{n-m}(\Qp)$. If $V_r$ denotes the natural irreducible representation of $\mathfrak{sl}_r(\Qp)$, then $\mathfrak{a}$ is isomorphic to $\Hom_{\Qp}(V_{n-m}, V_m) \cong V_m \otimes V_{n-m}^\ast $ as a $\mathfrak{d}$-module and is therefore irreducible as such. Therefore the adjoint representation of $\mathfrak{l}$ on $\mathfrak{a}$ is also irreducible: $\mathfrak{a}$ is a minimal abelian ideal of $\mathfrak{p}_-$.

Now $P_- = A \rtimes L$ is a semi-direct product, and $L$ acts rationally irreducibly on $A$ by the above. Because $L$ normalizes $A$, $G$ is uniform and $A$ is isolated in $G$,  $[L,A] \leq A \cap [G,G] \leq A^p$ inside $P_-$, so the $kP_-$-module $k \otimes_{kL} kP_-$ is just infinite by Theorem \ref{JustInf}.\qed
\subsection{Zalesskii's Theorem}\label{Zal}
\begin{lem} Let $G$ be a complete $p$-valued group of finite rank. Then
\be \item $Z(G)$ is isolated in $G$, and
\item $Z(U) = Z(G) \cap U$ for any open subgroup $U$ of $G$. 
\ee \end{lem}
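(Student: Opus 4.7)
The plan is to transport both statements to the Lie algebra side via Lazard's equivalence of categories ($\S\ref{LazEq}$). I would embed $G$ into its saturation $\tilde{G} := \Sat(G)$, which is $p$-saturated, and consider the associated saturated $\Zp$-Lie algebra $\mathfrak{g} := \log(\tilde{G})$. Since $\tilde{G}$ has finite rank, $\mathfrak{g}$ is a finitely generated free $\Zp$-module, hence $\Zp$-torsion-free. The key translation principle I will exploit is
\[ gh = hg \text{ in } \tilde{G} \iff [\log g, \log h] = 0 \text{ in } \mathfrak{g},\]
which follows from the commutator formula of $\S\ref{LazEq}$ in one direction and from the Baker--Campbell--Hausdorff identity $\Phi(u,v) = u + v$ (valid for commuting $u,v \in \mathfrak{g}$) combined with the bijectivity of $\exp$ in the other.

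For part (a), I would prove directly the stronger statement that any $g \in G$ with $g^m \in Z(G)$ for some $m \geq 1$ lies in $Z(G)$. Fixing $h \in G$ arbitrary, the element $g^m$ commutes with $h$ in $\tilde{G}$, so the translation principle yields
\[ 0 = [\log g^m, \log h] = m[\log g, \log h]\]
inside $\mathfrak{g}$. Torsion-freeness of $\mathfrak{g}$ forces $[\log g, \log h] = 0$, and the reverse direction of the principle then recovers $gh = hg$. Since $h$ was arbitrary, $g \in Z(G)$, so $G/Z(G)$ is torsion-free and $Z(G)$ is isolated.

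For part (b), the inclusion $Z(G) \cap U \subseteq Z(U)$ is immediate, so the content lies in the reverse inclusion. Let $u \in Z(U)$ and $g \in G$. Since $G$ is pro-$p$ and $U$ is open, there exists $n \geq 0$ with $g^{p^n} \in U$; then $u$ commutes with $g^{p^n}$ in $\tilde{G}$, giving $p^n[\log u, \log g] = 0$ in $\mathfrak{g}$, so $[\log u, \log g] = 0$ by torsion-freeness, so $u$ commutes with $g$ in $\tilde{G}$ and hence in $G$. Therefore $u \in Z(G) \cap U$, as required.

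No step is genuinely deep; the only place that requires any care is the translation principle linking commutators in $\tilde{G}$ with Lie brackets in $\mathfrak{g}$. The forward direction ($gh = hg \Rightarrow [\log g, \log h] = 0$) is immediate from the explicit limit formula for the bracket in $\S\ref{LazEq}$, while the reverse direction relies on the BCH series collapsing to $u + v$ when $[u,v] = 0$. Once this is cleanly recorded, both parts reduce to the observation that multiplication by a positive integer is injective on the $\Zp$-torsion-free module $\mathfrak{g}$.
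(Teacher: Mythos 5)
Your proof is correct but takes a genuinely different route from the paper's. The paper's argument is entirely group-theoretic: it invokes \cite[Proposition III.2.1.4]{Laz1965}, which gives $\omega(a^{p^n} b^{-p^n}) = \omega(a b^{-1}) + n$ in any $p$-valued group, hence injectivity of the $p^n$-th power map. For (a) one applies this to $a = x^{-1}gx$, $b = g$ (using that $g^{p^n}$ is central), and for (b) to $a = g^{-1}xg$, $b = x$ (using that $g$ centralizes the open subgroup $U \supseteq G^{p^n}$). You instead pass to $\Sat(G)$ and transport everything to $\mathfrak{g} = \log(\Sat(G))$, replacing ``$p^n$-th power is injective'' with ``multiplication by $p^n$ (or $m$) is injective on the $\Zp$-torsion-free module $\mathfrak{g}$,'' and replacing conjugation arguments with the equivalence $gh = hg \Leftrightarrow [\log g, \log h] = 0$. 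Your translation principle is sound in both directions: the forward direction is immediate from the limit formula for the bracket, and the reverse direction follows because every homogeneous term of the BCH series of degree $\geq 2$ is a $\mathbb{Q}$-linear combination of left-normed Lie words starting with $[u,v]$ or $[v,u]$, all of which vanish when $[u,v] = 0$. Both approaches ultimately rest on the same underlying fact (injectivity of $p$-power/multiplication-by-$p$), but the paper's is more economical, avoiding the saturation $\Sat(G)$ and the BCH formula entirely; yours is perhaps more conceptually transparent and fits the transport-of-structure philosophy used elsewhere in $\S$\ref{LazEq}--\ref{LogAut}. One small remark: in (b) you only need, for each fixed $g \in G$, some $n$ with $g^{p^n} \in U$, which holds since $g^{p^n} \to 1$ in the pro-$p$ topology; the paper's stronger claim that $G^{p^n} \subseteq U$ for a uniform $n$ is also true but not needed for your argument.
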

\begin{proof} (a) Let $g \in G$ be such that $g^{p^n} \in Z(G)$ for some $n$. Then $(x^{-1}gx)^{p^n} = g^{p^n}$ for all $x \in G$ and it follows from \cite[Proposition III.2.1.4]{Laz1965} that $g \in Z(G)$. 

(b) Let $g \in Z(U)$. Since $U$ contains $G^{p^n}$ for some large enough $n$, $(g^{-1}xg)^{p^n} = x^{p^n}$ for all $x \in G$. As in part (a), we deduce that $g \in Z(G)$ so $Z(U) \subseteq Z(G) \cap U$. The reverse inclusion is trivial.
\end{proof}
We can finally give a proof of our analogue of Zalesskii's Theorem.

\begin{thm} Let $G$ be a nilpotent complete $p$-valued group of finite rank with centre $Z$. Then every faithful prime ideal $P$ of $kG$ is controlled by $Z$.
\end{thm}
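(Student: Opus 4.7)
The strategy is to reduce first to the non-splitting case via Theorem \ref{GammaOrb}, and then to exhibit a non-trivial automorphism in $\Aut^\omega_{Z(H)}(H)$ stabilising a suitable restriction of $P$, to which Theorem \ref{MainB} will apply, yielding a contradiction unless $P^\chi \le Z(G)$.

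First I would note that $G$ is orbitally sound by Proposition \ref{GammaOrb}, torsion-free because the $p$-valuation axioms force $\omega(g^{p^m}) = \omega(g) + m$ to stay finite whenever $g\neq 1$, and pro-$p$ and $p$-adic analytic because $G$ has finite rank. Theorem \ref{GammaOrb} then reduces the claim to showing that every faithful virtually non-splitting right ideal $I$ of $kG$ is controlled by $Z$. Such an $I$ has the form $QkG$ where $Q$ is a non-splitting prime of $kU$ for some open subgroup $U\le G$, and Lemma \ref{ComplGpRng}(b) gives $I\cap kU = Q$; from this one verifies that $Q$ is faithful in $kU$ and that the controller subgroups agree, $Q^\chi = I^\chi$. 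Since the preceding Lemma yields $Z(U) = Z(G)\cap U$, it suffices to prove $Q^\chi \le Z(U)$. Accordingly, we may assume from here on that $P$ itself is a faithful non-splitting prime of $kG$ and aim to show $P^\chi \le Z(G)$.

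Set $H := P^\chi$, a closed normal subgroup of $G$. By Proposition \ref{NonSplit}, $P\cap kH$ is a prime ideal of $kH$; it is faithful and satisfies $(P\cap kH)^\chi = H$ by the standard properties of $(-)^\chi$ from \S\ref{ContSubIntro}. Suppose for a contradiction that $H\not\le Z(G)$. My goal is to produce a non-trivial $\varphi\in\Aut^\omega_{Z(H)}(H)$ stabilising $P\cap kH$, for then Theorem \ref{MainB} will force $P\cap kH$ to be controlled by a proper closed subgroup of $H$, contradicting $(P\cap kH)^\chi = H$. If $H$ is non-abelian, I would use the nilpotency of $H$ to pick $h\in Z_2(H)\setminus Z(H)$; then $[h,x]\in Z(H)$ for all $x\in H$, so conjugation by $h$ lies in $\Aut_{Z(H)}(H)$. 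By the preceding Lemma applied to $H$, $Z(H)$ is isolated in $H$, so $h^{p^n}\notin Z(H)$ for any $n\ge 0$, making $\varphi_{h^{p^n}}$ non-trivial; choosing $n$ so that $\omega(h)+n > 1/(p-1)$ places $\varphi_{h^{p^n}}$ in $\Aut^\omega_{Z(H)}(H)$, and it stabilises $P\cap kH$ because it is inner.

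If instead $H$ is abelian, then $\Aut^\omega_{Z(H)}(H) = \Aut^\omega(H)$, and since $H\not\le Z(G)$ I can choose $g\in G\setminus C_G(H)$; conjugation by $g$ stabilises both $H$ (as $H\triangleleft G$) and $P$ (as $P$ is two-sided), and $\deg_\omega(\varphi_g|_H) \ge \omega(g)$ by the commutator axiom of a $p$-valuation. Taking $n$ large so that $\omega(g)+n > 1/(p-1)$ yields $\varphi_{g^{p^n}}|_H\in\Aut^\omega(H)$; non-triviality amounts to $g^{p^n}\notin C_G(H)$, which follows from $g\notin C_G(H)$ and the uniqueness of $p^n$-th roots \cite[Proposition III.2.1.4]{Laz1965} (if $(h^{-1}gh)^{p^n} = g^{p^n}$ for some $h\in H$, then already $h^{-1}gh=g$). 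Theorem \ref{MainB} again provides the required contradiction. The main obstacle is precisely the reduction to the non-splitting setting: without it $P\cap kP^\chi$ may not even be prime, so there would be no ring in which to apply Theorem \ref{MainB} after restriction; once the reduction is secured by the orbital-soundness machinery of \S\ref{Gamma}, the case analysis and the trick of passing to $p^n$-th powers are routine.
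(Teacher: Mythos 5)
Your proof is correct and follows essentially the same route as the paper: reduce to a non-splitting faithful prime via Theorem \ref{GammaOrb} and the orbital soundness of $G$, then with $H = P^\chi$ apply Theorem \ref{MainB} to a conjugation automorphism in $\Aut^\omega_{Z(H)}(H)$ preserving $P\cap kH$. The paper packages the case analysis through the single group $K = C_G(H/Z(H))$ and a short group-theoretic coda, whereas you unroll it into the non-abelian case (conjugate by $h\in Z_2(H)\setminus Z(H)$) and the abelian case (conjugate by $g\in G\setminus C_G(H)$); one cosmetic remark is that passing to $p^n$-th powers is superfluous, since the $p$-valuation axiom already gives $\omega(x) > 1/(p-1)$ for every $x\neq 1$, so $\deg_\omega$ of the relevant inner automorphism exceeds $1/(p-1)$ with $n=0$.
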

\begin{proof} By Theorem \ref{GammaOrb} applied with $A = Z$, it is enough to show that every faithful virtually non-splitting right ideal $I$ of $kG$ is controlled by $Z$. Now $I = (I \cap kU)kG$ for some open subgroup $U$ of $G$ with $I \cap kU$ a non-splitting prime of $kU$. Since $Z$ contains $Z(U)$ by Lemma \ref{Zal}(b), it is enough to prove that $I \cap kU$ is controlled by $Z(U)$. So by replacing $G$ by $U$ and $P$ by $I \cap kU$ we may further assume that our faithful prime ideal $P$ is \emph{non-splitting}.

Let $H := P^\chi$ and define $K = C_G(H/Z(H)) = \{ g \in G : [g,H] \leq Z(H)\}$ to be the centralizer in $G$ of $H / Z(H)$. Then $K$ contains the centralizer $C_G(H)$ of $H$ in $G$ and $\Gamma := K/C_G(H)$ acts faithfully on $H$ by group automorphisms; since $K$ is $p$-valued and acts on $H$ by automorphisms which are trivial mod $Z(H)$, we will identify $\Gamma$ with a subgroup of $\Aut^\omega_{Z(H)}(H)$. 

Let $Q = P \cap kH$; then $P = QkG$ by \cite[Theorem A]{Ard2011}. Since $H$ is the controller subgroup of $P$, we see that $Q$ cannot be controlled by any proper subgroup of $H$. On the other hand, since $P$ is non-splitting, $Q$ is a prime ideal of $kH$ by Proposition \ref{NonSplit}. Since $\Gamma$ preserves $Q$, Theorem \ref{MainB} implies that $\Gamma$ must be trivial and therefore $K = C_G(H)$.

Now the definition of $K$ shows that $K \cap H$ is the second term $Z_2(H)$ in the upper central series of $H$. On the other hand, since $K = C_G(H)$ this intersection is just the centre of $H$. Because $H$ is nilpotent, it must actually be abelian. Inspecting the definition of $K$ again gives $K = N_G(H) = G$, but also $K = C_G(H)$ and therefore $G$ centralizes $H$. In other words, $H$ is central in $G$ and $P$ is controlled by $Z$.
\end{proof}

\subsection{A completed crossed product}\label{CompCross}
Our final application of Theorem \ref{MainB} is that when $G$ is nilpotent, every prime ideal $P$ of $kG$ is \emph{completely prime}, that is, $kG/P$ has no zero-divisors. This will require a little preparation.

\begin{lem}Let $G$ be a complete $p$-valued group of finite rank and let $N$ be a closed isolated normal subgroup of $G$. Then we can find $c_1,\ldots, c_e \in kG$ such that 
\be\item every element of $kG$ can be written uniquely as a non-commutative formal power series in $c_1,\ldots, c_e$ with coefficients in $kN$:
\[ kG = \left\{\sum_{\gamma \in\mathbb{N}^e}r_\gamma \C{\gamma} : r_\gamma \in kN \mb{for all} \gamma \in \mathbb{N}^e\right\},\]
\item the valuation $w$ on $kG$ satisfies
\[w\left(\sum_{\gamma \in \mathbb{N}^e} r_\gamma \C{\gamma} \right) = \inf_{\gamma\in \mathbb{N}^e} w(r_\gamma) + w(\C{\gamma}).\]
\ee\end{lem}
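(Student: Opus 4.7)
The plan is to reduce the statement to Corollary \ref{valkG} by choosing an ordered basis of $G$ extending an ordered basis of $N$, and then regrouping the resulting power series expansion.

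First, apply Lemma \ref{OrdBas} to the closed subgroup $N \leq G$: this yields an ordered basis $\{g_1, \ldots, g_d\}$ of $G$ and a non-negative sequence $n_1 \leq \cdots \leq n_{d-e}$ such that $\{g_1^{p^{n_1}}, \ldots, g_{d-e}^{p^{n_{d-e}}}\}$ is an ordered basis of $N$, where $e := \dim G - \dim N$. Isolation of $N$ forces all $n_i = 0$: if $n_1 > 0$ then $g_1^{p^{n_1}} \in N$ together with isolation gives $g_1 \in N$, yielding the contradictory inequality $\omega(g_1) \geq \min_i \omega(g_i^{p^{n_i}}) = \omega(g_1) + n_1$, while an inductive graded-level argument using the $\Fp[\pi]$-basis property of $\gr G$ handles $n_k$ for $k > 1$. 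Set $c_i := g_{d-e+i} - 1 \in kG$ for $i = 1,\ldots,e$ and $a_j := g_j - 1 \in kN$ for $j = 1,\ldots,d-e$, so that $\{a_1,\ldots,a_{d-e},c_1,\ldots,c_e\}$ is the sequence $\{b_1,\ldots,b_d\}$ of Corollary \ref{valkG} applied to $G$. Because the $N$-basis elements come first in the product order, the monomial $\mathbf{b}^\alpha$ factors as $\mathbf{a}^{\alpha'} \mathbf{c}^\gamma$ whenever $\alpha = (\alpha', \gamma) \in \mathbb{N}^{d-e} \times \mathbb{N}^e$.

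Given $x \in kG$, write $x = \sum_{(\alpha', \gamma)} \lambda_{(\alpha',\gamma)} \mathbf{a}^{\alpha'} \mathbf{c}^\gamma$ using Corollary \ref{valkG}(a) and define
\[
r_\gamma := \sum_{\alpha' \in \mathbb{N}^{d-e}} \lambda_{(\alpha', \gamma)} \mathbf{a}^{\alpha'} \in kN.
\]
Lemma \ref{valkG}(c) gives $w(\mathbf{a}^{\alpha'} \mathbf{c}^\gamma) = w(\mathbf{a}^{\alpha'}) + w(\mathbf{c}^\gamma)$, so the convergence of the outer sum forces $w(\mathbf{a}^{\alpha'}) \to \infty$ over the nonzero-coefficient $\alpha'$ for each fixed $\gamma$; hence $r_\gamma$ converges in $kN$ by Corollary \ref{valkG}(a) applied to $N$. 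Moreover, for any $M > 0$ only finitely many pairs $(\alpha', \gamma)$ with $\lambda_{(\alpha',\gamma)} \neq 0$ satisfy $w(\mathbf{a}^{\alpha'} \mathbf{c}^\gamma) < M$, so only finitely many $\gamma$ can project from such pairs; for every other $\gamma$ one has $w(r_\gamma) + w(\mathbf{c}^\gamma) \geq M$. Thus $\sum_\gamma r_\gamma \mathbf{c}^\gamma$ converges in $kG$ and equals $x$ by rearranging the convergent double sum.

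For uniqueness, any relation $\sum_\gamma r_\gamma \mathbf{c}^\gamma = 0$ with $r_\gamma = \sum_{\alpha'} \mu^{(\gamma)}_{\alpha'} \mathbf{a}^{\alpha'}$ expands to the vanishing double sum $\sum_{(\alpha',\gamma)} \mu^{(\gamma)}_{\alpha'} \mathbf{a}^{\alpha'} \mathbf{c}^\gamma = 0$, forcing every $\mu^{(\gamma)}_{\alpha'} = 0$ by Corollary \ref{valkG}(a) and hence every $r_\gamma = 0$. Part (b) then follows from Corollary \ref{valkG}(b):
\[
w(x) = \inf_{(\alpha',\gamma)} \bigl\{ w(\mathbf{a}^{\alpha'}) + w(\mathbf{c}^\gamma) : \lambda_{(\alpha', \gamma)} \neq 0 \bigr\} = \inf_\gamma \bigl( w(r_\gamma) + w(\mathbf{c}^\gamma) \bigr).
\]
The main obstacle I anticipate is the purity step --- verifying that isolation of $N$ permits $n_i = 0$ in Lemma \ref{OrdBas}; everything else is a formal rearrangement of convergent sums justified by $w$ being a valuation on both $kG$ and $kN$.
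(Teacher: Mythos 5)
Your overall strategy is a valid alternative to the paper's. You regroup the power-series expansion from Corollary \ref{valkG} by hand, whereas the paper establishes the graded-ring isomorphism $\gr^w kG \cong (\gr^w kN)[Y_1,\ldots,Y_e]$ and then invokes Lazard's Th\'eor\`eme I.2.3.17 to lift a free graded basis to a topological basis; the regrouping and uniqueness arguments you give are essentially the content of that cited theorem, unfolded. Both routes reduce to the same key claim: all the exponents $n_i$ in Lemma \ref{OrdBas} vanish.

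However, your argument for that claim has a genuine gap. The inequality you write, $\omega(g_1) \ge \min_i \omega(g_i^{p^{n_i}}) = \omega(g_1) + n_1$, relies on the identity $\min_i(\omega(g_i) + n_i) = \omega(g_1) + n_1$, which is unjustified and generally false: the $n_i$ are sorted increasingly, but nothing orders the $\omega(g_i)$, and a minimum of sums is not a sum of minima. Moreover the ``inductive graded-level argument'' you invoke for $n_k$ with $k > 1$ is not actually given, so this case is simply missing. The fix in the spirit of your group-level approach: for each $k$, if $n_k > 0$ then $g_k^{p^{n_k}} \in N$ and isolation forces $g_k \in N$; writing $g_k = \prod_i (g_i^{p^{n_i}})^{\mu_i}$ in the ordered basis of $N$ and comparing with the unique expression $g_k = g_k$ in the ordered basis $\{g_1,\ldots,g_d\}$ of $G$ gives $p^{n_k}\mu_k = 1$ in $\Zp$, which forces $n_k = 0$. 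The paper instead argues at the graded level: $G/N$ torsion-free implies (via \cite[IV.3.4.2]{Laz1965}) that the quotient filtration on $G/N$ is a $p$-valuation, hence $\gr(G/N)$ has no $\pi$-torsion, and since $\gr G / \gr N$ embeds into $\gr(G/N)$ (by \cite[II.1.1.8.3]{Laz1965}), no $n_i$ can be positive. Either version works; yours as written does not.
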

\begin{proof} Let $d = \dim G$ and $e = \dim(G/N)$. By Lemma \ref{OrdBas}, we can choose an ordered basis $\{g_1,\ldots, g_d\}$ for $G$ such that $\{g_1^{p^{n_1}},\ldots, g_{d-e}^{p^{n_{d-e}}}\}$ is an ordered basis for $N$ for some integers $n_1 \leq n_2 \leq \cdots \leq n_{d-e}$. Since $G/N$ is torsion-free, \cite[IV.3.4.2]{Laz1965} implies that the quotient filtration on $G/N$ is a $p$-valuation, so $\gr (G/N)$ has no $\pi$-torsion. Since $\gr G / \gr N$ naturally embeds into $\gr(G/N)$ by \cite[II.1.1.8.3]{Laz1965}, $n_1=n_2= \cdots = n_{d-e} = 0$ and $\{g_1,\ldots, g_{d-e}\}$ is an ordered basis for $N$.

Let $c_i = g_{d-e+i} - 1 \in kG$ for all $i = 1,\ldots, e$ and let $Y_i = \gr^w(c_i)$ be the principal symbol of $c_i$ in $\gr^w kG$. Then Lemma \ref{valkG}(a) implies that
\[\gr^wkG \cong (\gr^w kN)[Y_1,\ldots, Y_e]\]
so $\{\gr^w \C{\gamma} : \gamma \in \mathbb{N}^e\}$ is a free generating set for $\gr^wkG$ as a $\gr^wkN$-module. The result now follows from \cite[Th\'eor\`eme I.2.3.17]{Laz1965}.
\end{proof}
Thus $kG \cong kN[[c_1,\ldots,c_e]]$ as a $kN$-module. Because of this result, it is tempting to think of $kG$ as a kind of ``completed crossed product" of $kN$ with $G/N$. But we will not develop this intuition any further.

\subsection{Theorem}
\label{CompPrime}
Let $G$ be a complete $p$-valued group of finite rank with centre $Z$, and let $P$ be a prime ideal of $kZ$. 
\be \item $PkG$ is completely prime.
\item If $P$ is faithful, then so is $PkG$. \ee
\begin{proof} (a) Let $\tau : kZ \twoheadrightarrow kZ/P$ be the natural projection and let $Q$ be the field of fractions of $kZ/P$. Applying Theorem \ref{RescVal} to the group $Z$ and the prime ideal $P$ of $kZ$, we obtain a valuation $v : Q \to \mathbb{R}_\infty$ such that $v \tau(x) \geq w(x)$ for all $x \in kZ$. This valuation is separated because $Q$ is a field.

Since $Z$ is isolated in $G$ by Lemma \ref{Zal}(a), we can apply Lemma \ref{CompCross} to find $c_1,\ldots, c_e \in kG$ such that $kG \cong kZ[[c_1,\ldots,c_e]]$ as a $kZ$-module. Now define a function $f : kG \to \mathbb{R}_\infty$ by the rule
\[f \left( \sum_{\gamma \in \mathbb{N}^e} r_\gamma \C{\gamma} \right) :=  \inf\limits_{\gamma \in \mathbb{N}^e} v \tau(r_\gamma) + \wth{\gamma}. \]
We claim that $f$ is a ring filtration on $kG$. To see this, consider the product $\C{\alpha} \C{\beta}$ inside $kG$ for $\alpha, \beta \in \mathbb{N}^e$; using Lemma \ref{CompCross}(a) we can rewrite it as 
\[ \C{\alpha} \C{\beta} = \sum_{\gamma \in \mathbb{N}^e} \eta^{\alpha\beta}_\gamma \C{\gamma}\]
for some $\eta^{\alpha\beta}_\gamma \in kZ$, and these coefficients satisfy
\[ w(\C{\alpha} \C{\beta}) = \inf\limits_{\gamma \in \mathbb{N}^e} w(\eta^{\alpha\beta}_\gamma) + \wth{\gamma} \]
by Lemma \ref{CompCross}(b). Now
\[ w(\C{\alpha} \C{\beta}) = \wth{\alpha} + \wth{\beta} \mb{for all} \alpha,\beta\in \mathbb{N}^e \]
because $w$ is a valuation on $kG$. Since $v\tau(x) \geq w(x)$ for all $x \in kZ$, we see that
\begin{equation}\label{EtaEst} v\tau(\eta^{\alpha\beta}_\gamma) + \wth{\gamma} \geq \wth{\alpha} + \wth{\beta}\mb{for all} \alpha,\beta,\gamma \in \mathbb{N}^e. \end{equation}
Let $r = \sum_{\alpha\in \mathbb{N}^e} r_\alpha\C{\alpha} \in kG$ and $s = \sum_{\beta \in \mathbb{N}^e} s_\beta \C{\beta} \in kG$; then
\[r s = \sum_{\gamma \in \mathbb{N}^e} \left(\sum_{\alpha,\beta \in \mathbb{N}^e} r_\alpha s_\beta \eta^{\alpha\beta}_\gamma  \right) \C{\gamma}.\]
Applying the definition of $f$ and equation $(\ref{EtaEst})$, we obtain
\[ \begin{array}{lll} f(r s) &=& \inf\limits_{\gamma \in \mathbb{N}^e} v \tau \left(\sum_{\alpha,\beta \in \mathbb{N}^e} r_\alpha s_\beta \eta^{\alpha\beta}_\gamma  \right) + \wth{\gamma} \\ 
&\geq& \inf\limits_{\gamma \in \mathbb{N}^e} \left(\inf\limits_{\alpha,\beta \in \mathbb{N}^e} v \tau (r_\alpha) + v\tau(s_\beta) + v\tau(\eta^{\alpha\beta}_\gamma)\right) + \wth{\gamma} \\
& \geq & \inf\limits_{\alpha,\beta \in \mathbb{N}^e} v\tau(r_\alpha) + v\tau(s_\beta) + \wth{\alpha} + \wth{\beta} \\
& \geq & f(r) + f(s).\end{array}\]
The inequality $f(r + s) \geq \min \{f(r), f(s)\}$ is easy to verify, so $f$ is indeed a ring filtration on $kG$, satisfying
\[f(r) \geq w(r) \mb{for all} r \in kG\]
by Lemma \ref{CompCross}(b). 

Next, equip $kZ$ with the valuation $v\tau$; this valuation is not separated and in fact $(v\tau)^{-1}(\infty) = P$ because $v$ is a separated valuation on $Q$. Since $f(r) = v\tau(r)$ for all $r \in kZ$, the natural map $(kZ, v\tau) \to (kG, f)$ is strictly filtered and induces an inclusion of associated graded rings
\[ \iota :\gr^{v\tau} kZ \hookrightarrow \gr^f kG.\]
Since $\gr^w kG$ is commutative and $w(c_i) = f(c_i)$ by definition, we have
\[f([c_i,c_j]) \geq w([c_i,c_j]) > w(c_i) + w(c_j) = f(c_i) + f(c_j)\]
for all $i,j = 1,\ldots, e$ which shows that the principal symbols $\gr^f c_i$ of the $c_i$'s commute pairwise. Therefore we obtain a ring homomorphism
\[ \psi : \left(\gr^{v \tau}kZ\right)[Y_1,\ldots,Y_e] \to \gr^f kG\]
which extends $\iota$ and sends $Y_i$ to $\gr^f c_i$. Using the definition of $f$, it is straightforward to verify that $\psi$ is actually a bijection. Because $v\tau$ is a valuation, $\gr^{v \tau}kZ$ is a domain so $\gr^f kG$ is a domain, which implies that $f^{-1}(\infty)$ is a completely prime ideal of $kG$. 

Finally, by choosing a finite generating set for $P$ as an ideal, it is easy to see that 
\begin{equation}\label{PkG} PkG = \left\{\sum_{\gamma \in\mathbb{N}^e}r_\gamma \C{\gamma} : r_\gamma \in P \mb{for all} \gamma \in \mathbb{N}^e\right\}.\end{equation}
Since $P = (v\tau)^{-1}(\infty)$, it follows that $PkG = f^{-1}(\infty)$ is completely prime.

(b) Let $h\in (P kG)^\dag$. Then $h = zg_{d-e+1}^{\alpha_1} \cdots g_d^{\alpha_e}$ for some $z \in Z$ and $\alpha \in \Zp^e$, and 
\[ \begin{array}{lll} PkG \ni h - 1 &=& z (1 + c_1)^{\alpha_1} \cdots (1 + c_e)^{\alpha_e} - 1  \\
&=& (z - 1) + \sum_{\gamma \neq 0} z \binom{\alpha}{\gamma} \C{\gamma} 
\end{array}\]
by the binomial expansion. Applying $(\ref{PkG})$, we deduce that $z - 1  \in P$ and $z \binom{\alpha}{\gamma} \in P$ for all $0 \neq \gamma \in \mathbb{N}^e$. The first constraint forces $z = 1$ as $P$ is a faithful prime by assumption. Because the prime ideal $P$ is proper, we must have $\binom{\alpha}{\gamma} = 0$ in the field $k$ for all $0 \neq \gamma \in \mathbb{N}^e$.  By applying Mahler's Theorem \ref{Mahler}, we see that every locally constant function $f : \Zp^e \to k$ satisfies $f(\alpha) = f(0)$. But locally constant functions on $\Zp^e$ separate points, so $\alpha = 0$. Therefore $h = 1$ and $PkG$ is faithful.
\end{proof}

\begin{cor} Let $G$ be a nilpotent complete $p$-valued group of finite rank. Then every prime ideal $P$ of $kG$ is completely prime.
\end{cor}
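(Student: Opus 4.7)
The plan is to derive this corollary by combining the surjectivity part of Main Theorem~A with Theorem~\ref{CompPrime} that was just established. Concretely, for a nilpotent complete $p$-valued group $G$ of finite rank, Main Theorem~A(b) asserts $\Theta(\Psi(P)) = P$ for every prime ideal $P$ of $kG$, so it suffices to show that every ideal in the image of $\Theta$ is completely prime.

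To establish this, I would proceed as follows. Let $Q \in \Spec^f(kZ_N)$ for some $N \triangleleft^i_c G$, and let $\widetilde{Q}$ be its preimage in $k\widetilde{N}$, so that $\Theta(Q) = \widetilde{Q}kG$. Since $\widetilde{Q}$ contains $N - 1$, the two-sided ideal $(N - 1)kG$ is contained in $\widetilde{Q}kG$, and the quotient map $kG \twoheadrightarrow k(G/N)$ induces an isomorphism
\[ \frac{kG}{\widetilde{Q}kG} \cong \frac{k(G/N)}{Q \cdot k(G/N)}.\]
Because $N$ is isolated, $G/N$ is again a complete $p$-valued group of finite rank by \cite[IV.3.4.2]{Laz1965}, and its centre is precisely $\widetilde{N}/N = Z_N$. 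The prime ideal $Q$ of $kZ_N$ therefore satisfies the hypotheses of Theorem~\ref{CompPrime}(a) applied to the group $G/N$, which gives us that $Q \cdot k(G/N)$ is completely prime. Hence $kG/\Theta(Q)$ is a domain, which is exactly what we needed.

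Putting these pieces together: given an arbitrary prime $P \in \Spec(kG)$, set $N := P^\dag$, which is isolated by Lemma~\ref{IsoSub}(c), and let $Q := \Psi(P) \in \Spec^f(kZ_N)$. Main Theorem~A(b) then gives $P = \Theta(Q)$, and the argument of the previous paragraph shows $P$ is completely prime. The genuinely hard input is Main Theorem~A(b) itself, which rests on Theorem~\ref{MainB} (the Zalesskii-type control theorem for nilpotent $G$); once that has been invoked the corollary is essentially a bookkeeping exercise passing through the quotient $G/P^\dag$ to reduce to the already-proven Theorem~\ref{CompPrime}.
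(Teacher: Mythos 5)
Your proof is correct, and the ultimate inputs are the same as the paper's (Theorem~\ref{Zal} and Theorem~\ref{CompPrime}(a)), but you route through Main Theorem~A(b), whereas the paper proves the corollary directly: it passes to $G/P^\dag$ to make $P$ faithful, applies Theorem~\ref{Zal} to write $P = (P \cap kZ)kG$, observes $P \cap kZ$ is prime because $Z$ is central, and invokes Theorem~\ref{CompPrime}(a). Your detour through $\Theta$ and $\Psi$ is logically sound --- Main Theorem~A(b) itself depends only on Theorem~\ref{Zal}, not on this corollary, so there is no circularity --- and your verification that $kG/\widetilde{Q}kG \cong k(G/N)/Q\cdot k(G/N)$, which reduces the ``$\Theta(Q)$ is completely prime'' claim to Theorem~\ref{CompPrime}(a) for $G/N$, is exactly the argument the paper uses in the last line of $\S$\ref{PfMainA}. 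The only practical issue is ordering: in the paper this corollary appears in $\S$\ref{CompPrime}, \emph{before} $\S$\ref{PfMainA} where Theorem~A(b) is proved, so your proof as written would create a forward reference. The paper's inline version avoids this by applying Theorem~\ref{Zal} directly rather than quoting the packaged statement. Conceptually, what your approach buys is a cleaner modular picture (``every $\Theta(Q)$ is completely prime, and $P$ is always of this form''); what the paper's direct approach buys is that the corollary can be stated and proved immediately after Theorem~\ref{CompPrime} without waiting for the bookkeeping in $\S$\ref{PfMainA}.
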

\begin{proof} The normal subgroup $P^\dag$ of $G$ is isolated by Lemma \ref{IsoSub}(c), so $G/P^\dag$ is again a complete $p$-valued group of finite rank by \cite[IV.3.4.2]{Laz1965}. By replacing $G$ by $G/P^\dag$ we may therefore assume that our prime ideal $P$ is faithful. Now $P = (P\cap kZ)kG$ by Theorem \ref{Zal} and $P \cap kZ$ is a prime ideal in $kZ$ since $Z$ is the centre of $G$, so the result follows from Theorem \ref{CompPrime}(a).
\end{proof}

\subsection{Proof of Theorem \ref{MainA}}\label{PfMainA}
(a) Let $P = \Theta(Q) = \widetilde{Q} kG$; then $P \cap k\widetilde{N} = \widetilde{Q}$ by Lemma \ref{ComplGpRng}(b). Clearly $N \leq (\widetilde{Q})^\dag \leq P^\dag$; on the other hand if $g \in P^\dag$ then $gN \in \left(Q k[[G/N]]\right)^\dag$ which is the trivial group by Theorem \ref{CompPrime}(b) since $Q$ is faithful. So $P^\dag = N$ and therefore
\[ \Psi(\Theta(Q)) = \frac{P\cap k\widetilde{P^\dag}}{(P^\dag - 1)k\widetilde{P^\dag}} = \frac{\widetilde{Q}}{(N - 1)k\widetilde{N}} = Q.\]

(b) Let $Q = \Psi(P) = \frac{P \cap k\widetilde{P^\dag}}{(P^\dag - 1)k\widetilde{P^\dag}}$ so that $\widetilde{Q} = P \cap k\widetilde{P^\dag}$. Because $G$ is nilpotent, the image $P / (P^\dag - 1)kG$ of $P$ in $k[[G/P^\dag]]$ is controlled by $Z_{P^\dag}$ by Theorem \ref{Zal}, so 
\[P = (P \cap k\widetilde{P^\dag})kG = \widetilde{\Psi(P)}kG = \Theta(\Psi(P)).\]
Every ideal in $kG$ of the form $\Theta(Q)$ is completely prime by Theorem \ref{CompPrime} (a).
\bibliography{references}

\def\cprime{$'$} \def\cftil#1{\ifmmode\setbox7\hbox{$\accent"5E#1$}\else
  \setbox7\hbox{\accent"5E#1}\penalty 10000\relax\fi\raise 1\ht7
  \hbox{\lower1.15ex\hbox to 1\wd7{\hss\accent"7E\hss}}\penalty 10000
  \hskip-1\wd7\penalty 10000\box7}
\begin{thebibliography}{10}

\bibitem{Ak}
Y.~Akizuki.
\newblock Einige bemerkungen \"uber prim\"are integrit\"atsbereiche mit
  teilerkettensatz.
\newblock {\em Proc. Phys. Math. Soc. Japan}, 17(2):327--336, 1935.

\bibitem{ArdBro2006}
K.~Ardakov and K.~A. Brown.
\newblock Ring-theoretic properties of {I}wasawa algebras: a survey.
\newblock {\em Doc. Math.}, (Extra Vol.):7--33 (electronic), 2006.

\bibitem{ArdWad2009}
K.~Ardakov and S.~J. Wadsley.
\newblock {$\Gamma$}-invariant ideals in {I}wasawa algebras.
\newblock {\em J. Pure Appl. Algebra}, 213(9):1852--1864, 2009.

\bibitem{AWZ2}
K.~Ardakov, F.~Wei, and J.~J. Zhang.
\newblock Non-existence of reflexive ideals in {I}wasawa algebras of
  {C}hevalley type.
\newblock {\em J. Algebra}, 320(1):259--275, 2008.

\bibitem{Ard2006}
Konstantin Ardakov.
\newblock Localisation at augmentation ideals in {I}wasawa algebras.
\newblock {\em Glasg. Math. J.}, 48(2):251--267, 2006.

\bibitem{Ard2011}
Konstantin Ardakov.
\newblock The controller subgroup of one-sided ideals in completed group rings.
\newblock {\em Contemporary Mathematics, to appear}, 2011.

\bibitem{Cham1981}
Marc Chamarie.
\newblock Anneaux de {K}rull non commutatifs.
\newblock {\em J. Algebra}, 72(1):210--222, 1981.

\bibitem{CoaSchSuj2003}
J.~Coates, P.~Schneider, and R.~Sujatha.
\newblock Modules over {I}wasawa algebras.
\newblock {\em J. Inst. Math. Jussieu}, 2(1):73--108, 2003.

\bibitem{DDMS}
J.~D. Dixon, M.~P.~F. du~Sautoy, A.~Mann, and D.~Segal.
\newblock {\em Analytic pro-{$p$} groups}, volume~61 of {\em Cambridge Studies
  in Advanced Mathematics}.
\newblock Cambridge University Press, Cambridge, second edition, 1999.

\bibitem{Goldie1967}
A.~W. Goldie.
\newblock Localization in non-commutative noetherian rings.
\newblock {\em J. Algebra}, 5:89--105, 1967.

\bibitem{EGAIV1}
A.~Grothendieck.
\newblock \'{E}l\'ements de g\'eom\'etrie alg\'ebrique. {IV}. \'{E}tude locale
  des sch\'emas et des morphismes de sch\'emas. {I}.
\newblock {\em Inst. Hautes \'Etudes Sci. Publ. Math.}, (20):259, 1964.

\bibitem{GwyRob}
W.~D. Gwynne and J.~C. Robson.
\newblock Completions of non-commutative {D}edekind prime rings.
\newblock {\em J. London Math. Soc. (2)}, 4:346--352, 1971.

\bibitem{HMS}
Yonghuan Hu, Stefano Morra, and Benjamin Schraen.
\newblock Sur la fid\'elit\'e de certaines repr\'esentations de {GL$_2(F)$}
  sous une alg\`ebre d' {I}wasawa.
\newblock http://arxiv.org/abs/1105.5786.

\bibitem{Jantzen}
Jens~Carsten Jantzen.
\newblock {\em Representations of algebraic groups}, volume 107 of {\em
  Mathematical Surveys and Monographs}.
\newblock American Mathematical Society, Providence, RI, second edition, 2003.

\bibitem{Laz1965}
Michel Lazard.
\newblock Groupes analytiques {$p$}-adiques.
\newblock {\em Inst. Hautes \'Etudes Sci. Publ. Math.}, (26):389--603, 1965.

\bibitem{LVO}
H.~Li and F.~Van~Oystaeyen.
\newblock {\em Zariskian filtrations}.
\newblock Kluwer Academic Publishers, 1996.

\bibitem{Li}
Huishi Li.
\newblock Lifting {O}re sets of {N}oetherian filtered rings and applications.
\newblock {\em J. Algebra}, 179(3):686--703, 1996.

\bibitem{MCR}
J.~C. McConnell and J.~C. Robson.
\newblock {\em Noncommutative {N}oetherian rings}, volume~30 of {\em Graduate
  Studies in Mathematics}.
\newblock American Mathematical Society, Providence, RI, revised edition, 2001.
\newblock With the cooperation of L. W. Small.

\bibitem{Pass}
Donald~S. Passman.
\newblock {\em Infinite crossed products}, volume 135 of {\em Pure and Applied
  Mathematics}.
\newblock Academic Press Inc., Boston, MA, 1989.

\bibitem{ReidAki}
Miles Reid.
\newblock Akizuki's counterexample.
\newblock http://arxiv.org/abs/alg-geom/9503017.

\bibitem{Roseblade}
J.~E. Roseblade.
\newblock Prime ideals in group rings of polycyclic groups.
\newblock {\em Proc. London Math. Soc. (3)}, 36(3):385--447, 1978.

\bibitem{Rowen}
Louis~Halle Rowen.
\newblock {\em Polynomial identities in ring theory}, volume~84 of {\em Pure
  and Applied Mathematics}.
\newblock Academic Press Inc. [Harcourt Brace Jovanovich Publishers], New York,
  1980.

\bibitem{SchNFA}
Peter Schneider.
\newblock {\em Nonarchimedean functional analysis}.
\newblock Springer Monographs in Mathematics. Springer-Verlag, Berlin, 2002.

\bibitem{Zal}
A.~E. Zalesski{\u\i}.
\newblock On subgroups of rings without divisors of zero.
\newblock {\em Dokl. Akad. Nauk BSSR}, 10:728--731, 1966.

\end{thebibliography}
\bibliographystyle{plain}
\end{document}